\crefname{exmp}{Example}{Examples}
\crefname{prop}{Proposition}{Propositions}
\newtheoremstyle{mytheoremstyle} 
    {5pt}                    
    {5pt}                    
    {\itshape}                   
    {\parindent}                           
    {\bf}                   
    {.}                          
    {.5em}                       
    {}  
\theoremstyle{mytheoremstyle}
\newtheorem{theorem}{Theorem}[section]
\newtheorem{lemm}[theorem]{Lemma}
\newtheorem{prop}[theorem]{Proposition}
\newtheorem{coro}[theorem]{Corollary}
\newtheorem{prob}[theorem]{Problem}
\newtheorem{construction}[theorem]{Construction}
\newtheorem{notation}[theorem]{Notation}
\newtheoremstyle{mytdefintionstyle} 
    {5pt}                    
    {5pt}                    
    {\rm}                   
    {\parindent}                           
    {\bf}                   
    {.}                          
    {.5em}                       
    {}  
\theoremstyle{remark}
\newtheorem{rmrk}[theorem]{Remark}
\theoremstyle{mytdefintionstyle}
\newtheorem{defn}[theorem]{Definition}
\newtheoremstyle{exmp_contd}
    {5pt}                    
    {5pt}                    
    {\rm}                   
    {\parindent}                           
    {\bf}                   
    {.}                          
    {.5em}                       
    {\thmname{#1}\ \thmnumber{ #2}\thmnote{#3}\ (continued)}  
\theoremstyle{exmp_contd}
\DeclareMathOperator{\img}{im}
\DeclareMathOperator{\spa}{span}
\newcommand\A{\mathcal{A}}
\newcommand\C{\mathbb{C}}
\newcommand\F{\mathbb{F}}
\newcommand{\Q}{\mathbb{Q}}
\newcommand\R{\mathbb{R}}
\newcommand{\Z}{\mathbb{Z}}
\newcommand\N{\mathbb{N}}
\renewcommand\phi{\varphi}
\DeclareMathOperator\rk{rk}
\DeclareMathOperator\id{id}
\definecolor{darkgray}{rgb}{0.3,0.3,0.3}
\definecolor{LightGray}{gray}{0.9}
\newcommand{\topstrut}[1][1.2ex]{\setlength\bigstrutjot{#1}{\bigstrut[t]}}
\newcommand{\botstrut}[1][0.9ex]{\setlength\bigstrutjot{#1}{\bigstrut[b]}}
\definecolor{darkgreen}{rgb}{0.008,0.617,0.067}
\definecolor{brown}{rgb}{0.6,0.4,0.2}
\newif\ifjournalversion
\author{Lukas K\"uhne}
\address{Fakult\"at f\"ur Mathematik, Universit\"at Bielefeld, Bielefeld, Germany}
\email{\href{mailto:Lukas Kuehne<lukas.kuehne@math.uni-bielefeld.de>}{lukas.kuehne@math.uni-bielefeld.de}}
\author{Geva Yashfe}
\address{Einstein Institute of Mathematics, The Hebrew University of Jerusalem, Giv’at Ram, Jerusalem, 91904, Israel}
 \email{\href{mailto:Geva Yashfe <geva.yashfe@mail.huji.ac.il>}{geva.yashfe@mail.huji.ac.il}}
\begin{document}

\title{On entropic and almost multilinear representability of matroids}
\begin{abstract}
This article studies two notions of generalized matroid representations motivated by algorithmic information theory and cryptographic secret sharing. The first (entropic representability) involves discrete random variables, while the second (almost-multilinear representability) deals with approximate subspace arrangements. In both cases, we prove that determining whether an input matroid has such a representation is undecidable. Consequently, the conditional independence implication problem is also undecidable, providing an independent answer to a question posed by Geiger and Pearl, recently resolved by Cheuk Ting Li.
These problems are also closely related to characterizing achievable rates in network coding and constructing secret sharing schemes. For example, another corollary of our work is that deciding whether an access structure admits an ideal secret sharing scheme is undecidable. Our approach reduces undecidable problems from group theory to matroid representation problems. Specifically, we reduce the uniform word problem for finite groups to entropic representability and the word problem for sofic groups to almost-multilinear representability. A key part of this reduction involves modifying group presentations into forms where linear representations are generic in an appropriate sense when restricted to the generating set.
\end{abstract}

\thanks{L.K. was supported by the Studienstiftung des deutschen Volkes, the ERC StG 716424 - CASe and the Deutsche Forschungsgemeinschaft (DFG, German Research Foundation) -- SFB-TRR 358/1 2023 -- 491392403 and SPP 2458 -- 539866293.
	G.Y. was supported by ERC StG 716424 - CASe, by ERC COG 101045750 HodgeGeoComb and by ISF grant 1050/16.}

\keywords{%
matroids, entropic matroids, almost multilinear matroids, undecidability, word problem, von Staudt constructions, conditional independence implication, secret sharing schemes.
}
\subjclass[2020]{%
	05B35, 52B40, 14N20, 68P30 , 94A17, 20F10, 03D40.
}

\maketitle

\tableofcontents

\section{Introduction}\label{sec:intro}
\subsection{Main results}
A \emph{matroid} is a combinatorial abstraction of linear independence in vector spaces and forests in graphs.
Classically, a matroid is said to be representable over a field if there exists a set of vectors in some vector space over that field such that the subsets of linearly independent vectors are exactly the independent subsets of the matroid.
This article investigates entropic and multilinear representations.
\subsubsection{Entropic matroids}

\begin{prob}
	The \emph{entropic matroid representation problem} asks the following:
	\begin{description}
		\item[Instance] A matroid $M$ on a finite ground set $E$ with rank function $r$.
		\item[Question] Does there exist a family of discrete random variables $\{X_e\}_{e\in E}$ and a positive scalar $\lambda$ such that for all $A \subseteq E$ the joint entropy $H(X_A)$ of the variables $\{X_a\}_{a \in A}$ equals $\lambda \cdot r(A)$?
	\end{description}
\end{prob}

Matroids for which the answer is positive
are called \emph{entropic}.
The class of entropic matroids contains the ones that are representable over a field (also called linear matroids) and multilinear matroids.
Entropic matroids possibly go back to Fujishige~\cite{Fuj78} and these representations are equivalent to matroid representations by partitions~\cite{Mat99} and almost affine codes~\cite{SA98}.

The first main result of this article is the following:
\begin{theorem}\label{thm:entropic}
	The entropic matroid representation problem is algorithmically undecidable.	
\end{theorem}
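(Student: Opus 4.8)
The plan is to prove undecidability by a computable reduction from a known-undecidable problem in combinatorial group theory, namely the \emph{uniform word problem for finite groups}. By a theorem of Slobodski\u{\i}, there is no algorithm that, given a finite presentation $\langle S \mid R\rangle$ together with a word $w$ in the generators $S$, decides whether $w$ evaluates to the identity in \emph{every} finite group satisfying the relations $R$; equivalently, whether there exists a finite group satisfying $R$ in which $w \neq 1$. From such data I would construct, effectively, a matroid $M = M(S,R,w)$ on an explicit finite ground set so that $M$ is entropic if and only if there is a (suitably nondegenerate) finite group satisfying $R$ in which $w \neq 1$. Any algorithm for the EMR problem would then decide the undecidable group problem, which is impossible.

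The matroid $M(S,R,w)$ is produced by a \emph{von Staudt--type construction} built on a Dowling-like frame: a distinguished frame encodes a ``group of multipliers'', a block of points for each generator in $S$ records a group element, each relator in $R$ is realized by a modular configuration of points forcing the corresponding product of multipliers to be trivial, and a final gadget is rigged so that its rank is as prescribed precisely when the multiplier attached to $w$ is nontrivial. One direction is soft: if $H$ is a finite group satisfying $R$ with $w \neq 1$, then $M$ admits a representation by cosets of subgroups of a finite group $G$ assembled from $H$; taking the uniform distribution on $G$ together with the random partitions into these cosets yields discrete random variables $\{X_e\}_{e \in E}$ whose joint entropies are proportional to the rank function of $M$, so that $r(A) = \lambda H(X_A)$ holds for a suitable positive scalar $\lambda$. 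Hence $M$ is entropic, and all the work is in the converse.

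The main obstacle is the converse \emph{rigidity} statement: mere entropic representability of $M$ --- a priori far weaker than linear or even multilinear representability --- must already force the existence of a finite group witnessing $w \neq 1$. Two ingredients combine here. First, the classical passage from an entropic (poly)matroid representation to a \emph{quasi-uniform} one, and from quasi-uniform distributions to subgroups of a finite group $G$ (in the spirit of Chan and Yeung); since only proportionality of $r$ to an entropy function is demanded, the typical-set blow-up used in this passage does no harm. Second, and this is where the structure of the construction is essential, a von Staudt rigidity argument showing that in any such group-based representation the frame and the forced modular and $U_{2,k}$--type substructures pin down the ``multipliers'' so tightly that the relations $R$ must hold in a subquotient of $G$ and the image of $w$ there is nontrivial. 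The delicate points are controlling the interplay between the Shannon-type entropy inequalities, the single global scalar $\lambda$, and the combinatorics of the Dowling frame, and in particular building enough rigidity into the frame to exclude degenerate representations, such as those coming from the trivial group.

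Combining the two directions, $M(S,R,w)$ is entropic exactly when the group problem has a positive answer; since the latter is undecidable, so is the EMR problem.
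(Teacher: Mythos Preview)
Your high-level strategy matches the paper's: reduce from the uniform word problem for finite groups via Slobodskoi, encode the presentation in a Dowling-type matroid, and argue both directions. But two points separate your sketch from a proof, and both are precisely where the paper's effort is concentrated.

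First, a structural issue: you posit a \emph{single} matroid $M(S,R,w)$ with a clean iff. The paper instead produces a finite family $\mathcal{M}$ of generalized Dowling geometries (the ``subordinate'' ones) and shows the group problem has a positive answer iff \emph{some} $M\in\mathcal{M}$ is entropic. The reason is that in the easy direction a finite quotient with $s\neq e$ yields a multilinear (hence entropic) representation, but this representation may satisfy additional three-letter relations beyond those in $R$, so it realizes only some member of the subordinate family, not a fixed $M$. Your single-matroid claim would need to rule this out, and you do not.

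Second, the rigidity direction is the crux, and your sketch does not supply the mechanism. The paper does not go via Chan--Yeung/quasi-uniform; it uses Mat\'u\v{s}'s equivalence of entropic matroids with probability-space representations, then shows directly (Theorem~\ref{thm:Dowling_prob_rep}) that the determination functions assemble into a faithful functor from the Dowling groupoid to $\mathrm{FinSet}$, giving a homomorphism $G\to S_n$ that separates distinct generators. To make ``separates generators'' imply ``$s\neq e$ in some finite quotient'', the paper preprocesses the presentation via two lengthy constructions (\emph{scrambling} and \emph{augmentation}, Section~\ref{sec:scrambling}): scrambling passes to $(G*F_R)\times\mathbb{Z}^N$ and forces distinct generators to map to distinct matrices in any suitable representation; augmentation adds free letters $z_1,\dots,z_4$ and a new generator $t=sz_1s$, so that generator-faithfulness gives $t\neq z_1$, hence $s\neq e$. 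Your proposed ``final gadget whose rank is as prescribed precisely when $w$ is nontrivial'' is a different, unspecified device, and it is exactly the step --- excluding degenerate entropic representations --- where the paper spends most of its pages. Without an analogue of the scrambling/augmentation machinery, the converse direction is a gap rather than a sketch.
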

(This is restated and proved as \cref{thm:undecidable_entropic})
In contrast, representability over some field can be decided using Gr\"obner bases~\cite[Thm. 6.8.9]{Oxl11}.
Generalized matrix representability over a division ring and multilinear representability are also undecidable~\cite{KPY20,KY19}.

Entropic matroids are related to ideal secret sharing schemes:
In the theory of secret sharing schemes one wants to distribute shares of a secret amongst a number of participants.
The goal is that only certain authorized subsets of the participants can recover the secret by combining their shares, while other subsets of the participants can recover no information about the secret. See~\cite{Stin92,secret_sharing_padro,secret_sharing_beimel} for detailed exposition.
The family of subsets of the participants that can jointly recover the secret is called the \emph{access structure}.

A secret sharing scheme is \emph{ideal} if the size of the share given to each participant equals the size of the secret. Brickell and Davenport observed that the access structure of an ideal secret sharing scheme determines a matroid, and called matroids arising in such a way \emph{secret sharing matroids}~\cite{BD91}.
These are the same as the entropic matroids~\cite{Mat99}.
Martin extended this bijection to connected monotone access structures that potentially don't admit an ideal secret sharing scheme~\cite{Mar91}, and Seymour proved that the Vam\'os matroid is not a secret sharing matroid~\cite{Sey92}.

Martin asked which connected monotone access structures admit an ideal secret sharing scheme~\cite{Mar91}.
\Cref{thm:entropic} show that this question is undecidable.

\subsubsection{Almost multilinear matroids} Almost-multilinear matroids are matroids approximately representable by subspace arrangements. See below for a precise definition.

\begin{prob}
	The \emph{almost multilinear matroid representation problem} asks the following:
	\begin{description}
		\item[Instance] A matroid $M$ on a finite ground set $E$ with rank function $r$ and a field~$\F$.
		\item[Question] Is it true that for every $\varepsilon > 0$ there exists a vector space $V$ over $\F$ together with a collection of subspaces $\{W_e\}_{e\in E}$ and a $c\in \N$ such that
		\[
			\max_{S\subseteq E}\left|r\left(S\right)-\frac{1}{c} \dim\left(\sum_{e\in S}W_{e}\right)\right|<\varepsilon?
		\]
	\end{description}
\end{prob}

Matroids for which this problem has a positive answer are called \emph{almost multilinear}.
This class generalizes the class of linear and multilinear matroids and is defined analogously to the class of almost entropic matroids studied by Mat\'{u}\v{s}~\cite{Mat07,Mat19}.
Almost multilinear matroids are elements of the closure of the cone of realizable polymatroids defined by Kinser~\cite{Kin09}.
Our second main result of the article is the following.

\begin{theorem}\label{thm:almost}
	The almost multilinear matroid representation problem is algorithmically undecidable.
\end{theorem}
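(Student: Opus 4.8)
The plan is to prove \Cref{thm:almost} by a reduction from the word problem for finitely presented sofic groups, in close analogy with the proof of \Cref{thm:entropic} but with finite quotients replaced by sofic approximations. Both proofs rest on the same von~Staudt--type construction: an algorithm sending a finite group presentation $\langle S\mid R\rangle$ together with a word $w$ in $S$ to a matroid $M=M(S,R,w)$ whose ground set $E$ has elements encoding the generators and relators of $G=\langle S\mid R\rangle$ and a ``witness'' for $w$, designed so that an exact multilinear representation of $M$ over $\F$ amounts to a linear representation $\phi\colon G\to GL_n(\F)$ with $\phi(w)\neq I$. Each group-multiplication relation is enforced by a local rank identity among the subspaces representing the elements involved, and the witness uses a graph-of-the-action encoding: it pairs an element $a$ representing the graph $\{(v,\phi(w)v)\}$ with an element $b$ representing the diagonal and imposes $r(\{a,b\})=2$, so that $\tfrac1c\dim(W_a+W_b)$ is within $\varepsilon$ of $2$ precisely when $\phi(w)$ moves at least a $(1-\varepsilon)$-fraction of a basis.

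First I would show that $M(S,R,w)$ is almost multilinear over $\F$ whenever $w\neq 1$ in a sofic group $G$. Fix $\varepsilon>0$. With $F$ containing the generators, relators and $w$, soficity provides an $\varepsilon$-approximate homomorphism $\psi\colon G\to\Sym(n)$ in the normalized Hamming metric with $\psi(w)$ moving at least a $(1-\varepsilon)$-fraction of $\{1,\dots,n\}$. Composing with the permutation-matrix embedding $\Sym(n)\hookrightarrow GL_n(\F)$ --- a homomorphism under which normalized rank distance is bounded by normalized Hamming distance --- yields an $\varepsilon$-approximate homomorphism $\phi\colon G\to GL_n(\F)$ in the rank metric. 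Translating $\phi$ through the gadgets of $M$ then produces a subspace arrangement $\{W_e\}$ and a constant $c$ whose dimensions match $r$ to within $O(\varepsilon)$ uniformly over all $S\subseteq E$: the group-multiplication gadgets are approximately satisfied because $\phi$ is an approximate homomorphism, and the witness gadget is approximately satisfied because $\psi(w)$, hence $\phi(w)$, is far from the identity. Letting $\varepsilon\to0$ gives the families required for almost multilinearity.

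Next I would show that $M(S,R,w)$ is \emph{not} almost multilinear when $w=1$ in $G$. Suppose for contradiction that there are subspace arrangements $\{W^{(i)}_e\}$ in $V_i$ with constants $c_i$ and dimension errors $\varepsilon_i\to 0$. Passing to a metric ultraproduct $\prod_{\mathcal U}GL_{n_i}(\F)$ along these data, the $O(\varepsilon_i)$ gadget errors vanish in the limit and one extracts an \emph{honest} group homomorphism $\overline\phi\colon G\to\prod_{\mathcal U}GL_{n_i}(\F)$ for which the limiting subspaces attached to the witness satisfy $r(\{a,b\})=2$ exactly. But $w=1$ in $G$ forces $\overline\phi(w)=1$, so the limiting graph and diagonal subspaces coincide and the witness relation degenerates to the value $1$ --- a contradiction. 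Consequently, fixing a finitely presented sofic group $G_0=\langle S_0\mid R_0\rangle$ with undecidable word problem --- one exists since Kharlampovich constructed finitely presented solvable, hence amenable, hence sofic, groups with undecidable word problem --- the computable map $w\mapsto(M(S_0,R_0,w),\F)$ reduces the word problem of $G_0$ to the AMMR problem, which is therefore undecidable.

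The main obstacle is the quantitative control of the von~Staudt gadgets underlying both directions. One must verify that an $\varepsilon$-good approximation of the rank function of $M$ forces each local gadget relation to hold up to $O(\varepsilon)$ in the normalized rank metric of the ambient $GL_n(\F)$, that these errors do not compound worse than linearly when gadgets are chained to build the witness for $w$, and --- for the converse assembly --- that an honest homomorphism into the metric ultraproduct can be spread out into subspace arrangements whose dimensions match $r$ to within any prescribed $\varepsilon$ uniformly over all $2^{|E|}$ subsets, absorbing the gadget count into the choices of $c$ and $n$. Translating between the permutation picture that governs soficity and the linear picture over the fixed field $\F$ is routine once the bi-Lipschitz comparison of Hamming and rank distances on permutation matrices is in hand, but making the gadget estimates uniform and explicit is where the real work lies.
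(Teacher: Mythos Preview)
Your high-level strategy---reduce from the word problem for a finitely presented sofic group (via Kharlampovich's solvable example), build approximate subspace arrangements from sofic approximations for one direction, and pass to a metric ultraproduct to extract an honest homomorphism for the other---is exactly the paper's approach; see \Cref{thm:almost_multilinear_undecidable}, \Cref{thm:almost_multilinear}, \Cref{thm:epsilon_reps}, and \Cref{thm:almost_multilinear_to_group}.

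The gap is in reducing to a \emph{single} matroid $M(S,R,w)$. The obstruction is accidental coincidences in the presentation: distinct generators $s,s'\in S$ may be equal in $G$, and a product $s''s's$ may equal $e$ in $G$ without appearing in $R$. Your forward direction then fails: a sofic approximation sends such $s,s'$ to nearby permutations, so the corresponding subspaces nearly coincide and the matroid's independence constraint on $\{s,s'\}$ is violated by $\approx 1$, not $\approx\varepsilon$. No gadget bookkeeping fixes this without knowing in advance which generators coincide in $G$---but that is the word problem itself. The paper does \emph{not} resolve this by refining the von Staudt gadgets, and unlike in the entropic proof it does not use scrambling or augmentation here at all. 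Instead it replaces the single matroid by a computable finite \emph{family}: enumerate all equivalence relations $\sim$ on $S$ that do not identify $w$ with $e$, and for each take all generalized Dowling geometries subordinate to $\langle S/{\sim}\mid R/{\sim}\rangle$. Some member of this family is almost multilinear iff $w\neq e$ in $G$: if $w\neq e$, the relation $\sim$ identifying exactly those generators that agree in $G$ yields a presentation of $G$ with pairwise distinct generators, to which \Cref{cor:almost_multilinear} applies directly; conversely, almost multilinearity of any subordinate GDG forces its distinct generators to be distinct in the presented group by \Cref{thm:almost_multilinear_to_group}, and since that group is a quotient of $G$ one gets $w\neq e$. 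Your graph-versus-diagonal witness is also different from the paper's device (the paper simply makes $w$ a generator in a symmetric triangular presentation and uses the GDG's built-in separation of generators); note too that your witness as stated detects invertibility of $\phi(w)-I$, not merely $\phi(w)\neq I$, which is harmless in the sofic regime but does not match your exact-representation claim.
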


Multilinear matroids found applications to network coding capacity:
In~\cite{ElR10}, El Rouayheb et al.\ constructed linear network capacity problems equivalent to multilinear matroid representability. 
Our previous result in~\cite{KY19} implies that the question whether an instance of the network coding problem has a  linear vector coding solution is undecidable.
\Cref{thm:almost} implies that it is also undecidable whether an instance of the network coding problem has an approximate linear vector coding solution.

A natural extension of both theorems is the question whether almost entropic representability is also undecidable.
This will be shown to be the case in the upcoming paper~\cite{almost_entropic}, which crucially relies on our work here for the almost-multilinear case.

\subsection{Conditional independence implications}
Given a finite ground set $E$, a \emph{conditional independence (CI) statement} is a triple $(A\perp B\mid C)$ of subsets $A,B,C\subseteq E$ which encodes the statement ``$A$ is independent from $B$ given $C$''.
We say that a family of discrete random variables $\{X_e\}_{e\in E}$ \emph{realizes} a CI statement $(A\perp B\mid C)$ if $X_{A}$ and $X_{B}$ are probabilistically independent given $X_{C}$. Here $X_A$ is the random variable given by the tuple $(X_a)_{a \in A}$, so that its distribution is the joint distribution of variables with indices in $A$.

\begin{prob}
	The \emph{conditional independence implication problem (CII)} is:
	\begin{description}
		\item[Instance] A set $\mathcal A$ of CI statements on a finite ground set $E$ and a CI statement~$c$.
		\item[Question] Does
		\[
		\bigwedge_{A\in \mathcal A} A \Rightarrow c
		\]
		hold for every family $\{X_e\}_{e\in E}$ of discrete random variables?
		In other words, is it true that whenever a family $\{X_e\}_{e\in E}$ of discrete random variables realizes all CI statements in $\A$ it also realizes the CI statement $c$.
	\end{description}
\end{prob}

In the literature, the sets appearing in CI statements are sometimes defined to be pairwise disjoint.
In this paper, we do not make this assumption but note that both formulations are equivalent as shown by Cheuk Ting Li~\cite{Li21}.

In the 1980s, Pearl and Paz conjectured that there exists a finite set of axioms characterizing all valid CI implication statements~\cite{PP86}.
This conjecture was later refuted by Studen\'y~\cite{Stu90}.
Subsequently, Geiger and Pearl proved that the CII problem is decidable under certain conditions on the CI statements and asked whether it is undecidable in general~\cite{GP93}.
Partial results concerning the CII problem were obtained in~\cite{NGSG13,Li21} and it was shown in~\cite{KKNS20} that the CII problem is co-recursively enumerable.
Recently Cheuk Ting Li showed that the CII problem is undecidable~\cite{Li22}.

An oracle to decide the CII problem can also decide the EMR problem.
Therefore we obtain a second independent solution of the long-standing CII problem.
\begin{coro}
	The conditional independence implication (CII) problem is algorithmically undecidable.
\end{coro}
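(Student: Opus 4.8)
The plan is to reduce the EMR problem to the complement of the CII problem, so that \Cref{thm:entropic} forces the CII problem to be undecidable. To a matroid $M$ on a ground set $E$ with rank function $r$ I would associate, computably, a finite set $\mathcal{A}_M$ of CI statements and a single CI statement $c_M$, chosen so that the implication $\bigwedge_{A \in \mathcal{A}_M} A \Rightarrow c_M$ \emph{fails} exactly when $M$ is entropic; an oracle for the CII problem would then decide the EMR problem. Rank-zero and one-element matroids are entropic and can be dealt with directly, and since the matroids produced in the proof of \Cref{thm:entropic} can be arranged to be connected, it suffices to treat connected $M$ with at least two elements; such an $M$ is loopless.

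For $\mathcal{A}_M$ I would take exactly those CI statements that encode ``the family $\{X_e\}_{e \in E}$ represents $M$'' and that therefore hold automatically for every entropic representation of $M$: for each $e \in E$ and each $D \subseteq E \setminus \{e\}$ with $r(D \cup \{e\}) = r(D)$, the \emph{functional dependency} $(\{e\} \perp \{e\} \mid D)$; and for each $e \in E$ and each $D \subseteq E \setminus \{e\}$ with $r(\{e\}) + r(D) = r(D \cup \{e\})$, the \emph{independence} $(\{e\} \perp D \mid \emptyset)$. (These are not built from pairwise disjoint sets, which by \cite{Li21} costs nothing.) For $c_M$ I would fix any element $e_0 \in E$, necessarily a non-loop, and set $c_M = (\{e_0\} \perp \{e_0\} \mid \emptyset)$, which asserts $H(X_{e_0}) = 0$.

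The core of the proof would be this equivalence. Suppose a family $\{X_e\}$ realizes every statement of $\mathcal{A}_M$, and write $h(A) = H(X_A)$. The functional dependencies give $h(D \cup \{e\}) = h(D)$ whenever $e$ lies in the closure of $D$, hence $h(A) = h(B)$ for a basis $B$ of any $A \subseteq E$; telescoping the independences along a chain $\emptyset \subset \{b_1\} \subset \cdots \subset \{b_1, \dots, b_k\} = B$ gives $h(B) = \sum_{i} h(\{b_i\})$ for every independent set $B$; and for any circuit $C$, comparing $h(C) = h(C \setminus \{x\}) = \sum_{y \in C \setminus \{x\}} h(\{y\})$ as $x$ ranges over $C$ shows that $h(\{x\})$ is the same for all $x \in C$. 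Connectedness of $M$ then makes all singleton entropies equal, say to $\mu \geq 0$, so $h = \mu r$. Hence a realization of $\mathcal{A}_M$ that violates $c_M$ is exactly one with $\mu = h(\{e_0\}) > 0$, that is, an entropic representation of $M$ with scaling $\lambda = 1/\mu$. Conversely, any entropic representation of $M$ has entropy function $h = r/\lambda$, which satisfies every statement of $\mathcal{A}_M$ by the very choice of those statements and has $h(\{e_0\}) = r(\{e_0\})/\lambda = 1/\lambda > 0$, so it violates $c_M$. Therefore $M$ is entropic if and only if $\bigwedge_{A \in \mathcal{A}_M} A \Rightarrow c_M$ fails, which is the required reduction.

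The main obstacle I anticipate is pinning the entropy function of a realization of $\mathcal{A}_M$ down to a genuine \emph{scalar} multiple of $r$ rather than an arbitrary non-negative reweighting of it. This is exactly why the reduction should be routed through connected matroids, and why one must confirm that the construction behind \Cref{thm:entropic} outputs connected instances: on a disconnected matroid the scaling constants on the separate connected components cannot be equated by a single CI statement, which can only express the vanishing of one conditional mutual information. The remaining points --- that $\mathcal{A}_M$ is finite and computable from $M$, that each statement in it really is valid for every entropic representation so that no genuine representation is spuriously excluded, and the handling of the degenerate small matroids --- should be routine.
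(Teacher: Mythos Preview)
Your argument is correct. Both you and the paper encode a connected matroid by a finite set of CI statements and exploit that the generalized Dowling geometries arising in \Cref{thm:entropic} are connected; the core computation showing that any realization has entropy function a scalar multiple of the rank function is essentially the same in both.

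The route differs in two respects. First, the paper interposes an auxiliary \emph{conditional independence realization} problem (does a given finite set $\mathcal{C}$ of CI statements admit a nontrivial realization?), proves it undecidable via the encoding, and then reduces CIR to CII by testing \emph{every} $c\in\mathcal{A}_E\setminus\mathcal{C}$ as a possible consequent. You collapse these two steps by identifying in advance the single consequent $c_M=(\{e_0\}\perp\{e_0\}\mid\emptyset)$ that detects nontriviality. Second, the paper routes the equivalence ``$\mathcal{C}_M$ nontrivially realizable $\Leftrightarrow$ $M$ entropic'' through its notion of probability space representation and \Cref{thm:entropic_iff_prob_rep}, whereas you argue directly with entropy identities. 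Your path is shorter and yields a single CII instance per matroid; the paper's detour isolates the CIR problem as an object of independent interest and avoids having to name the right consequent. One small point: your $\mathcal{A}_M$ ranges over all $D\subseteq E\setminus\{e\}$, while the paper restricts to circuits and independent sets; both work, and your larger set does no harm since every statement in it is genuinely satisfied by an entropic representation.
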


\subsection{Related work}
We attempt to give a concise summary of that part of the literature that is most relevant to this paper, and apologize for any omissions.

Very recently Cheuk Ting Li proved that the conditional independence implication problem is undecidable, as well as that the networking coding problem is undecidable \cite{Li22}. His work became available very late in our writing. The methods used in both papers are related to each other, and also to the methods of \cite{KY19}: all three papers reduce a representability problem to the uniform word problem for finite groups. The similarity in methods seems to end there: the proof in \cite{Li22} uses different (though related) combinatorial configurations of random variables, and is significantly shorter than ours. We do not know whether it can be used to prove that entropic representability of matroids is undecidable (and thus be applied to show that it is undecidable whether an access structure admits an ideal secret scheme). It also does not cover approximate results like almost-multilinear representability.

Multilinear representations of Dowling geometries were studied by Beimel, Ben-Efraim, Padr\'{o}, and Tyomkin in~\cite{BBP14}.
This work was extended by Ben-Efraim and Mat\'{u}\v{s}  to entropic matroids~\cite{MB20} building on Mat\'{u}\v{s}' earlier work on these matroids in~\cite{Mat99}.
We previously used partial Dowling geometries to prove that the representability problem of multilinear matroids is undecidable~\cite{KY19}, where these matroids were called ``generalized Dowling geometries''.
With Rudi Pendavingh, we used more general von Staudt constructions to compare the multilinear matroid representations with representations over division rings~\cite{KPY20}.
Almost entropic matroids featured prominently in Mat\'{u}\v{s}' recent article where he proved that algebraic matroids are almost entropic~\cite{Mat19}.

\subsection{Methods and structure of the article}

We first sketch the structure of the main argument and then describe the paper section by section. Undefined terms can be found in the preliminaries (\cref{sec:preliminaries}).

The basic idea is that given a finitely presented group $G = \langle S \mid R \rangle$ and one of the generators $s \in S$, we construct a finite family of matroids $\mathcal{M}$ (in an explicit, computable way). The construction is different for the entropic and for the almost-multilinear case. It satisfies:
\begin{itemize}
	\item At least one of the matroids $M \in \mathcal{M}$ is entropic if and only if $G$ has a finite quotient in which $s$ maps to a nontrivial element.
	\item If $G$ is a sofic group, then at least one of the matroids $M \in \mathcal{M}$ is almost-multilinear if and only if $s$ is nontrivial in $G$.
\end{itemize}
See \cref{fig:iff_schematic}.

\begin{center}
	\begin{figure}[hbt]
		\begin{tikzpicture}[
			box/.style = {rectangle, draw=blue, fill=blue!10, rounded corners, minimum width=4cm, minimum height=1cm, align=center},
			arrow/.style = {thick, ->, >=Stealth},
			imply/.style={double distance=2pt, -Implies, thick}
			]			
			\node[box] (nontrivial) { 
				$s \neq e$ in some finite quotient of $G$};
			\node[box, right=2.5cm of nontrivial] (representable) {	Some $M \in \mathcal{M}$ is entropic };
			
			\draw[imply, bend right=20] (representable) to node[below] {(A)} (nontrivial);
			\draw[imply, bend right=20] (nontrivial) to node[above] {(B)} (representable);
			
		\node[box] (nontrivial) { 
			$s \neq e$ in some finite quotient of $G$};
		\node[box, right=2.5cm of nontrivial] (representable) {	Some $M \in \mathcal{M}$ is entropic };
				
			\node[box, below=1.5cm of nontrivial] (nontrivial2) { 
				$s \neq e$ in the sofic group $G$};
			\node[box, right=2.5cm of nontrivial2] (representable2) {Some $M \in \mathcal{M}$ is almost-multilinear};
			
			\draw[imply, bend right=20] (representable2) to node[below] {(A)} (nontrivial2);
			\draw[imply, bend right=20] (nontrivial2) to node[above] {(B)} (representable2);
			
			\node[box, below=1.5cm of nontrivial] (nontrivial2) { 
				$s \neq e$ in the sofic group $G$};
			\node[box, right=2.5cm of nontrivial2] (representable2) {Some $M \in \mathcal{M}$ is almost-multilinear};
			
		\end{tikzpicture}
		\caption{The four implications described above.
			The first diagram shows the two implications used in the proof that the recognition of entropic matroids is undecidable.
		  The second diagram is used for the analogous statement for almost multilinear matroids.}
		\label{fig:iff_schematic}
	\end{figure}
\end{center}

Hence, the so-called uniform word problem for finite groups can be reduced to the entropic matroid representation problem. In the same way, the word problem for torsion-free sofic groups can be reduced to the almost-multilinear representation problem. Both of these word problems are known to be undecidable (see \cref{sec:uwpfg,sec:sofic}).

Schematically, the construction of $\mathcal{M}$ in the entropic case  is shown in \Cref{fig:construction_of_M}.
\begin{center}
\begin{figure}[htb]
\begin{tikzpicture}[scale=0.8, transform shape,
	box/.style = {rectangle, draw=blue, fill=blue!10, rounded corners, minimum width=4cm, minimum height=1cm, align=center},
	arrow/.style = {thick, ->, >=Stealth},
	imply/.style={double distance=2pt, -Implies, thick}
	]
	
	\node[box] (input) {Input: $G = \langle S \mid R \rangle$ and $s \in S$};
	\node[box, below=1.5cm of input] (scrambling) {Construct the augmented scrambling \\ $G'' = \langle S'' \mid R'' \rangle$ of $G$};
	\node[box, below=1.5cm of scrambling, 
	text width=6cm] (matroids) {Construct the set $\mathcal{M}=\mathcal{M}_{S'',R''}$ of matroids subordinate to the presentation};
	
	\draw[arrow] (input) -- node[text width=1.6cm, midway, left]{\Cref{sec:scrambling}} (scrambling);
	\draw[arrow] (scrambling) -- node[text width=1.6cm, midway, left]{\Cref{sec:dowling}} (matroids); 
\end{tikzpicture}
\caption{Construction of the finite matroid family $\mathcal{M}$.}
\label[figure]{fig:construction_of_M}
\end{figure}
\end{center}

The implications labelled (A) in \Cref{fig:iff_schematic} are relatively straightforward, and do not require the scrambling construction. For entropic matroids and the uniform word problem for finite groups, this is proved in \Cref{sec:entropic_reps}. For almost-multilinear matroids and the word problem for sofic groups, this is proved in \cref{thm:almost_multilinear_to_group}.

The implication (B) in the entropic case is somewhat more delicate and (together with the construction of augmented scramblings, which was designed for this purpose) takes up much of the paper. See \cref{sec:scrambling}. In the proof it is useful to have some linear algebraic tools, so even for the statement on entropic matroids we work specifically with multilinear representations (multilinear matroids are entropic, so finding a multilinear representation suffices). In the almost-multilinear case, implication (B) is relatively short: the results of \cite{optimal_linear_sofic_approx} are available to replace scrambling in the approximate setting. (See \cref{lem:linear_sofic_amplification}, and note that this requires the additional assumption that our group is torsion-free.)

The paper is organized as follows.
\begin{enumerate}
	\item We start by recalling definitions and setting up basic notions and notation in~\Cref{sec:preliminaries}.
	\item Given a finite group, one can define an associated matroid, the so-called \emph{Dowling geometry}, whose representations are closely related to the representation theoretic properties of the group~\cite{Dow73}.
	We work with a extension of this construction to finitely presented groups which we present in~\Cref{sec:dowling}.
	We call the resulting matroids \emph{partial Dowling geometries}.
	We first used them in~\cite{KY19,KPY20}.
	They are special cases of frame matroids as studied by Zaslavsky in~\cite{Zas03}.
	The idea is to encode group presentations via the von Staudt constructions.
	\item After defining entropic matroids in~\Cref{sec:probability_space_reps} as well as the essentially equivalent (but more convenient) notion of probability space representations, we prove in ~\Cref{sec:entropic_reps} that the existence of an entropic representation of the partial Dowling geometry of a symmetric triangular presentation $\langle S\mid R\rangle$ implies the existence of a group homomorphism from $\langle S\mid R\rangle$ to a finite group such that images of some elements are nontrivial.
	\item In \cref{sec:multilinear} we discuss multilinear matroid representations and introduce an equivalent (but more convenient) notion we call \emph{vector space representations}, as part of our preparation for proving implication (B) of \Cref{fig:iff_schematic}.
	\item In \cref{sec:scrambling} we introduce the scrambling and augmentation constructions and prove that the resulting groups have linear representations with desirable properties.
	\item In \cref{sec:entropic_undecidability} we put together our tools to show that the entropic representation problem is undecidable.
	\item In \cref{sec:cii} we briefly discuss the conditional independence implication problem.
	\item In \cref{sec:almost_mutilinear} we discuss almost-multilinear matroids. The discussion parallels the earlier sections: first we introduce approximate vector space representations in \cref{sec:approx_vec_reps}. Then we discuss almost-multilinear representations of partial Dowling geometries in \cref{sec:approx_dowling}. In \cref{sec:almost_undecidable} we prove the almost-multilinear representation problem is undecidable.
\end{enumerate}

\section*{Acknowledgments}

We would like to thank Tobias Boege, Michael Dobbins, Zlil Sela, and Thomas Zaslavsky for inspiring discussions.
We are grateful to Karim Adiprasito for introducing us to the topic of almost multilinear matroids.

Last but not least we are indebted to the anonymous referees for carefully reading an earlier version of the paper and their suggestion which helped to significantly improve it.

\section{Preliminaries}\label{sec:preliminaries}

\subsection{Notation for probability spaces and random variables}\label[section]{sec:rand_variable_notation}

An indexed collection of random variables on a probability space $\left(\Omega,\mathcal{F},P\right)$
consists of a set $E$, a collection of measurable spaces $\left\{ \left(\Omega_{e},\mathcal{F}_{e}\right)\right\} _{e\in E}$, and a collection of measurable functions $\left\{ X_{e}:\Omega\rightarrow\Omega_{e}\right\} _{e\in E}$. 

For convenience, we often write ``let $\left\{ X_{e}\right\} _{e\in E}$
be a collection of random variables on $\left(\Omega,\mathcal{F},P\right)$,''
and use the notation $\left\{ \left(\Omega_{e},\mathcal{F}_{e}\right)\right\} _{e\in E}$
for the codomains of the random variables without explicitly naming
them. We also denote by $\left\{ P_{e}\right\} _{e\in E}$ the probability
measures defined by $P_{e}=\left(X_{e}\right)_{*}P$. By definition
this implies that each of the transformations 
\[
X_{e}:\left(\Omega,\mathcal{F},P\right)\rightarrow\left(\Omega_{e},\mathcal{F}_{e},P_{e}\right)
\]
is measure-preserving.

Given a collection of random variables $\left\{ X_{e}\right\} _{e\in E}$
on $(\Omega,\mathcal{F},P)$ as above and a tuple $S=\left(s_{1},\ldots,s_{n}\right)$
with elements in $E$, we define a measurable space $\left(\Omega_{S},\mathcal{F}_{S}\right)$
by $\Omega_{S}=\prod_{i=1}^{n}\Omega_{s_{i}}$ and $\mathcal{F}_{S}=\bigotimes_{i=1}^{n}\mathcal{F}_{s_{i}}$
the $\sigma$-algebra generated by measurable boxes (which are the
sets $\prod_{i=1}^{n}A_{i}$ with $A_{i}\in\mathcal{F}_{s_{i}}$ for
each $i$). We then define a random variable $X_{S}:\Omega\rightarrow\Omega_{S}$
by 
\[
X_{S}\left(\omega\right)=\left(X_{s_{i}}\left(\omega\right)\right)_{i=1}^{n}.
\]
If the order is inessential, the same notation can be used if $S$ is a set.
On $\left(\Omega_{S},\mathcal{F}_{S}\right)$ we define the probability
measure $P_{S}=\left(X_{S}\right)_{*}P$, the pushforward
of $P$.

\subsection{Entropy functions of discrete random variables}

Let $\{X_e\}_{e\in E}$ be a collection of discrete random variables on $(\Omega,\mathcal{F},P)$.
For each $S\subseteq E$ we denote by $H(X_S)$ the \emph{(Shannon) entropy} of the random variable $X_S$:
\begin{equation*}\label{eq:entropy}
H(X_S)\coloneqq -\sum_{\omega\in \Omega_S} P_S( \omega ) \log P_S(\omega).
\end{equation*}
We set $H(X_S)\coloneqq \infty$ if the sum does not converge.
The base of the logarithm is irrelevant for this article; for consistency we choose to work with the base $2$ throughout.

\subsection{Matroids}

We frequently use standard terminology from matroid theory, as for instance explained in Oxley's textbook~\cite{Oxl11}.
For the reader's convenience we just briefly recall the definition of a matroid.

\begin{defn}\label{def:matroid}
	A \emph{matroid} $M=(E,r)$ is a pair consisting of a finite \emph{ground set} $E$ together with a \emph{rank function} $r:\mathcal{P}(E)\to \Z_{\ge 0}$ such that
	\begin{enumerate}
		\item\label{it:mat_a} $r(A)\le |A|$ for all $A\subseteq E$,
		\item\label{it:mat_b} $r(A)\le r(B)$ for all $A\subseteq B\subseteq E$ ($r$ is monotone), and
		\item\label{it:mat_c} $r(A\cup B)+r(A\cap B)\le r(A)+r(B) $ for all $A,B\subseteq E$ ($r$ is submodular).
	\end{enumerate}
	A pair $(E,r)$ with  $r:\mathcal{P}(E)\to \R_{\ge 0}$ satisfying~\eqref{it:mat_b} and~\eqref{it:mat_c} is called a \emph{polymatroid}.
\end{defn}

We will freely use standard matroid terminology such as independent sets, bases or flats and refer to Oxley's book for their definitions\cite{Oxl11}.
In particular, we will use that a matroid can be defined by specifying its flats.

\subsection{Matroid representations}
Matroid theory is the combinatorial study of various notions of dependence and independence, analogous to those in linear algebra.
A well-studied subclass of matroids is the class of linearly-representable matroids, in which the rank function is actually given by linear-algebraic rank:
A matroid $M=(E,r)$ is \emph{representable} over a field $\F$ if there exists a family of vectors $\{v_e\}_{e\in E}$ in a vector space over $\F$ such that $r(S)=\dim(\spa(\{v_e\}_{e\in S}))$ for all $S\subseteq E$.
The matroid $M$ is also called \emph{linear} over $\F$ in this case.

When studying any notion of matroid representability, it is desirable to be able to decide whether a given matroid is representable. 
For example, using Gr\"obner bases one can decide whether a matroid is linear over an algebraically closed field~\cite[Theorem 6.8.9]{Oxl11}.
The question of whether one can decide representability over $\Q$ is equivalent to the solvability of Diophantine equations in $\Q$~\cite{Stu87}.
This is a variant of Hilbert's tenth problem and still open.
In this paper we study generalized notions of matroid representability and the associated decision problems.

In this section we define the notions of representability that we will investigate throughout the article.

\begin{defn}[\cite{SA98}]
	A matroid $M=\left(E,r\right)$ is \emph{multilinear} over a field
	$\mathbb{F}$ if there exist an integer $c$ and a vector space $V$
	over $\mathbb{F}$ with subspaces $\left\{ W_{e}\right\} _{e\in E}$
	such that for each $S\subseteq E$
	\[
		r(S) = \frac{1}{c}\dim\left(\sum_{e\in S}W_{e}\right).
	\]
	In this case the vector space $V$ and the indexed family of subspaces
	$\left\{ W_{e}\right\} _{e\in E}$ are called a multilinear representation
	of $M$, or a representation of $M$ \emph{as a $c$-arrangement}.
	(We learned the term ``$c$-arrangement'' from~\cite{GM88}.)
	Observe that if we add the constraint $c=1$ we recover the definition
	of linear representability.
\end{defn}

Given a collection $\{X_e\}_{e\in E}$ of discrete random variables, Fujishige observed that the assignment $H:\mathcal{P}(E)\to \mathbb{R}_{\ge 0}$ given for each $S\subseteq E$ by the entropy $H(X_S)$ is a polymatroid~\cite{Fuj78}. Polymatroids arising this way are called \emph{entropic}. Subsequently entropic polymatroids were studied by various authors, and entropic matroids were defined, for instance by  Mat\'{u}\v{s}, who called them ``strongly probabilistically representable matroids'' in \cite{Mat99}:

\begin{defn}
	A matroid $M=(E,r)$ is \emph{entropic} if there exists a family $\{X_e\}_{e\in E}$ of random variables on a discrete probability space $(\Omega,\mathcal{F},P)$ and a real $\lambda>0$ such that for all subsets $S\subseteq E$
	\[
		r(S)=\lambda H(X_S).
	\]
\end{defn}

Note that in contrast to this definition but following the discussion above, a polymatroid $(E,r)$ is entropic if there exists random variables $\{X_e\}_{e\in E}$ such that $r(S)=H(X_S)$ for all subsets $S\subseteq E$ (there is no scaling factor).

We now introduce approximate notions of multilinear and entropic matroid representations.
\begin{defn}
	A  polymatroid $\left(E,r\right)$ is \emph{linear} over
	a field $\mathbb{F}$ if there exists a vector space $V$ and a collection of subspaces
	$\left\{ W_{e}\right\} _{e\in E}$ of $V$ satisfying that for all $S\subseteq E$:
	\[
	r\left(S\right)=\dim\left(\sum_{e\in S}W_{e}\right). 
	\]

	A matroid $M=(E,r)$ is \emph{almost multilinear}
	if for every $\varepsilon>0$ there exists a linear polymatroid $\left(\widetilde{E},\widetilde{r}\right)$
	and a $c\in\mathbb{N}$ such that 
	\[
	\left\Vert r-\frac{1}{c}\widetilde{r}\right\Vert _{\infty}=\max_{S\subseteq E}\left|r\left(S\right)-\frac{1}{c}\widetilde{r}\left(S\right)\right|<\varepsilon.
	\]
\end{defn}
	
	Note that we may always assume the ambient vector space $V$ of a
	linear polymatroid $\left(E,r\right)$ is finite dimensional: if the
	representation is given by the subspaces $\left\{ W_{e}\right\} _{e\in E}$
	of $V$, we may replace $V$ by $\sum_{e\in E}W_{e}$, which has dimension
	$r\left(E\right)$.

\begin{defn}[\cite{Mat07}]\label{def:almost_entropic}
	A matroid $(E,r)$ is \emph{almost entropic} if for every $\varepsilon>0$ there exists a collection of discrete random variables $\{X_e\}_{e\in E}$ on a probability space $(\Omega,\mathcal{F},P)$ and a real $\lambda>0$ such that
	\[
	\max_{S\subseteq E}\left|r\left(S\right)-\lambda H(X_S)\right|<\varepsilon.
	\]
\end{defn}

\subsection{Hamming and rank distance}
\begin{defn}
	Let $n\in\mathbb{N}$. The \emph{normalized Hamming distance} $d_\mathrm{hamm}$ is the metric on the symmetric group $S_n$ defined by
	\[
	d_\mathrm{hamm} (\sigma,\tau) \coloneqq \frac{1}{n} |\{ i\in \left[n\right]\mid\sigma(i)\neq \tau(i) \}|
	\]
	for all $\sigma,\tau\in S_n$.
\end{defn}
The normalized Hamming distance satisfies that if $\sigma,\sigma',\tau \in S_n$ then
\begin{align*} 
	&d_\mathrm{hamm}(\sigma,\sigma')=d_\mathrm{hamm}(\sigma\circ\tau,\sigma'\circ\tau)=d_\mathrm{hamm}(\tau\circ\sigma,\tau\circ\sigma').
\end{align*}
\begin{defn}
	Let $A,B\in M_{n}\left(\mathbb{F}\right)$ be matrices. Their \emph{normalized
		rank distance} is
	\[
	d_{\text{rk}}\left(A,B\right)\coloneqq\frac{1}{n}\rk\left(A-B\right).
	\]
	
	More generally, if $T_1, T_2: V \to W$ are linear maps between finite dimensional vector spaces $V,W$ over a field, define
	\[d_{\rk}(T_1,T_2) \coloneqq \frac{1}{\dim(W)}\rk(T_1-T_2), \]
	where the rank of a linear map is the dimension of its image.
\end{defn}

By abuse of notation, we denote all these functions $d_\mathrm{rk}: \mathrm{Hom}(V,W) \times \mathrm{Hom}(V,W) \to \R$ (or $M_n(\F) \times M_n(\F)$) by the same name. It will always be clear from the arguments which function we mean.

By representing maps with respect to a fixed basis, it is clear that any result on the metric $d_{\rk}$ defined on $M_n(\F)$ extends to $\mathrm{End}(W)=\mathrm{Hom}(W,W)$ for any finite dimensional vector space $W$ over $\F$ and vice versa.

It is well-known that the function  $d_{\mathrm{rk}}: M_n(\F) \times M_n(\F) \to \R$ is a metric, see e.g.,~\cite[Remark 1.3]{rankmetriccodes}. 
In particular, the function $d_{\mathrm{rk}}:\mathrm{Hom}(V,W) \times \mathrm{Hom}(V,W) \to \R$ is a metric on $\mathrm{Hom}(V,W)$.

\begin{rmrk}\label[remark]{rem:normalized_rank_metric}
	Note that $d_{\rk}$ is left- and right-invariant under composition with invertible transformations, in the sense that if $T_1,T_2 \in \mathrm{Hom}(V,W)$ and $S,Q$ are invertible linear transformations such that $S$ has domain $W$ and $Q$ has codomain $V$, then
	\[d_\mathrm{rk}(T_1,T_2) = d_\mathrm{rk}(S\circ T_1 \circ Q, S\circ T_2 \circ Q).\]
	
	If the requirement that $S,Q$ are invertible is dropped and $S: W \to U$, we obtain instead
	\[d_\mathrm{rk}(S\circ T_1 \circ Q, S\circ T_2 \circ Q) \le \frac{\dim W}{\dim U}d_\mathrm{rk}(T_1,T_2).\]
	To see this, observe that
	\[\rk(S \circ T_1 \circ Q - S\circ T_2 \circ Q) =\rk(S\circ (T_1 - T_2) \circ Q) \le \rk(T_1 - T_2).\]
	In particular, if $A,B,C \in M_n(\F)$ and $d_{\rk}(A,B) < \varepsilon$ then also $d_{\rk}(CA,CB) < \varepsilon$.
\end{rmrk}

\subsection{The uniform word problem for finite groups}
\label[section]{sec:uwpfg}
The uniform word problem for finite groups (UWPFG) is the following decision problem.
\begin{description}
	\item[Instance] A finite presentation $\langle S \mid R \rangle$ of a group $G$ and an element $w\in S$.
	\item[Task] Decide whether there exists a finite group $H$ and a homomorphism $\varphi:G\to H$ such that $w\notin\ker(\varphi)$.
\end{description}
Our undecidability results in the entropic setting rely on the following consequence of Slobodskoi's work \cite{Slo81}.
\begin{theorem}\label{thm:undecidable_grp_theory}
	The uniform word problem for finite groups is undecidable.
\end{theorem}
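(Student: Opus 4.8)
The statement is Slobodskoi's undecidability theorem for finite groups \cite{Slo81} (often phrased as the undecidability of the universal, or universal-Horn, theory of the class of finite groups); the plan is to invoke it after recording a couple of reductions, and to sketch the mechanism that makes it work. First I would observe that the uniform word problem as stated is \emph{recursively enumerable}: to semi-decide a positive instance $(\langle S\mid R\rangle, w)$ one enumerates all finite groups $G$ in order of size, all set maps $S\to G$, retains those respecting every relation in $R$, and checks whether some resulting homomorphism sends $w$ outside $1$; such a witness is found if and only if one exists. Hence it is enough to prove the problem is not co-recursively enumerable, which I would do by a many-one reduction from the halting problem. It is worth stressing why the familiar route through the Novikov--Boone word problem of a \emph{fixed} finitely presented group is unavailable here: the word problem of any single finite group is trivially decidable, so the entire difficulty lies in the universal quantifier over finite quotients, and a successful reduction must constrain all finite images at once.

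The engine of such a reduction is a faithful simulation of computation by group presentations. From a Turing machine (equivalently, a two-counter Minsky machine) $T$ one builds a finite presentation $\langle S_T\mid R_T\rangle$ whose generators name tape symbols, head positions and internal states, whose relations in $R_T$ encode single computation steps together with auxiliary bookkeeping identities, and a distinguished word $w_T$ designed so that its nontriviality certifies that $T$ reaches a halting state. The ``forward'' direction is comparatively soft: a terminating run of $T$ visits only finitely many configurations, hence can be realised inside a finite permutation group acting on that finite configuration set, and in such a group $w_T$ is nontrivial; thus if $T$ halts, some finite quotient of $\langle S_T\mid R_T\rangle$ has $w_T\notin\ker$.

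The step I expect to be the main obstacle is the converse \emph{rigidity}: $R_T$ must be engineered so that in \emph{every} finite group satisfying the relations $w_T$ is forced to equal $1$ unless $T$ genuinely halts --- no spurious finite model may make $w_T$ nontrivial without exhibiting an actual halting computation. This is precisely the feature separating the finite-quotient problem from the ordinary word problem, where residual non-finiteness is harmless, and securing it is the heart of the delicate group-theoretic analysis in \cite{Slo81}. Granting such a family $\{(\langle S_T\mid R_T\rangle, w_T)\}_T$, we conclude: $T$ halts on empty input if and only if there exist a finite group $G$ and a homomorphism $\varphi\colon G_{S_T,R_T}\to G$ with $w_T\notin\ker(\varphi)$, so a decision procedure for the uniform word problem for finite groups would decide the halting problem, and none exists. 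In a write-up I would give the recursive-enumerability bound and the final reduction in full while treating the construction of the simulating presentations as a black box, citing \cite{Slo81}.
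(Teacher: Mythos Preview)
Your proposal is correct and in fact goes beyond what the paper does: the paper provides no proof of this theorem at all, but simply states it as a consequence of Slobodskoi's work with a citation to \cite{Slo81}, adding only the remark that Slobodskoi in fact proves the stronger statement in which the presentation $\langle S\mid R\rangle$ is fixed and only the word $w$ varies. Your plan to cite \cite{Slo81} as a black box is therefore exactly the paper's approach; the additional context you supply (recursive enumerability, the contrast with Novikov--Boone, and the heuristic sketch of the reduction) is more than the paper offers, though of course the actual content of Slobodskoi's argument is still deferred to the reference in both cases.
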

Slobodskoi's result is stronger: it shows that in fact the word problem for finite groups is undecidable for some specific $\langle S \mid R \rangle$ (in the notation above, it is only the word $w$ that is not fixed).

Note that this problem is semi-decidable: there exists an algorithm which halts and returns the answer whenever it is positive, and otherwise runs forever.

\subsection{Sofic groups}
\label[section]{sec:sofic}
For an introduction to sofic groups see the survey by Pestov~\cite{Pes08}.

The following is one of several equivalent definitions of sofic groups (see for instance \cite{ES06}). To see its equivalence to the characterization in \cite[Theorem 3.5]{Pes08}, one uses the amplification trick described in the proof of the same theorem.
\begin{defn}\label{def:sofic}
	A group $G$ is \emph{sofic} if for every finite $F\subseteq G$ and for every $\varepsilon >0 $ there exist an $n\in\N$ and a mapping $\theta : F \to S_n$ such that
	\begin{enumerate}
		\item If $g,h,gh\in F$ then $d_\mathrm{hamm}(\theta(g)\theta(h),\theta(gh))<\varepsilon$,
		\item If the neutral element $e_G$ is in $F$ then $d_\mathrm{hamm}(\theta(e),\id) < \varepsilon$, and
		\item If $g,h\in F$ are distinct then $d_\mathrm{hamm}(\theta(g),\theta(h))\geq 1-\varepsilon$.
	\end{enumerate}
\end{defn}

Our proof that the existence of almost multilinear matroid representations is undecidable relies on the following theorem.
\begin{theorem}\label{thm:undecidable_sofic}
	There exists a finitely presented, torsion-free sofic group with an undecidable word problem.
\end{theorem}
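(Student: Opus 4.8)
The plan is to combine two known results. By a theorem of Kharlampovich~\cite{Kha81}, there exists a finitely presented solvable group $G$ with algorithmically undecidable word problem; I claim this $G$ is already sofic, and hence is the desired example. So the only thing left to explain is why every solvable group satisfies \Cref{def:sofic}, which I would do through amenability.

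\emph{Step 1: solvable groups are amenable.} Abelian groups are amenable, and the class of amenable groups is closed under subgroups, quotients, directed unions, and extensions. Since a solvable group is built from abelian groups by finitely many extensions, it is amenable, and in particular satisfies the F\o{}lner condition.

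\emph{Step 2: amenable groups are sofic.} Fix a finite $F\subseteq G$ and $\varepsilon>0$; enlarging $F$, we may assume $e\in F$ and $F=F^{-1}$. Using the F\o{}lner condition, choose a nonempty finite $A\subseteq G$ with $|gA\setminus A|<\delta|A|$ for every $g\in F$, where $\delta>0$ is a small constant to be fixed in terms of $\varepsilon$. Put $n=|A|$ and identify $[n]$ with $A$. For $g\in F$, left translation by $g$ restricts to a bijection $A\cap g^{-1}A\to gA\cap A$; extend it arbitrarily to a permutation $\theta(g)\in S_n$ of $A$, taking $\theta(e)=\id$. Then condition~(2) of \Cref{def:sofic} holds. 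For $g,h,gh\in F$, the permutations $\theta(g)\theta(h)$ and $\theta(gh)$ both send $a$ to $gha$ whenever $a$, $ha$, and $gha$ all lie in $A$; the set of $a\in A$ failing this has size at most $2\delta|A|$, which gives condition~(1) once $\delta<\varepsilon/2$. Finally, for distinct $x,y\in F$ and $a\in A$ with $xa,ya\in A$ one has $\theta(x)(a)=xa\neq ya=\theta(y)(a)$, and all but at most a $2\delta$-fraction of $a\in A$ satisfy $xa,ya\in A$, yielding condition~(3). Hence $G$ is sofic, and being finitely presented with undecidable word problem, it witnesses the theorem.

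The argument is a repackaging of standard facts, so I do not expect a real obstacle; the only mildly technical point is the error bookkeeping in Step~2, namely fixing $\delta$ as a function of $\varepsilon$ (and the size of the enlarged $F$) so that all three inequalities of \Cref{def:sofic} simultaneously fall below $\varepsilon$. Alternatively — and this is presumably what the phrasing preceding the theorem intends — one may simply cite that amenable, hence solvable, groups are sofic, together with Kharlampovich's construction of a finitely presented solvable group with undecidable word problem.
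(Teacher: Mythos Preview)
Your proposal is correct and is exactly the approach the paper takes: the paper simply states (in the sentence preceding the theorem) that the result follows from Kharlampovich's finitely presented solvable group with undecidable word problem together with the standard fact that solvable groups are sofic. Your write-up in fact supplies more detail than the paper does, sketching the solvable $\Rightarrow$ amenable $\Rightarrow$ sofic chain via F\o{}lner sets; this is a welcome elaboration but not a different route.
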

This follows from the standard result that a solvable group is sofic, together with the construction \cite{undecidable_torsionfree_solvable} of Baumslag, Gildenhuys, and Strebel for a finitely presented, solvable, torsion-free group with undecidable word problem. (The first construction of this general kind appeared in \cite{Kha81}, but the group constructed there has torsion.)

\subsection{Approximate representations of groups}
\label[section]{sec:approx_group_reps}
In order to study almost-multilinear Dowling geometries we need a ``linear version'' of soficity. This has been studied by Arzhantseva and P\u{a}unescu in \cite{AP12}. Our definitions are specialized to the finitely presented case and avoid metric ultraproducts.

\begin{defn}
	Let $G = \langle S \mid R \rangle$ be a finitely presented group and let $\varepsilon>0$. An $\varepsilon$-approximate representation of the presentation $\langle S \mid R \rangle$ of $G$ over a field $\mathbb{F}$ is a function
	\[\rho:S \to \mathrm{GL}_n(\mathbb{F})\] satisfying:
	\begin{enumerate}
		\item If $r = s_{i_1}^{\epsilon_1}\cdot\ldots s_{i_k}^{\epsilon_k}$ is a relator in $R$ then $d_\mathrm{rk}(I, \rho(s_{i_1})^{\epsilon_1}\cdot\ldots\cdot\rho(s_{i_k})^{\epsilon_k})<\varepsilon$ (in this case we say that $\rho$ \emph{$\varepsilon$-satisfies} $r$).
		\item If the neutral element $e_G$ is in $S$ then $d_\mathrm{rk}(\rho(e),I) < \varepsilon$.
	\end{enumerate}
	
	An $\varepsilon$-approximate representation of $\langle S \mid R \rangle$ naturally extends to all words on $S$: if $w=w(S)=s_{i_1}^{\epsilon_1} \cdot\ldots\cdot s_{i_k}^{\epsilon_k}$ is any word in $S$, we denote $\rho(w) = \rho\left(s_{i_1}\right)^{\epsilon_1} \cdot\ldots\cdot \rho\left(s_{i_k}\right)^{\epsilon_k}$.
\end{defn}

The following lemma is a direct implication of \cite[Theorem A]{optimal_linear_sofic_approx}, together with the fact that a sofic group is linear-sofic.
\begin{lemm}\label[lemma]{lem:linear_sofic_amplification}
	Let $G = \langle S \mid R \rangle$ be a finitely presented sofic group. If $G$ is torsion-free and $\mathbb{F}$ has characteristic $0$ then for all $\varepsilon>0$  there exists an $n\ge 1$ and an $\varepsilon$-approximate representation
	\[\rho:S \to \mathrm{GL}_n(\mathbb{F})\]
	satisfying in addition that $d_\mathrm{rk}(\rho(s),\rho(s')) \ge 1-\varepsilon$ whenever $s,s' \in S$ map to distinct elements of $G$.
\end{lemm}

\subsection{Finitely presented categories}
\label{sec:fp_cats}
Finitely presented categories are to finitely presented monoids as groupoids are to groups. We use these in our discussion of almost-multilinear representations.

For the following definitions see also Awodey's book \cite{Awo10}.
All our directed graphs may have multiple edges between the same pair
of vertices.
\begin{defn}[free categories] 
	The \emph{free category} on a directed
	graph $G$ is the category $\mathcal{C}\left(G\right)$ in which objects
	are the vertices of $G$, morphisms are (directed) paths in $G$,
	and composition is given by concatenating paths.
\end{defn}
\begin{defn}	
	A \emph{congruence} on a category $\mathcal{C}$ is an equivalence relation $\sim$ on the morphisms of $\mathcal{C}$ such that:
	\begin{enumerate}
		\item If $f \sim g$ then $f,g$ have the same domain and the same codomain.
		\item If $f \sim g$ then $a \circ f \circ b \sim a \circ g \circ b$ for all morphisms $a$ with codomain the domain of $f,g$ and all morphisms $b$ with domain the codomain of $f,g$.
	\end{enumerate}
\end{defn}

A congruence on $\mathcal{C}$ is precisely an equivalence relation $\sim$ on morphisms such that there is a quotient category $\mathcal{C}/\mathord{\sim}$ with the same objects as $\mathcal{C}$ and such that $\hom_{\mathcal{C}/\mathord{\sim}}(x,y) = \hom_{\mathcal{C}}(x,y)/\mathord{\sim}$ for all $x,y$ objects in $\mathcal{C}$. The composition in $\mathcal{C}/\mathord{\sim}$ is induced from that of $\mathcal{C}$, and the identity morphisms are the images through the quotient map of those in $\mathcal{C}$.

\begin{defn}[finitely presented categories]
	Let $\mathcal{C}$ be a category and let $R=\left\{ f_{i}=g_{i}\right\} _{i\in I}$
	be a set of formal expressions (``relations'') such that for each $i\in I$,
	$f_{i},g_{i}:x_{i}\rightarrow y_{i}$ are two morphisms between the
	same two objects of $\mathcal{C}$. Denote by $\sim_{R}$ the minimal
	congruence satisfying that $f_{i}\sim_{R}g_{i}$ for each $i\in I$.
	We call $\sim_{R}$ the congruence generated by the relations in $R$.
	
	A \emph{finitely presented category} is a category of the form $\mathcal{C}\left(G\right)/\mathord{\sim_R}$,
	where $G$ is a finite directed graph and $R$ is a finite set of
	relations between morphisms of $\mathcal{C}\left(G\right)$.
	
	In this situation we denote $\langle G \mid R \rangle = \mathcal{C}\left(G\right)/\mathord{\sim_R}$. As far as we are aware this notation is nonstandard, but it gives us a convenient way to refer to the finite set $R$, rather than just to the congruence $\sim_R$.
\end{defn}
\begin{rmrk}\label{rem:congruence}
	In the notation above, the congruence $\sim_R$ is precisely the equivalence relation in which $h_1,h_2$ are equivalent if and only if they may be written in the form
	\[h_1 = a_1 \circ f_{i_1} \circ a_2 \circ f_{i_2} \circ \ldots \circ a_n \circ f_{i_n} \circ a_{n+1},\]
	\[h_2 = a_1 \circ g_{i_1} \circ a_2 \circ g_{i_2} \circ \ldots \circ a_n \circ g_{i_n} \circ a_{n+1},\]
	where $n\in\mathbb{N}$ and ``$f_{i_j} = g_{i_j}$'' is a relation in $R$ for each $1 \le j \le n$.
	
	To verify this, it suffices to note that $\sim_R$ is indeed a congruence and that it contains the relations $f_i \sim_R g_i$ for all $i\in I$.
\end{rmrk}

\begin{rmrk}\label{rem:functor}
	To construct a functor $F$ from a finitely presented category $\mathcal{C}=\mathcal{C}\left(G\right)/\mathord{\sim_R}$
	into a category $\mathcal{D}$, it suffices to define $F$ on the
	objects and morphisms of $\mathcal{C}$ corresponding to vertices
	and edges of $G$, and to show that if $\varphi_{1}=\varphi_{2}$
	is a relation in $R$ then $F\left(\varphi_{1}\right)=F\left(\varphi_{2}\right)$
	in $\mathcal{D}$ (see \cite{Awo10}).
\end{rmrk}

\begin{defn}
	A groupoid is a category in which every morphism has a two-sided inverse. A finitely-presented groupoid is a finitely-presented category that happens to be a groupoid.
\end{defn}
\begin{rmrk}
	For a finitely presented category $\mathcal{C}(G)/\mathord{\sim_R}$ to be a groupoid it suffices that each generating morphism (i.e. arising from an edge of $G$) is invertible.
\end{rmrk}

\subsection{Approximate representations of groupoids}
\label{sec:approx_groupoid_reps}
In some situations it is more natural to produce approximate representations of Dowling groupoids (see \cref{sec:dowling} below) than of the corresponding groups. It is useful to have some results applicable in this situation. 

\begin{defn}
	Let $\varepsilon > 0$. An $\varepsilon$-representation $\rho$ of a finitely presented groupoid $\mathcal{C} = \langle G \mid R\rangle$ over a field $\F$ is a functor $\rho$ from $\mathcal{C}(G)$ to the category of finite dimensional $\F$-vector spaces such that
\begin{enumerate}
	\item If ``$f = g$'' is a relation in $R$ then $d_\mathrm{rk}(\rho(f),\rho(g)) < \varepsilon$.
	\item If $x,y$ are vertices in the same connected component of $G$ then $\dim \rho(x) = \dim \rho(y)$.
\end{enumerate}
\end{defn}

\begin{rmrk}
	Note that by \cref{rem:functor} it suffices to specify an approximate representation of $\langle G \mid R \rangle$ on the vertices and edges of the graph $G$.
	
	Condition (b) can be omitted more-or-less harmlessly, in the sense that approximate $\varepsilon$-``representations'' that do not satisfy it can be approximated by ones that do (with slightly larger $\varepsilon$, depending on the particular presentation). But it shortens some proofs.
\end{rmrk}
\begin{lemm}
	Let $\mathcal{C} = \langle G \mid R \rangle$ be a finitely presented groupoid. Denote the congruence generated by $R$ by $\sim_R$. Let $h_1,h_2$ be two morphisms in the free category $\mathcal{C}(G)$ that map to the same morphism of $\mathcal{C}$ (note that in particular they have the same domain and codomain). Then there exists $k\in\mathbb{N}$ such that for any $\varepsilon > 0$ and every $\varepsilon$-approximate representation $\rho$ of $\langle G \mid R \rangle$:
	\[d_\mathrm{rk}(\rho(h_1),\rho(h_2)) < k\varepsilon.\]
\end{lemm}
\begin{proof}
By \cref{rem:congruence}, we may write
\[h_1 = a_1 \circ f_{i_1} \circ a_2 \circ f_{i_2} \circ \ldots \circ a_n \circ f_{i_k} \circ a_{k+1},\]
\[h_2 = a_1 \circ g_{i_1} \circ a_2 \circ g_{i_2} \circ \ldots \circ a_k \circ g_{i_k} \circ a_{k+1},\]
where $k\in\mathbb{N}$ and ``$f_{i_j} = g_{i_j}$'' is a relation in $R$ for each $1 \le j \le k$.

Since
\[d_\mathrm{rk}(\rho(f_{i_j}), \rho(g_{i_j})) < \varepsilon\]
for each $1 \le j \le k$, the result follows by \cref{rem:normalized_rank_metric}.
\end{proof}

\begin{coro}\label[corollary]{cor:approx_different_groupoid_elements}
	Let $\mathcal{C}=\left\langle G\mid R\right\rangle$ be a finitely presented groupoid,
	let $h_1,h_2$ be morphisms in the free category $\mathcal{C}(G)$ with the same domain and codomain, and let $\alpha > 0$. If for each $\varepsilon > 0$ there exists an $\varepsilon$-approximate representation $\rho$ of $\langle G\mid R\rangle$ such that
	\[d_\mathrm{rk}(\rho(h_1),\rho(h_2)) \ge \alpha\]
	then $h_1,h_2$ map to different elements of $\mathcal{C}$.
\end{coro}
\begin{proof}
	By the previous lemma, if $h_1,h_2$ map to the same element of $\mathcal{C}$ then for all small enough $\varepsilon>0$ each $\varepsilon$-approximate representation $\rho$ of $\langle G \mid R \rangle$ satisfies $d_{\rk}(h_1,h_2) < \alpha$.
\end{proof}

\subsection{Some algebraic lemmas}\label{sec:algebraic_lemmas}

We collect some results about field theory and linear algebra for later use.
We use them in order to prove that certain matroids are (almost) multilinear.

Recall that a group $G$ is residually finite if for each $x\in G$
such that $x\neq e_{G}$ there exists a finite group $H$ and a homomorphism
$\varphi:G\rightarrow H$ such that $\varphi\left(x\right)\neq e_{H}$.
\begin{theorem}
	[Mal'cev's theorem]Let $\mathbb{F}$ be a field. A finitely generated
	subgroup of $\mathrm{GL}_{n}\left(\mathbb{F}\right)$ is residually
	finite.
\end{theorem}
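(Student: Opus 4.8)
The plan is to prove Mal'cev's theorem by reducing to the Nullstellensatz-style fact that a finitely generated commutative ring that is not zero has a maximal ideal with finite residue field, combined with the observation that the matrix entries of a finitely generated linear group lie in a finitely generated subring of $\mathbb{F}$.

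\medskip

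First I would fix a finitely generated subgroup $G \le \mathrm{GL}_n(\mathbb{F})$ generated by matrices $A_1, \dots, A_k$, and let $R \subseteq \mathbb{F}$ be the subring generated by all entries of the $A_i$ and of the $A_i^{-1}$ together with the element $\det(A_i)^{-1}$; this $R$ is a finitely generated commutative ring, and every element of $G$ has entries in $R$ and determinant a unit in $R$. So in fact $G \le \mathrm{GL}_n(R)$. Next, given $x \in G$ with $x \neq e_G$, pick an entry $(i_0,j_0)$ where $x$ and the identity matrix differ, so the element $c \coloneqq x_{i_0 j_0} - \delta_{i_0 j_0} \in R$ is nonzero. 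The key step is then to find a maximal ideal $\mathfrak{m}$ of $R$ with $c \notin \mathfrak{m}$ and $R/\mathfrak{m}$ finite: localize at $c$ to get the finitely generated ring $R[c^{-1}]$, which is nonzero, choose any maximal ideal $\mathfrak{n}$ of $R[c^{-1}]$, and invoke the fact (a form of the Nullstellensatz / Artin--Tate, valid because $R[c^{-1}]$ is a finitely generated $\mathbb{Z}$-algebra or finitely generated algebra over a prime field) that $R[c^{-1}]/\mathfrak{n}$ is a finite field; pull $\mathfrak{n}$ back to a maximal ideal $\mathfrak{m}$ of $R$ avoiding $c$. Finally, the reduction map $R \to R/\mathfrak{m} =: \mathbb{F}_q$ induces a ring homomorphism $\mathrm{GL}_n(R) \to \mathrm{GL}_n(\mathbb{F}_q)$ (units go to units since determinants are units), hence a group homomorphism $\varphi \colon G \to \mathrm{GL}_n(\mathbb{F}_q)$ into a finite group, and $\varphi(x) \neq e$ because its $(i_0,j_0)$ entry differs from that of the identity modulo $\mathfrak{m}$, as $c \notin \mathfrak{m}$.

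\medskip

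The main obstacle is the arithmetic input that a finitely generated commutative ring which is a field is necessarily finite (equivalently, that residue fields at maximal ideals of finitely generated $\mathbb{Z}$-algebras are finite) — this is the one genuinely nontrivial ingredient, and in the write-up I would either cite it directly (it is standard, e.g.\ via the Artin--Tate lemma and general position arguments, or as in \cite{LS77}) rather than reprove it. Everything else — passing to the subring $R$, controlling determinants so that reduction lands in $\mathrm{GL}_n$, localizing to keep $c$ a unit, and checking that $\varphi(x)$ stays nontrivial — is routine. Since the excerpt explicitly says Mal'cev's theorem can be found in \cite{LS77} and states it is included without proof, in practice the "proof" here is essentially this citation together with the indicated strategy.
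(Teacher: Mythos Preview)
Your proposal is correct and is the standard proof of Mal'cev's theorem. The paper, however, does not give a proof at all: it explicitly states that proofs are included ``for all but Mal'cev's theorem, which can be found (for example) in \cite{LS77}.'' So there is nothing to compare against --- you have supplied more than the paper does, and indeed you already note this at the end of your proposal.
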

For a proof see \cite{LS77} for example.

\begin{lemm}\label{lem:derangement_rep}
	Let $\mathbb{F}$ be a field, $G$ a finitely generated group, $g\in G$
	an element, and let $\rho:G\rightarrow\mathrm{GL}_{n}(\mathbb{F})$
	be a representation such that $\rho\left(g\right)\neq I_{n}$. Then
	there exists $n^{\prime}\in\mathbb{N}$ and a representation $\rho^{\prime}:G\rightarrow\mathrm{GL}_{n^{\prime}}(\mathbb{F})$
	such that for every $x\in G$ the matrix $\rho^{\prime}\left(x\right)$ is either $I_{n'}$ or the permutation matrix
	of a derangement, and $\rho'(g)\neq I_{n'}$.
\end{lemm}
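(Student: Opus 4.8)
The plan is to first reduce to the case where $G$ is finite, and then use the regular representation of a finite group to replace an arbitrary matrix representation by one through permutation matrices of derangements.

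First I would apply Malcev's theorem together with residual finiteness: since $\rho(g)\neq I_n$, the entry-wise difference $\rho(g)-I_n$ has some nonzero coordinate, so one of the matrix coefficients of $\rho$ distinguishes $g$ from $e_G$. The subgroup $\rho(G)\le \mathrm{GL}_n(\F)$ is finitely generated, hence residually finite by Malcev's theorem, so there is a finite group $H$ and a homomorphism $\pi:\rho(G)\to H$ with $\pi(\rho(g))\neq e_H$. Composing, we obtain a homomorphism $\bar\rho = \pi\circ\rho : G \to H$ onto (a subgroup of) a finite group with $\bar\rho(g)\neq e_H$. So it suffices to take this finite quotient and produce from it a representation by derangement permutation matrices.

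Next, I would use the left regular representation of the finite group $H$: let $H$ act on itself by left multiplication, giving an embedding $H\hookrightarrow \mathrm{Sym}(H)\cong S_{|H|}$. Every non-identity element of $H$, acting by left multiplication on $H$, has no fixed points — if $hx=x$ then $h=e_H$ — so it is a derangement; and the identity goes to $I_{|H|}$. Passing to the corresponding permutation matrices over $\F$ (of size $n' = |H|$) and precomposing with $\bar\rho$, we get $\rho' : G \to \mathrm{GL}_{n'}(\F)$ with the desired property: for every $x\in G$, either $\bar\rho(x)=e_H$ and $\rho'(x)=I_{n'}$, or $\bar\rho(x)\neq e_H$ and $\rho'(x)$ is the permutation matrix of a derangement. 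Since $\bar\rho(g)\neq e_H$, we have $\rho'(g)\neq I_{n'}$, as required.

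I do not expect a serious obstacle here; the only point that needs a little care is the very first reduction, namely extracting from "$\rho(g)\neq I_n$" an actual \emph{homomorphism} to a finite group separating $g$ from the identity — this is exactly where residual finiteness of $\rho(G)$ (hence Malcev) is used, rather than finiteness of $G$ itself, which is not assumed. After that, the regular-representation step is completely routine.
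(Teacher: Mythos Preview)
Your proof is correct and follows essentially the same route as the paper: apply Mal'cev's theorem to the finitely generated linear group $\rho(G)$ to obtain a finite quotient $H$ separating $\rho(g)$ from the identity, then compose with the left regular permutation representation of $H$ (where every nontrivial element acts as a derangement) and pass to permutation matrices over $\mathbb{F}$. The aside about matrix coefficients is unnecessary---residual finiteness of $\rho(G)$ applied directly to the nontrivial element $\rho(g)$ already gives the required finite quotient---but it does no harm.
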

\begin{proof}
	The image $\rho\left(G\right)$ is a finitely generated subgroup of
	$\mathrm{GL}_{n}(\mathbb{F})$, so it is residually finite by Mal'cev's
	theorem. 
	Therefore there exists a finite group $H$ and a homomorphism $\varphi:\rho\left(G\right)\rightarrow H$
	such that $\varphi\left(\rho\left(g\right)\right)\neq e_{H}$.
	The left action of $H$ on itself defines a permutation representation
	of $H$, and thus of $\rho\left(G\right)$ and of $G$, on a set of
	$n^{\prime}=\left|H\right|$ elements, such that any element that
	acts nontrivially acts by a derangement. Choosing a bijection between
	$H$ and the set $\left\{ 1,\ldots,n^{\prime}\right\} $ we produce
	a homomorphism $G\rightarrow S_{n^{\prime}}$ which maps each $x\in G$ to the identity or to a
	derangement (and $g$ to a derangement). There is a homomorphism $S_{n^{\prime}}\rightarrow\mathrm{GL}_{n^{\prime}}(\mathbb{F})$
	which maps each permutation to its permutation matrix. Taking $\rho^{\prime}$
	to be the composition of these homomorphisms $G\rightarrow S_{n^{\prime}}\rightarrow\mathrm{GL}_{n^{\prime}}(\mathbb{F})$
	yields the result.
\end{proof}
\begin{lemm}
	\label[lemma]{lem:derangement_matrix}Let $\mathbb{F}$ be an algebraically
	closed field of characteristic either $0$ or larger than $n$ and
	let $A\in\mathrm{GL}_{n}(\mathbb{F})$ be the permutation matrix of
	a derangement. Then $A$ is conjugate to a block diagonal matrix in
	which every $k\times k$ nonzero block is a diagonal matrix of the
	form
	\[
	\left[\begin{matrix}\omega^{0}\\
	& \omega^{1}\\
	&  & \ddots\\
	&  &  & \omega^{k-1}
	\end{matrix}\right]\in\mathrm{GL}_{k}(\mathbb{F})
	\]
	for $\omega$ a primitive $k$-th root of unity.
\end{lemm}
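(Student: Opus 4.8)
The plan is to argue via the cycle decomposition of the underlying permutation. Let $A\in\mathrm{GL}_n\mathbb{F}$ be the permutation matrix of a derangement $\sigma\in S_n$. Since $\sigma$ has no fixed points, its disjoint cycle decomposition consists of cycles of length at least $2$; say the cycle lengths are $k_1,\dots,k_m$ with $\sum_i k_i = n$ and each $k_i\ge 2$. Permuting basis vectors (i.e.\ conjugating by a permutation matrix) puts $A$ into block-diagonal form with one block $C_{k_i}$ for each cycle, where $C_k$ is the $k\times k$ cyclic permutation matrix of a single $k$-cycle. So it suffices to diagonalize each $C_k$ and identify its eigenvalues.

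The key step is the diagonalization of a single cyclic block. The matrix $C_k$ satisfies $C_k^k = I_k$, and its minimal polynomial is exactly $x^k-1$ (one checks that $I_k, C_k, \dots, C_k^{k-1}$ are linearly independent, e.g.\ because they have disjoint supports as $0/1$-matrices). Since $\mathbb{F}$ is algebraically closed and its characteristic is either $0$ or larger than $n\ge k$, the polynomial $x^k-1$ is separable over $\mathbb{F}$ and splits into distinct linear factors $\prod_{j=0}^{k-1}(x-\omega^j)$ for $\omega$ a primitive $k$-th root of unity. Hence $C_k$ has $k$ distinct eigenvalues $\omega^0,\omega^1,\dots,\omega^{k-1}$, so it is diagonalizable and conjugate (within $\mathrm{GL}_k\mathbb{F}$) to $\operatorname{diag}(\omega^0,\dots,\omega^{k-1})$, which is precisely the claimed block shape. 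Assembling the per-cycle conjugations into a single change of basis yields the stated block-diagonal normal form for $A$.

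I expect the only mild subtlety — the ``main obstacle'', such as it is — to be the bookkeeping that guarantees each cyclic block really does have distinct eigenvalues: this is where the characteristic hypothesis enters, since in characteristic $p\le n$ one could have $p\mid k$ and then $x^k-1$ would have repeated roots, so $C_k$ would fail to be diagonalizable. The rest is routine linear algebra (cycle decomposition of a permutation, and the standard fact that a companion-type cyclic matrix of order $k$ with separable characteristic polynomial is diagonalizable with roots of unity as eigenvalues). One should also note the harmless point that there is no ``identity block'' forced here because the permutation is a derangement; if one wanted to allow general permutations one would include $1\times 1$ blocks equal to $\omega^0 = 1$, matching the phrasing ``every $k\times k$ nonzero block''.
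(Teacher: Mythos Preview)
Your proposal is correct and follows essentially the same approach as the paper: decompose the derangement into cycles, conjugate by a permutation matrix to obtain a block-diagonal form with cyclic permutation blocks, then diagonalize each cyclic block. The only minor difference is in that last step: the paper diagonalizes each $k\times k$ cyclic block by invoking the character theory of $\mathbb{Z}/k\mathbb{Z}$ (and also exhibits the explicit Vandermonde conjugating matrix), whereas you argue via the minimal polynomial $x^k-1$ being separable under the characteristic hypothesis---both are standard and equally valid.
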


\begin{proof}
	Suppose $A$ is the permutation matrix of a derangement $\sigma\in S_{n}$.
	Let the cycle decomposition of $\sigma$ be 
	\[
	\left(i_{1}i_{2}\ldots i_{k_{1}}\right)\left(i_{k_{1}+1}i_{k_{1}+2}\ldots i_{k_{2}}\right)\ldots\left(i_{k_{r-1}+1}i_{k_{r-1}+2}\ldots i_n\right).
	\]
	Then if $P$ is the permutation matrix of the permutation that takes
	$j$ to $i_{j}$ for all $1\le j\le n$, it is clear that $P^{-1}AP$
	is a block diagonal matrix which blocks of size $k_{1},k_{2}-k_{1},k_{3}-k_{2},\ldots,n-k_{r-1}$,
	in which each nonzero $k\times k$ block is the permutation matrix
	of a cyclic permutation, i.e., is of the form
	\[
	B=\left[\begin{matrix}0 & 0 & 0 & 0 & 1\\
	1 & 0 & 0 & 0 & 0\\
	0 & 1 & 0 & 0 & 0\\
	0 & 0 & \ddots & 0 & 0\\
	0 & 0 & 0 & 1 & 0
	\end{matrix}\right]\in\mathrm{GL}_{k}(\mathbb{F}).
	\]
	Such a matrix defines a representation of $\mathbb{Z}/k\mathbb{Z}$
	(in which the generator $1\in\mathbb{Z}/k\mathbb{Z}$ maps to $B$).
	Its character vanishes on every $x\in\mathbb{Z}/k\mathbb{Z}$ except
	the identity, on which it achieves the value $k$. Since this is precisely
	the sum of the irreducible characters of $\mathbb{Z}/k\mathbb{Z}$
	the result follows.
	
	More concretely, if $\omega$ is a primitive $k$-th root of unity
	in $\mathbb{F}$ then for the Vandermonde matrix $Q=\left(\omega^{-\left(i-1\right)\left(j-1\right)}\right)_{1\le i,j\le k}$
	we have 
	\[
	QBQ^{-1}=\left[\begin{matrix}\omega^{0}\\
	& \omega^{1}\\
	&  & \ddots\\
	&  &  & \omega^{k-1}
	\end{matrix}\right].
	\qedhere\]
\end{proof}
We need a basic property of transcendental field extensions.
The following is elementary and well known.
\begin{lemm}
	\label[lemma]{lem:transcendentals}
	Let $\mathbb{F}\subset \mathbb{L}$ be fields with $z_1,\ldots,z_n \in \mathbb{L}$ transcendental over $\mathbb{F}$. Then $\mathbb{F}(z_1,\ldots,z_n)$ (the minimal subfield of $\mathbb{L}$ containing $\mathbb{F}$ and $z_1,\ldots,z_n$) is isomorphic to the field of rational functions in $n$ variables over $\mathbb{F}$. In particular, if $p\in\mathbb{F}\left[x_{1},\ldots,x_{n}\right]$ is nonzero then $p(z_1,\ldots,z_n)\neq 0$.
\end{lemm}

We apply this lemma in the following form:
\begin{coro}\label{cor:transcendental_entries}
Let $\mathbb{F}$ be a field and let $\mathbb{L}=\mathbb{F}\left(z_{k,i,j}\right)_{1\le k\le r,1\le i,j\le n}$.
For each $1\le k\le r$ denote by $A_{k}\in M_{n}\left(\mathbb{L}\right)$
the matrix given by
\[
A_{k}=\left(z_{k,i,j}\right)_{1\le i,j\le n}.
\]
Let $w$ an element of the free algebra over $\mathbb{F}$ with generators
$b_{1},\ldots,b_{r},b_{1}^{-1},\ldots,b_{r}^{-1}$ and $c_1,\dots,c_s,c_1^{-1},\dots,c_s^{-1}$ (note that formally
$b_{i}$ and $b_{i}^{-1}$ as well as $c_i$ and $c_i^{-1}$ are unrelated generators, and not inverses
in this algebra).
For invertible matrices $B_{1},\ldots,B_{r},C_1,\dots,C_s\in M_{n}\left(\mathbb{L}\right)$,
denote by $w\left(B_{1},\ldots,B_{r},C_1,\dots,C_s\right)\in M_{n}\left(\mathbb{L}\right)$
the matrix obtained by substituting $B_{i}$ and $B_i^{-1}$ for $b_i$
and $b_{i}^{-1}$ and $C_{i}$ and $C_i^{-1}$ for $c_i$
and $c_{i}^{-1}$in the expression $w$.

If there exist invertible matrices $B_{1},\ldots,B_{r},C_1,\dots,C_s\in M_{n}\left(\mathbb{L}\right)$
such that the matrix $w\left(B_{1},\ldots,B_{r},C_1,\dots,C_s\right)$
is invertible  then $w\left(A_{1},\ldots,A_{r},C_1,\dots,C_s\right)$
is invertible too.
\end{coro}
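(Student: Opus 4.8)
The plan is to clear the denominators introduced by the inverse matrices and then apply \Cref{lem:transcendentals}, with the hypothesis supplying the point that certifies a certain polynomial is not identically zero. First I would record a polynomial identity. Using $M^{-1}=\det(M)^{-1}\operatorname{adj}(M)$, where $\operatorname{adj}(M)$ has entries polynomial in those of $M$, and letting $e$ be the number of occurrences of the symbols $b_1^{-1},\dots,b_r^{-1}$ in $w$ (finite, since $w$ is an element of the free algebra), one produces an $n\times n$ matrix $\Phi$ whose entries are polynomials in the entries of $r$ generic matrices $X_1,\dots,X_r$ — with coefficients built from the entries of the fixed matrices $C_1,\dots,C_s$ — such that, substituting $M_k^{-1}$ for $b_k^{-1}$, one has
\[
\Phi(M_1,\dots,M_r)=\left(\prod_{k=1}^{r}\det M_k\right)^{e}w(M_1,\dots,M_r,C_1,\dots,C_s)
\]
whenever $M_1,\dots,M_r$ are invertible matrices over an extension field of $\mathbb{F}$. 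Each $\det A_k$ is itself a nonzero polynomial in the $z_{k,i,j}$ (its Leibniz expansion contains the monomial $z_{k,1,1}\cdots z_{k,n,n}$), hence nonzero in $\mathbb{L}$ by \Cref{lem:transcendentals}, so each $A_k$ is invertible over $\mathbb{L}$, the expression $w(A_1,\dots,A_r,C_1,\dots,C_s)$ is defined, and the identity applies to $M_k=A_k$ as well as to $M_k=B_k$. Taking determinants and setting $p\coloneqq\det\Phi$, a polynomial in the $rn^{2}$ entry variables, yields
\[
p(M_1,\dots,M_r)=\left(\prod_{k=1}^{r}\det M_k\right)^{en}\det\bigl(w(M_1,\dots,M_r,C_1,\dots,C_s)\bigr).
\]

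Now I invoke the hypothesis: there exist invertible $B_1,\dots,B_r\in M_n(\mathbb{L})$ with $w(B_1,\dots,B_r,C_1,\dots,C_s)$ invertible, so $p(B_1,\dots,B_r)$ is a product of nonzero elements of $\mathbb{L}$ and is therefore nonzero; hence $p$ is not the zero polynomial. By \Cref{lem:transcendentals}, evaluating the nonzero polynomial $p$ at the algebraically independent elements $z_{k,i,j}$ produces a nonzero element of $\mathbb{L}$, that is, $p(A_1,\dots,A_r)\neq 0$. Dividing out the nonzero factor $\left(\prod_{k}\det A_k\right)^{en}$ gives $\det\bigl(w(A_1,\dots,A_r,C_1,\dots,C_s)\bigr)\neq 0$, i.e.\ $w(A_1,\dots,A_r,C_1,\dots,C_s)$ is invertible, as desired.

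The step that needs care is the construction of $\Phi$: one must check that the single clearing power $e$ neutralizes the denominators in all monomials of $w$ simultaneously (which works precisely because $w$, lying in the free algebra, has only finitely many inverse symbols), and — in order for \Cref{lem:transcendentals} to be applicable — that the coefficients of $p$, assembled from the entries of the $C_i$, lie in a subfield of $\mathbb{L}$ over which the variables $z_{k,i,j}$ stay algebraically independent. Granting this, the rest of the argument is formal.
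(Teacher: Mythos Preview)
Your approach is essentially the paper's: both regard $\det\bigl(w(A_1,\dots,A_r,C_1,\dots,C_s)\bigr)$ as a function of the variables $z_{k,i,j}$, use the hypothesis at $B_1,\dots,B_r$ to certify it is not identically zero, and then invoke \Cref{lem:transcendentals}. The only difference is packaging: the paper writes this determinant directly as a reduced fraction $f/g$ over $\mathbb{F}$ via Cramer's rule, while you clear denominators up front with the adjugate identity to obtain a single polynomial $p$.

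The caveat you flag at the end is exactly the point the paper glosses over. The paper's assertion that substituting the entries of $B_1,\dots,B_r$ for the $z_{k,i,j}$ in $f/g$ yields $\det\bigl(w(B_1,\dots,B_r,C_1,\dots,C_s)\bigr)$ is only valid when the entries of the $C_i$ do not themselves involve the $z_{k,i,j}$ --- equivalently, your requirement that the coefficient field of $p$ leave the $z_{k,i,j}$ algebraically independent. Without that assumption the corollary is in fact false (take $n=r=s=1$, $w=b_1-c_1$, $C_1=z$, $B_1=1$: then $w(B_1,C_1)=1-z$ is invertible but $w(A_1,C_1)=0$). In every application of the corollary in the paper the $C_i$ have entries in the base field, so the issue never arises in practice; once that hypothesis is added, your argument is complete and coincides with the paper's.
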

\begin{proof}
	Using Cramer's formula, consider $p=\det\left(w\left(A_{1},\ldots,A_{r},C_1,\dots,C_s\right)\right)$
	as a rational function over $\mathbb{F}$ in variables the entries
	$\left\{ z_{k,i,j}\right\} _{1\le k\le r,1\le i,j\le n}$ of $A_{1},\ldots,A_{r}$.
	Represent it as a reduced fraction of polynomials $\frac{f}{g}$ in
	the variables.
	Then
	\[
		\det\left(w\left(B_{1},\ldots,B_{r},C_1,\dots,C_s\right)\right)
	\]
	is the value of this rational function when the entries of $B_{1},\ldots,B_{r}$
	are substituted for the variables. In particular, $f$ and $g$ are
	nonzero (because they give nonzero values with this substitution).
	Thus also $p\neq0$ by an application of \cref{lem:transcendentals} to each of $f$
	and $g$.
\end{proof}

\begin{coro}\label{cor:invertible_matrices}
	Let $\mathbb{F}$ be a
	field, let $\mathbb{L}=\mathbb{F}\left(z\right)$, and let $A,B\in M_{n}\left(\mathbb{F}\right)$
	be invertible matrices. Then $\det\left(zA+B\right)\neq0$.
\end{coro}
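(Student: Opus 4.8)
The plan is to view $\det(zA+B)$ as the evaluation at the transcendental element $z$ of an honest univariate polynomial over $\mathbb{F}$, exhibit a point at which that polynomial is nonzero, and then appeal to \Cref{lem:transcendentals} with $n=1$. The point of this last step is that over a finite field a nonzero polynomial may vanish identically as a function, so one cannot simply "plug in" $z$; \Cref{lem:transcendentals} is exactly the tool that makes the substitution legitimate once $z$ is known to be transcendental.

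Concretely, first I would introduce an indeterminate $x$ and set $p(x) = \det(xA+B) \in \mathbb{F}[x]$; this is a genuine polynomial because the entries of $xA+B$ are affine-linear in $x$ and the determinant is a polynomial in the entries, so $\deg p \le n$. Next I would evaluate at $x = 0$: since $p(0) = \det(B) \neq 0$ (here we use that $B$ is invertible), the polynomial $p$ is a nonzero element of $\mathbb{F}[x]$. Finally, because $\mathbb{L} = \mathbb{F}(z)$ with $z$ transcendental over $\mathbb{F}$, \Cref{lem:transcendentals} yields $p(z) = \det(zA+B) \neq 0$ in $\mathbb{L}$, which is the claim. (As an alternative to producing a nonvanishing point, one may factor $\det(zA+B) = \det(A)\cdot\det(zI_n + A^{-1}B)$ and observe that $\det(zI_n + A^{-1}B)$ is, up to sign, the characteristic polynomial of $-A^{-1}B$, hence monic of degree $n$ and therefore a nonzero polynomial; this is where the hypothesis that $A$ is invertible is used.)

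There is no substantial obstacle in this argument: it is a one-line reduction to \Cref{lem:transcendentals}. The only subtlety worth flagging in the writeup is the one already mentioned — that the conclusion genuinely needs transcendence of $z$ and is false if one naively treats polynomials as functions over an arbitrary field.
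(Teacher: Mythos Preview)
Your argument is correct. The primary route you take---introducing the polynomial $p(x)=\det(xA+B)\in\mathbb{F}[x]$, observing $p(0)=\det B\neq 0$, and then invoking \Cref{lem:transcendentals}---is clean and complete. It is also slightly more economical than the paper's proof: you only use the invertibility of $B$ at this step, whereas the paper first factors $zA+B=A(zI+A^{-1}B)$ (using that $A$ is invertible), sets $C=A^{-1}B$, and then argues that the polynomial $\det(zI+C)$ cannot be identically zero because otherwise every element of $\overline{\mathbb{F}}$ would be an eigenvalue of $C$, contradicting the bound of $n$ eigenvalues. Your alternative route is close to the paper's factorization but replaces the eigenvalue-counting argument with the observation that $\det(zI+A^{-1}B)$ is a monic polynomial of degree $n$ (essentially the characteristic polynomial of $-A^{-1}B$), which is arguably more direct. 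Either way, the substance is the same: produce a nonzero univariate polynomial and appeal to transcendence of $z$; you simply exhibit the nonvanishing more explicitly than the paper does.
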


\begin{proof}
	Consider $\det\left(xA+B\right)$ as a polynomial in $x$: it is nonzero because substituting $x=0$ yields $\det(B)\neq 0$. By \cref{lem:transcendentals} we have $\det(zA+B)\neq 0$ as required.
\end{proof}

\section{Dowling groupoids and partial Dowling geometries}\label{sec:dowling}

We find it useful to think of partial Dowling geometries (see
the overview in \cref{sec:intro}) as matroidal encodings of certain
groupoids, which we call Dowling groupoids. In this section we introduce
the Dowling groupoid of a finitely presented group, explain how representations
of these groupoids are related to representations of the associated
groups, and define the partial Dowling geometries.

To define the Dowling groupoids and geometries we use group presentations satisfying certain combinatorial requirements. While algebraically some of them are very artificial, they make the combinatorics that follows more convenient.
Note that the relators in our presentations are not necessarily reduced. (To describe group presentations we freely use both relators and relations as convenient.)
\begin{defn}\label[definition]{def:symmetric_triangular}
	We call a group presentation $\langle S\mid R\rangle$ \emph{symmetric triangular} if it satisfies the following conditions.
	\begin{enumerate}
		\item $S$ and $R$ are finite and the neutral element $e$ is a generator in $S$.
		\item The generators $S$ are symmetric. That is, for $e\neq s\in S$ also $s^{-1}\in S$. Further, $ss^{-1}e$ is a relator in $R$.
		\item All relators in $R$ are of length three.
		\item The relators in $R$ are cyclically symmetric. That is, if $abc\in R$ is a relator for $a,b,c\in S$ then also $bca$ and $cab$ are relators in $R$.
		\item If $abc\in R$ is a relator then also $c^{-1}b^{-1}a^{-1}$ is a relator in $R$.
		\item $eee$ is a relator in $R$.
	\end{enumerate}
\end{defn}
Any finitely presented group has a symmetric triangular presentation. To obtain one from a given presentation $\langle S \mid R \rangle$, first add the neutral element $e$ to $S$ if necessary. Then symmetrize the generators (by adding a formal inverse $s^{-1}$ for each $s\in S\setminus\{e\}$ which does not already have one, together with the relation $s^{-1}se=e$). Then ``break up'' long relators into short ones as follows: given a relator $s_1 s_2 \ldots s_n$ in $R$, add generators $x_2, x_3, \ldots, x_{n-2}$, and symmetrize the generating set (to add inverses for the new generators). Then add the relations
\begin{align*}
	s_1 s_2 x_1^{-1} = e,\quad	x_1 s_3 x_2^{-1} = e,\quad\dots,\quad	x_{n-2} s_{n-1} s_n = e,
\end{align*}
and delete $s_1 s_2 \ldots s_n=e$ from $R$.
Symmetrize $R$ by adding the cyclic shifts of each relator and their inverses.
Finally, add the relator $eee$.

\begin{defn}\label[definition]{def:Dowling_groupoid}
	Let $G$ be a group given by a symmetric triangular presentation $\left\langle S\mid R\right\rangle $.
	The \emph{Dowling groupoid}
	associated to $\left\langle S\mid R\right\rangle $
	is the finitely presented groupoid $\mathcal{G}$ with the following
	presentation:
	\begin{enumerate}
		\item The objects are $\left\{ b_{1},b_{2},b_{3}\right\} $,
		\item Generators for the morphisms are given by 
		\[
		\left\{ g_{s,i,j}:b_{i}\rightarrow b_{j}\mid s\in S,\mbox{ and } i,j\in \{1,2,3\}\text{ with }i\neq j\right\} .
		\]
		\item For each $s\in S$ and each pair of distinct indices $i,j\in\left\{ 1,2,3\right\} $
		we impose the relation $g_{s,j,i}\circ g_{s,i,j}=\text{id}_{b_{i}}$.
		For each cyclic shift $\left(i,j,k\right)$ of $\left(1,2,3\right)$
		and for each relation $s^{\prime\prime}s^{\prime}s=e$ in $R$, we
		impose the relation
		\[
		g_{s^{\prime\prime},k,i}\circ g_{s^{\prime},j,k}\circ g_{s,i,j}=\mathrm{id}_{b_{i}}.
		\]
	\end{enumerate}
\end{defn}

\begin{center}
\begin{figure}
	\includegraphics[width=.6\linewidth]{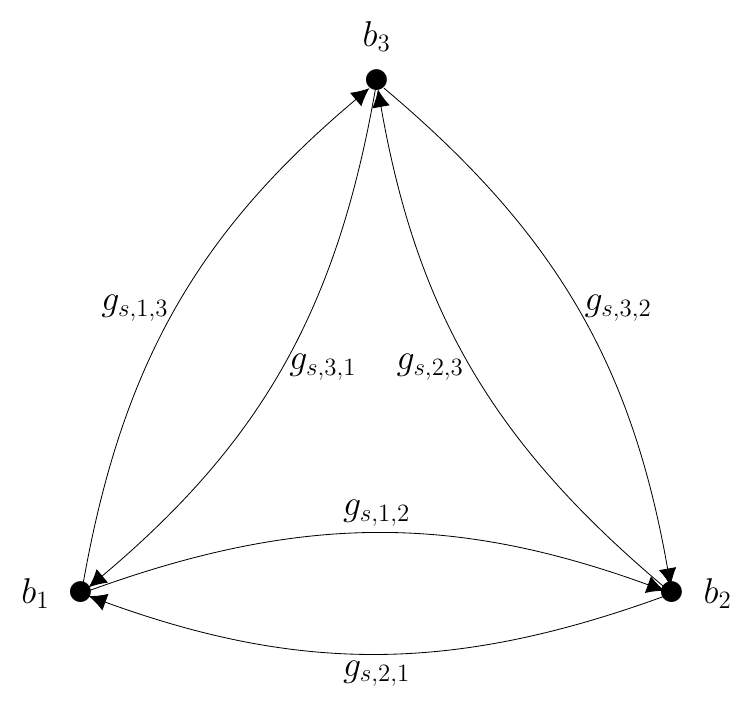}
	\caption{Morphisms in a Dowling groupoid that correspond to a generator $s \in S$.}
	\label{fig:gdg}
\end{figure}
\end{center}

\begin{rmrk}\label[remark]{rmk:redundant_relations}
	It is useful to note that since $\langle S \mid R \rangle$ is symmetric triangular, the following relations hold in $\mathcal{G}$:
	\begin{enumerate}
		\item For each permutation $\left(i,j,k\right)$ of $\left(1,2,3\right)$
				and for each $s\in S$, the relation 
				\[
				g_{e,j,k}\circ g_{s,i,j}=g_{s,j,k}\circ g_{e,i,j}
				\]
				holds.
		\item For each permutation $(i,j,k)$ of $(1,2,3)$, the relation $g_{e,k,i}\circ g_{e,j,k}\circ g_{e,i,j}=\mathrm{id}_{b_{i}}$ holds.
	\end{enumerate}
	Each relation of type (a) can be deduced from the relation
	\[
	g_{s^{-1},k,i}\circ g_{e,j,k}\circ g_{s,i,j} = \mathrm{id}_{b_i} = g_{s^{-1},k,i}\circ g_{s,j,k}\circ g_{e,i,j}
	\]
	which can itself be deduced from the defining relations of $\mathcal{G}$ and the fact that $s^{-1} e s = e$ is a relation of $\langle S \mid R \rangle$, because the presentation is symmetric triangular.
	
	Similarly, the six relations of type (b) follow from the defining relations of $\mathcal{G}$, together with the fact that $eee=e$ is a relation in $R$. Note that for $(i,j,k)$ an odd permutation of $(1,2,3)$ (which is not a cyclic shift) the relation $g_{e,k,i}\circ g_{e,j,k}\circ g_{e,i,j}=\mathrm{id}_{b_{i}}$ is the inverse of $g_{e,j,i}\circ g_{e,k,j} \circ g_{e,i,k}=\mathrm{id}_{b_i}$, where $(i,k,j)$ is a cyclic shift of $(1,2,3)$.
\end{rmrk}

\subsection{\texorpdfstring{Representations of $\mathcal{G}$ and of $G$}{Representations of Dowling groupoids and their groups}}\label{sec:dowling_representations}

Let $G$ be a group with a symmetric triangular presentation.
The Dowling groupoid $\mathcal{G}$ does not interest us in itself; it is
a sort of intermediate object between $G$ and the matroids constructed
further below. The point is that from a representation of $\mathcal{G}$ into some category $C$
(i.e. a functor $F:\mathcal{G}\rightarrow C$)
one can obtain a representation of $G$ in $C$ and vice versa. Here
$G$ is considered as a groupoid with one object $*$. This is shown
in several lemmas below. The proofs are rather obvious and readers
may wish to skip them (the purpose of this section is to verify
that the relations defining $\mathcal{G}$ have been chosen correctly).
\begin{lemm}
	\label[lemma]{lem:groupoid_one_object_rep}For each representation
	$F:\mathcal{G}\rightarrow C$ of $\mathcal{G}$ in a category $C$
	there is an isomorphic representation $F^{\prime}:\mathcal{G}\rightarrow C$
	which satisfies: 
	\begin{enumerate}
		\item\label{it:groupoid_lem_a} $F^{\prime}\left(b_{1}\right)=F^{\prime}\left(b_{2}\right)=F^{\prime}\left(b_{3}\right)$,
		\item\label{it:groupoid_lem_b} $F^{\prime}\left(g_{e,i,j}\right)=\text{id}_{F^{\prime}\left(b_{i}\right)}$
		for all $i,j$,
		\item\label{it:groupoid_lem_c} $F^{\prime}\left(g_{s,1,2}\right)=F^{\prime}\left(g_{s,2,3}\right)=F^{\prime}\left(g_{s,3,1}\right)$
		for each $s\in S$, and
		\item\label{it:groupoid_lem_d} $F^{\prime}\left(g_{s,2,1}\right)=F^{\prime}\left(g_{s,3,2}\right)=F^{\prime}\left(g_{s,1,3}\right)=F^{\prime}\left(g_{s,1,2}\right)^{-1}$ for each $s\in S$.
	\end{enumerate}
\end{lemm}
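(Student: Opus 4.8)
The plan is to obtain $F'$ from $F$ by a transport of structure along an explicit natural isomorphism, and then to read off the four normalizations from the relations recorded in \cref{rmk:redundant_relations}. First, since $\mathcal{G}$ is a groupoid, every generator $g_{s,i,j}$ is an isomorphism (with inverse $g_{s,j,i}$, by the relation $g_{s,j,i}\circ g_{s,i,j}=\mathrm{id}_{b_i}$), so $F$ sends each $g_{s,i,j}$ to an isomorphism in $C$. Set $X\coloneqq F(b_1)$ and define isomorphisms $\eta_{1}\coloneqq\mathrm{id}_X$, $\eta_{2}\coloneqq F(g_{e,2,1})\colon F(b_2)\to X$ and $\eta_{3}\coloneqq F(g_{e,3,1})\colon F(b_3)\to X$; note that $\eta_2^{-1}=F(g_{e,1,2})$ and $\eta_3^{-1}=F(g_{e,1,3})$ (again by $g_{e,j,i}\circ g_{e,i,j}=\mathrm{id}_{b_i}$). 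Now define $F'$ on objects by $F'(b_i)=X$ and on a morphism $\phi\colon b_i\to b_j$ of $\mathcal{G}$ by $F'(\phi)\coloneqq\eta_j\circ F(\phi)\circ\eta_i^{-1}$. A one-line check shows $F'$ is a functor and that $(\eta_i)_i$ is a natural isomorphism $F\Rightarrow F'$, so $F'$ is an isomorphic representation; condition \ref{it:groupoid_lem_a} holds by construction.

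For condition \ref{it:groupoid_lem_b} I would compute, for each ordered pair $i\neq j$, that $F'(g_{e,i,j})=\eta_j\circ F(g_{e,i,j})\circ\eta_i^{-1}$ equals $F$ applied to a composite of $g_{e,\ast,\ast}$'s from $b_1$ to $b_1$; using $g_{e,\ast,\ast}\circ g_{e,\ast,\ast}=\mathrm{id}$ together with the type-(b) relations of \cref{rmk:redundant_relations}, each such composite is $\mathrm{id}_{b_1}$, hence $F'(g_{e,i,j})=\mathrm{id}_X$. For example $F'(g_{e,2,3})=F\!\left(g_{e,3,1}\circ g_{e,2,3}\circ g_{e,1,2}\right)=F(\mathrm{id}_{b_1})=\mathrm{id}_X$, and the other five cases are analogous.

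For condition \ref{it:groupoid_lem_c}, unwinding the definition gives $F'(g_{s,1,2})=F\!\left(g_{e,2,1}\circ g_{s,1,2}\right)$, $F'(g_{s,2,3})=F\!\left(g_{e,3,1}\circ g_{s,2,3}\circ g_{e,1,2}\right)$ and $F'(g_{s,3,1})=F\!\left(g_{s,3,1}\circ g_{e,1,3}\right)$, so it suffices to establish the identities $g_{e,2,1}\circ g_{s,1,2}=g_{e,3,1}\circ g_{s,2,3}\circ g_{e,1,2}=g_{s,3,1}\circ g_{e,1,3}$ in $\mathcal{G}$. These follow by pushing the $g_{e,\ast,\ast}$ factors past $g_{s,\ast,\ast}$ using the type-(a) relations of \cref{rmk:redundant_relations} and then contracting the resulting pairs of $g_{e,\ast,\ast}$'s via the type-(b) relations; applying $F$ gives \ref{it:groupoid_lem_c}. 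Condition \ref{it:groupoid_lem_d} is then immediate: functoriality gives $F'(g_{s,j,i})=F'(g_{s,i,j})^{-1}$ from $g_{s,j,i}\circ g_{s,i,j}=\mathrm{id}_{b_i}$, and combining this with \ref{it:groupoid_lem_c} (equivalently, taking inverses throughout \ref{it:groupoid_lem_c}) yields $F'(g_{s,2,1})=F'(g_{s,3,2})=F'(g_{s,1,3})=F'(g_{s,1,2})^{-1}$.

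The only genuine work here is the bookkeeping in \ref{it:groupoid_lem_b} and \ref{it:groupoid_lem_c}: one must keep track of sources and targets and compose in the correct order so that the right instances of the relations in \cref{rmk:redundant_relations} are invoked. This is the main (though entirely routine) obstacle; no idea beyond the transport-of-structure construction is needed, and the content of \cref{rmk:redundant_relations} is precisely what makes these verifications go through.
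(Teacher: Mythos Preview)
Your proof is correct and essentially identical to the paper's: you define the same natural isomorphism $\eta_i=F(g_{e,i,1})$ (with $\eta_1=\mathrm{id}$), obtain $F'$ by conjugation $F'(\phi)=\eta_j\circ F(\phi)\circ\eta_i^{-1}$, and verify (a)--(d) via the relations recorded in \cref{rmk:redundant_relations}. The only difference is presentational---the paper spells out the case distinction $i=1$, $j=1$ explicitly and writes out the functoriality check via commuting squares, whereas you invoke transport of structure directly; the computations for (b), (c), (d) match line-for-line.
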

That $F^{\prime}$ is an isomorphic representation of $\mathcal{G}$
means that there is a natural isomorphism $F\rightarrow F^{\prime}$.
\begin{proof}
	Define $F^{\prime}$ on objects by setting $F^{\prime}\left(b_{i}\right)\coloneqq F(b_1)$ for all $1\le i\le 3$.
	Further define it on the generating morphisms $f:b_{i}\rightarrow b_{j}$ as follows:
	\[
	F'(f)\coloneqq	\begin{cases}
	F(g_{e,j,1}\circ f\circ g_{e,1,i}) &\mbox{ if } i\neq 1 \mbox{ and } j\neq 1,\\
	F(g_{e,j,1}\circ f) &\mbox{ if } i=1, \\
	F( f\circ g_{e,1,i}) &\mbox{ if } j=1.
	\end{cases}
	\]
	
	For each object $b_{i}$ of $\mathcal{G}$ we define an isomorphism
	$\eta_{b_{i}}:F\left(b_{i}\right)\rightarrow F'(b_i)$ by setting
	$\eta_{1}\coloneqq \id_{F\left(b_{1}\right)}$ and $\eta_{i}=F\left(g_{e,i,1}\right)$ for $i=2,3$.
	By definition of $F'$ this yields for each generating morphism $f:b_{i}\rightarrow b_{j}$
	of $\mathcal{G}$ the commutative diagram:
	\[
	\xymatrix{F\left(b_{i}\right)\ar[r]^{\eta_{b_{i}}}\ar[d]^{F\left(f\right)} & F^{\prime}\left(b_{i}\right)\ar[d]^{F^{\prime}\left(f\right)}\\
		F\left(b_{j}\right)\ar[r]^{\eta_{b_{j}}} & F^{\prime}\left(b_{j}\right).
	}
	\]
	For general morphisms of $\mathcal{G}$ the same diagrams commute, because they
	can be written as compositions of generating morphisms. Thus $F^{\prime}$
	is a functor, i.e. it respects composition: if $f_{2}\circ f_{1}=f_{3}$
	in $\mathcal{G}$ for some $f_{1}:b_{i}\rightarrow b_{j}$, $f_{2}:b_{j}\rightarrow b_{k}$,
	and $f_{3}:b_{1}\rightarrow b_{k}$ then the diagram
	\[
	\xymatrix{F\left(b_{i}\right)\ar[d]^{\eta_{b_{i}}}\ar[r]^{F\left(f_{1}\right)} & F\left(b_{j}\right)\ar[d]^{\eta_{b_{j}}}\ar[r]^{F\left(f_{2}\right)} & F\left(b_{k}\right)\ar[d]^{\eta_{b_{k}}}\\
		F^{\prime}\left(b_{i}\right)\ar[r]_{F^{\prime}\left(f_{1}\right)} & F^{\prime}\left(b_{j}\right)\ar[r]_{F^{\prime}\left(f_{2}\right)} & F^{\prime}\left(b_{k}\right)
	}
	\]
	commutes, implying that
	\[
	F^{\prime}\left(f_{2}\right)\circ F^{\prime}\left(f_{1}\right)\circ\eta_{b_{i}}=\eta_{b_{k}}\circ F\left(f_{2}\right)\circ F\left(f_{1}\right)
	\]
	and thus that $F^{\prime}\left(f_{2}\right)\circ F^{\prime}\left(f_{1}\right)=\eta_{b_{k}}\circ F\left(f_{2}\circ f_{1}\right)\circ\eta_{b_{i}}^{-1}=\eta_{b_{k}}\circ F\left(f_{3}\right)\circ\eta_{b_{i}}^{-1}$.
	But by definition we have $F^{\prime}\left(f_{3}\right)=\eta_{b_{k}}\circ F\left(f_{3}\right)\circ\eta_{b_{i}}^{-1}$,
	which shows
	\[
	F^{\prime}\left(f_{3}\right)=F^{\prime}\left(f_{2}\circ f_{1}\right)
	\]
	as desired. By definition the maps $\left\{ \eta_{b_{i}}\right\} _{b_{i} \text{ object in }\mathcal{G}}$
	define a natural isomorphism $F\rightarrow F^{\prime}$.
	
	We now prove each of the claimed properties of $F^{\prime}$ in
	turn:
	
	Property~\eqref{it:groupoid_lem_a} is satisfied by definition, and property~\eqref{it:groupoid_lem_b} follows
	from the following computations, in which we use the relations of
	$\mathcal{G}$:
	\begin{align*}
	F^{\prime}\left(g_{e,1,2}\right)=&F\left(g_{e,2,1}\circ g_{e,1,2}\right)=F\left(\mathrm{id}_{b_{1}}\right)=\mathrm{id}_{F^{\prime}\left(b_{1}\right)}\\
	F^{\prime}\left(g_{e,2,3}\right)=&F\left(g_{e,3,1}\circ g_{e,2,3}\circ g_{e,1,2}\right)=F\left(\mathrm{id}_{b_{1}}\right)=\mathrm{id}_{F^{\prime}\left(b_{1}\right)}\\
	F^{\prime}\left(g_{e,3,1}\right)=&F\left(g_{e,3,1}\circ g_{e,1,3}\right)=F\left(\mathrm{id}_{b_{1}}\right)=\mathrm{id}_{F^{\prime}\left(b_{1}\right)}.
	\end{align*}
	
	We now prove property~\eqref{it:groupoid_lem_c}.
	To this end observe that $F^{\prime}\left(g_{s,1,2}\right)=F\left(g_{e,2,1}\circ g_{s,1,2}\right)$
	and furthermore $F^{\prime}\left(g_{s,2,3}\right)= F\left(g_{e,3,1}\circ g_{s,2,3}\circ g_{e,1,2}\right)$.
	The relations of $\mathcal{G}$ imply that
	\begin{align*}
	F^{\prime}\left(g_{s,1,2}\right)=&F\left(g_{e,2,1}\circ g_{s,1,2}\right)\overset{(1)}{=}F\left(g_{e,3,1}\circ g_{e,2,3}\circ g_{s,1,2}\right)\\
	\overset{(2)}{=}&F\left(g_{e,1,3}\circ g_{s,2,3}\circ g_{e,1,2}\right)=F^{\prime}\left(g_{s,2,3}\right).
	\end{align*}
	where (1) is obtained by precomposing the identity $g_{e,2,1}\circ g_{e,1,2}=\mathrm{id}_{b_{1}}=g_{e,3,1}\circ g_{e,2,3}\circ g_{e,1,2}$
	with $g_{e,1,2}^{-1}$ and (2) follows from the relation $g_{e,2,3}\circ g_{s,1,2}=g_{s,2,3}\circ g_{e,1,2}$.
	The identity $F^{\prime}\left(g_{s,2,3}\right)=F^{\prime}\left(g_{s,3,1}\right)$
	follows similarly: we have
	\begin{align*}
	F^{\prime}\left(g_{s,3,1}\right)=&F\left(g_{s,3,1}\circ g_{e,1,3}\right)=F\left(g_{s,3,1}\circ g_{e,2,3}\circ g_{e,1,2}\right)\\
	=&F\left(g_{e,3,1}\circ g_{s,2,3}\circ g_{e,1,2}\right)=F^{\prime}\left(g_{s,2,3}\right).
	\end{align*}
	For property~\eqref{it:groupoid_lem_d}, using the fact that $g_{s,2,1}=g_{s,1,2}^{-1}$
	in $\mathcal{G}$ we see that $F^{\prime}\left(g_{s,2,1}\right)=F^{\prime}\left(g_{s,1,2}\right)^{-1}$.
	Similarly we have $g_{s,3,2}=g_{s,2,3}^{-1}$ and $g_{s,1,3}=g_{s,3,1}^{-1}$.
	It follows that
	\[
	F^{\prime}\left(g_{s,2,1}\right)=F^{\prime}\left(g_{s,3,2}\right)=F^{\prime}\left(g_{s,1,3}\right).\qedhere
	\]
\end{proof}
\begin{lemm}\label[lemma]{lem:groupoid_rep_to_group_rep}
	Consider $G$ as a groupoid with one object $*$ and morphisms the
	elements of the group $G$.
	Let $F:\mathcal{G}\rightarrow C$ be
	a representation satisfying properties~\eqref{it:groupoid_lem_a}-\eqref{it:groupoid_lem_d} of \cref{lem:groupoid_one_object_rep}.
	Then there is a functor
	\[
	F^{\prime}:G\rightarrow C
	\]
	defined by $F^{\prime}\left(*\right)=F\left(b_{1}\right)$ and on
	the generating morphisms by $F^{\prime}\left(s\right)=F\left(g_{s,1,2}\right)$.
\end{lemm}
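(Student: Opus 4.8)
The plan is to produce the asserted functor straight from the presentation $\langle S\mid R\rangle$ of $G$. Regarding $G$ as a groupoid with one object $*$, giving a functor $F'\colon G\to C$ with $F'(*)=F(b_1)$ is the same as giving a group homomorphism $G\to\mathrm{Aut}_C\bigl(F(b_1)\bigr)$. Since $S$ generates $G$ and $\langle S\mid R\rangle$ is a presentation, such a homomorphism exists and restricts on $S$ to $s\mapsto F(g_{s,1,2})$ as soon as (i) each $F(g_{s,1,2})$ is an automorphism of $F(b_1)$, and (ii) the function $s\mapsto F(g_{s,1,2})$, extended to the free group on $S$, sends every relator in $R$ to $\mathrm{id}_{F(b_1)}$.

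For (i): property~(a) gives $F(b_1)=F(b_2)$, so $F(g_{s,1,2})$ is an endomorphism of $F(b_1)$, and applying $F$ to the defining relations $g_{s,2,1}\circ g_{s,1,2}=\mathrm{id}_{b_1}$ and $g_{s,1,2}\circ g_{s,2,1}=\mathrm{id}_{b_2}$ of $\mathcal{G}$ (using (a) again) shows $F(g_{s,2,1})$ is a two-sided inverse; this is also what property~(d) records. For (ii), fix a relator $abc\in R$, so $abc=e$ in $G$. By property~(c) we may replace $F(g_{a,1,2})$ by $F(g_{a,3,1})$ and $F(g_{b,1,2})$ by $F(g_{b,2,3})$; these are now composable morphisms in $\mathcal{G}$, so functoriality of $F$ yields
\[
F(g_{a,1,2})\circ F(g_{b,1,2})\circ F(g_{c,1,2})=F\bigl(g_{a,3,1}\circ g_{b,2,3}\circ g_{c,1,2}\bigr).
\]
Now $g_{a,3,1}\circ g_{b,2,3}\circ g_{c,1,2}=\mathrm{id}_{b_1}$ is exactly the relation imposed in $\mathcal{G}$ for the relator $abc$ together with the cyclic shift $(i,j,k)=(1,2,3)$, so the right-hand side is $\mathrm{id}_{F(b_1)}$, as needed. (The relators $ss^{-1}e$ and $eee$ are instances of the same computation; for $ss^{-1}e$ one recovers $F(g_{s^{-1},1,2})=F(g_{s,1,2})^{-1}$ after using $F(g_{e,1,2})=\mathrm{id}_{F(b_1)}$ from property~(b).)

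I do not expect a genuine obstacle: the argument is a bookkeeping unwinding of definitions. The points requiring care are matching each length-three relator of $R$ to the correct defining relation of $\mathcal{G}$ under a cyclic shift, keeping the order of composition consistent with the convention that composition in the one-object groupoid $G$ is group multiplication, and invoking the universal property of presentations correctly — in particular, since $S$ is symmetric there are no formal generator inverses, so checking the relators in $R$ really does suffice to obtain the homomorphism.
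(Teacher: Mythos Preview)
Your proposal is correct and follows essentially the same approach as the paper: the paper's proof also reduces to checking that $F(g_{s'',1,2})\circ F(g_{s',1,2})\circ F(g_{s,1,2})=\mathrm{id}_{F(b_1)}$ for each relator $s''s's\in R$, uses property~(c) to replace $F(g_{s'',1,2})$ and $F(g_{s',1,2})$ by $F(g_{s'',3,1})$ and $F(g_{s',2,3})$, and then applies functoriality of $F$ together with the defining relation $g_{s'',3,1}\circ g_{s',2,3}\circ g_{s,1,2}=\mathrm{id}_{b_1}$ of $\mathcal{G}$. Your version is slightly more explicit in spelling out why each $F(g_{s,1,2})$ is an automorphism and in noting that the symmetric generating set means no formal inverses need separate treatment, but the core argument is identical.
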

\begin{proof}
	We only have to verify that $F^{\prime}\left(s^{\prime\prime}\right)\circ F^{\prime}\left(s^{\prime}\right)\circ F^{\prime}\left(s\right)=\mathrm{id}_{F^{\prime}\left(*\right)}$
	for any relation $s^{\prime\prime}s^{\prime}s=e$ in $R$.
	We compute
	\begin{align*}
	&F^{\prime}\left(s^{\prime\prime}\right)\circ F^{\prime}\left(s^{\prime}\right)\circ F^{\prime}\left(s\right)=F\left(g_{s^{\prime\prime},1,2}\right)\circ F\left(g_{s^{\prime},1,2}\right)\circ F\left(g_{s,1,2}\right)\\
	=&F\left(g_{s^{\prime\prime},3,1}\right)\circ F\left(g_{s^{\prime},2,3}\right)\circ F\left(g_{s,1,2}\right)=F\left(g_{s^{\prime\prime},3,1}\circ g_{s^{\prime},2,3}\circ g_{s,1,2}\right)=F\left(\mathrm{id}_{b_{1}}\right)\\
	=&\mathrm{id}_{F\left(b_{1}\right)}=\mathrm{id}_{F^{\prime}\left(*\right)}.\qedhere
	\end{align*}
\end{proof}
\begin{lemm}
	Consider $G$ as a groupoid with one object $*$. Let $F:G\rightarrow C$
	be a representation of $G$. Then there is a functor
	\[
	F^{\prime}:\mathcal{G}\rightarrow C
	\]
	defined on objects by $F^{\prime}\left(b_{i}\right)=F\left(*\right)$
	and on the generating morphisms by:
	\begin{align*}
	F^{\prime}\left(g_{s,1,2}\right)&=F^{\prime}\left(g_{s,2,3}\right)=F^{\prime}\left(g_{s,3,1}\right)=F\left(s\right) \mbox{ and }\\
	F^{\prime}\left(g_{s,2,1}\right)&=F^{\prime}\left(g_{s,3,2}\right)=F^{\prime}\left(g_{s,1,3}\right)=F\left(s\right)^{-1}
	\end{align*}
	for each $s\in S$. This functor satisfies properties~\eqref{it:groupoid_lem_a}-\eqref{it:groupoid_lem_d} of \cref{lem:groupoid_one_object_rep}.
\end{lemm}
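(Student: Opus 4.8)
The plan is to use that $\mathcal{G}$ is given by a presentation: to produce a functor $F'\colon\mathcal{G}\to C$ it is enough to prescribe $F'$ on the objects and on the generating morphisms $g_{s,i,j}$ and then to check that the resulting assignment respects the two families of defining relations of $\mathcal{G}$, namely the involutive relations $g_{s,j,i}\circ g_{s,i,j}=\id_{b_i}$ and the triangle relations $g_{s'',k,i}\circ g_{s',j,k}\circ g_{s,i,j}=\id_{b_i}$ indexed by cyclic shifts $(i,j,k)$ of $(1,2,3)$ and relators $s''s's$ in $R$. Since the values of $F'$ are already dictated by the statement, the whole argument is this verification, after which properties \eqref{it:groupoid_lem_a}--\eqref{it:groupoid_lem_d} will be read off directly.

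First I would record two elementary observations about $F$. Because $G$ is regarded as a one-object groupoid, $F$ carries each generator $s\in S$ to an isomorphism $F(s)$ of $C$ with $F(s)^{-1}=F(s^{-1})$; and because $e\in S$ is the identity morphism of the unique object $*$, we have $F(e)=\id_{F(*)}$. In particular every morphism $F(s)^{\pm1}$ used in the definition of $F'$ is a genuine isomorphism of $C$, so $F'$ sends the generators of $\mathcal{G}$ to invertible morphisms, as it must.

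Next I would verify the relations. For an involutive relation: of the two ordered pairs $(i,j)$ and $(j,i)$ exactly one lies in $\{(1,2),(2,3),(3,1)\}$, so $F'$ sends one of $g_{s,i,j},g_{s,j,i}$ to $F(s)$ and the other to $F(s)^{-1}$, and their composite is $\id_{F(*)}$. For a triangle relation attached to a cyclic shift $(i,j,k)$ of $(1,2,3)$: by definition of a cyclic shift, each of the consecutive pairs $(i,j)$, $(j,k)$, $(k,i)$ lies in $\{(1,2),(2,3),(3,1)\}$, so $F'$ sends the left-hand side to $F(s'')\circ F(s')\circ F(s)=F(s''s's)$; and since $\langle S\mid R\rangle$ is a presentation of $G$, the word $s''s's$ equals $e$ in $G$, whence this is $F(e)=\id_{F(*)}$. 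Thus $F'$ yields a well-defined functor $\mathcal{G}\to C$.

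Finally, properties \eqref{it:groupoid_lem_a}--\eqref{it:groupoid_lem_d} of \cref{lem:groupoid_one_object_rep} follow straight from the definition of $F'$: all three objects $b_i$ go to $F(*)$; $F'(g_{e,i,j})$ is $F(e)$ or $F(e)^{-1}$, both equal to $\id_{F(*)}$; $F'(g_{s,1,2})=F'(g_{s,2,3})=F'(g_{s,3,1})=F(s)$ by construction; and $F'(g_{s,2,1})=F'(g_{s,3,2})=F'(g_{s,1,3})=F(s)^{-1}=F'(g_{s,1,2})^{-1}$. I do not expect a genuine obstacle: this is a routine verification, and the only two points deserving a moment's care are the combinatorial bookkeeping that matches the cyclic shifts of $(1,2,3)$ with the ``positively oriented'' index pairs, and the observation that it is the relations holding in the group $G$ itself --- not merely in the free group on $S$ --- that force the triangle relations to be respected by $F'$.
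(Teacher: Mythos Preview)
Your proposal is correct and follows essentially the same approach as the paper: both verify that $F'$ respects the defining relations of $\mathcal{G}$ (the involutive relations and the triangle relations for cyclic shifts), after which properties \eqref{it:groupoid_lem_a}--\eqref{it:groupoid_lem_d} are immediate from the definition. The paper additionally checks some of the derived relations from \cref{rmk:redundant_relations}, but as you correctly note, only the defining relations need to be verified, so your version is in fact slightly more streamlined.
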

\begin{proof}
	Once we prove $F^{\prime}$ is a functor, properties~\eqref{it:groupoid_lem_a}-\eqref{it:groupoid_lem_d} follow
	directly from the definition. Thus we only need to check that each
	relation between morphisms in $\mathcal{G}$ is respected by $F^{\prime}$. 
	
	Observe that if $s\in S$ and $i,j\in\left\{ 1,2,3\right\} $ are
	distinct then $F^{\prime}\left(g_{s,j,i}\right)\circ F^{\prime}\left(g_{s,i,j}\right)$
	equals either $F\left(s\right)\circ F\left(s\right)^{-1}$ or $F\left(s\right)^{-1}\circ F\left(s\right)$,
	depending on whether the pair $\left(i,j\right)$ is one of $\left\{ \left(1,2\right),\left(2,3\right),\left(3,1\right)\right\} $,
	and in either case the composition maps to the identity. Since $F^{\prime}\left(g_{e,i,j}\right)=F\left(e\right)=\mathrm{id}_{F\left(*\right)}$,
	it is clear that 
	\[
	F\left(g_{e,k,i}\right)\circ F\left(g_{e,j,k}\right)\circ F\left(g_{e,i,j}\right)=\mathrm{id}_{F\left(b_{i}\right)}
	\]
	whenever $i,j,k$ are distinct indices. Similarly, 
	\[
	F\left(g_{s,j,k}\right)\circ F\left(g_{e,i,j}\right)=F^{\prime}\left(s\right)^{\text{sgn}\left(j,k,i\right)}=F^{\prime}\left(s\right)^{\text{sgn}\left(i,j,k\right)}=F\left(g_{e,j,k}\right)\circ F\left(g_{s,i,j}\right).
	\]
	If $s^{\prime\prime}s^{\prime}s=e$ is a relation in $R$ and $\left(i,j,k\right)$
	is a cyclic shift of $\left(1,2,3\right)$ we then have $F^{\prime}\left(g_{s,i,j}\right)=F\left(s\right)$,
	$F^{\prime}\left(g_{s^{\prime},j,k}\right)=F\left(s^{\prime}\right)$,
	and $F^{\prime}\left(g_{s^{\prime\prime},k,i}\right)=F\left(s^{\prime\prime}\right)$,
	so that
	\begin{align*}
	&F^{\prime}\left(g_{s^{\prime\prime},k,i}\right)\circ F^{\prime}\left(g_{s^{\prime},j,k}\right)\circ F^{\prime}\left(g_{s,i,j}\right)=F\left(s^{\prime\prime}\right)\circ F\left(s^{\prime}\right)\circ F\left(s\right)\\
	=&F\left(s^{\prime\prime}s^{\prime}s\right)=F\left(e\right)=\mathrm{id}_{F\left(b_{i}\right)}.\qedhere
	\end{align*}
\end{proof}

\subsection{Partial Dowling geometries}

We define a class of matroids which extend the classical Dowling geometries from finite groups to finitely presented groups (we first defined these in~\cite{KY19}).
The structure closely parallels that of the Dowling groupoids defined above.
\begin{defn}
	\label[definition]{def:GDG}Let $G=\left\langle S\mid R\right\rangle $
	be a group together with a symmetric triangular presentation.
	The \emph{partial
		Dowling geometry} associated to the presentation $\left\langle S\mid R\right\rangle $
	is the rank~$3$ matroid $M$ on the ground set
	\[
	E\coloneqq\left\{ b_{1},b_{2},b_{3}\right\} \cup\left\{ s_{i}\mid s\in S,1\le i\le3\right\} 
	\]
	(i.e., three elements $b_{1},b_{2},b_{3}$ and three indexed copies
	of each element $s\in S$) and with the following flats of rank $2$
	(which are called lines, in analogy with affine geometry):
	\begin{itemize}
		\item For each $s\in S$, we place the element $s_{1}$ on the line spanned
		by $\left\{ b_{1},b_{2}\right\} $, and similarly $s_{2}$ and $s_{3}$
		are on the lines spanned by $\left\{ b_{2},b_{3}\right\} $ and $\left\{ b_{3},b_{1}\right\} $
		respectively. This means that each of the sets
		\[
		\left\{ b_{1},b_{2}\right\} \cup\left\{ s_{1}\mid s \in S\right\},\quad\left\{ b_{2},b_{3}\right\} \cup\left\{ s_{2}\mid s \in S\right\},\quad\left\{ b_{3},b_{1}\right\} \cup\left\{ s_{3}\mid s \in S\right\}
		\]
		is a flat.
		\item For each relation $s^{\prime\prime}s^{\prime}s=e$ in $R$ and any
		cyclic shift $(i,j,k)$ of the indices $(1,2,3)$, we take $\left\{ s_{i},s_{j}^{\prime},s_{k}^{\prime\prime}\right\} $
		to be a flat.
	\end{itemize}
\end{defn}

We call these matroids partial Dowling geometries as they are a finite restriction of the usual Dowling geometry of the group.
This is necessary for our purposes as the ground set of the Dowling geometry is infinite if the group is not finite.

\begin{prop}
	The partial Dowling geometry associated to a symmetric triangular group presentation is a matroid and $B=\left\{ b_{1},b_{2},b_{3}\right\} $ is one of its bases.
\end{prop}
\begin{proof}
	Any two of the rank-$2$ flats defined above intersect in at most
	one element, and $B$ is not contained in any of these flats. The
	result thus follows from \cite[Prop. 1.5.6]{Oxl11}.
\end{proof}

\begin{rmrk}
	Partial Dowling geometries are closely related to Zaslavsky's frame matroids of gain graphs~\cite{Zas89,Zas91}.
	For instance, the usual Dowling geometry of a finite group $G$ of rank $r$ is the full $ G $–expansion of the complete graph $K_r$ in Zaslavsky's notation.
	There is a corresponding construction for finitely generated groups, but it is not computable in general: doing so requires deciding whether certain words in the generators are trivial. 
\end{rmrk}

\begin{defn}\label{def:dowling_geometries}
	Let $G=\langle S \mid R \rangle$ be a group with a symmetric triangular presentation. We define a set $\mathcal{M}_{S,R}$ of partial Dowling geometries, which we call the \emph{set of partial Dowling geometries subordinate to $\langle S \mid R \rangle$}. 
	
	Denote by $T$ the set of all words $s''s's$, where $s,s',s'' \in S$ are three generators (not necessarily distinct). For each $X\subseteq T$ symmetrize the relations of $\langle S \mid R \cup X\rangle$ and denote by $M_X$ the partial Dowling geometry associated to the resulting group presentation. Then
	\[\mathcal{M}_{S,R} \coloneqq \{M_X \mid X\subseteq T\}.\]
\end{defn}
\begin{rmrk}
	Each partial Dowling geometry in $\mathcal{M}_{S,R}$ is the geometry associated to $\langle S \mid R\cup X\rangle$ for some $X$, and there is a quotient map $\langle S \mid R \rangle \to \langle S \mid R\cup X \rangle$ which is the identity on the generators.
	
	The family $\mathcal{M}_{S,R}$ can also be described as a collection of certain weak images of the partial Dowling geometry associated to $\langle S \mid R\rangle$, but we will not use this.
\end{rmrk}

\section{Probability space representations of matroids}\label{sec:probability_space_reps}

An entropic representation of a matroid is given by a collection of
random variables on a discrete probability space. We introduce some
new language to handle these more conveniently: 
rather than working with the entropy function, we prefer to work with the independence and determination properties of the variables. In terms of the matroids involved, this corresponds to working with independent sets and circuits.
We package everything
we need into the definition of a ``probability space representation''
and the accompanying notation. The discussion is essentially equivalent
to the probabilistic representations introduced by  Mat\'{u}\v{s} in \cite{Matus_background}.
\begin{defn}\label{def:independence_and_determination}
	Let $\left(\Omega,\mathcal{F},P\right)$
	be a probability space and let $\left\{ X_{e}\right\} _{e\in E}$
	be a finite collection of random variables on $\left(\Omega,\mathcal{F},P\right)$. (See \cref{sec:rand_variable_notation} for the notation.)
	\begin{enumerate}
		\item The variables $\left\{ X_{e}\right\} _{e\in E}$ are \emph{independent}
		if for any $\left(A_{e}\right)_{e\in E}\in\prod_{e\in E}\mathcal{F}_{e}$:
		\[
		P\left(\bigcap_{e\in E}X_{e}^{-1}\left(A_{e}\right)\right)=\prod_{e\in E}P\left(X_{e}^{-1}\left(A_{e}\right)\right).
		\]
		(This is the usual notion of independence of random variables.)
		\item Fix $c\in C\subseteq E$. The function
		$X_{c}$ is \emph{determined by $\left\{ X_{e}\right\} _{e\in C\setminus\{c\}}$
		}if there exists a measurable function
		\[
		f:\prod_{e\in C\setminus\{c\}}\Omega_{e}\rightarrow\Omega_{c}
		\]
		such that $f\circ\left(X_{e}\right)_{e\in C\setminus\{c\}}=X_{c}$. Such a function
		$f$ is called a \emph{determination function for $X_{c}$ given $\left\{ X_{e}\right\} _{e\in C\setminus\{c\}}$},
		or just a determination function for short.
	\end{enumerate}
\end{defn}
\begin{defn}\label{def:prob_space_rep}
	Let $M$ be a matroid on a
	finite set $E$. A probability space representation of $M$ consists
	of a discrete probability space $\left(\Omega,\mathcal{F},P\right)$
	and an indexed collection of random variables $\left\{ X_{e}\right\} _{e\in E}$
	on $\Omega$ such that the following conditions hold:
	\begin{enumerate}
		\item (Independence.) If $A\subseteq E$ is independent, the variables $\left\{ X_{e}\right\} _{e\in A}$
		are independent.
		\item (Determination.) If $C\subseteq E$ is a circuit and $c\in C$, then
		$X_{c}$ is determined by $\{ X_{e}\} _{e\in C\setminus\{ c\} }$. 
		\item (Non-triviality.) If $e\in E$ is not a loop, there are disjoint measurable
		$S,T\subsetneq\Omega_{e}$ such that $X_{e}^{-1}\left(S\right)$ and
		$X_{e}^{-1}\left(T\right)$ have nonzero probability.
	\end{enumerate}
\end{defn}
\begin{rmrk}
	The non-triviality condition implies, for instance, that $\Omega_{e}$
	is not a singleton. Together with the independence condition, it also
	ensures that if $e\in A\subseteq E$ where $A$ is independent then
	$X_{e}$ is \emph{not} determined by $\left\{ X_{f}\right\} _{f\in A\setminus\left\{ e\right\} }$. 
	
	Note that since the probability space is discrete, it is harmless
	to assume that all singletons have positive probability. With this
	additional assumption we have that $X_{A}$ is surjective for each
	independent $A\subseteq E$.
\end{rmrk}
	
	As in \cref{sec:rand_variable_notation}, 
	whenever we work with just one matroid on a ground set $E$ and one
	probability space representation in $\left(\Omega,\mathcal{F},P\right)$,
	we will denote the measurable spaces and functions associated to each
	element $e\in E$ by $\left(\Omega_{e},\mathcal{F}_{e}\right)$ and
	$X_{e}:\Omega\rightarrow\Omega_{e}$ respectively, without further
	explicit mention of the notation.

\begin{theorem}\label{thm:entropic_iff_prob_rep}
	Let $M$ be a connected matroid of rank at least two.
	Then $M$
	is entropic if and only if it has a probability space representation
	in a discrete probability space in which each singleton has nonzero
	probability. In this case, each of the random variables
	$\left\{ X_{e}\right\} _{e\in E}$ is uniformly distributed and the underlying
	probability space has a finite subset of probability $1$.
\end{theorem}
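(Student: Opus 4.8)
The plan is to prove both directions by relating the entropy function of a collection of random variables to the combinatorial structure encoded by a probability space representation. Throughout we use Fujishige's observation that $S \mapsto H(X_S)$ is a polymatroid, and the standard entropy identities: $H(X_A) \le H(X_B)$ for $A \subseteq B$, submodularity, $H(X_{A \cup B}) = H(X_A) + H(X_B)$ when $X_A$ and $X_B$ are independent, and $H(X_{A \cup \{c\}}) = H(X_A)$ precisely when $X_c$ is determined by $\{X_e\}_{e \in A}$ (up to null sets). The key numerical fact we will want is that for a matroid, independence of $A$ forces every singleton to carry full rank, and circuits give the determination relations; conversely, control of these two families of subsets pins down the entire rank function of a \emph{connected} matroid.

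For the ``only if'' direction, suppose $r(A) = \lambda H(X_A)$ for all $A$. First I would argue that the hypothesis of connectedness and rank $\ge 2$ lets us normalize: since for any non-loop $e$ there is a circuit through $e$ and a basis element, one shows all the $H(X_{\{e\}})$ are controlled, and in fact by passing to a suitable product/conditioning argument one may assume each $X_e$ is uniformly distributed on its (finite) support — here I would invoke that for a matroid element $e$ lying in a common circuit with elements of a basis, the equalities $r = \lambda H$ on nested sets force $H(X_{\{e\}})$ to equal $\lambda^{-1} r(\{e\}) = \lambda^{-1}$, and similarly the joint entropies of independent sets add up exactly, which (together with the equality case in $H(X,Y) \le H(X) + H(Y)$) yields genuine probabilistic independence. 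The determination condition comes from the equality case $H(X_{C}) = H(X_{C \setminus \{c\}})$ for a circuit $C$, which holds because $r(C) = r(C \setminus \{c\}) = |C| - 1$; standard measure theory then produces the determination function $f$. Finiteness of the probability space follows because we may replace $\Omega$ by the (finite) image of $X_E$, and uniformity can be arranged by the usual trick of replacing each atom by a block of equiprobable atoms (or by a base-change argument); the non-triviality condition is immediate from $H(X_{\{e\}}) > 0$ for non-loops.

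For the ``if'' direction, suppose we are given a probability space representation in a discrete space with all singletons of positive probability. I would first observe that, after the remark following Definition~\ref{def:prob_space_rep}, $X_A$ is surjective for each independent $A$, and on a finite space we may take all the codomains finite. Then I would define $h(S) := H(X_S)$ and show $h = \lambda r$ for the appropriate constant $\lambda$. The heart of the matter is the following: for any $S \subseteq E$, choose a maximal independent subset $B_S \subseteq S$, so $r(S) = |B_S|$. By the determination property applied along circuits, $H(X_S) = H(X_{B_S})$: every element of $S \setminus B_S$ lies in a circuit contained in $B_S \cup \{e\}$, hence is determined by $X_{B_S}$, so adding it does not change the entropy; one does this one element at a time. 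By the independence property, $H(X_{B_S}) = \sum_{e \in B_S} H(X_e)$. So it remains to show that all the individual entropies $H(X_e)$ (over non-loops $e$) are equal to a common value $\lambda$; then $H(X_S) = \lambda |B_S| = \lambda r(S)$ as desired, and $\lambda > 0$ by non-triviality. Equality of the $H(X_e)$ is where connectedness enters: if $e, e'$ are two non-loop elements, there is a circuit $C$ containing both. Inside $C$ every proper subset is independent, so in particular the two-element subsets are; running the independence-plus-determination bookkeeping around a circuit of size $k$ (where $r(C) = k-1$) forces $\sum_{f \in C} H(X_f) \ge H(X_C) = H(X_{C \setminus \{f\}}) = \sum_{g \ne f} H(X_g)$ for each $f \in C$, i.e. $H(X_f) \le \sum_{g\ne f}H(X_g)$ for all $f$, while independence of any proper subset gives $H(X_{C\setminus\{f\}}) = \sum_{g\ne f} H(X_g)$ and these must all be equal since removing different single elements from $C$ gives subsets with the same rank $k-1$; chasing these equalities yields $H(X_e) = H(X_{e'})$. (For rank exactly $2$ a small separate check is needed since circuits may be small, but connectedness still links all non-loops.) Uniformity of each $X_e$ can then be imposed after the fact by the block-splitting trick, which preserves all three conditions and multiplies every entropy by the same constant, so it does not affect the conclusion.

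The main obstacle I anticipate is the uniformity claim and the passage to a finite space while simultaneously keeping the scaling constant $\lambda$ uniform across all elements — in particular making rigorous the step that equality in the entropy subadditivity bounds around a circuit forces both honest independence of the relevant subsets and equality of the single-variable entropies, since one must be careful that a circuit of size $k$ in a rank-$3$ matroid (so $2 \le k \le 4$ here, but in general $k \le r+1$) really does propagate the equality $H(X_e) = H(X_{e'})$ through the connectivity graph. The rest is a systematic but routine translation between the language of entropy and the language of independence/determination.
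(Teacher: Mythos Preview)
Your equivalence argument is essentially the paper's: entropic $\Rightarrow$ probability space representation via the equality cases of $H(X_A)\le\sum_{a\in A}H(X_a)$ and $H(X_{C\setminus\{c\}})\le H(X_C)$, and the converse via showing every $H(X_e)$ is a common constant $1/\lambda$ using a circuit through any two non-loops. One small wording issue: when you say the $H(X_{C\setminus\{f\}})$ are all equal ``since removing different single elements from $C$ gives subsets with the same rank $k-1$,'' that is circular (you are proving $H\propto r$). The correct reason, which you also state, is that each $H(X_{C\setminus\{f\}})$ equals $H(X_C)$ by the determination property; then $\sum_{g\ne e}H(X_g)=\sum_{g\ne e'}H(X_g)$ gives $H(X_e)=H(X_{e'})$. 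This is exactly the paper's computation.

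The genuine gap is the uniformity (and hence finiteness) claim. The theorem asserts that the $X_e$ \emph{are} uniform, not that they can be massaged into being uniform; your ``block-splitting'' or ``product/conditioning'' suggestions would at best produce a different representation, and it is not clear they preserve the independence and determination conditions. Equal entropies $H(X_e)=1/\lambda$ do not by themselves force uniform distributions. The paper (following Mat\'u\v{s}) argues directly at the level of probabilities: given non-loops $e,e'$ in a common circuit $C$, set $L=C\setminus\{e,e'\}$. For atoms $\omega_e,\omega_{e'}$ of positive probability, independence of $\{e,e'\}$ gives $P(\omega_e,\omega_{e'})>0$, so some $\omega_L$ has $P_C(\omega_L,\omega_e,\omega_{e'})>0$. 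Then independence of $L\cup\{e\}$ and determination of $e'$ from $L\cup\{e\}$ (and symmetrically) yield
\[
P_L(\omega_L)P_e(\omega_e)=P_C(\omega_L,\omega_e,\omega_{e'})=P_L(\omega_L)P_{e'}(\omega_{e'}),
\]
so $P_e(\omega_e)=P_{e'}(\omega_{e'})$ for \emph{all} positive-probability atoms. This forces each $X_e$ to be uniform on a common finite support size, and finiteness of the space follows. Your identification of uniformity as the main obstacle was correct; the missing idea is to bypass entropy entirely and compare atom probabilities through a circuit.
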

The first part of the theorem relies on standard facts concerning entropy functions and the second part of the theorem is a trivial generalization of a result by  Mat\'{u}\v{s} in \cite[p.190-191]{Matus_background} (which follows from the proof given in that paper).

\subsection{Entropic representations of partial Dowling geometries}\label{sec:entropic_reps}
In this section we extract group-theoretic information from entropic representations of partial Dowling geometries.
\begin{theorem}\label{thm:entropic_Dowling_reps}
	Let $G$ 
	be a group with a symmetric triangular presentation $\left\langle S\mid R\right\rangle$ and let $M$
	be the associated partial Dowling geometry.
	If $M$ is entropic
	then there exists $n\in\mathbb{N}$ such that there exists a group homomorphism $\rho:G\rightarrow S_{n}$ with $\rho\left(s\right)\neq\rho\left(s^{\prime}\right)$
	for distinct $s,s^{\prime}\in S$.
\end{theorem}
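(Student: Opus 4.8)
The plan is to extract, from an entropic (equivalently probability-space) representation of the generalized Dowling geometry $M$, a representation of the associated Dowling groupoid $\mathcal{G}$ in the category of finite sets with a fixed uniform measure, and then use \Cref{lem:groupoid_rep_to_group_rep} to turn it into a homomorphism $G\to S_n$. Concretely, by \Cref{thm:entropic_iff_prob_rep} we may assume there is a probability space representation $\{X_e\}_{e\in E}$ on a finite probability space in which every singleton has positive probability, and in which every $X_e$ is uniformly distributed; since the basis $\{b_1,b_2,b_3\}$ is independent, the variable $X_{\{b_1,b_2,b_3\}}=(X_{b_1},X_{b_2},X_{b_3})$ is surjective onto $\Omega_{b_1}\times\Omega_{b_2}\times\Omega_{b_3}$, and by uniformity $|\Omega_{b_1}|=|\Omega_{b_2}|=|\Omega_{b_3}|=:n$.

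The key step is to read off, for each line of $M$, a bijection between the fibers of the two basis elements spanning it. Fix distinct $i,j$ and consider the line $\{b_i,b_j\}\cup\{s_i : s\in S\}$ (with appropriate indexing of the copies of $s$). Any two distinct elements on this line form a basis of $M$, hence are independent, and any three of them form a circuit, hence each is determined by the other two. In particular, for $s\in S$ the copy $s_\ell$ on this line is determined by $X_{b_i}$ and $X_{b_j}$; combined with independence of pairs and uniformity this forces the joint distribution of $(X_{b_i},X_{b_j})$ to be uniform on a set which, via the determination functions, is the graph of a bijection $\pi_{s,i,j}:\Omega_{b_i}\to\Omega_{b_j}$ (so that $X_{b_j}=\pi_{s,i,j}\circ X_{b_i}$ whenever we restrict to the event determining the value of $X_{s_\ell}$; a short combinatorial argument using that $X_{b_i},X_{b_j}$ are independent and uniform, and that $X_{s_\ell}$ is a non-constant function of each alone on suitable events, pins this down). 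The element $e\in S$, whose three copies lie on the three lines, must give the identity bijections after a harmless normalization, exactly mirroring \Cref{lem:groupoid_one_object_rep}; this is what lets us identify all three $\Omega_{b_i}$ with a single set $[n]$.

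Next I would verify the groupoid relations. The relation $g_{s,j,i}\circ g_{s,i,j}=\mathrm{id}$ holds because the two copies of $s$ on the lines through $b_i$ and through $b_j$ come from the \emph{same} random variable, so the bijections they induce are mutually inverse. For a relator $s''s's=e$ in $R$ and a cyclic shift $(i,j,k)$, the set $\{s_i,s'_j,s''_k\}$ is a flat (a line) of $M$, hence a circuit together with any basis element it does not span; chasing the determination relations around the triangle $b_i\to b_j\to b_k\to b_i$ forces $\pi_{s'',k,i}\circ\pi_{s',j,k}\circ\pi_{s,i,j}=\mathrm{id}$. Thus the assignment $b_i\mapsto[n]$, $g_{s,i,j}\mapsto\pi_{s,i,j}$ is a representation $F:\mathcal{G}\to\mathbf{FinSet}$ satisfying properties \eqref{it:groupoid_lem_a}--\eqref{it:groupoid_lem_d}, and \Cref{lem:groupoid_rep_to_group_rep} yields a functor $G\to\mathbf{FinSet}$ landing in $S_n$, i.e.\ a homomorphism $\rho:G\to S_n$ with $\rho(s)=\pi_{s,1,2}$. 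Finally, $\rho(s)\neq\rho(s')$ for distinct $s,s'\in S$: the copies $s_1$ and $s'_1$ are distinct elements on the same line $\{b_1,b_2\}\cup\{t_1\}_{t\in S}$, so $\{b_1,s_1,s'_1\}$ is a circuit and independence of $\{s_1,s'_1\}$ together with non-triviality forces the bijections $\pi_{s,1,2}$ and $\pi_{s',1,2}$ to differ (if they agreed, $X_{s_1}$ and $X_{s'_1}$ would be deterministic functions of each other yet also independent, contradicting non-triviality).

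The main obstacle I expect is the second step: turning the qualitative ``independence + determination'' data on a line into an honest bijection between fiber sets, and checking it is well defined independently of which determination functions one picks and which other basis element on the line one uses as a witness. This requires carefully exploiting uniformity (from \Cref{thm:entropic_iff_prob_rep}) and the fact that on a rank-$2$ line every pair is a basis and every triple is a circuit, so that the "coordinates" $X_{b_i}$, $X_{b_j}$, and each $X_{s_\ell}$ are three pairwise-independent uniform variables any two of which determine the third — a classical ``Latin square'' situation. Once that rigidity lemma is in place, the verification of the groupoid relations and of injectivity of $\rho$ on $S$ is essentially bookkeeping parallel to \Cref{sec:dowling_representations}.
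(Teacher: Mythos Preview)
Your overall plan---extract from the probability space representation a functor $\mathcal{G}\to\mathrm{FinSet}$ and then apply \Cref{lem:groupoid_one_object_rep,lem:groupoid_rep_to_group_rep}---is the same as the paper's. The gap is in the construction of the bijection $\pi_{s,i,j}:\Omega_{b_i}\to\Omega_{b_j}$.

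Since $\{b_i,b_j\}$ is independent, $(X_{b_i},X_{b_j})$ is uniform on the \emph{full} product $\Omega_{b_i}\times\Omega_{b_j}$, not on the graph of a bijection. What the Latin-square structure actually gives you is, for each value $\sigma\in\Omega_{s_\ell}$, a bijection $\pi_{s,i,j}^{(\sigma)}:\Omega_{b_i}\to\Omega_{b_j}$ (namely $\omega_i\mapsto f_{s,i,j}(\omega_i,\sigma)$). These bijections are distinct for distinct $\sigma$ (indeed this is exactly what makes $X_{s_\ell}$ recoverable from $X_{b_i},X_{b_j}$). There is no canonical way to collapse this family to a single $\pi_{s,i,j}$, and choosing one value $\sigma$ for each $s_\ell$ will in general destroy the relation $\pi_{s'',k,i}^{(\sigma'')}\circ\pi_{s',j,k}^{(\sigma')}\circ\pi_{s,i,j}^{(\sigma)}=\mathrm{id}$: that identity holds only when $(\sigma,\sigma',\sigma'')$ is a compatible triple for the circuit $\{s_i,s'_j,s''_k\}$, and there is no reason all such choices can be made consistently across all relators simultaneously. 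Your sentence ``$X_{b_j}=\pi_{s,i,j}\circ X_{b_i}$'' would in fact make $X_{b_j}$ a function of $X_{b_i}$, contradicting independence.

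The paper's fix (\Cref{thm:Dowling_prob_rep}) is to keep the parameter: define the functor on $F(b_i)=\Omega_{b_i}\times\Omega$ and set
\[
\varphi_{s,i,j}(\omega_i,\omega)=\bigl(f_{s,i,j}(\omega_i,X_{s_\ell}(\omega)),\,\omega\bigr),
\]
so the second coordinate $\omega$ records which member of the Latin-square family is being used. The relation check for a relator $s''s's=e$ then succeeds precisely because $\{s_i,s'_j,s''_k\}$ is a circuit, so the three parameters $X_{s_i}(\omega),X_{s'_j}(\omega),X_{s''_k}(\omega)$ are automatically compatible for every $\omega$. The faithfulness argument ($\varphi_{s,i,j}\neq\varphi_{s',i,j}$) is essentially your final paragraph. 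Once you incorporate the $\Omega$-factor into the objects, the rest of your outline goes through; the resulting $n$ is $|\Omega_{b_1}|\cdot|\Omega|$, not $|\Omega_{b_1}|$.
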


This follows from the following more technical result using \cref{lem:groupoid_one_object_rep,lem:groupoid_rep_to_group_rep}.

\begin{theorem}
	\label[theorem]{thm:Dowling_prob_rep} Let $\left\langle S\mid R\right\rangle $
	be a group with a symmetric triangular presentation. 
	Let $M=\left(E,\mathcal{C}\right)$
	be the associated partial Dowling geometry, and let $\mathcal{G}$
	be the corresponding groupoid. Suppose $M$ has a probability space
	representation in a discrete probability space $\left(\Omega,\mathcal{F},P\right)$,
	with each $e\in E$ assigned the measurable space $\left(\Omega_{e},\mathcal{F}_{e}\right)$
	and the measurable function $X_{e}:\Omega\rightarrow\Omega_{e}$.
	For $s\in S$ and each circuit $C\in\mathcal{C}$ of the form $\left\{ b_{i},b_{j},s_{i}\right\} $
	(with $i,j$ distinct) let 
	\begin{align*}
	&f_{s,i,j}:\Omega_{b_{i}}\times\Omega_{s}\rightarrow\Omega_{b_{j}}\text{ and}\\
	&f_{s,j,i}:\Omega_{b_{j}}\times\Omega_{s}\rightarrow\Omega_{b_{i}}
	\end{align*}
	be the two corresponding determination functions of the circuit.
	
	Further define
	\begin{align*}
	&\varphi_{s,i,j}:\Omega_{b_{i}}\times\Omega\rightarrow\Omega_{b_{j}}\times\Omega\\
	&\varphi_{s,i,j}\left(\omega_{i},\omega\right)=\left(f_{s,i,j}\left(\omega_{i,}X_{s_{i}}\left(\omega\right)\right),\omega\right)
	\end{align*}
	and similarly
	\begin{align*}
	&\varphi_{s,j,i}:\Omega_{b_{j}}\times\Omega\rightarrow\Omega_{b_{i}}\times\Omega\\
	&\varphi_{s,j,i}\left(\omega_{j},\omega\right)=\left(f_{s,j,i}\left(\omega_{j,}X_{s_{i}}\left(\omega\right)\right),\omega\right).
	\end{align*}
	Then there is a functor $F:\mathcal{G}\rightarrow\mathrm{FinSet}$
	defined on objects by $F\left(b_{i}\right)=\Omega_{b_{i}}\times\Omega$
	and on the generators of the morphisms by $F\left(g_{s,i,j}\right)=\varphi_{s,i,j}$.
	This functor is faithful (that is, it maps distinct generating morphisms
	to distinct morphisms.) More explicitly:
	\begin{enumerate}
		\item The functions $\varphi_{s,i,j}$ and $\varphi_{s,j,i}$ are mutually
		inverse.
		\item If $\left(i,j,k\right)$ is an even permutation of $\left(1,2,3\right)$
		and $s^{\prime\prime}s^{\prime}s=e$ is a relation in $R$ then
		\[
		\varphi_{s^{\prime\prime},k,i}\circ\varphi_{s^{\prime},j,k}\circ\varphi_{s,i,j}=\mathrm{id}_{\Omega_{b_{i}}\times\Omega}.
		\]
		\item If $s,s^{\prime}\in S$ are distinct elements and $i,j\in\left\{ 1,2,3\right\} $
		are distinct then $\varphi_{s,i,j}\neq\varphi_{s^{\prime},i,j}$.
	\end{enumerate}
\end{theorem}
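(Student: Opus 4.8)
The three displayed assertions are exactly what has to be checked: (a) and (b) say that the assignment $g_{s,i,j}\mapsto\varphi_{s,i,j}$ respects the two families of defining relations of $\mathcal{G}$ (\cref{def:Dowling_groupoid}), hence extends to a functor, and (c) is the faithfulness statement. The plan is to verify (a), (b), (c) using a single observation. Because $\left(\Omega,\mathcal{F},P\right)$ is discrete we may, as in the remark following \cref{def:prob_space_rep}, assume every singleton has positive probability, so that $\left(X_e\right)_{e\in A}$ is surjective onto $\prod_{e\in A}\Omega_e$ for every independent $A\subseteq E$. Since $M$ is a simple rank-$3$ matroid with basis $\{b_1,b_2,b_3\}$ (\cref{def:GDG}), every $2$-element subset of $E$ is independent; in particular each $\{b_i,b_j,s_i\}$ and each relation-flat $\{s_i,s'_j,s''_k\}$ (for a relation $s''s's=e$ in $R$ and a cyclic shift $(i,j,k)$) is a $3$-element circuit, while each $\{b_i,s_i,s'_j\}$ (for a cyclic shift $(i,j,k)$) is a basis — the last because the line spanned by $\{b_i,s_i\}$ equals the line spanned by $\{b_i,b_j\}$, which does not contain $s'_j$.

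Combining surjectivity with the Determination property gives what I will call the \emph{slice lemma}: for a $3$-circuit $\{x,y,z\}$ with determination function $f$ for $X_z$ given $\{X_x,X_y\}$ and determination function $g$ for $X_y$ given $\{X_x,X_z\}$, the map $b\mapsto f(a,b)$ is, for each fixed $a\in\Omega_x$, a bijection $\Omega_y\to\Omega_z$ with inverse $c\mapsto g(a,c)$; and $f(X_x(\omega'),X_y(\omega'))=X_z(\omega')$ for every $\omega'\in\Omega$ by definition of a determination function. Injectivity: if $f(a,b)=f(a,b')$, choose (surjectivity) $\omega,\omega'$ with $(X_x,X_y)$-values $(a,b)$ and $(a,b')$; then $X_z(\omega)=X_z(\omega')$, so $b=b'$ by determination of $X_y$ from $\{X_x,X_z\}$. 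Surjectivity and the inverse identity follow by evaluating $f$ and $g$ on preimages under $(X_x,X_z)$ and $(X_x,X_y)$. For claim (a), fix $\omega_i\in\Omega_{b_i}$, $\omega\in\Omega$, put $t=X_{s_i}(\omega)$, and pick $\omega'$ with $(X_{b_i},X_{s_i})(\omega')=(\omega_i,t)$ (using that $\{b_i,s_i\}$ is independent). Then $f_{s,i,j}(\omega_i,t)=f_{s,i,j}(X_{b_i}(\omega'),X_{s_i}(\omega'))=X_{b_j}(\omega')$, and since $X_{s_i}(\omega')=t$ also $f_{s,j,i}(X_{b_j}(\omega'),t)=X_{b_i}(\omega')=\omega_i$; hence $\varphi_{s,j,i}\circ\varphi_{s,i,j}(\omega_i,\omega)=(\omega_i,\omega)$, and the symmetric computation gives $\varphi_{s,i,j}\circ\varphi_{s,j,i}=\mathrm{id}$. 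So each $\varphi_{s,i,j}$ is a bijection and $F$ respects the relations $g_{s,j,i}\circ g_{s,i,j}=\mathrm{id}_{b_i}$.

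Claim (b) is the step I expect to be the main obstacle. Fix a cyclic shift $(i,j,k)$, a relation $s''s's=e$ in $R$, a point $\omega\in\Omega$, and an \emph{arbitrary} $\omega_i\in\Omega_{b_i}$; I must show the composition fixes $(\omega_i,\omega)$. Set $t=X_{s_i}(\omega)$ and $t'=X_{s'_j}(\omega)$. Since $\{b_i,s_i,s'_j\}$ is a basis, choose $\omega'$ with $(X_{b_i},X_{s_i},X_{s'_j})(\omega')=(\omega_i,t,t')$; since $\{s_i,s'_j,s''_k\}$ is a circuit, $X_{s''_k}$ is a function of $(X_{s_i},X_{s'_j})$, whence $X_{s''_k}(\omega')=X_{s''_k}(\omega)=:t''$. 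Tracing the first coordinate through the composition and applying the Determination property at $\omega'$ at each step:
\begin{align*}
f_{s,i,j}(\omega_i,t)&=f_{s,i,j}(X_{b_i}(\omega'),X_{s_i}(\omega'))=X_{b_j}(\omega'),\\
f_{s',j,k}(X_{b_j}(\omega'),t')&=f_{s',j,k}(X_{b_j}(\omega'),X_{s'_j}(\omega'))=X_{b_k}(\omega'),\\
f_{s'',k,i}(X_{b_k}(\omega'),t'')&=f_{s'',k,i}(X_{b_k}(\omega'),X_{s''_k}(\omega'))=X_{b_i}(\omega')=\omega_i.
\end{align*}
The second coordinate is unchanged, so $\varphi_{s'',k,i}\circ\varphi_{s',j,k}\circ\varphi_{s,i,j}=\mathrm{id}_{\Omega_{b_i}\times\Omega}$ and $F$ is a well-defined functor. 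The difficulty is precisely that a naive computation only shows the composition fixes the pairs $(X_{b_i}(\omega),\omega)$; upgrading this to \emph{all} pairs $(\omega_i,\omega)$ genuinely uses the matroidal structure — the relation-flat $\{s_i,s'_j,s''_k\}$ forces $X_{s''_k}$ to be a function of $X_{s_i}$ and $X_{s'_j}$, while $\{b_i,s_i,s'_j\}$ being a basis leaves $X_{b_i}$ unconstrained, so one can realize every prescribed $\omega_i$ by a point $\omega'$ carrying the same $s$-coordinates as $\omega$.

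Finally, for claim (c), suppose $\varphi_{s,i,j}=\varphi_{s',i,j}$ with $s\neq s'$; then $f_{s,i,j}(\omega_i,X_{s_i}(\omega))=f_{s',i,j}(\omega_i,X_{s'_i}(\omega))$ for all $\omega_i\in\Omega_{b_i}$ and $\omega\in\Omega$. Fix $\omega_i$ and $u'\in\Omega_{s'_i}$. As $\{s_i,s'_i\}$ is independent, for every $u\in\Omega_{s_i}$ there is $\omega$ with $(X_{s_i},X_{s'_i})(\omega)=(u,u')$, so $f_{s,i,j}(\omega_i,u)=f_{s',i,j}(\omega_i,u')$ does not depend on $u$; that is, $u\mapsto f_{s,i,j}(\omega_i,u)$ is constant on $\Omega_{s_i}$. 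But by the slice lemma applied to the circuit $\{b_i,b_j,s_i\}$ this map is a bijection onto $\Omega_{b_j}$, and $\Omega_{s_i}$ has at least two elements because $s_i$ is not a loop (so the non-triviality condition of \cref{def:prob_space_rep} applies) — a contradiction. Hence $\varphi_{s,i,j}\neq\varphi_{s',i,j}$, so $F$ is faithful, and together with the definition $F(b_i)=\Omega_{b_i}\times\Omega$ on objects this completes the proof.
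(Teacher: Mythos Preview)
Your proof is correct and follows essentially the same approach as the paper's. Parts (a) and (b) are virtually identical to the paper's argument: pick an auxiliary point $\omega'$ realizing the required coordinates (using surjectivity from independence), then evaluate the determination functions at $\omega'$. For part (c) you package the bijectivity of the slices $u\mapsto f_{s,i,j}(\omega_i,u)$ into a reusable ``slice lemma'' and derive a contradiction from a constant bijection on a set of size $\ge 2$; the paper instead directly exhibits $\omega,\omega'$ with $X_{s'_i}(\omega)=X_{s'_i}(\omega')$ but $X_{s_i}(\omega)\neq X_{s_i}(\omega')$ and uses determination of $X_{s_i}$ by $\{X_{b_i},X_{b_j}\}$ to rule out $\varphi_{s,i,j}(\omega_i,\omega)=\varphi_{s,i,j}(\omega_i,\omega')$. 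These are the same idea organized differently, and your packaging is arguably cleaner.

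One small omission: the theorem asserts that $F$ lands in $\mathrm{FinSet}$, not just $\mathrm{Set}$, and you never justify that $\Omega_{b_i}\times\Omega$ is finite. The paper handles this in one line by invoking \cref{thm:entropic_iff_prob_rep} (the generalized Dowling geometry is connected of rank $3$, so the underlying probability space may be taken finite). You should add this remark.
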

\begin{proof}
	We assume, as we may by \cref{thm:entropic_iff_prob_rep}, that $\Omega$ (together with all probability spaces $\Omega_e$ for $e\in E$) is finite. Thus if $F$ defines a functor its values are in $\mathrm{FinSet}$ (rather than just $\mathrm{Set}$.) To show $F$ is a functor it suffices to prove the three statements above.
	\begin{enumerate}
		\item Let $\left(\omega_{i},\omega\right)\in\Omega_{b_{i}}\times\Omega$,
		and assume without loss of generality that $i$ precedes $j$ in the
		cyclic ordering of the indices. Denote $\omega_{s}=X_{s_{i}}\left(\omega\right)$.
		Then there exists $\omega^{\prime}\in\Omega$ such that $X_{b_{i}}\left(\omega^{\prime}\right)=\omega_{i}$
		and $X_{s_{i}}\left(\omega^{\prime}\right)=\omega_{s}$, since $X_{\left\{ b_{i},s_{i}\right\} }$
		is surjective. Denote $\omega_{j}=X_{b_j}\left(\omega^{\prime}\right)$.
		Then
		\[
		f_{s,i,j}\left(\omega_{i},\omega_{s}\right)=f_{s,i,j}\circ X_{\left\{ b_{i},s_{i}\right\} }\left(\omega^{\prime}\right)=X_{b_{j}}\left(\omega^{\prime}\right)=\omega_j,
		\]
		and similarly $f_{s,j,i}\left(\omega_{j},\omega_{s}\right)=f_{s,j,i}\circ X_{\left\{ b_{j},s_{i}\right\} }\left(\omega^{\prime}\right)=X_{b_{i}}\left(\omega^{\prime}\right)=\omega_i$.
		It follows that
		\[
		\varphi_{s,i,j}\left(\omega_{i},\omega\right)=\left(\omega_{j},\omega\right)\quad\text{and}\quad\varphi_{s,j,i}\left(\omega_{j},\omega\right)=\left(\omega_{i},\omega\right).
		\]
		\item Let $\left(\omega_{i},\omega\right)\in\Omega_{b_{i}}\times\Omega$,
		and denote $\omega_{s}=X_{s_{i}}\left(\omega\right)$, $\omega_{s^{\prime}}=X_{s_{j}^{\prime}}\left(\omega\right)$.
		Since $\left\{ b_{i},s_{i},s_{j}^{\prime}\right\} $ is an independent
		set, there exists $\omega^{\prime}\in\Omega$ such that
		\[
		X_{b_{i}}\left(\omega^{\prime}\right)=\omega_{i},\quad X_{s_{i}}\left(\omega^{\prime}\right)=\omega_{s},\quad\text{and }X_{s_{j}^{\prime}}\left(\omega^{\prime}\right)=\omega_{s^{\prime}}.
		\]
		Since $\left\{ s_{i},s_{j}^{\prime},s_{k}^{\prime\prime}\right\} \in\mathcal{C}$,
		the variable $X_{s_{k}^{\prime\prime}}$ is determined by the values
		of $X_{s_{i}}$ and $X_{s_{j}^{\prime}}$, and we have
		\[
		X_{s_{k}^{\prime\prime}}\left(\omega^{\prime}\right)=X_{s_{k}^{\prime\prime}}\left(\omega\right)
		\]
		because the same equalities hold for $X_{s_{i}}$ and $X_{s_{j}^{\prime}}$.
		Using this, we compute:
		\[
		\varphi_{s,i,j}\left(\omega_{i},\omega\right)=\left(f_{s,i,j}\left(\omega_{i},\omega_{s}\right),\omega\right)=\left(X_{b_{j}}\left(\omega^{\prime}\right),\omega\right)
		\]
		where the last equality holds because $X_{b_{i}}\left(\omega^{\prime}\right)=\omega_{i}$
		and $X_{s_{i}}\left(\omega^{\prime}\right)=\omega_{s}$. In precisely
		the same way,
		\begin{align*}
		\varphi_{s^{\prime},j,k}\left(X_{b_{j}}\left(\omega^{\prime}\right),\omega\right)&=\left(f_{s^{\prime},j,k}\left(X_{b_{j}}\left(\omega^{\prime}\right),\omega_{s^{\prime}}\right),\omega\right)=\left(X_{b_{k}}\left(\omega^{\prime}\right),\omega\right)\quad\text{and} \\
		\varphi_{s^{\prime\prime},k,i}\left(X_{b_{k}}\left(\omega^{\prime}\right),\omega\right)&=\left(f_{s^{\prime\prime},k,i}\left(X_{b_{k}}\left(\omega^{\prime}\right),X_{s_{k}^{\prime\prime}}\left(\omega^{\prime}\right)\right),\omega\right)=\left(\omega_{i},\omega\right)
		\end{align*}
		so that $\varphi_{s^{\prime\prime},k,i}\circ\varphi_{s^{\prime},j,k}\circ\varphi_{s,i,j}\left(\omega_{i},\omega\right)=\left(\omega_{i},\omega\right)$
		where $\left(\omega_{i},\omega\right)\in\Omega_{b_{i}}\times\Omega$
		is an arbitrary element.
		\item Assume without loss of generality that $i$ precedes $j$ in the cyclic
		ordering, and consider the circuits $C_{1}=\left\{ b_{i},b_{j},s_{i}\right\} $
		and $C_{2}=\left\{ b_{i},b_{j},s_{i}^{\prime}\right\} $ in $M$.
		Since $\left\{ s_{i},s_{i}^{\prime}\right\} $ is an independent subset,
		there exist elements $\omega,\omega^{\prime}\in\Omega$ such that
		$X_{s_{i}^{\prime}}\left(\omega\right)=X_{s_{i}^{\prime}}\left(\omega^{\prime}\right)$
		but $X_{s_{i}}\left(\omega\right)\neq X_{s_{i}}\left(\omega^{\prime}\right)$
		(here we used the non-triviality condition of probability space representations
		of matroids). 
		
		Fix $\omega_{i}\in\Omega_{b_{i}}$. Then by definition
		\[
		\varphi_{s^{\prime},i,j}\left(\omega_{i},\omega\right)=f_{s^{\prime},i,j}\left(\omega_{i},X_{s_{i}^{\prime}}\left(\omega\right)\right)=f_{s^{\prime},i,j}\left(\omega_{i},X_{s_{i}^{\prime}}\left(\omega^{\prime}\right)\right)=\varphi_{s^{\prime},i,j}\left(\omega_{i},\omega^{\prime}\right).
		\]
		Suppose for a contradiction that $\omega_{j}\coloneqq\varphi_{s,i,j}\left(\omega_{i},\omega\right)=\varphi_{s,i,j}\left(\omega_{i},\omega^{\prime}\right)$
		also. Since $\left\{ b_{i},s_{i}\right\} $ are independent we can
		find $\widetilde{\omega},\widetilde{\omega}^{\prime}\in\Omega$ such that
		$\left(X_{b_{i}},X_{s_{i}}\right)\left(\widetilde{\omega}\right)=\left(\omega_{i},X_{s_{i}}\left(\omega\right)\right)$
		and $\left(X_{b_{i}},X_{s_{i}}\right)\left(\widetilde{\omega}^{\prime}\right)=\left(\omega_{i},X_{s_{i}}\left(\omega^{\prime}\right)\right)$.
		Using the fact that $X_{b_{j}}$ is determined by $X_{b_{i}}$ and
		$X_{s_{i}}$ we obtain the equalities 
		\begin{align*}
		\left(X_{b_{i}},X_{s_{i}},X_{b_{j}}\right)\left(\widetilde{\omega}\right)&=\left(\omega_{i},X_{s_{i}}\left(\omega\right),\omega_{j}\right)\quad\text{and}\\ \left(X_{b_{i}},X_{s_{i}},X_{b_{j}}\right)\left(\widetilde{\omega}^{\prime}\right)&=\left(\omega_{i},X_{s_{i}}\left(\omega^{\prime}\right),\omega_{j}\right).
		\end{align*}
		In particular $\left(X_{b_{i}},X_{b_{j}}\right)\left(\widetilde{\omega}\right)=\left(\omega_{i},\omega_{j}\right)=\left(X_{b_{i}},X_{b_{j}}\right)\left(\widetilde{\omega}^{\prime}\right)$.
		But $X_{s_{i}}$ is determined by $X_{b_{i}}$ and $X_{b_{j}}$, so
		\[
		X_{s_{i}}\left(\omega\right)=X_{s_{i}}\left(\widetilde{\omega}\right)=X_{s_{i}}\left(\widetilde{\omega}^{\prime}\right)=X_{s_{i}}\left(\omega^{\prime}\right).
		\]
		This is a contradiction.\qedhere
	\end{enumerate}
\end{proof}

\section{Multilinear representations of matroids}\label{sec:multilinear}

Multilinear matroids are entropic~\cite{Mat99}, so the results of
\Cref{sec:entropic_reps} are valid for them as well: a multilinear
representation of a partial Dowling geometry gives rise, by the
correspondences described above, to a representation of the associated
groupoid. We prove a partial converse to this result, which states
that under certain conditions a matrix representation of a group $\left\langle S\mid R\right\rangle $
implies that the corresponding partial Dowling geometry is multilinear.
But first we digress and discuss multilinear matroid representations
on their own terms: We introduce an equivalent definition of multilinear
representability which is directly analogous to probability space
representations. We feel this definition helps clarify what is going
on:
groupoid representations are constructed using determination functions in a manner similar to the entropic case.

\subsection{Notation for vector spaces and linear maps}\label{sec:map_notation}
We introduce some notation which closely parallels the notation for probability spaces and random variables introduced in \cref{sec:rand_variable_notation}.

Let $\F$ be a field. An indexed collection of linear maps on a vector space $V$ over $\F$ consists of an index set $E$, a collection of vector spaces $\{W_e\}_{e\in E}$, and a collection of linear maps $\{T_e:V\to W_e\}_{e\in E}$. As in \cref{sec:rand_variable_notation}, we sometimes write ``let $\{T_e\}_{e\in E}$ be a collection of linear maps on $V$, and refer to the codomain of each $T_e$ by $W_e$ (without naming $W_e$ explicitly).

Given a tuple $S=(s_1,\ldots,s_n)$ of elements of $E$, we denote $W_S = \bigoplus_{i=1}^n W_{s_i}$ and define a linear map $T_S:V\to W_S$ by
\[T_S(v) = (T_{s_i}(v))_{i=1}^n.\]
If the order is inessential, the same notation can be used if $S$ is a set.

\subsection{Vector space representations}
\label{sec:vec_reps}

The following terminology is nonstandard, but useful because of the
close analogy with random variables, probability spaces, and probability
space representations of matroids.
All vector spaces in this section are over a fixed field $\mathbb{F}$ and assumed to be finite dimensional.

\begin{defn}\label[definition]{def:vec_rep}
	Let $V$ be a vector space, let $E$ be a finite set, and let $\{T_e\}_{e\in E}$ be a collection of linear maps on $V$.
	\begin{enumerate}
		\item The maps $\{T_e\}_{e\in E}$ are \emph{independent} if $\rk(T_E) = \sum_{e\in E}\dim W_e$.
		\item Fix $x\in E$. The map $T_x$ is \emph{determined} by $\{T_e\}_{e\in E\setminus\{x\}}$ if there exists a linear map $S:W_{E\setminus\{x\}}\to W_x$ such that
		\[T_x = S\circ T_{E\setminus\{x\}}.\]
	\end{enumerate}
\end{defn}

\begin{defn}
	\label[definition]{def:vect_space_rep}Let $M$ be a matroid on $E$.
	A \emph{vector space representation} of $M$ consists of $c\in \N$, a vector space $V$, a collection of vector spaces $\{W_{e}\}_{e\in E}$ with $\dim W_e=c$ for all $e\in E$, and a collection of linear maps $\{T_{e}:V\rightarrow W_{e}\}_{e\in E}$.
	These are required to satisfy:
	\begin{enumerate}
		\item If $A\subseteq E$ is independent in $M$ then the maps $\left\{ T_{e}\right\} _{e\in A}$
		are independent.
		\item If $c\in C\subseteq E$ is a circuit in $M$ then $T_{c}$ is determined
		by $\left\{ T_{e}\right\} _{e\in C\setminus\left\{ c\right\} }$. 
	\end{enumerate}
\end{defn}

We hope the proliferation of similar names (linear and multilinear
representations of matroids, vector space representations) does
not cause confusion.
Vector space representability for matroids is
equivalent to multilinear representability.

\begin{theorem}
	A simple matroid has a vector space representation if and only
	if it is multilinear.
\end{theorem}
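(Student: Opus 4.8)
The plan is to prove the two implications separately, moving between the combinatorial data of the matroid and the linear-algebraic data of a representation.

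For the ``only if'' direction, suppose the simple matroid $M=(E,r)$ has a vector space representation consisting of $c\in\N$, a vector space $V$, codomains $\{W_e\}_{e\in E}$ with $\dim W_e=c$, and linear maps $\{T_e:V\to W_e\}_{e\in E}$. First I would pass to the subspaces $U_e\coloneqq(T_e)^{-1}(0)^{\perp}$, or more invariantly work with the kernels $K_e=\ker T_e$ and their orthocomplements; the cleaner move is to replace each $T_e$ by the induced surjection $V/\bigcap_{f}K_f\to W_e$ and identify $W_e$ with a subspace via a splitting. Concretely, set $V' = V/(\bigcap_{e\in E}K_e)$; the maps $T_e$ descend to $V'$, and after choosing a splitting one gets subspaces $\widetilde W_e\subseteq V'$ with $\dim\widetilde W_e = \dim\img T_e$. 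The independence axiom applied to singletons (which are independent since $M$ is simple and loopless) gives $\dim\img T_e = c$ for each $e$; the independence axiom for a general independent set $A$ gives $\dim\bigl(\sum_{e\in A}\widetilde W_e\bigr) = c\,|A| = c\,r(A)$. The remaining point is to check $\dim\bigl(\sum_{e\in S}\widetilde W_e\bigr) = c\,r(S)$ for \emph{dependent} $S$ as well: here one uses the determination axiom on circuits, which says that for a circuit $C$ and $c_0\in C$ the map $T_{c_0}$ factors through $T_{C\setminus\{c_0\}}$, hence $\widetilde W_{c_0}\subseteq \sum_{e\in C\setminus\{c_0\}}\widetilde W_e$, so adding $c_0$ does not increase the dimension of the span; combined with submodularity and the independent-set case this pins down $\dim\bigl(\sum_{e\in S}\widetilde W_e\bigr)$ on all of $\mathcal P(E)$ by induction on $|S|$. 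That is exactly a $c$-arrangement, so $M$ is multilinear.

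For the ``if'' direction, suppose $M$ is multilinear with a $c$-arrangement $\{W_e\}_{e\in E}$ inside $V$, where $\dim\bigl(\sum_{e\in S}W_e\bigr)=c\,r(S)$ for all $S$. I would take $T_e:V^{*}\to W_e^{*}$ to be the dual of the inclusion $W_e\hookrightarrow V$ (equivalently, restriction of functionals), so $W_e^{*}$ plays the role of the codomain, of dimension $c$. For an independent set $A$, the natural map $\sum_{e\in A}W_e \to \bigoplus_{e\in A}W_e$... rather, one checks $\rk(T_A)=\sum_{e\in A}\dim W_e^{*}$: the image of $T_A:V^{*}\to\bigoplus_{e\in A}W_e^{*}$ is the annihilator data controlled by $\dim(\sum_{e\in A}W_e) = c\,|A|$, which forces $T_A$ surjective. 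For a circuit $C$ with $c_0\in C$, the defining property $r(C)=r(C\setminus\{c_0\})$ means $W_{c_0}\subseteq\sum_{e\in C\setminus\{c_0\}}W_e$, and dualizing this inclusion yields a factorization of $T_{c_0}$ through $T_{C\setminus\{c_0\}}$, i.e. determination. So the duals $\{T_e\}_{e\in E}$ form a vector space representation.

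The main obstacle I anticipate is bookkeeping the passage between ``subspaces of $V$ summing correctly'' and ``maps out of $V$ with prescribed rank/determination'', because the two formulations are dual to each other and it is easy to conflate $W_e$ with $W_e^{*}$ or to lose track of where the constant $c$ enters; the role of simplicity (ensuring singletons and pairs are independent, so that each $\dim W_e=c$ and the arrangement is genuinely a $c$-arrangement rather than having loops or parallel collapses) must also be used carefully. A secondary subtlety is that the theorem as stated concerns the whole simple matroid, whereas the preceding paragraph hints that in general vector space representability matches multilinearity ``for each connected component''; for a \emph{simple} matroid this distinction evaporates, but I would make sure the inductive dimension count over $\mathcal P(E)$ in the first direction genuinely only needs submodularity plus the circuit/determination input and does not secretly require connectivity.
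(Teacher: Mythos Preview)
Your overall strategy is correct and matches the paper's: both directions hinge on the duality between ``subspaces $W_e\subseteq V$ with $\dim(\sum_{e\in S}W_e)=c\,r(S)$'' and ``maps $T_e$ out of $V$ (or $V^*$) with $\rk(T_S)=c\,r(S)$''. Your ``if'' direction is essentially identical to the paper's: the paper defines $T_e:V^*\to V^*/W_e^0$ (quotient by the annihilator), which is canonically isomorphic to your restriction map $V^*\to W_e^*$, and then verifies the rank formula $\rk(T_S)=\dim(\sum_{e\in S}W_e)$ via $\ker T_S=(\sum_{e\in S}W_e)^0$. The paper packages both directions more symmetrically by first proving the lemma ``$\{T_e\}$ is a vector space representation iff $\rk(T_S)=c\,r(S)$ for all $S$'' and then observing that the annihilator construction translates this exactly into the multilinear condition; your inductive argument over $\mathcal P(E)$ in the ``only if'' part is the content of that lemma.

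There is, however, a genuine gap in your ``only if'' direction. Your first idea---taking $\widetilde W_e=K_e^\perp$ with respect to a \emph{fixed} inner product (equivalently, the annihilators $K_e^0\subseteq V^*$)---is correct, because then $\sum_{e\in S}\widetilde W_e=(\bigcap_{e\in S}K_e)^\perp$ and hence $\dim(\sum\widetilde W_e)=\rk(T_S)$. But what you call the ``cleaner move''---choosing, for each $e$ separately, a splitting of $V'\twoheadrightarrow W_e$ and thereby identifying $W_e$ with some complement $\widetilde W_e$ of $K_e$---does \emph{not} work. Arbitrary complements need not sum correctly: for instance with $c=1$, $V=\mathbb{F}^2$, and $K_1=\spa(e_1)$, $K_2=\spa(e_2)$, $K_3=\spa(e_1+e_2)$ (a vector space representation of $U_{2,3}$), one may choose $\widetilde W_1=\widetilde W_2=\spa(e_1+e_2)$ and $\widetilde W_3=\spa(e_1)$; each is a legitimate complement, yet $\dim(\widetilde W_1+\widetilde W_2)=1\neq 2=r(\{1,2\})$ and $\widetilde W_3\not\subseteq\widetilde W_1+\widetilde W_2$. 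So neither your independence count nor your determination inclusion survives individual splittings. Stick with the fixed-inner-product (or annihilator) version throughout; then your argument goes through and coincides with the paper's.
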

This is a special case of the more general~\Cref{thm:multilinear_and_vector_space_reps}. It also appears, in somewhat implicit form, in \cite[Proposition 2.10]{BBP14}.

Given a representation of a finitely presented group we can construct a vector space representation of the associated partial Dowling geometry under certain conditions.

\begin{theorem}\label{thm:gdg_vector_space_reps}
	Let $G$
	be a group with a symmetric triangular presentation $\left\langle S\mid R\right\rangle$ and let $\rho:G\rightarrow\mathrm{GL}(W)$
	be a linear representation of $G$ in a vector space $W$. Suppose that
	\begin{enumerate}
		\item If $s,s'\in S$ are distinct then $\rho(s)-\rho(s^{\prime})$ is invertible,
		\item Whenever $s,s^{\prime},s^{\prime\prime}\in S$ (not necessarily
		distinct) satisfy $\rho(s s' s'')\neq \mathrm{id}_W$ 
		the linear transformation $\rho(s s' s'')-\mathrm{id}_W$ 
		is invertible, and
		\item For $s,s',s''\in S$ satisfying $\rho(s s' s'')=\mathrm{id}_W$, the equation $s s' s'' =e$ is a relation in~$R$.
	\end{enumerate}
	Then the partial Dowling geometry corresponding
	to the presentation $\left\langle S\mid R\right\rangle $ has a vector space representation.
	
	Moreover, if the representation $\rho$ just satisfies the assumptions (a) and (b) then some matroid of the partial Dowling geometries $\mathcal{M}_{S,R}$ subordinate to $\langle S \mid R \rangle$ has a vector space representation.
\end{theorem}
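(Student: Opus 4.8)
The plan is to construct a vector space representation of the generalized Dowling geometry $M$ of $\langle S\mid R\rangle$ in the sense of \cref{def:vect_space_rep} and then invoke the equivalence of \cref{thm:multilinear_and_vector_space_reps}, noting that $M$ is simple. Put $c=\dim W$, let $V=W^{\oplus 3}$ with projections $\pi_1,\pi_2,\pi_3\colon V\to W$, and set $T_{b_i}=\pi_i$ together with
\[
T_{s_1}=\pi_2-\rho(s)\pi_1,\qquad T_{s_2}=\pi_3-\rho(s)\pi_2,\qquad T_{s_3}=\pi_1-\rho(s)\pi_3
\]
for each $s\in S$. This assignment is invariant under the cyclic rotation $1\mapsto2\mapsto3\mapsto1$ used to define $M$, and each $T_e$ is surjective onto $W$, so all codomains are $c$-dimensional.

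To verify the independence axiom of \cref{def:vect_space_rep}, note first that $M$ is simple, so every pair of elements is independent; a short computation shows each pair of maps is surjective onto $W^{2}$, invoking the invertibility of $\rho(s)-\rho(s')$ from hypothesis~(a) for a pair $\{s_1,s_1'\}$ of distinct elements on a common long line and using only elementary manipulations otherwise. For an independent triple not of the transversal form $\{s_i,s_j',s_k''\}$, a triangular elimination — invoking~(a) exactly when the triple contains two elements $s_i,s_i'$ on a common long line — shows the associated map $V\to W^{3}$ is an isomorphism; since $\dim V=3c=\dim W^{3}$, the determination axiom is then automatic for every $4$-element circuit, as deleting one element of such a circuit leaves an independent triple. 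It remains to treat the $3$-element circuits: on a long line one has explicit identities such as $T_{s_1}=T_{b_2}-\rho(s)T_{b_1}$ and $T_{s_1}-T_{s_1'}=(\rho(s')-\rho(s))\pi_1$, so~(a) makes each element determined by any two others; for a short line $\{s_1,s_2',s_3''\}$ arising from a relation $s''s's=e$ of $R$, the identity $\rho(s'')\rho(s')\rho(s)=\id$ gives the dependence $T_{s_1}+\rho(s')^{-1}T_{s_2'}+\rho(s)T_{s_3''}=0$ with invertible coefficients (and likewise for the cyclic shifts).

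The crux is the independence of a transversal triple $\{s_1,s_2',s_3''\}$ for which $s''s's=e$ is \emph{not} a short line of $M$. Gaussian elimination on the $3\times3$ block form of $(T_{s_1},T_{s_2'},T_{s_3''})\colon V\to W^{3}$ reduces its invertibility to that of $\id-\rho(s')\rho(s)\rho(s'')=(\rho(s'')^{-1}-\rho(s's))\rho(s'')$. Under hypothesis~(c), the triple failing to be a short line means $s''s's=e$ is not a relation of $R$, so $\rho(s''s's)\neq e$ by~(c); since $\rho(s''s's)=e$ is equivalent to $\rho(s'')^{-1}=\rho(s's)$, hypothesis~(b) supplies the required invertibility. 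This completes the vector space representation of $M$, so $M$ is multilinear.

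For the ``moreover'' statement I would, in the absence of hypothesis~(c), pass to $R\cup X$ where $X=\{\,w\in T:\rho(w)=e\,\}$. Since $\rho$ is a homomorphism, $X$ is closed under cyclic shifts and inversion, so $M_X$ is the generalized Dowling geometry of $\langle S\mid R\cup X\rangle$ and hence lies in $\mathcal{M}_{S,R}$; its short lines are precisely the transversal triples $\{s_i,s_j',s_k''\}$ with $\rho(s''s's)=e$. The dependences exhibited above persist for every short line of $M_X$, because $\rho$ kills every relation of $R$ and, by construction, every word of $X$; and every remaining transversal triple has $\rho(s''s's)\neq e$, so hypothesis~(b) applies just as before. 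Therefore $\{T_e\}_{e\in E}$ is a vector space representation of $M_X$, and $M_X$ is multilinear. The main obstacle throughout is precisely the transversal case of the previous paragraph: it is the only place where hypothesis~(b) enters, and it is also the reason one must either impose~(c) or enlarge the relator set.
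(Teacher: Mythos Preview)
Your proposal is correct and follows essentially the same route as the paper: the paper deduces the first statement as the $\varepsilon=0$ case of \Cref{thm:epsilon_reps}, whose explicit construction ($V=W^{\oplus3}$, $T_{b_i}=\pi_i$, $T_{s_i}=\pi_j-\rho(s)\pi_i$) and case-by-case verification coincide with yours, and it derives the ``moreover'' clause exactly as you do, by adjoining to $R$ all three-letter words killed by $\rho$ and applying the first part.
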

\begin{proof}
	This is the special case $\varepsilon=0$ of the more general~\Cref{thm:epsilon_reps} proved below.
\end{proof}

\section{Group scrambling}\label{sec:scrambling}

We introduce a two-step construction to modify finitely presented
groups. Its goal is to facilitate the encoding of word problems into
representation problems for partial Dowling geometries.
The main difficulty is that a linear representation of a group need not satisfy conditions (a,b) of \Cref{thm:gdg_vector_space_reps}.

The first step, which we call \emph{scrambling}, takes as input a
symmetric triangular presentation $\left\langle S\mid R\right\rangle $ of a group
$G$, and outputs a presentation $\left\langle S^{\prime}\mid R^{\prime}\right\rangle $
of $(G\ast F_R)\times\mathbb{Z}^{N}$ where $F_R$ is the free group on the generating set $R$, $\ast$ is the free product of groups, and $N\in \N$ is some natural number.
Any matrix representation of $G$ extends to a representation of $\langle S' \mid R' \rangle$ which satisfies conditions (a,b) of \Cref{thm:gdg_vector_space_reps}.

The second step, which we call \emph{augmentation}, takes as input the
result of the first step together with the original presentation $\left\langle S\mid R\right\rangle $
and a generator $s\in S$. It outputs a presentation of $\left(G\ast F_R\ast F_{4}\right)\times\mathbb{Z}^{N}$
(where $F_{4}=\left\langle z_{1},\ldots,z_{4}\right\rangle $ is the free group
on four generators). The resulting presentation has $z_{1}$ and $sz_{1}s$
as two of its generators; it has a matrix representation satisfying
the conditions of~\Cref{thm:gdg_vector_space_reps} if and only if
there is a matrix representation $\rho$ of $G$ such that $\rho\left(s\right)\neq\rho\left(e\right)$.
The ``only if'' direction follows from the fact that $z_{1}\neq sz_{1}s$
only if $s\neq e$. The ``if'' follows from a direct construction
of a representation, which is rather lengthy and forms a significant
part of what follows.

Throughout this section we work over $\C$ in order to ensure the existence of roots of unity.
Our main aim is to show that certain matroids are entropic, and for this it suffices to show that they are multilinear over some field by~\cite{Mat99}.
Therefore, working over $\C$ results in no loss of generality.

We will use \emph{Tietze transformations} to modify finite group presentations.
These are standard procedures so that two finite presentations define isomorphic group if and only if there exists a sequence of Tietze transformations moving one presentation to the other, see~\cite[Section II.2]{LS77} for details.
\subsection{Sufficiently generic elements}
In the rest of this section we face the following sort of problem several times: given some finitely presented group $G=\langle S \mid R \rangle$, a free group $F$ on some finite set of generators, an element $g\in G\ast F$, and a linear representation $\rho :G\ast F \to \mathrm{GL}_n(\C)$, show that $\rho(g)-I_n$ is invertible or zero.

We have some control over $\rho$. In particular, we are able to ensure that for each $s\in S$ the matrix $\rho(s)$ is either $I_n$ or the permutation matrix of a derangement. This motivates the next definition.

\begin{defn}\label{def:free_sufficiently_generic}
	Let $F_S$ be a free group on the set of generators $S$ and $F_T$ a free group on the set of generators $T$.
	Fix an element $x\in F_S\ast F_T$ and let  $\rho:F_S\ast F_T \to \mathrm{GL}_n(\C)$ be a linear representation.
	We consider the following two properties of $\rho$:
		\begin{enumerate}
		\item For each $s\in S$, $\rho(s)$ is $I_n$ or the permutation matrix of a derangement.
		\item The indexed collection of entries of the matrices $\{\rho(t)\}_{t \in T}$ is algebraically independent over $\Q$.
	\end{enumerate}
	We say that $x$ is a \emph{sufficiently generic word} if for all linear representations  $\rho:F_S\ast F_T \to \mathrm{GL}_n(\C)$ satisfying the conditions (a) and (b), the matrix $\rho(x)-I_n$ is either invertible or $0$.
\end{defn}

\begin{defn}\label{def:sufficiently_generic}
	Let $G$ be a group and let $S$ be a finite set together with a map $\varphi: S\to G$. Let $F_T$ a free group on the set of generators $T$. Denote by $\varphi: F_S \to G$ the group homomorphism mapping each $s \in S \subset F_S$ to the corresponding element $\varphi (s)$ of $G$. An element $x\in G*F_T$ is \emph{sufficiently generic relative to $\varphi$} if there exists a sufficiently generic word $\widetilde{x} \in F_S * F_T$ such that $x$ is conjugate to the image of $\widetilde{x}$ under the map $\varphi*\mathrm{id}_{F_{T}}:F_S*F_{T}\rightarrow G*F_{T}$.
\end{defn}
\begin{rmrk}
	Often, $S$ is either a subset of $G$ or a set of generators in a group presentation $\langle S \mid R \rangle \simeq G$, in which case $\varphi$ is the obvious map $S \to G$. In general, whenever the map $\varphi$ is clear from the context, we omit it and discuss sufficiently generic elements relative to $S$.
\end{rmrk}

We prove that various elements of $G*F_T$ are sufficiently generic.
\begin{lemm}\label{lem:sufficiently_generic}
	Let $g ,g'\in G$ and let $t \in T$. The following elements of $G\ast F_T$ are sufficiently generic relative to $\{g,g'\}$:
	\begin{enumerate}[(i)]
		\item\label{it:suff1} The element $gtgt^{-1}$,
		\item\label{it:suff2} the commutator $\left[t, g \right]$, and
		\item\label{it:suff3} the commutator $\left[ g,tg' \right]$.
	\end{enumerate}
	In particular, in parts \ref{it:suff1},\ref{it:suff2}, the elements are sufficiently generic relative to $\{g\}$.
	Moreover, if $\rho: F_{\{g,g'\}} * F_T \to \mathrm{GL}_n(\mathbb{C})$ is a representation such that $\rho(g)$ is the permutation matrix of a derangement and $\rho(t)$ has entries which are algebraically independent over $\mathbb{Q}$ then the element $w$ of each of \ref{it:suff1},\ref{it:suff2},\ref{it:suff3} satisfies that $\rho(w) - I_n$ is invertible.
\end{lemm}
\begin{proof}
	It suffices to prove the claim with $G$ replaced by the free group $F=\langle g,g' \rangle$.
	For the rest of the proof $G$ denotes this free group.
	
	Let $\rho :G\ast F_T\to \mathrm{GL}_n(\C)$ be a representation satisfying the assumptions (a) and (b) of~\Cref{def:free_sufficiently_generic}.
	So in particular $\rho(g)$ is either $I_n$ or the permutation matrix of a derangement and $\rho(t)$ is a matrix with algebraically independent entries.
	If $\rho(g) = I_n$ then $\rho(x)-I_n$ is $0$ for each of the considered words $x$ and we are done.
	We therefore assume that $\rho(g)$ is the permutation matrix of a derangement.
	
	By \cref{lem:derangement_matrix} each of the matrices $\rho(g)$ and $\rho(g^{-1})$ is conjugate
	to a block diagonal matrix in which each nonzero $m\times m$ block is a diagonal matrix of the form 
	\[
	\left[\begin{matrix}\omega^{0}\\
	& \ddots\\
	&  & \omega^{m-1}
	\end{matrix}\right]
	\]
	for $\omega$ a primitive $m$-th root of unity, and $m\ge 2$ for all blocks.
	Thus by changing basis we may assume that $\rho(g)$ has this form.

	\begin{enumerate}[(i)]
		\item To show $\rho\left(gtgt^{-1}\right)-I_n$ is invertible
		it suffices to show $\rho\left(t\right)-\rho\left(gtg\right)$
		is invertible.
		In a basis in which $\rho\left(g\right)$ has
		the form above, substitute a block diagonal matrix for $\rho(t)$
		in which each diagonal block is of the form
		\[
		\left[\begin{matrix}0 & 0 & 1\\
		0 & \iddots & 0\\
		1 & 0 & 0
		\end{matrix}\right].
		\]
		In this basis, each diagonal block of $\rho\left(t\right)-\rho\left(gtg\right)$
		is of the form
		\begin{align*}
		&\left[\begin{matrix}0 & 0 & 1\\
		0 & \iddots & 0\\
		1 & 0 & 0
		\end{matrix}\right]-\left[\begin{matrix}\omega^{0}\\
		& \ddots\\
		&  & \omega^{m-1}
		\end{matrix}\right]\left[\begin{matrix}0 & 0 & 1\\
		0 & \iddots & 0\\
		1 & 0 & 0
		\end{matrix}\right]\left[\begin{matrix}\omega^{0}\\
		& \ddots\\
		&  & \omega^{m-1}
		\end{matrix}\right]\\
		=&\left[\begin{matrix}0 & 0 & 1\\
		0 & \iddots & 0\\
		1 & 0 & 0
		\end{matrix}\right]-\left[\begin{matrix}0 & 0 & \omega^{m-1}\\
		0 & \iddots & 0\\
		\omega^{m-1} & 0 & 0
		\end{matrix}\right]=\left(1-\omega^{m-1}\right)\left[\begin{matrix}0 & 0 & 1\\
		0 & \iddots & 0\\
		1 & 0 & 0
		\end{matrix}\right],
		\end{align*}
		which has rank $m$, so each block is invertible.
		Thus by \Cref{cor:transcendental_entries} the matrix $\rho\left(gtgt^{-1}\right)-I_n$ is also invertible.
		\item Using~\Cref{cor:transcendental_entries} again it suffices to show that $\left[A,\rho\left(g\right)\right]-I_n$
		is invertible
		for some invertible matrix $A$.
		Again, working in a basis in which
		$\rho\left(g\right)$ has the block diagonal form described
		above and taking an $A$ with the same block structure, it
		suffices to show this on each diagonal block separately. Note that for invertible
		matrices $A,B$, the matrix $\left[A,B\right]-I_n$ is invertible if
		and only if $AB-BA$ is invertible. To see this, note that
		\[
		\left(AB-BA\right)\left(BA\right)^{-1}=ABA^{-1}B^{-1}-I_n.
		\]
		Thus for each integer $m\ge2$ and each primitive $m$-th root of
		unity $\omega$ we need to find an $m\times m$ matrix $A$ such that
		\[
		A\left[\begin{matrix}\omega^{0}\\
		& \ddots\\
		&  & \omega^{m-1}
		\end{matrix}\right]-\left[\begin{matrix}\omega^{0}\\
		& \ddots\\
		&  & \omega^{m-1}
		\end{matrix}\right]A
		\]
		is invertible. Take the matrix $A$ that acts on the standard basis
		$e_{1},\ldots,e_{n}$ of the column space $\C^{n}$ by $Ae_{i}=e_{i+1}$
		for $i<n$, and $Ae_{n}=e_{1}$. Thus
		\[
		A=\left[\begin{matrix}0 &  &  & 1\\
		1 & 0\\
		& \ddots & \ddots\\
		&  & 1 & 0
		\end{matrix}\right]
		\]
		where the unfilled entries are zero. Hence
		\begin{align*}
		&A\left[\begin{matrix}\omega^{0}\\
		& \ddots\\
		&  & \omega^{m-1}
		\end{matrix}\right]-\left[\begin{matrix}\omega^{0}\\
		& \ddots\\
		&  & \omega^{m-1}
		\end{matrix}\right]A\\
		=&\left[\begin{matrix}0 &  &  & \omega^{m-1}\\
		\omega^{0} & 0\\
		& \ddots & \ddots\\
		&  & \omega^{m-2} & 0
		\end{matrix}\right]-\left[\begin{matrix}0 &  &  & \omega^{0}\\
		\omega^{1} & 0\\
		& \ddots & \ddots\\
		&  & \omega^{m-1} & 0
		\end{matrix}\right]=\left(1-\omega\right)A\left[\begin{matrix}\omega^{0}\\
		& \ddots\\
		&  & \omega^{m-1}
		\end{matrix}\right]
		\end{align*}
		which is invertible, as a product of an invertible scalar and two
		invertible matrices.
		\item As $\rho(g')$ is a permutation matrix by assumption, $\rho(tg')$ is a matrix with algebraically independent entries. Thus this case follows from the previous one.\qedhere
	\end{enumerate}
\end{proof}

\subsection{Scrambled groups and their representations}
We encode the properties that our scrambling construction satisfies into a definition,
and work with it axiomatically to defer the discussion of the implementation. The actual construction is
postponed to \cref{sec:scrambling_construction}.
\begin{defn}\label{def:scrambling}
	Let $G$ be a group given by a symmetric triangular presentation $\langle S\mid R\rangle$.
	We	call a finitely presented group $G^{\prime}=\left\langle S^{\prime}\mid R^{\prime}\right\rangle $
	a \emph{scrambling} of $\left\langle S\mid R\right\rangle $ if it
	satisfies the following properties:
	\begin{enumerate}[(PS1), labelwidth=-6mm,  labelindent=2em,leftmargin =!]
		\item\label{it:sc1} $\langle S'\mid R'\rangle$ is a symmetric triangular presentation.
		\item\label{it:sc2} There is an isomorphism $\mu:G^{\prime}\to (G\ast F_R)\times\mathbb{Z}^{N}$
		for some $N\ge0$ where $F_R$ is the free group on the letters $f_r$ for $r\in R$. We denote the projections onto the factors by 
		\begin{align*}
		\pi_{G}&:(G\ast F_R)\times\mathbb{Z}^{N}\rightarrow G,\\
		\pi_{\mathbb{Z}}&:(G\ast F_R)\times\mathbb{Z}^{N}\rightarrow\mathbb{Z}^{N},\\
		\pi_{F,\mathbb{Z}}^\mathrm{ab}&:(G*F_R)\times\Z^N\to \Z^{|R|}\times\Z^N,
		\end{align*}
		where $\pi_{F,\mathbb{Z}}^\mathrm{ab}$ is the composition of the projection to $F_R\times \Z^N$ with the abelianization homomorphism of $F_R$.
		Slightly abusing notation we identify $G'$ with $(G\ast F_R)\times\mathbb{Z}^{N}$ via~$\mu$.
		\item\label{it:sc4} If $s,s^{\prime}\in S^{\prime}$ are distinct then $\pi_{F,\mathbb{Z}}^\mathrm{ab}\left(s\right)\neq\pi_{F,\mathbb{Z}}^\mathrm{ab}\left(s^{\prime}\right)$.
		\item\label{it:sc5} For any $s,s',s''\in S'$ (not necessarily distinct) either 
		\begin{enumerate}[(i)]
			\item $\pi_{F,\mathbb{Z}}^\mathrm{ab}(s^{\prime\prime}s^{\prime}s)\neq 0$,
			\item $s^{\prime\prime}s^{\prime}s=e$ in $G'$, or
			\item $s''s's$ is a sufficiently generic element in $G \ast F_R$ relative to the map $S \to G$ given by the presentation $G = \langle S \mid R \rangle$. (Note that $s''s's$ is in $G\ast F_R \simeq (G \ast F_R)\times\{0\}$ if (i) does not hold.)
		\end{enumerate}
		\item\label{it:sc6} There is a function $i:S\hookrightarrow S'$ such that $\pi_{G}\circ i=\mathrm{id}_{G}\restriction_{S}$.
		\item\label{it:sc7} There is a basis $B=\left\{ b_{1},\ldots,b_{N}\right\} $ of $\mathbb{Z}^{N}$
		and a function $j:B\rightarrow S^{\prime}$ such that $\mu\circ j\left(b_{i}\right)=\left(e_{G\ast F_R},b_{i}\right)$
		for each $1\le i\le N$.
		
		(The functions $i$ and $j$ are to be given explicitly.)
		\item\label{it:sc8} For each $s\in S$ we have $\mu(i(s))\in (G\ast \{e_F\})\times \Z^N\le (G\ast F_R)\times \Z^N$.
		Further, there is a $1\le k\le N$ such that $\pi_{\Z} \left(i\left(s\right)\right)=\sum_{m=1}^{N}c_{m}b_{m}$
		with $c_{k}\ge5$, and such that for each $s^{\prime}\in S^{\prime}$,
		the absolute value of the $b_{k}$-coefficient of $\pi_{\mathbb{Z}}\left(s^{\prime}\right)$
		is at most $c_{k}+1$.
	\end{enumerate}
\end{defn}
We will frequently use the following immediate consequence of the definition of a group scrambling.
\begin{prop}\label{cor:scrambling}
	In the notation of~\Cref{def:scrambling} consider the equation
	\[\pi_{F,\Z}^{\mathrm{ab}}(x''x'x)=0\]
	where $x,x',x''\in G'$.
	If we fix $x=g$ and $x''=g''$ for some generators $g,g''\in S'$ then there is at most one $x'\in S'$ that satisfies this equation.
\end{prop}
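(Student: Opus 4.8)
The plan is to use the single fact that $\pi_{F,\Z}^{\mathrm{ab}}$ is a group homomorphism whose target $\Z^{|R|}\times\Z^N$ is abelian, together with the injectivity statement \ref{it:sc4}. Since the target is abelian, $\pi_{F,\Z}^{\mathrm{ab}}$ is additive on products, so for any $x,x',x''\in G'$ we have
\[
	\pi_{F,\Z}^{\mathrm{ab}}(xx'x'') = \pi_{F,\Z}^{\mathrm{ab}}(x) + \pi_{F,\Z}^{\mathrm{ab}}(x') + \pi_{F,\Z}^{\mathrm{ab}}(x'').
\]
Thus the equation $\pi_{F,\Z}^{\mathrm{ab}}(xx'x'')=0$ with $x=g$ and $x''=g''$ fixed is equivalent to the single equation $\pi_{F,\Z}^{\mathrm{ab}}(x') = -\pi_{F,\Z}^{\mathrm{ab}}(g) - \pi_{F,\Z}^{\mathrm{ab}}(g'')$ in $\Z^{|R|}\times\Z^N$, which pins down $\pi_{F,\Z}^{\mathrm{ab}}(x')$ uniquely as an element of the target abelian group.

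Next I would invoke property \ref{it:sc4}, which says precisely that the restriction of $\pi_{F,\Z}^{\mathrm{ab}}$ to the generating set $S'$ is injective: distinct $s,s'\in S'$ have distinct images $\pi_{F,\Z}^{\mathrm{ab}}(s)\neq\pi_{F,\Z}^{\mathrm{ab}}(s')$. Combining this with the previous paragraph, if $x'\in S'$ satisfies the equation then $\pi_{F,\Z}^{\mathrm{ab}}(x')$ is the fixed element $-\pi_{F,\Z}^{\mathrm{ab}}(g) - \pi_{F,\Z}^{\mathrm{ab}}(g'')$, and by injectivity there is at most one generator in $S'$ with that image. Hence at most one $x'\in S'$ can satisfy $\pi_{F,\Z}^{\mathrm{ab}}(gx'g'')=0$, which is the claim.

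There is essentially no obstacle here: the statement is an immediate bookkeeping consequence of the definition, and the only two ingredients needed (additivity of a homomorphism into an abelian group, and the injectivity clause \ref{it:sc4}) are already available. The proof will therefore be just a few lines spelling out the two paragraphs above.
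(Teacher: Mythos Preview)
Your proposal is correct and is essentially identical to the paper's proof: both use that $\pi_{F,\Z}^{\mathrm{ab}}$ lands in an abelian group to rewrite the equation as $\pi_{F,\Z}^{\mathrm{ab}}(x')=-\pi_{F,\Z}^{\mathrm{ab}}(gg'')$, and then invoke property \ref{it:sc4} to conclude uniqueness.
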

\begin{proof}
	Since the group $\Z^{|R|}\times \Z^N$ is abelian the equation $\pi_{F,\Z}^{\mathrm{ab}}(x''x'x)=0$ is equivalent to $\pi_{F,\Z}^{\mathrm{ab}}(x') =-\pi_{F,\Z}^{\mathrm{ab}}(gg'')$ assuming $x=g$ and $x''=g''$.
	Therefore by property \ref{it:sc4} if there exists a generator $g'$ such that $x'=g'$ fulfills the equation this generator must be unique.
\end{proof}

Let $G = \langle S \mid R \rangle$ be a group with a given symmetric triangular presentation. We prove that certain matrix representations of $G$ extend to nice representations of scramblings of $G$.

\begin{prop}
	\label[proposition]{prop:scrambling_reps}Let $\left\langle S^{\prime}\mid R^{\prime}\right\rangle $
	be a scrambling of $G=\left\langle S\mid R\right\rangle $, so that $\langle S'\mid R'\rangle \simeq (G\ast F_R)\times\mathbb{Z}^{N}$.
	Let	$\rho:G\rightarrow\mathrm{GL}_{n}(\C)$ be a representation satisfying that for each $g\in G$ the matrix $\rho(g)$ is either the permutation matrix of a derangement or the identity matrix.
	Then there exists a representation 
	\[
	\widetilde{\rho}:(G\ast F_R)\times\mathbb{Z}^{N}\rightarrow\mathrm{GL}_{n}(\mathbb{C})
	\]
	which satisfies: 
	\begin{enumerate}
		\item If $s,s^{\prime}\in S^{\prime}$ are distinct then $\widetilde{\rho}\left(s\right)-\widetilde{\rho}\left(s^{\prime}\right)$
		is invertible.
		\item For $s,s^{\prime},s^{\prime\prime}\in S^{\prime}$ (not necessarily
		distinct) the matrix $\widetilde{\rho}\left(s^{\prime\prime}s^\prime s \right)-I_n$
		is either invertible or zero.
		\item For each $g\in G$ we have $\widetilde{\rho}\left(g\right)=\rho\left(g\right)$.
	\end{enumerate}
\end{prop}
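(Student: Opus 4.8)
The plan is to construct $\tilde\rho$ explicitly. Let $\mathbb{L}=\mathbb{C}\bigl(\{z_{r,i,j}\}_{r\in R,\ 1\le i,j\le n},\ \{\zeta_m\}_{1\le m\le N}\bigr)$ be the purely transcendental extension of $\mathbb{C}$ whose displayed generators are algebraically independent. Using the isomorphism $\mu$ of property \ref{it:sc2} we identify $G'$ with $(G\ast F_R)\times\mathbb{Z}^N$ and set $\tilde\rho(g)=\rho(g)$ for $g\in G$, $\tilde\rho(f_r)=A_r:=(z_{r,i,j})_{1\le i,j\le n}$ for $r\in R$, and $\tilde\rho(b_m)=\zeta_m I_n$ for the standard basis $b_1,\dots,b_N$ of $\mathbb{Z}^N$. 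Each $A_r$ is invertible, since $\det A_r$ is a nonzero polynomial in algebraically independent variables, and each $\zeta_m I_n$ is invertible. As the relations of $G$ hold under $\rho$, the $f_r$ are free generators, and the matrices $\tilde\rho(b_m)$ are scalar and hence central, this assignment extends to a homomorphism $\tilde\rho\colon(G\ast F_R)\times\mathbb{Z}^N\to\mathrm{GL}_n(\mathbb{L})$, and property (c) holds by construction.

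The heart of the argument is the claim that $\tilde\rho(x)-I_n$ is invertible whenever $x\in(G\ast F_R)\times\mathbb{Z}^N$ satisfies $\pi_{F,\mathbb{Z}}^{\mathrm{ab}}(x)\neq 0$. To prove it, write $x$ as a word in the generators $f_r^{\pm1}$, the $b_m^{\pm1}$, and the generators of $G$; let $d_r$ and $v_m$ be the total exponents of $f_r$ and of $b_m$ in this word, and let $\bar w\in G$ be the image of the $G\ast F_R$-component of $x$ under the retraction killing $F_R$. Then $((d_r)_r,(v_m)_m)=\pi_{F,\mathbb{Z}}^{\mathrm{ab}}(x)\neq 0$, so $\mu:=\prod_r z_{r,1,1}^{d_r}\prod_m\zeta_m^{v_m}$ is a nontrivial Laurent monomial in a subset of the chosen transcendence basis of $\mathbb{L}$ over $\mathbb{C}$, hence transcendental over $\mathbb{C}$. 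Substituting the invertible scalar matrices $z_{r,1,1}I_n$ for the $f_r$ (and keeping $\rho(g)$ and $\zeta_m I_n$ in place) turns $\tilde\rho(x)-I_n$ into $\mu\,\rho(\bar w)-I_n$, because scalar matrices are central; this is invertible by \cref{cor:invertible_matrices}, as $\rho(\bar w)$ and $I_n$ are invertible and $\mu$ is transcendental over $\mathbb{C}$. Since $\tilde\rho(x)-I_n$ arises from the same word by substituting the generic matrices $A_r$ for the $f_r$-slots, \cref{cor:transcendental_entries} shows that $\tilde\rho(x)-I_n$ is itself invertible, proving the claim.

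Property (a) follows at once: for distinct $s,s'\in S'$ put $x=s^{-1}s'$; then $\pi_{F,\mathbb{Z}}^{\mathrm{ab}}(x)\neq 0$ by property \ref{it:sc4}, so $\tilde\rho(x)-I_n$ is invertible, whence so is $\tilde\rho(s)-\tilde\rho(s')=-\tilde\rho(s)\bigl(\tilde\rho(x)-I_n\bigr)$. For property (b), apply property \ref{it:sc5} to $s,s',s''\in S'$ and put $x=s''s's$. There are three alternatives. If $\pi_{F,\mathbb{Z}}^{\mathrm{ab}}(x)\neq 0$, then $\tilde\rho(x)-I_n$ is invertible by the claim. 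If $x=e$ in $G'$, then $\tilde\rho(x)-I_n=0$. Finally, if $x$ is a sufficiently generic element of $G\ast F_R$, then the $\mathbb{Z}^N$-component of $x$ must be trivial (otherwise the first alternative applies), so $x\in G\ast F_R$, and since the restriction of $\tilde\rho$ to $G\ast F_R$ sends each $g\in G$ to $\rho(g)$, a permutation matrix of a derangement or $I_n$, and each $f_r$ to the matrix $A_r$, whose entries are algebraically independent over $\mathbb{C}$, the definition of a sufficiently generic element (\cref{def:sufficiently_generic}) gives that $\tilde\rho(x)-I_n$ is invertible or zero. In all three cases $\tilde\rho(x)-I_n$ is invertible or zero, hence so is $\tilde\rho(s'')^{-1}-\tilde\rho(s's)=-\tilde\rho(s'')^{-1}\bigl(\tilde\rho(x)-I_n\bigr)$, as required.

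I expect the claim of the second paragraph to be the main obstacle: one must show that a word with nonzero image under $\pi_{F,\mathbb{Z}}^{\mathrm{ab}}$ maps, under the generic representation, to a matrix differing from $I_n$ by an invertible matrix. The device is to collapse the $f_r$ and $b_m$ to transcendental scalars — this is where properties \ref{it:sc4} and \ref{it:sc5} enter, guaranteeing a nonzero net exponent — reducing to the invertibility of $\mu\,\rho(\bar w)-I_n$, and then to transport genericity back to the truly generic matrices $A_r$ via \cref{cor:transcendental_entries}. The one situation this does not reach, namely $x=s''s's$ lying in $G\ast F_R$ with trivial image in the abelianization of $F_R$, is exactly the case that \cref{lem:sufficiently_generic} was designed to cover.
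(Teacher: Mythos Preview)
Your proof is correct and follows essentially the same approach as the paper: the same field extension $\mathbb{L}$, the same definition of $\tilde\rho$ (sending $f_r$ to a matrix of independent transcendentals and each $b_m$ to a scalar transcendental times $I_n$), and the same reduction via an auxiliary ``abelianized'' representation (your substitution $A_r\mapsto z_{r,1,1}I_n$ is exactly the paper's $\tilde\rho^{\mathrm{ab}}$) combined with \cref{cor:invertible_matrices} and \cref{cor:transcendental_entries}. The only organizational difference is that you package the key step as a single claim about $\tilde\rho(x)-I_n$ for $\pi_{F,\mathbb{Z}}^{\mathrm{ab}}(x)\neq 0$ and derive both (a) and Case~1 of (b) from it, whereas the paper treats (a) and (b) separately; the underlying computations are identical.
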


\begin{proof}
	Choose algebraically independent elements 
	$\left\{y_{r,i,j}\right\}_{{r \in R, 1\le i,j\le n}} \cup\left\{z_{1},\ldots,z_{N}\right\}\subset \mathbb{C}$ 
	over $\mathbb{Q}$.

	The free group $F_R$ has generators $f_r$ for $r\in R$.
	For each such generator $f_r$ we define
	\[
		\widetilde{\rho}(f_r)=(y_{r,i,j})_{1\le i,j\le n}.
	\]
	For $g\in G$ define $\widetilde{\rho}\left(g\right)=\rho\left(g\right)$.
	This extends to a representation $\widetilde{\rho}:G\ast F_R \to \mathrm{GL}_n(\C)$ because $G\ast F_R$ is a free product, and $F_R$ is free. Thus $\langle S \cup \{f_r\}_{r\in R} \mid R \rangle$ is a presentation of $G * F_R$, and it is clear that $\widetilde{\rho}$ maps all words that represent relators to the identity matrix.
	
	This representation extends further to a representation of $(G\ast F_R)\times\mathbb{Z}^{N}$ as follows. For $v=\left(v_{1},\ldots,v_{N}\right)\in\mathbb{Z}^{N}$ define
	\[
	\widetilde{\rho}\left(v\right)=\left(\prod_{i=1}^{N}z_{i}^{v_{i}}\right)\cdot I_{n}.
	\]
	If $g\in G\ast F_R$ and $v\in\mathbb{Z}^{N}$, define $\widetilde{\rho}\left(gv\right)=\widetilde{\rho}\left(g\right)\widetilde{\rho}\left(v\right)$.
	Any element of $(G\ast F_R)\times\mathbb{Z}^{N}$ can be written in exactly
	one way in the form $gv$, so $\widetilde{\rho}$ is well defined. It
	is a homomorphism essentially because if $v\in\mathbb{Z}^{N}$ then
	$\widetilde{\rho}\left(v\right)$ is a scalar matrix, and hence commutes
	with all matrices in the image of $\widetilde{\rho}$. More explicitly, we have
	\begin{align*}
	\widetilde{\rho}\left(g_{1}v_{1}\cdot g_{2}v_{2}\right)&=\widetilde{\rho}\left(\left(g_{1}g_{2}\right)\left(v_{1}v_{2}\right)\right)=\left(\widetilde{\rho}\left(g_{1}\right)\widetilde{\rho}\left(g_{2}\right)\right)\left(\widetilde{\rho}\left(v_{1}\right)\widetilde{\rho}\left(v_{2}\right)\right)\\
	&=\widetilde{\rho}\left(g_{1}\right)\widetilde{\rho}\left(v_{1}\right)\widetilde{\rho}\left(g_{2}\right)\widetilde{\rho}\left(v_{2}\right)=\widetilde{\rho}\left(g_{1}v_{1}\right)\widetilde{\rho}\left(g_{2}v_{2}\right),
	\end{align*}
	for $g_1,g_2\in G\ast F_R$ and $v_1,v_2\in \Z^N$.
	Observe that if $v\in\mathbb{Z}^{n}$ is nonzero then $\widetilde{\rho}\left(v\right)$
	is of the form $\lambda I_{n}$ where $\lambda$ is transcendental
	over $\mathbb{Q}$.
	
	We now prove the three claimed properties.
	It is convenient to define an auxiliary representation $\widetilde{\rho}^\mathrm{ab}:(G\ast F_R)\times \Z^N$ which is defined in the same way as $\widetilde\rho$ by extending $\rho$ except that $\widetilde{\rho}^\mathrm{ab}(f_r)=y_{r,1,1}I_n$ for all $r\in R$.
	\begin{enumerate}
		\item Let $s,s^{\prime}\in S^{\prime}$ be distinct elements. Denote $v=\pi_{F,\mathbb{Z}}^\mathrm{ab}\left(s\right)$
		and $v^{\prime}=\pi_{F,\mathbb{Z}}^\mathrm{ab}\left(s^{\prime}\right)$, as well
		as $z=\widetilde{\rho}^\mathrm{ab}\left(v\right)$ and $z^{\prime}=\widetilde{\rho}^\mathrm{ab}\left(v^{\prime}\right)$.
		By property \ref{it:sc4} of scramblings $v\neq v^{\prime}$, so $z^{-1}z^{\prime}$
		is transcendental 
		over $\Q$.
		Denote $g=\pi_{G}\left(s\right)$
		and $g^{\prime}=\pi_{G}\left(s^{\prime}\right)$.
		It suffices to prove that the matrix $\rho\left(g\right)z-\rho\left(g'\right)z'$ is invertible by \Cref{lem:transcendentals}: this matrix is obtained from $\widetilde{\rho}(s) - \widetilde\rho(s')$ by substituting different values instead of the transcendental matrix entries $\{y_{r,i,j}\}_{r,i,j}$. More explicitly, each matrix $\widetilde\rho(f_r)=(y_{r,i,j})_{i,j}$ is replaced by $y_{r,1,1} I_n$. Thus, instead of each off-diagonal entry $y_{r,i,j}$ ($i \neq j$) we substitute $0$, and instead of each diagonal entry $y_{r,i,i}$ we substitute $y_{r,1,1}$.

		Since $z^{-1}z^{\prime}$ is transcendental over $\Q$,
		 \cref{cor:invertible_matrices}
		implies that
		\[
		\det\left(\rho\left(g\right)-z^{-1}z^{\prime}\rho\left(g^{\prime}\right)\right)\neq0.
		\]
		Hence also $\det\left( \rho\left(g\right)z-\rho\left(g'\right)z' \right)\neq0$.
		\item Let $s,s^{\prime},s^{\prime\prime}\in S^{\prime}$ be not necessarily
		distinct generators.
		Then by property \ref{it:sc5} of scramblings exactly one of the following three cases holds:
		\begin{enumerate}[label=\textbf{Case \arabic*:}, labelwidth=-1cm,  labelindent=2em,leftmargin =!]
			\item Suppose $\pi_{F,\Z}^\mathrm{ab}(s''s's)\neq 0$.
			Denote $z = \widetilde{\rho}^\mathrm{ab}\left(\pi_{F,\Z}^\mathrm{ab}\left(s^{\prime\prime} s^\prime s\right)\right)$ as well as $g = \pi_G(s'' s' s)$. By construction $z\cdot \widetilde{\rho}^\mathrm{ab}(g) = \widetilde{\rho}^\mathrm{ab}(s'' s' s)$. Since $z$ is transcendental over $\Q$,
			\cref{cor:invertible_matrices} shows $\det(\widetilde{\rho}^\mathrm{ab}(s'' s' s) - I_n) \neq 0$. By \cref{cor:transcendental_entries} also $\widetilde{\rho}(s'' s' s) - I_n$ is invertible.
			\item If $s''s's=e$ then $\widetilde{\rho}(s''s's)-I_n = 0$.
			\item Suppose $s''s's$ is sufficiently generic relative to $S$. By construction, for each $g\in G$ the matrix $\widetilde{\rho}(g)$ is either the identity matrix or a permutation matrix of a derangement, and the entries of the matrices representing the free generators of $F_R$ are mutually transcendental elements over the prime field.
			So by definition of sufficiently generic elements the matrix $\widetilde{\rho}(s''s's)-I_n$ is invertible.
		\end{enumerate}
		\item This is immediate from the construction of $\widetilde{\rho}$.\qedhere
	\end{enumerate}
\end{proof}

\subsection{The augmentation construction}

We construct and prove the necessary properties of an augmentation of the presentation $\left(G\ast F_R\ast \left\langle z_{1},\ldots,z_{4}\right\rangle \right)\times\mathbb{Z}^{N}$ which we obtained from the above scrambling construction.
These properties are encoded by the Propositions \ref{prop:augmentation} and \ref{prop:augmentation_reps}.

\begin{construction}\label{con:augmentation} Let $\left\langle S^{\prime}\mid R^{\prime}\right\rangle $
	be a scrambling of the group $G=\left\langle S\mid R\right\rangle $ given by a symmetric triangular presentation,
	and let $s\in S$ be a given generator. We use the same notation as
	in \cref{def:scrambling}: $G^{\prime}=\left\langle S^{\prime}\mid R^{\prime}\right\rangle $
	is isomorphic to $(G\ast F_R)\times\mathbb{Z}^{N}$ for some given $N\in\mathbb{N}$,
	$B=\left\{ b_{1},\ldots,b_{N}\right\} $ is a basis of $\mathbb{Z}^{N}$,
	and $\mu$, $\pi_{\mathbb{Z}}$, $\pi_{F,\Z}$, $\pi^{\mathrm{ab}}_{F,\Z}$, $i$, and $j$ are the
	same maps as in that definition.
	
	In what follows we construct a new finitely presented group $G^{\prime\prime}=\left\langle S^{\prime\prime}\mid R^{\prime\prime}\right\rangle $
	by iteratively adding generators and relations to $S^{\prime}$
	and $R^{\prime}$. 
	\begin{enumerate}[(C1), labelwidth=-6mm,  labelindent=2em,leftmargin =!]
		\item\label{it:ca1} Add four generators $z_{1},\ldots,z_{4}$ to $S^{\prime}$. For each
		$1\le i\le N$ and each $1\le k\le4$ we add the following generators
		and relations in order to ensure that $j\left(b_{i}\right)$ commutes
		with $z_{k}$ in $G^{\prime\prime}$:
		\begin{enumerate}[(a)]
			\item Add a generator $u_{z_{k},i}$ and its inverse $u_{z_{k},i}^{-1}$. 
			\item Add the relations $u_{z_{k},i}u_{z_{k},i}^{-1}e=e$, $j\left(b_{i}\right)z_{k}u_{z_{k},i}^{-1}=e$,
			and $u_{z_{k},i}j\left(b_{i}\right)^{-1}z_{k}^{-1}=e$.
			\begin{rmrk}
				Note that the first of these relations ensures that $u_{z_{k},i}$
				and $u_{z_{k},i}^{-1}$ are actually inverses in $G^{\prime\prime}$;
				the second is equivalent to $u_{z_{k},i}=j\left(b_{i}\right)z_{k}$;
				and substituting the second relation into the third yields $j\left(b_{i}\right)z_{k}j\left(b_{i}\right)^{-1}z_{k}^{-1}=e$.
				We ``break up'' relations in this way in the rest of this construction
				and in \cref{con:scrambling} to ensure that indeed all relations in the constructed presentation have length three.
			\end{rmrk}
			
		\end{enumerate}
		\item\label{it:ca2} Add a new generator $t$ to $S^{\prime}$. The following ensures that
		$t=sz_1s$ in $G^{\prime\prime}$: Denote $s^{\prime}=i\left(s\right)$,
		and express $-2\cdot\pi_{F,\mathbb{Z}}^{\mathrm{ab}}\left(s^{\prime}\right)\in\mathbb{Z}^{N}$
		as a minimal-length sum
		\[
		\varepsilon_{1}b_{k_{1}}+\varepsilon_{2}b_{k_{2}}+\ldots+\varepsilon_{r}b_{k_{r}}
		\]
		of elements of $B$, where $\varepsilon_{1},\ldots,\varepsilon_{r}\in\left\{ -1,1\right\} $.
		Recall that $\pi_{F,\mathbb{Z}}^{\mathrm{ab}}(s')$ is generated by elements in $B$ by property \ref{it:sc8}.
		We add generators and relations to ``break up'' the relation 
		\[
		t=z_{4}^{-1}\left(z_{4}\left(\left(z_{3}^{-1}\left(\left(z_{3}s^{\prime}\right)z_{1}s^{\prime}\right)\right) z_{2} b_{k_{1}}^{\varepsilon_{1}} z_{2} b_{k_{2}}^{\varepsilon_{2}} z_{2}\ldots b_{k_{r-1}}^{\varepsilon_{r-1}} z_{2} b_{k_{r}}^{\varepsilon_{r}}\right)\underbrace{z_{2}^{-1}\ldots z_{2}^{-1}}_{\text{\ensuremath{r} times}}\right).
		\]
		Explicitly: 
		\begin{enumerate}[(a)]
			\item Add generators $v_{1},\ldots,v_{4}$, one for each of the words
			\[ z_{3}s^{\prime},\quad
			\left(z_{3}s^{\prime}\right)z_{1}, \quad\left(z_{3}s^{\prime}\right)z_{1}s^{\prime},
			\left(z_{3}^{-1}\left(\left(z_{3}s^{\prime}\right)z_{1}s^{\prime}\right)\right)=s^{\prime}z_{1}s^{\prime}.\]
			Then add their inverses, together with relations 
			\[
			v_{1}v_{1}^{-1}e=e,\ldots,v_{4}v_{4}^{-1}e=e
			\]
			and the relations
			\[
			z_{3}s^{\prime}v_{1}^{-1}=e,\quad v_{1}z_{1}v_{2}^{-1}=e,\quad v_{2}s^{\prime}v_{3}^{-1}=e,\quad z_{3}^{-1}v_{3}v_{4}^{-1}=e.
			\]
			These relations ensure that $v_{1}=z_{3}s^{\prime}$, $v_{2}=\left(z_{3}s^{\prime}\right)z_{1}$,
			$v_{3}=\left(z_{3}s^{\prime}\right)z_{1}s^{\prime}$, and $v_{4}=s^{\prime}z_{1}s^{\prime}$
			in the resulting group.
			\item Add further generators $v_{5}=v_{4+1}$ up to $v_{4+2r}$, one for
			each of the words 
			\[
				\left(z_{3}^{-1}\left(\left(z_{3}s^{\prime}\right)z_{1}s^{\prime}\right)\right) z_{2}, \dots, 
		\left(z_{3}^{-1}\left(\left(z_{3}s^{\prime}\right)z_{1}s^{\prime}\right)\right) z_{2} b_{k_{1}}^{\varepsilon_{1}} z_{2} b_{k_{2}}^{\varepsilon_{2}} z_{2}\dots b_{k_{r-1}}^{\varepsilon_{r-1}} z_{2} b_{k_{r}}^{\varepsilon_{r}}.
			\]
			Add the inverses of these generators, together with the appropriate
			relations (analogously to the above).
			\item Add generators $v_{5+2r}$ up to $v_{5+3r}$ for each of the words
			\[
			z_{4}v_{4+2r},z_{4}v_{4+2r}z_{2}^{-1},\dots,z_{4} v_{4+2r}\underbrace{z_{2}^{-1}\dots z_{2}^{-1}}_{\text{\ensuremath{r} times}}.
			\]
			Add inverses for these generators, and add the appropriate relations
			(exactly as above).
			\item Add a generator $t$, together with its inverse and the relation $tt^{-1}e=e$.
			Then add the relation $z_{4}^{-1}v_{5+3r}t^{-1}=e$ to ensure $t=sz_{1}s$
			in $G^{\prime\prime}$.
		\end{enumerate}
		\item\label{it:ca3} Symmetrize the set of relations.
	\end{enumerate}
\end{construction}

We abuse notation slightly and denote by $b_{i}$
	(for $1\le i\le N$) the element $j\left(b_{i}\right)$ in $G^{\prime\prime}$.
	As for general elements of $G''$, we use multiplicative notation for $b_i$ in this context.
	Thus for $\varepsilon\in\left\{ -1,1\right\} $,
	$b_{i}^{\varepsilon}$ denotes an element of $G^{\prime\prime}$,
	but $\varepsilon b_{i}$ denotes an element of $\mathbb{Z}^{N}$.

\begin{prop}\label{prop:augmentation}
	In the notation of the construction,
	$G^{\prime\prime}=\left\langle S^{\prime\prime}\mid R^{\prime\prime}\right\rangle $
	is isomorphic to $\left(G\ast F_R\ast \left\langle z_{1},\ldots,z_{4}\right\rangle \right)\times\mathbb{Z}^{N}$
	by an isomorphism which maps each element of $S^{\prime}\subset S^{\prime\prime}$
	to the corresponding element of \[(G\ast F_R)\times\mathbb{Z}^{N}\le\left(G\ast F_R*\left\langle z_{1},\dots,z_{4}\right\rangle \right)\times\mathbb{Z}^{N},\]
	and $z_1,\dots,z_4$ to the elements of the same
	name in $\left(G*F_R *\left\langle z_{1},\ldots,z_{4}\right\rangle \right)\times\mathbb{Z}^{N}$.
	
	This isomorphism maps $t\in S^{\prime\prime}$ to $\left(sz_{1}s,0\right)\in\left(G\ast F_R*\left\langle z_{1},\ldots,z_{4}\right\rangle \right)\times\mathbb{Z}^{N}$.
\end{prop}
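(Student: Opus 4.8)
The plan is to recognize \cref{con:augmentation} as a chain of Tietze transformations together with the adjunction of a small set of genuinely new relations, and then to identify the result by a universal property argument. First I would note that, apart from $z_1,\dots,z_4$, every generator introduced in the construction is accompanied by a relation that expresses it — or, for the formal inverses, expresses its inverse — as a word in generators introduced earlier: this holds for $u_{z_k,i}$ and $u_{z_k,i}^{-1}$ in step \ref{it:ca1}, and for $v_1,\dots,v_{5+3r}$, $t$ and their inverses in step \ref{it:ca2}. Eliminating all these redundant generators by reverse Tietze moves, performed in the order forced by the definitions ($t$ first, then $v_{5+3r},\dots,v_1$, then the $u_{z_k,i}^{\pm1}$), and discarding the relations added in the symmetrization step \ref{it:ca3} as consequences of the group axioms, one is left with the presentation
\[
	\langle\, S'\cup\{z_1,z_2,z_3,z_4\}\ \mid\ R'\cup\{\,[b_i,z_k]=e : 1\le i\le N,\ 1\le k\le 4\,\}\,\rangle,
\]
in which $b_i$ abbreviates $j(b_i)$ and the relation $u_{z_k,i}\,j(b_i)^{-1}z_k^{-1}=e$ has become $[b_i,z_k]=e$ after substituting $u_{z_k,i}=j(b_i)z_k$.

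Next I would identify this group. By \ref{it:sc2} the sub-presentation $\langle S'\mid R'\rangle$ presents $(G\ast F_R)\times\Z^N$, and by \ref{it:sc7} the element $j(b_i)$ is the $i$-th standard basis vector of the $\Z^N$ factor; so the displayed presentation is $\bigl((G\ast F_R)\times\Z^N\bigr)\ast\langle z_1,\dots,z_4\rangle$ modulo the normal closure of the commutators $[b_i,z_k]$. Writing $A=G\ast F_R$, $C=\Z^N$, $B=\langle z_1,\dots,z_4\rangle$, I would check that this quotient is $(A\ast B)\times C$: a homomorphism from $\bigl((A\times C)\ast B\bigr)/\langle\langle[C,B]\rangle\rangle$ to an arbitrary group $K$ is precisely a triple of homomorphisms $A\to K$, $B\to K$, $C\to K$ whose $C$-image commutes both with the $A$-image (since $A$ and $C$ already commute inside $A\times C$) and with the $B$-image (the newly adjoined relations), and this is exactly the data of a homomorphism $(A\ast B)\times C\to K$. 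The two groups thus have the same universal property and are isomorphic, via an isomorphism that is the identity on the generators coming from $A$, $B$, $C$. This yields an isomorphism $G''\cong(G\ast F_R\ast\langle z_1,\dots,z_4\rangle)\times\Z^N$ fixing every $s\in S'$ (viewed inside $(G\ast F_R)\times\Z^N$) and fixing $z_1,\dots,z_4$, which is the first assertion.

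It then remains to track $t$ through these eliminations. By \ref{it:sc6} and \ref{it:sc8}, $\mu(i(s))$ lies in $(G\ast\{e_F\})\times\Z^N$ and $\pi_G$ sends it to $s$, so $\mu(i(s))=(s,v)$ with $v\coloneqq\pi_\Z(i(s))$; consequently $\pi_{F,\Z}^{\mathrm{ab}}(i(s))=(0,v)$, and therefore the minimal-length expression $\varepsilon_1 b_{k_1}+\dots+\varepsilon_r b_{k_r}$ chosen in step \ref{it:ca2} equals $-2v$. Following the defining relations of the $v_j$: $v_4=s'z_1s'$ maps to $(sz_1s,\,2v)$; right-multiplication by $z_2 b_{k_1}^{\varepsilon_1}z_2\cdots z_2 b_{k_r}^{\varepsilon_r}$ multiplies by $(z_2^r,\,\sum_{\ell=1}^r\varepsilon_\ell b_{k_\ell})=(z_2^r,-2v)$, so $v_{4+2r}$ maps to $(sz_1sz_2^r,\,0)$; then $v_{5+3r}=z_4 v_{4+2r}z_2^{-r}$ maps to $(z_4sz_1s,0)$, and finally $t=z_4^{-1}v_{5+3r}$ maps to $(sz_1s,0)$, as required.

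The step that demands care is the universal-property identification: the isomorphism $\bigl((A\times C)\ast B\bigr)/\langle\langle[C,B]\rangle\rangle\cong(A\ast B)\times C$ genuinely uses that $C$ already commutes with $A$, and it would fail without that hypothesis. The remaining verifications — that each adjoined relation is truly definitional, and the $\Z^N$-bookkeeping in the computation of $t$, which rests on \ref{it:sc8} to guarantee that $i(s)$ has trivial $F_R$-component — are routine.
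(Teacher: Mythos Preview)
Your proof is correct and follows essentially the same strategy as the paper's: both reduce $G''$ via reverse Tietze moves to the presentation $\langle S'\cup\{z_1,\dots,z_4\}\mid R'\cup\{[j(b_i),z_k]\}\rangle$, then identify this with $(G\ast F_R\ast\langle z_1,\dots,z_4\rangle)\times\mathbb{Z}^N$ and track $t$. Your version is more thorough in two places where the paper is terse---the universal property argument for $\bigl((A\times C)\ast B\bigr)/\langle\langle[C,B]\rangle\rangle\cong(A\ast B)\times C$, and the explicit $\mathbb{Z}^N$-bookkeeping showing $t\mapsto(sz_1s,0)$---but these are elaborations of the same line of reasoning rather than a different approach.
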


\begin{proof}
	The proposition defines a map $G^{\prime\prime}\rightarrow\left(G\ast F_R\ast\left\langle z_{1},\ldots,z_{4}\right\rangle \right)\times\mathbb{Z}^{N}$,
	and this is clearly surjective. It is injective: first note that step
	\ref{it:ca1} of the construction ensures that every element of $G^{\prime\prime}$
	commutes with each $j\left(b_{k}\right)$. Consider a relation added
	during step \ref{it:ca2}, skipping over all relations of the form $yy^{-1}e=e$
	for $y$ a new generator. Each such relation is of the form $x_{1}\ldots x_{n} y^{-1}=e$,
	for $y$ one of the new generators which does not appear in any of
	the previous relations (except $yy^{-1}e=e$). Thus, traversing this
	list in reverse, we may apply Tietze transformations to remove each
	relation along with the generator $y$. The same procedure can be
	applied to the relations $u_{z_{k},i}u_{z_{k},i}^{-1}e=e$ and $j\left(b_{i}\right)z_{k}u_{z_{k},i}^{-1}=e$
	and the generators $u_{z_{k},i}$ (for all $1\le k\le4$ and $1\le i\le N$),
	thus eliminating all new generators in $S^{\prime\prime}$ except
	for $z_{1},\ldots,z_{4}$ and their inverses. At the end of this process is finished we end up with the group presentation
	\[
	\left\langle S^{\prime}\cup\left\{ z_{1},\ldots,z_{4}\right\} \mid R^{\prime}\cup\left\{ j\left(b_{i}\right)z_{k}j\left(b_{i}\right)^{-1}z_{k}^{-1}\right\} _{\substack{1\le i\le N\\
			1\le k\le4
		}
	}\right\rangle ,
	\]
	which is isomorphic to $\left(G\ast F_R*\left\langle z_1,\dots,z_4\right\rangle \right)\times\mathbb{Z}^{N}$
	in the desired manner. 
	
	Observe that $t=s^{\prime}z_{1}s^{\prime} z_{2} b_{k_{1}}^{\varepsilon_{1}} z_{2} b_{k_{2}}^{\varepsilon_{2}} z_2\dots b_{k_{r-1}}^{\varepsilon_{r-1}} z_{2} b_{k_{r}}^{\varepsilon_{r}} z_{2}^{-r}$
	in $G^{\prime\prime}$, where $s^{\prime}\in S^{\prime}$ maps to
	$\left(s,\pi_{\mathbb{Z}}\left(s'\right)\right)\in\left(G*\left\langle z_{1},\dots,z_{4}\right\rangle \right)\times\mathbb{Z}^{N}$
	(this is its image in $G\ast F_R\times\mathbb{Z}^{N}$ under $\mu$). 
\end{proof}
This proposition allows us to identify $G^{\prime\prime}$ with $\left(G\ast F_R*\left\langle z_{1},\ldots,z_{4}\right\rangle \right)\times\mathbb{Z}^{N}$.

\begin{notation}\label[notation]{not:deg_maps}
	Consider the quotient map $G'' \to \langle z_1,\ldots,z_4 \rangle$. Composing the abelianization homomorphism $\langle z_1,\ldots,z_4 \rangle \to \Z^4$ on this map we obtain a homomorphism 
	\[\deg_z:G'' \to \Z^4.\]
	Define homomorphisms $\deg_{z_i}:G'' \to \Z$ for each $1\le i\le 4$, so that $\deg_{z_i}(x)$ is the total degree of $z_i$ in $x$, and $deg_z(x) = (\deg_{z_1}(x), \ldots, \deg_{z_4}(x))$.
\end{notation}

\begin{prop}\label{prop:augmentation_reps}
	Let $G=\left\langle S\mid R\right\rangle $
	be a group given by a symmetric triangular presentation and let $G^{\prime}=\left\langle S^{\prime}\mid R^{\prime}\right\rangle $
	be a scrambling. Let $s\in S$ and let $G^{\prime\prime}=\left\langle S^{\prime\prime}\mid R^{\prime\prime}\right\rangle \simeq\left(G\ast F_R*\left\langle z_{1},\ldots,z_{4}\right\rangle \right)\times\mathbb{Z}^{N}$
	be the associated augmentation.
	Then the following two conditions are equivalent:
	\begin{enumerate}[(i)]
		\item There exists a
			representation $\rho:G\rightarrow\mathrm{GL}_{n}(\mathbb{C})$ for some $n\in\mathbb{N}$ with $\rho\left(s\right)\neq\rho\left(e\right)$.
		\item There exists a representation $\widetilde{\rho}:G''\rightarrow\mathrm{GL}_{n}(\mathbb{C})$ for some $n\in\mathbb{N}$ which satisfies:
		\begin{enumerate}
			\item If $x,x^{\prime}\in S^{\prime\prime}$ are distinct then $\widetilde{\rho}\left(x\right)-\widetilde{\rho}\left(x^{\prime}\right)$
			is invertible, 

			\item For $x,x^{\prime},x^{\prime\prime}\in S^{\prime\prime}$ a not necessarily
			distinct triple of generators,   $\widetilde{\rho}(x^{\prime\prime}x^{\prime}x)-I_n$ is invertible or $0$.
		\end{enumerate}
	\end{enumerate}
\end{prop}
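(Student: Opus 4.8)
The plan is to prove the two implications separately. The implication (ii)$\Rightarrow$(i) is immediate: given $\tilde{\rho}\colon G''\to\mathrm{GL}_{n}\mathbb{F}$ as in (ii), I would take $\rho\coloneqq\tilde{\rho}|_{G}$, the restriction along $G\hookrightarrow G\ast F_R\ast\langle z_{1},\ldots,z_{4}\rangle\hookrightarrow G''$. By \Cref{prop:augmentation} the generator $t\in S''$ equals $sz_{1}s$ in $G''$ with trivial $\mathbb{Z}^{N}$-coordinate, so $\tilde{\rho}(t)=\rho(s)\,\tilde{\rho}(z_{1})\,\rho(s)$. If $\rho(s)=\rho(e)=I_{n}$, this gives $\tilde{\rho}(t)-\tilde{\rho}(z_{1})=0$, which contradicts property (a) of (ii) because $t$ and $z_{1}$ are distinct generators of $S''$. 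Hence $\rho(s)\neq\rho(e)$.

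For (i)$\Rightarrow$(ii) I would run the argument parallel to the proof of \Cref{prop:scrambling_reps}. From $\rho$ with $\rho(s)\neq\rho(e)$, \Cref{lem:derangement_rep} yields a representation $\rho'\colon G\to\mathrm{GL}_{n'}\mathbb{C}$ with $\rho'(s)\neq I_{n'}$ in which every $\rho'(x)$ is the identity or the permutation matrix of a derangement. As $\langle S'\mid R'\rangle$ is a scrambling of $\langle S\mid R\rangle$, applying \Cref{prop:scrambling_reps} to $\rho'$ produces $\mathbb{C}\subseteq\mathbb{L}$ and $\tilde{\rho}_{0}\colon G'=(G\ast F_R)\times\mathbb{Z}^{N}\to\mathrm{GL}_{n'}\mathbb{L}$ satisfying the analogues of (a), (b) on $S'$ with $\tilde{\rho}_{0}|_{G}=\rho'$. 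Using the identification $G''\simeq(G\ast F_R\ast\langle z_{1},\ldots,z_{4}\rangle)\times\mathbb{Z}^{N}$ of \Cref{prop:augmentation}, I would extend $\tilde{\rho}_{0}$ to $\tilde{\rho}$ on $G''$, over a larger field $\mathbb{L}'$, by letting each $\tilde{\rho}(z_{k})$ (for $1\le k\le4$) be a matrix of fresh indeterminates algebraically independent over $\mathbb{L}$ (valid since $z_{1},\ldots,z_{4}$ span a free factor) and extending over $\mathbb{Z}^{N}$ by scalar matrices as in \Cref{prop:scrambling_reps}. Then $\tilde{\rho}|_{G}=\rho'$, each $\tilde{\rho}(g)$ with $g\in G$ is the identity or a derangement permutation matrix, and the entries of $\{\tilde{\rho}(f_{r})\}_{r}\cup\{\tilde{\rho}(z_{k})\}_{k}$ are algebraically independent over $\mathbb{C}$.

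It then remains to verify (a), (b) for $\tilde{\rho}$, by a case analysis over $S''=S'\cup\{z_{1},\ldots,z_{4}\}\cup\{u_{z_{k},i}^{\pm1}\}\cup\{v_{\ell}^{\pm1}\}\cup\{t^{\pm1}\}$. Each new generator is an explicit word in $G''$, so, via $\tilde{\rho}(x)-\tilde{\rho}(x')=\tilde{\rho}(x)(I-\tilde{\rho}(x^{-1}x'))$ and $\tilde{\rho}(x'')^{-1}-\tilde{\rho}(x'x)=\tilde{\rho}(x'')^{-1}(I-\tilde{\rho}(x''x'x))$, everything reduces to showing that $\tilde{\rho}(w)-I_{n'}$ is invertible (for the (a)-type words $w$) or invertible-or-zero (for the (b)-type words $w$), and I would split on the image of $w$ under the projection $\pi^{\mathrm{ab}}\colon G''\to(F_R\ast\langle z_{1},\ldots,z_{4}\rangle)^{\mathrm{ab}}\times\mathbb{Z}^{N}$ that extends $\pi_{F,\mathbb{Z}}^{\mathrm{ab}}$. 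When $\pi^{\mathrm{ab}}(w)\neq0$, the associated monomial scalar is transcendental over the field generated by the remaining entries, and \Cref{cor:invertible_matrices} together with \Cref{cor:transcendental_entries} gives invertibility — this is the argument of \Cref{prop:scrambling_reps}, and it covers all pairs and triples from $S'$ and all those already separated abelianly. When $\pi^{\mathrm{ab}}(w)=0$, the element $w$ lies in $G\ast F_R\ast\langle z_{1},\ldots,z_{4}\rangle$, and I would show it is trivial or sufficiently generic, so that \Cref{def:sufficiently_generic} yields the dichotomy; \Cref{lem:sufficiently_generic} and \Cref{lem:sufficiently_generic_conjugation} apply here because \Cref{con:augmentation} is arranged so that the words that occur are conjugates of things like $sz_{1}sz_{1}^{-1}$, $[j(b_{i}),z_{k}]$, or $[s',z_{k}s'']$ — the twist by $z_{2}b_{k_{1}}^{\varepsilon_{1}}\cdots b_{k_{r}}^{\varepsilon_{r}}z_{2}^{-r}$ in the definition of $t$ serving only to cancel the $\mathbb{Z}^{N}$-coordinate of $s'z_{1}s'$.

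The main obstacle is the family of instances of (a) — above all the pair $(z_{1},t)$ — that $\pi^{\mathrm{ab}}$ does not separate at all (this is exactly the effect of arranging $t\mapsto(sz_{1}s,0)$): \Cref{def:sufficiently_generic} only yields ``invertible or zero'', so I must rule out $\tilde{\rho}(w)=I_{n'}$ by hand. For $w=z_{1}^{-1}t=z_{1}^{-1}(sz_{1}sz_{1}^{-1})z_{1}$ one has $\tilde{\rho}(w)=\tilde{\rho}(z_{1})^{-1}\rho'(s)\tilde{\rho}(z_{1})\rho'(s)$, and $\tilde{\rho}(w)=I_{n'}$ would force $\tilde{\rho}(z_{1})^{-1}\rho'(s)\tilde{\rho}(z_{1})=\rho'(s)^{-1}$; since the entries of $\tilde{\rho}(z_{1})$ are algebraically independent this forces the linear endomorphism $Z\mapsto\rho'(s)Z-Z\rho'(s)^{-1}$ to vanish identically, hence $\rho'(s)$ is central and therefore $I_{n'}$ (a scalar derangement permutation matrix is the identity), contradicting $\rho'(s)\neq I_{n'}$. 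A few triples through $t$ and the $v_{\ell}$ need an analogous genericity check. The rest is bookkeeping: sorting the quadratically many pairs and cubically many triples of generators of $S''$ into the $\pi^{\mathrm{ab}}\neq0$ case (treated as in \Cref{prop:scrambling_reps}) and the $\pi^{\mathrm{ab}}=0$ case (treated via the sufficiently-generic lemmas), and confirming that no further degenerate configurations arise.
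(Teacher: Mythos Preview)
Your proposal is correct and takes essentially the same approach as the paper: both directions match (restriction along $G\hookrightarrow G''$ for (ii)$\Rightarrow$(i); derangement lemma $\to$ \Cref{prop:scrambling_reps} $\to$ transcendental entries for the $z_k$ $\to$ case split on the abelianized degree for (i)$\Rightarrow$(ii)). Your combined $\pi^{\mathrm{ab}}$ is the paper's $\deg_z$ together with $\pi_{F,\mathbb{Z}}^{\mathrm{ab}}$, your ``bookkeeping'' is the paper's Table~1 and the enumeration Cases~1--7 (with the residual words collected into \Cref{lem:augmentation_words}), and the paper handles your $(z_1,t)$ obstacle via the same element $sz_1sz_1^{-1}$, proved sufficiently generic in \Cref{lem:sufficiently_generic}(i) by an explicit substitution rather than your centrality argument.
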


\begin{proof}
	Assume (ii) holds. The isomorphism from $G^{\prime\prime}$ to $\left(G\ast F_R*\left\langle z_{1},\ldots,z_{4}\right\rangle \right)\times\mathbb{Z}^{N}$
		stemming from~\Cref{prop:augmentation} maps the generator $t\in S^{\prime\prime}$
		(see \cref{con:augmentation}) to $sz_{1}s$.
		Further observe that $z_{1}\in S^{\prime\prime}$.
	Since $z_{1},t$ are distinct generators, $\widetilde{\rho}\left(z_{1}\right)-\widetilde{\rho}\left(t\right)$
	is invertible, and in particular $\widetilde{\rho}\left(t\right)=\widetilde{\rho}\left(sz_{1}s\right)\neq\widetilde{\rho}\left(z_{1}\right)$.
	Thus $\widetilde{\rho}\left(s\right)\neq\widetilde{\rho}\left(e\right)$.
	Restricting $\widetilde{\rho}$ to $G\le\left(G\ast F_R*\left\langle z_{1},\ldots,z_{4}\right\rangle \right)\times\mathbb{Z}^{N}$
	we obtain (i).
	
	Assuming (i) holds, let $\rho:G\rightarrow\mathrm{GL}_{n}(\mathbb{C})$
	be a representation such that $\rho\left(s\right)\neq\rho\left(e\right)$.
	By applying \cref{lem:derangement_rep}, changing $n$ as necessary,
	we obtain a new representation $\rho$ of $G$ with the property
	that every $\rho(x)$ for $x\in S$ is the permutation matrix of a derangement or the identity matrix and $\rho(s)\neq I_n$.
	By \cref{prop:scrambling_reps}, $\rho$ extends to
	a representation $\rho^{\prime}$ of $G^{\prime}\simeq G\ast F_R\times\mathbb{Z}^{N}$
	over $\mathbb{C}$ satisfying conditions analogous to (a) and (b).
	Let $\{\xi_{k,i,j}\}_{1\le k\le 4,\,1\le i,j\le n}$ be a collection of complex numbers which are algebraically independent over $\Q$ and algebraically independent over all entries in the matrices of the image of $\rho'$.
	Now extend $\rho^{\prime}$ to $\widetilde{\rho}:G^{\prime\prime}\rightarrow\mathrm{GL}_{n}(\mathbb{C})$
	by defining (on generators) $\widetilde{\rho}\left(z_{k}\right)=\left(\xi_{k,i,j}\right)_{1\le i,j\le n}\in\mathrm{GL}_{n}(\mathbb{C})$
	for each $1\le k\le4$.
	This defines a representation $\widetilde{\rho}:\left(G\ast F_R*\left\langle z_{1}\ldots,z_{4}\right\rangle \right)\times\mathbb{Z}^{N}\simeq G^{\prime\prime}\rightarrow\mathrm{GL}_{n}(\mathbb{C})$
	because elements of the $\mathbb{Z}^{N}$-factor map to scalar matrices,
	which commute with all matrices in $\mathrm{GL}_{n}(\mathbb{C})$, and
	because any representation of $G\ast F_R$ extends to a representation of
	$G\ast F_R*\left\langle z_{1},\ldots,z_{4}\right\rangle $ once the images
	of $z_{1},\ldots,z_{4}$ are chosen (there is no constraint on these
	images because there are no nontrivial relations involving any of
	$z_{1},\ldots,z_{4}$).
	
	We verify that conditions (a) and (b) hold for this representation
	by considering the various pairs and triples of generators in $S^{\prime\prime}$.

		As a first step, we verify that if $\deg_{z}\left(x^{\prime\prime}x^{\prime}x\right)\neq0$
		then $\widetilde{\rho}\left(x^{\prime\prime}x^\prime x\right)-I_n$
		is invertible.
		So assume for $x,x',x''\in S''$ that $\deg_{z_{i}}\left(x^{\prime\prime} x^{\prime}x\right)\neq 0$
		for some $i$.
	Considering 
	\[
	\det\left(\widetilde{\rho}\left(x^{\prime\prime}x^\prime x\right)-I_n \right)
	\]
	as a polynomial in the entries of $\widetilde{\rho}\left(z_{i}\right)$
	we see that the determinant doesn't vanish, because it doesn't vanish
	if we substitute a transcendental multiple of the identity matrix
	by \cref{cor:invertible_matrices}.
	We also get invertibility if
	$\pi_{F,\mathbb{Z}}^\mathrm{ab}\left(x^{\prime\prime}x^{\prime}x\right)\neq0$,
	by the same argument.
	
	Therefore to verify the conditions (a) and (b) it suffices to check ordered pairs $x,x'$ of generators
	which have equal values under $\deg_{z}$ and $\pi_{F,\mathbb{Z}}^\mathrm{ab}$
	and ordered triples $x,x',x''$ with $\deg_{z}(x'' x' x)= \pi_{F,\mathbb{Z}}^\mathrm{ab}(x''x'x)=0$.
	By the next proposition (\cref{prop:trivial_or_generic_products}), the only such pairs are $x=t,x'=z_1$, the inverse pair $x=t^{-1},z_1^{-1}$, and their re-orderings. To see that $\widetilde\rho(x x') - I_n$ is invertible in this case, we note that $t= sz_1 s$ in $G''$ by \cref{prop:augmentation}, and $\rho(s)\neq e$ is the permutation matrix of a derangement. 
	From \cref{lem:sufficiently_generic}, it follows that
	$\rho(t z_1^{-1}) - I_n$ is invertible
		and hence so are $\rho(t) - \rho(z_1)$ and $\rho(z_1^{-1}) - \rho(t^{-1})$ as desired.

	Similarly, for each ordered triple $x,x',x''$ with $\deg_z(x''x'x)=\pi_{F,\Z}^{\mathrm{ab}}(x''x'x) =0$, we need to check that $\widetilde{\rho}(x'' x' x) - I_n$ is invertible or $0$.
	It suffices that $x'' x' x$ is either the identity element or sufficiently generic in $G*(\langle z_1,\ldots,z_4\rangle * F_R)$ relative to $S$ (notice that $x'' x' x \in G*(\langle z_1,\ldots,z_4\rangle * F_R)$ because its projection to $\Z^N$ is trivial by assumption. Since $(\langle z_1,\ldots,z_4\rangle * F_R)$ is a free group, we can discuss its sufficient genericity). That this holds is precisely the statement of the next proposition.
\end{proof}

\begin{prop}\label{prop:trivial_or_generic_products}
	Let $G=\left\langle S\mid R\right\rangle $
	be a group given by a symmetric triangular presentation and let $G^{\prime}=\left\langle S^{\prime}\mid R^{\prime}\right\rangle $
	be a scrambling. Let $s\in S$ and let $G^{\prime\prime}=\left\langle S^{\prime\prime}\mid R^{\prime\prime}\right\rangle \simeq\left(G\ast F_R*\left\langle z_{1},\ldots,z_{4}\right\rangle \right)\times\mathbb{Z}^{N}$
	be the associated augmentation. Then for any $x,x',x'' \in S''$ such that
	\[\deg_{z}(x''x'x)=e\quad \text{and}\quad \pi_{F,\Z}^\mathrm{ab}(x'' x' x) =0,\]
	each of the six products 
	\[x''x'x,\ x''xx',\ x'x''x,\ x'xx'',\ xx''x',\ xx'x''\]
	over a permutation of $x'',x',x$ is either trivial or sufficiently generic relative to $S$.
	
	Further, the only pairs of elements $x,x' \in S''$ satisfying both $\deg_z(x)=\deg_z(x')$ and $\pi_{F,\mathbb{Z}}^\mathrm{ab}(x)=\pi_{F,\mathbb{Z}}^\mathrm{ab}(x')$ are $t,z_1$ and the inverse pair $t^{-1}, z_1^{-1}$. 
\end{prop}
\begin{rmrk}
	By assumption, the element $x''x'x$ in the statement satisfies $x''x'x \in (G \ast F_R \ast \{z_1,\ldots,z_4\})\times\{0\} \simeq G \ast (\langle z_1,\ldots,z_4 \rangle \ast F_R)$, so it makes sense to discuss sufficient genericity in $G \ast (\langle z_1,\ldots,z_4 \rangle \ast F_R)$ relative to the generating set $S$ of the presentation $G = \langle S\mid R\rangle$.
\end{rmrk}
\begin{proof}
	\Cref{tab:augmentation} contains, out of each pair
	of mutually inverse generators $\{x,x^{-1}\}$ of $S^{\prime\prime}$,
	an element with nonnegative degrees in $z_{1},\ldots,z_{4}$ and shows their degrees under the map $\deg_z$.
	\begin{table}[hbt]
		\addtolength{\tabcolsep}{0.9pt}
		\def\arraystretch{1.2}{
			\[
			\begin{array}{ll}
				\text{Generator \ensuremath{x}} & \deg_{z}\left(x\right)\\
				\hline \rowcolor{LightGray}
				z_{4} & \left(0,0,0,1\right)\\
				u_{z_{4},i}=j\left(b_{i}\right)z_{4} & \left(0,0,0,1\right)\\\rowcolor{LightGray}
				z_{3} & \left(0,0,1,0\right)\\
				u_{z_{3},i}=j\left(b_{i}\right)z_{3} & \left(0,0,1,0\right)\\\rowcolor{LightGray}
				z_{2} & \left(0,1,0,0\right)\\
				u_{z_{2},i}=j\left(b_{i}\right)z_{2} & \left(0,1,0,0\right)\\\rowcolor{LightGray}
				z_{1} & \left(1,0,0,0\right)\\
				u_{z_{1},i}=j\left(b_{i}\right)z_{1} & \left(1,0,0,0\right)\\\rowcolor{LightGray}
				z_{3}s^{\prime} & \left(0,0,1,0\right)\\
				z_{3}s^{\prime}z_{1} & \left(1,0,1,0\right)\\\rowcolor{LightGray}
				z_{3}s^{\prime}z_{1}s^{\prime} & \left(1,0,1,0\right)\\
				s^{\prime}z_{1}s^{\prime} & \left(1,0,0,0\right)\\\rowcolor{LightGray}
				s^{\prime}z_{1}s^{\prime}z_{2}\cdot\ldots b_{k_{i-1}}^{\varepsilon_{i-1}}\cdot z_{2}\quad \text{ for \ensuremath{1\le i\le r}} & \left(1,i,0,0\right)\\
				s^{\prime}z_{1}s^{\prime}z_{2}b_{k_{1}}^{\varepsilon_{1}}\ldots z_{2}b_{k_{i}}^{\varepsilon_{i}}\quad \text{ for \ensuremath{1\le i\le r}} & \left(1,i,0,0\right)\\\rowcolor{LightGray}
				z_{4}s^{\prime}z_{1}s^{\prime}z_{2}b_{k_{1}}^{\varepsilon_{1}}\ldots z_{1}b_{k_{r}}^{\varepsilon_{r}}z_{2}^{-i}\quad \text{ for \ensuremath{0\le i\le r}} & \left(1,r-i,0,1\right)\\
				sz_{1}s & \left(1,0,0,0\right)\\\rowcolor{LightGray}
				\text{any }x\in S^{\prime} & \left(0,0,0,0\right)
			\end{array}
			\]
		}
		\caption{The generators of $S''$ together with their degrees $\deg_{z}$.}
		\label{tab:augmentation}
	\end{table}

	Considering
		the values of $\deg_{z}\left(x\right)$ of the rows in \Cref{tab:augmentation} as vectors in $\mathbb{Z}^{4}$,
		we are thus looking for dependencies of the form $\pm R_{1}\pm R_{2}\pm R_{3}=0$ where $R_1,R_2,R_3$ are three of these vectors.
	Such dependencies correspond exactly to those triples $x,x',x'' \in S''$ such that
	\[\deg_{z}(x''x'x)=e.\]
	
	Since all rows in the table have nonnegative degrees (and no row is
	$0$ except the last, which corresponds to generators in $S^{\prime}$),
	at least one of the coefficients in such a dependence must be negative
	and at least one must be positive. Thus we may assume (by permuting
	the indices if necessary) that the equation has the form $R_{1}+R_{2}=R_{3}$.

	Given a dependence $R_{1}+R_{2}=R_{3}$, we consider triples of generators $x$,
	$x^{\prime}$, $x^{\prime\prime-1}$ with values under $\mathrm{deg}_z$ equal
	to $R_{1}$, $R_{2}$, and $R_{3}$, respectively, and also satisfying $\pi_{F,\Z}^\mathrm{ab}(x'' x' x) = 0$ (or, equivalently, $\pi_{F,\Z}^\mathrm{ab}(x) + \pi_{F,\Z}^\mathrm{ab}(x') = \pi_{F,\Z}^\mathrm{ab}(x^{\prime\prime -1})$.)
	For the rest of this proof, ``sufficiently generic'' is short for ``sufficiently generic in $G \ast (F_R \ast \langle z_1,\ldots,z_4\rangle)$ relative to $S$''.
	It suffices to check that $x'' x' x$ and $x' x'' x$ are either trivial or sufficiently generic, since each of the four other products $x x' x'',\ x x'' x',\ x'' x x',\ x' x x''$ is a cyclic shift of one of these, and cyclic shifts are conjugate to each other.

	We now enumerate all cases by going over the possible vectors of $R_{3}\in\mathbb{Z}^{4}$.
		\begin{enumerate}[label=\textbf{Case \arabic*:}, labelwidth=-8mm,  labelindent=2em,leftmargin =!]
		\item Suppose $R_{3}=\left(0,0,1,0\right)$. Then without loss of generality
		$R_{1}=\left(0,0,1,0\right)$ and $R_{2}=\left(0,0,0,0\right)$. 
		In this case $x^{\prime\prime-1}$ and $x$ are among $z_{3}$, $j\left(b_{i}\right)z_{3}$
		(for some $1\le i\le N$), and $z_{3}s^{\prime}$, while $x^{\prime}\in S^{\prime}$.
		It follows that 
		\[
		\pi_{F,\mathbb{Z}}^\mathrm{ab}\left(x^{\prime\prime}x\right)\in\left\{ 0,\pm b_{i},\pm\pi_{\mathbb{Z}}\left(s^{\prime}\right),\pm b_{i}\pm\pi_{\mathbb{Z}}\left(s^{\prime}\right)\right\} .
		\]
		
		\begin{enumerate}[label=\textbf{Case 1.\arabic*:}, labelwidth=-10mm,  labelindent=12mm,leftmargin =!]
			\item If $\pi_{F,\mathbb{Z}}^\mathrm{ab}\left(x^{\prime\prime}x\right)=0$ it follows
			that $x^{\prime\prime}=x^{-1}$. The relation $\pi_{\mathbb{Z}}\left(x^{\prime\prime}x^{\prime}x\right)=0$
			then yields $x^{\prime}=e$ by~\Cref{cor:scrambling}.
			Thus the elements $x''x'x$ and $x'x''x$ are both $e$.
			\item If $\pi_{F,\mathbb{Z}}^\mathrm{ab}\left(x^{\prime\prime}x\right)=\pm b_{i}$ we
			may assume $x^{\prime\prime-1}=z_{3}$ and $x=j\left(b_{i}\right)z_{3}$.
			\Cref{cor:scrambling} yields $x^{\prime}=j\left(b_{i}\right)^{-1}$.
			Since $j\left(b_{i}\right)$ commutes with $z_{3}$, the elements $x''x'x$ and $x'x''x$ are both $e$.
			\item If $\pi_{F,\mathbb{Z}}^\mathrm{ab}\left(x^{\prime\prime}x\right)=\pm\pi_{F,\mathbb{Z}}^\mathrm{ab}\left(s^{\prime}\right)$
			we may assume $x^{\prime\prime-1}=z_{3}$ and $x=z_{3}s^{\prime}$.
			Thus $\pi_{F,\mathbb{Z}}^\mathrm{ab}\left(x^{\prime}\right)=-\pi_{F,\mathbb{Z}}^\mathrm{ab}\left(s^{\prime}\right)$,
			and by \Cref{cor:scrambling} we must have $x^{\prime}=s^{\prime-1}$.
			In this case we obtain
			\[
			x''x'x=z_{3}^{-1}s^{\prime-1}z_{3}s=\left[z_{3},s^{\prime-1}\right]\quad\text{and}\quad x'x''x=s^{\prime-1}z_{3}^{-1}z_{3}s^{\prime}=e.
			\]
			We show in~\Cref{lem:augmentation_words} that $\left[z_{3},s^{\prime-1}\right]$ is sufficiently generic.
			\item If $\pi_{F,\mathbb{Z}}^\mathrm{ab}\left(x^{\prime\prime}x\right)=\pm b_{i}\pm\pi_{F,\mathbb{Z}}^\mathrm{ab}\left(s^{\prime}\right)$,
			we may assume $x^{\prime\prime-1}=j\left(b_{i}\right)z_{3}$ and $x=z_{3}s^{\prime}$.
			Thus $\pi_{F,\mathbb{Z}}^\mathrm{ab}\left(x^{\prime}\right)=b_{i}-\pi_{\mathbb{Z}}\left(s^{\prime}\right)$,
			and by \Cref{cor:scrambling} we must have $x^{\prime}=j\left(b_{i}\right)s^{\prime-1}$.
			In this case the elements $x^{\prime\prime},x^{\prime},x$ differ
			from the elements of the previous case by $j\left(b_{i}\right)^{\pm1}$.
			Since $j\left(b_{i}\right)$ commutes with all other generators and
			thus cancels in the elements $x''x'x$ and $x'x''x$,
			the computation is the same as in the previous case.
		\end{enumerate}
		\item Suppose $R_{3}=\left(0,1,0,0\right)$ or $R_{3}=\left(0,0,0,1\right)$.
		Then without loss of generality $R_{1}=R_{3}$ and $R_{2}=\left(0,0,0,0\right)$.
		Each possibility in this case was checked in the $R_{3}=\left(0,0,1,0\right)$-case with $z_{3}$
		in place of $z_{2}$ or $z_{4}$.
		\item Suppose $R_{3}=\left(1,0,0,0\right)$. Then without loss of generality $R_{1}=\left(1,0,0,0\right)$
		and $R_{2}=\left(0,0,0,0\right)$. 
		In this case $x^{\prime\prime-1}$ and $x$ are among $z_{1}$, $j\left(b_{i}\right)z_{1}$
		(for some $1\le i\le N$), $s^{\prime}z_{1}s^{\prime}$, and $sz_{1}s$,
		while $x^{\prime}\in S^{\prime}$. It follows that
		\[
		\pi_{F,\mathbb{Z}}^\mathrm{ab}\left(x^{\prime\prime}x\right)\in\left\{ 0,\pm b_{i},\pm2\pi_{\mathbb{Z}}\left(s^{\prime}\right),\pm\left(2\pi_{\mathbb{Z}}\left(s^{\prime}\right)-b_{i}\right)\right\} .
		\]
		There is no $x^{\prime}\in S^{\prime}$ with $\pi_{F,\mathbb{Z}}^\mathrm{ab}\left(x^{\prime}\right)\in\left\{ \pm2\pi_{\mathbb{Z}}\left(s^{\prime}\right),\pm\left(2\pi_{\mathbb{Z}}\left(s^{\prime}\right)-b_{i}\right)\right\} $
		by property \ref{it:sc8} of scramblings, so either $x^{\prime\prime}=x^{-1}$
		(in which case $x^{\prime}=e$ and the two elements $x''x'x$ and $x'x''x$ are $e$) or one the following cases occurs:
		\begin{enumerate}[label=\textbf{Case 3.\arabic*:}, labelwidth=-10mm,  labelindent=12mm,leftmargin =!]
			\item $\left\{ x^{\prime\prime-1},x\right\} =\left\{ z_{1},j\left(b_{i}\right)z_{1}\right\} $:
			this case was considered in the $R_{3}=\left(0,0,1,0\right)$-case, with $z_{3}$ in place of $z_{1}$.
			\item $\left\{ x^{\prime\prime-1},x\right\} =\left\{ z_{1},sz_{1}s\right\} $.
			In this case $x^{\prime}=e$ and the two elements $x''x'x$ and $x'x''x$ are both conjugate to  $sz_{1}sz_{1}^{-1}$ of which we show in~\Cref{lem:augmentation_words} that it is sufficiently generic.
			\item $\left\{ x^{\prime\prime-1},x\right\} =\left\{ j\left(b_{i}\right)z_{1},sz_{1}s\right\}$.
			In this case $x^{\prime}=j\left(b_{i}\right)^{\pm1}$, and since $j\left(b_{i}\right)$
			commutes with all other generators the resulting elements are just
			those of the previous case.
		\end{enumerate}
		\item Suppose $R_{3}=\left(1,0,1,0\right)$. Then $x^{\prime\prime-1}$ is either
		$z_{3}s^{\prime}z_{1}$ or $z_{3}s^{\prime}z_{1}s^{\prime}$. There
		are two cases to consider:
		\begin{enumerate}[label=\textbf{Case 4.\arabic*:}, labelwidth=-10mm,  labelindent=12mm,leftmargin =!]
			\item $R_{1}=\left(1,0,0,0\right)$ and $R_{2}=\left(0,0,1,0\right)$.
			
			In this case $x^{\prime}$ is either $z_{3}$ or $j\left(b_{i}\right)z_{3}$
			for some $1\le i\le N$, and $x$ is one of $z_{1}$, $j\left(b_{k}\right)z_{1}$
			(for some $1\le k\le N$), $s^{\prime}z_{1}s^{\prime}$, and $sz_{1}s$.
			We consider the possibilities for $x^{\prime\prime-1}$:
			\begin{enumerate}[label=\textbf{Case 4.1.\arabic*:}, labelwidth=-12mm,  labelindent=14mm,leftmargin =!]
				\item $x^{\prime\prime-1}=z_{3}s^{\prime}z$: Since $\pi_{F,\mathbb{Z}}^\mathrm{ab}\left(s^{\prime}\right)$
				is not of the form $\pm b_{i}$ $\pm b_{i}\pm b_{i}$, or $\pm2\pi_{\mathbb{Z}}\left(s^{\prime}\right)\pm b_{i}$
				by property \ref{it:sc8} of scramblings, it is impossible to obtain $\pi_{F,\mathbb{Z}}^\mathrm{ab}\left(x^{\prime\prime}x^{\prime}x\right)=0$
				in this case. 
				\item $x^{\prime\prime-1}=z_{3}s^{\prime}z_{1}s^{\prime}$: As in the previous
				case, to obtain $\pi_{F,\mathbb{Z}}^\mathrm{ab}\left(x^{\prime\prime}x^{\prime}x\right)=0$
				we must have $x^{\prime}=z_{3}$ and $x=s^{\prime}z_{1}s^{\prime}$.

					In this case we obtain
				\begin{align*}
					x''x'x&=\left(z_{3}s^{\prime}z_{1}s^{\prime}\right)^{-1}z_{3}\left(s^{\prime}z_{1}s^{\prime}\right)=e\,\text{and}\\
					x'x''x&=z_{3}\left(z_{3}s^{\prime}z_{1}s^{\prime}\right)^{-1}\left(s^{\prime}z_{1}s^{\prime}\right)=\left[z_{3},\left(s^{\prime}z_{1}s^{\prime}\right)^{-1}\right].
				\end{align*}

					We prove that $\left[z_{3},\left(s^{\prime}z_{1}s^{\prime}\right)^{-1}\right]$ is sufficiently generic in~\Cref{lem:augmentation_words}.

			\end{enumerate}
		\item Suppose $R_{1}=\left(1,0,1,0\right)$ and $R_{2}=\left(0,0,0,0\right)$.
		In this case $x$ is either $z_{3}s^{\prime}z_{1}$ or $z_{3}s^{\prime}z_{1}s^{\prime}$
		and $x^{\prime}\in S^{\prime}$. We consider the possibilities for
		$x^{\prime\prime-1}$:
		\begin{enumerate}[label=\textbf{Case 4.2.\arabic*:}, labelwidth=-12mm,  labelindent=14mm,leftmargin =!]
			\item $x^{\prime\prime-1}=z_{3}s^{\prime}z_{1}$: If $x=z_{3}s^{\prime}z_{1}$
			then $\pi_{F,\mathbb{Z}}^\mathrm{ab}\left(x^{\prime\prime}x\right)=0$, and by~\Cref{cor:scrambling} we must have $x^{\prime}=e$.
			In this case the elements $x''x'x$ and $x'x''x$ are both $e$.
			If $x=z_{3}s^{\prime}z_{1}s^{\prime}$ then $\pi_{F,\mathbb{Z}}^\mathrm{ab}\left(x^{\prime\prime}x\right)=\pi_{F,\mathbb{Z}}^\mathrm{ab}\left(s^{\prime}\right)$,
			and again by~\Cref{cor:scrambling} we must have $x^{\prime}=s^{\prime-1}$.
			Thus we obtain
			\begin{align*}
				x''x'x=\left(z_{3}s^{\prime}z_{1}\right)^{-1}s^{\prime-1}\left(z_{3}s^{\prime}z_{1}s^{\prime}\right)&=\left[\left(z_{3}s^{\prime}z_{1}\right)^{-1},s^{\prime-1}\right]\mbox{ and}\\
				x'x''x=s^{\prime-1}\left(z_{3}s^{\prime}z_{1}\right)^{-1}\left(z_{3}s^{\prime}z_{1}s^{\prime}\right)&=e.
			\end{align*}
			We prove that $\left[\left(z_{3}s^{\prime}z_{1}\right)^{-1},s^{\prime-1}\right]$
			is sufficiently generic in~\Cref{lem:augmentation_words}.
			\item $x^{\prime\prime-1}=z_{3}s^{\prime}z_{1}s^{\prime}$. If $x=z_{3}s^{\prime}z_{1}$
			then by exchanging the roles of $x$ and $x^{\prime\prime}$ (and
			inverting all three generators) we reduce to the previous case. If
			$x=z_{3}s^{\prime}z_{1}s^{\prime}$ then $\pi_{F,\mathbb{Z}}^\mathrm{ab}\left(x^{\prime\prime}x\right)=0$,
			and by~\Cref{cor:scrambling} we must have $x^{\prime}=e$.
			In this case the elements $x''x'x$ and $x'x''x$ are both $e$.
		\end{enumerate}
	\end{enumerate}
	
	\item Suppose $R_{3}=\left(1,i,0,0\right)$ for $1\le i\le r$. There are two
	cases to consider in this case:
	\begin{enumerate}[label=\textbf{Case 5.\arabic*:}, labelwidth=-10mm,  labelindent=12mm,leftmargin =!]
		\item $R_{1}=\left(1,i-1,0,0\right)$ and $R_{2}=\left(0,1,0,0\right)$.
		
		In this case $x^{\prime\prime-1}$ is either $s^{\prime}z_{1}s^{\prime}z_{2}\ldots b_{k_{i-1}}^{\varepsilon_{i-1}} z_{2}$
		or $s^{\prime}z_{1}s^{\prime}z_{2}\dots z_{2}b_{k_{i}}^{\varepsilon_{i}}$.
		If $i>1$, $x$ is either $s^{\prime}z_{1}s^{\prime}z_{2}\ldots z_{2}b_{k_{i-1}}^{\varepsilon_{i-1}}$
		or $s^{\prime}z_{1}s^{\prime}z_{2}\ldots\cdot z_{2}$, and thus
		$\pi_{F,\mathbb{Z}}^\mathrm{ab}\left(x^{\prime\prime}x\right)$ is either $0$,
		$-\varepsilon_{i}b_{k_{i}}$, $-\varepsilon_{i-1}b_{k_{i-1}}$, or
		$-\varepsilon_{i-1}b_{k_{i-1}}-\varepsilon_{i}b_{k_{i}}$. The generator
		$x^{\prime}$ must be either $z_{2}$ or $j\left(b_{k}\right)z_{2}$
		for some $1\le k\le N$, so in the case $\pi_{F,\mathbb{Z}}^\mathrm{ab}\left(x^{\prime\prime}x\right)=-\varepsilon_{i-1}b_{k_{i-1}}-\varepsilon_{i}b_{k_{i}}$
		there is nothing to check (because it implies $\pi_{F,\mathbb{Z}}^\mathrm{ab}\left(x^{\prime\prime}x^{\prime}x\right)\neq0$).
		In each of the other cases, all elements $b_{j}^{\varepsilon_{j}}$
		and $j\left(b_{k}\right)$ vanish from the product (because they commute
		with all other generators, and cancel out).
		Thus we obtain
		\begin{align*}
			x''x'x=\left(s^{\prime}z_{1}s^{\prime}z_{2}^{i}\right)^{-1}z_{2}\left(s^{\prime}z_{1}s^{\prime}z_{2}^{i-i}\right)&=\left[\left(s^{\prime}z_{1}s^{\prime}z_{2}^{i}\right)^{-1},z_{2}\right]\mbox{ and}\\
			x'x''x=z_{2}\left(s^{\prime}z_{1}s^{\prime}z_{2}^{i}\right)^{-1}\left(s^{\prime}z_{1}s^{\prime}z_{2}^{i-i}\right)&=e.
		\end{align*}
		We prove that $\left[\left(s^{\prime}z_{1}s^{\prime}z_{2}^{i}\right)^{-1},z_{2}\right]$
		is sufficiently generic in~\Cref{lem:augmentation_words}.
		
		If $i=1$ we have that $\pi_{F,\mathbb{Z}}^\mathrm{ab}\left(x^{\prime\prime}\right)$
		equals either $-2\pi_{F,\mathbb{Z}}^\mathrm{ab}\left(s\right)$ or $2\pi_{F,\mathbb{Z}}^\mathrm{ab}\left(s\right)-\varepsilon_{k_{1}}b_{k_{1}}$,
		and $x$ may also equal one of $z_{1}$, $j\left(b_{k}\right)z_{1}$
		(for some $1\le k\le N$), $s^{\prime}z_{1}s^{\prime}$, and $sz_{1}s$
		(the other possible values for $x$ have been dealt with in the case
		$i>1$). Thus $\pi_{F,\mathbb{Z}}^\mathrm{ab}\left(x\right)$ is one of $0$, $b_{k}$
		(for some $1\le k\le N$), and $2\pi_{F,\mathbb{Z}}^\mathrm{ab}\left(s^{\prime}\right)$.
		By property \ref{it:sc8} of scramblings, if $\pi_{F,\mathbb{Z}}^\mathrm{ab}\left(x\right)\neq2\pi_{F,\mathbb{Z}}^\mathrm{ab}\left(s^{\prime}\right)$
		then $\pi_{F,\mathbb{Z}}^\mathrm{ab}\left(x^{\prime\prime}x^{\prime}x\right)\neq0$
		(note that $x^{\prime}$ is either $z_{2}$ or its product with some
		$j\left(b_{k^{\prime}}\right)$, and thus $\pi_{F,\mathbb{Z}}^\mathrm{ab}\left(x^{\prime}\right)$
		is either $0$ or some basis element). Therefore we need only consider
		the case where $x=s^{\prime}z_{1}s^{\prime}$.
		Since all basis elements
		of $\mathbb{Z}^{N}$ cancel in the product, it suffices to compute
		the elements $x''x'x$ and $x'x''x$
		for $x^{\prime\prime-1}=s^{\prime}z_{1}s^{\prime}z_{2}$, $x=s^{\prime}z_{1}s^{\prime}$,
		and $x^{\prime}=z_{2}$.
		This yields
		\begin{align*}
			x''x'x&=\left(s^{\prime}z_{1}s^{\prime}z_{2}\right)^{-1}z_{2}\left(s^{\prime}z_{1}s^{\prime}\right)=\left[z_{2}^{-1},s^{\prime}z_{1}s^{\prime}\right]\mbox{ and}\\
			x'x''x&=z_{2}\left(s^{\prime}z_{1}s^{\prime}z_{2}\right)^{-1}\left(s^{\prime}z_{1}s^{\prime}\right)=e.
		\end{align*}
		We prove that $\left[z_{2}^{-1},s^{\prime}z_{1}s^{\prime}\right]$ is sufficiently generic
		in~\Cref{lem:augmentation_words}.
		\item $R_{1}=\left(1,i,0,0\right)$ and $R_{2}=\left(0,0,0,0\right)$.
		
		In this case, $x^{\prime}\in S^{\prime}$ while $x$ and $x^{\prime\prime-1}$
		are each equal to one of $s^{\prime}z_{1}s^{\prime}z_{2}\dots b_{k_{i-1}}^{\varepsilon_{i-1}} z_{2}$
		and $s^{\prime}z_{1}s^{\prime}z_{2}\dots z_{2}b_{k_{i}}^{\varepsilon_{i}}$.
		It follows that $\pi_{F,\mathbb{Z}}^\mathrm{ab}\left(x^{\prime\prime}x\right)\in\left\{ \pm\varepsilon_{i}b_{k_{i}},0\right\} $
		and therefore by~\Cref{cor:scrambling} $x^{\prime}=e$ or
		$x^{\prime}=b_{k_{i}}^{\pm1}$.
		In all cases, the elements $x''x'x$ and $x'x''x$ are both $e$.
	\end{enumerate}
	\item Suppose $R_{3}=\left(1,i,0,1\right)$ for $0\le i\le r$. There are three
	cases to consider in this case. In all of them we must have $x^{\prime\prime-1}=z_{4}s^{\prime}z_{1}s^{\prime}z_{2}b_{k_{1}}^{\varepsilon_{1}}\dots z_{2}b_{k_{r}}^{\varepsilon_{r}}z_{2}^{r-i}=z_{4}sz_{1}sz_{2}^{i}$.
	\begin{enumerate}[label=\textbf{Case 6.\arabic*:}, labelwidth=-10mm,  labelindent=12mm,leftmargin =!]
		\item $R_{1}=\left(1,i-1,0,1\right)$ and $R_{2}=\left(0,1,0,0\right)$
		(if $i\neq0$).
		
		In this case $x=z_{4}s^{\prime}z_{1}s^{\prime}z_{2}b_{k_{1}}^{\varepsilon_{1}}\ldots z_{2}b_{k_{r}}^{\varepsilon_{r}}z_{2}^{r-i+1}=z_{4}szs_{1}z_{2}^{i-1}$.
		Since $\pi_{F,\mathbb{Z}}^\mathrm{ab}\left(x^{\prime\prime}x\right)=0$ we must
		also have $\pi_{F,\mathbb{Z}}^\mathrm{ab}\left(x^{\prime}\right)=0$. Thus $x^{\prime}=z_{2}$.
		This yields
		\begin{align*}
			x''x'x=&\left(z_{4}sz_{1}sz_{2}^{i}\right)^{-1}z_{2}\left(z_{4}sz_{1}sz_{2}^{i-1}\right)=\left[\left(z_{4}sz_{1}sz_{2}^{i}\right)^{-1},z_{2}\right] \mbox{ and}\\
			x'x''x=&z_{2}\left(z_{4}sz_{1}sz_{2}^{i}\right)^{-1}\left(z_{4}sz_{1}sz_{2}^{i-1}\right)=e.
		\end{align*}
		We prove that $\left[\left(z_{4}sz_{1}sz_{2}^{i}\right)^{-1},z_{2}\right]$
		is sufficiently generic in~\Cref{lem:augmentation_words}.
		\item $R_{1}=\left(1,i,0,0\right)$ and $R_{2}=\left(0,0,0,1\right)$.
		
		Suppose first $i\neq0$. Then $x$ is equal to one of the words $s^{\prime}z_{1}s^{\prime}z_{2}\dots b_{k_{i-1}}^{\varepsilon_{i-1}} z_{2}$
		and $s^{\prime}z_{1}s^{\prime}z_{2}\dots b_{k_{i-1}}^{\varepsilon_{i-1}} z_{2}b_{k_{i}}^{\varepsilon_{i}}$,
		while $x^{\prime}$ is either $z_{4}$ or $j\left(b_{k}\right)z_{4}$
		for some $1\le k\le N$. Supposing $\pi_{F,\mathbb{Z}}^\mathrm{ab}\left(x^{\prime\prime}x^{\prime}x\right)=0$,
		we may ignore any basis element of $\mathbb{Z}^{N}$ in $x''x'x$ and $x'x''x$
		as these basis elements commute with all other generators and cancel
		each other.
		Modulo the $\mathbb{Z}^{N}$ factor, $x$ is equivalent
		to $sz_{1}s$ and $x^{\prime}$ is equivalent to $z_{4}$.
		This yields
		\begin{align*}
			x''x'x&=\left(z_{4}sz_{1}s\right)^{-1}z_{4}\left(sz_{1}s\right)=e\mbox{ and}\\
			x'x''x&=z_{4}\left(z_{4}sz_{1}s\right)^{-1}\left(sz_{1}z\right)=\left[z_{4},\left(sz_{1}s\right)^{-1}\right].
		\end{align*}
		We prove that $\left[z_{4},\left(sz_{1}s\right)^{-1}\right]$ is sufficiently generic in~\Cref{lem:augmentation_words}.
		
		If $i=0$ there are more possibilities for $x$: it may additionally
		be one of $z_{1}$, $j\left(b_{k^{\prime}}\right)z_{1}$ (for some
		$1\le k^{\prime}\le N$), $s^{\prime}z_{1}s^{\prime}$, and $sz_{1}s$.
		The possibilities for $x^{\prime}$ remain the same. Again, assuming
		$\pi_{F,\mathbb{Z}}^\mathrm{ab}\left(x^{\prime\prime}x^{\prime}x\right)=0$, we
		may work modulo the $\mathbb{Z}^{N}$ factor; thus the last two possibilities
		for $x$ are both equivalent to $sz_{1}s$, which has already been
		considered. The first two possibilities for $x$ are equivalent to
		$z_{1}$.
		Thus the elements $x''x'x$ and $x'x''x$
		are equivalent to one of $e$, $\left[z_{4},\left(sz_{1}s\right)^{-1}\right]$,
		\begin{align*}
			\left(z_{4}sz_{1}s\right)^{-1}z_{4}z_{1}&=s^{-1}z_{1}^{-1}s^{-1}z_{1} \mbox{ and}\\
			z_{4}\left(z_{4}sz_{1}s\right)^{-1}z_{1}&=z_{4}\left(s^{-1}z_{1}^{-1}s^{-1}\right)z_{4}^{-1}z_{1}.
		\end{align*}
		We prove that these are sufficiently generic
		in~\Cref{lem:augmentation_words}.
		\item $R_{1}=\left(1,i,0,1\right)$ and $R_{2}=\left(0,0,0,0\right)$.
		
		In this case $x^{\prime}=e$ and $x^{\prime\prime-1}=x$. 
		Thus the elements $x''x'x$ and $x'x''x$ are both $e$.
	\end{enumerate}
	\item Suppose $R_{3}=\left(0,0,0,0\right)$. Then also $R_{1}=R_{2}=\left(0,0,0,0\right)$.
	Since all three generators are then in $S^{\prime}$, there is nothing
	to check: condition (b) holds by property \ref{it:sc5} of scramblings. \qedhere
\end{enumerate}

We now verify the second part of the statement, on pairs $x,x^{\prime}\in S^{\prime\prime}$
satisfying both $\deg_{z}\left(x\right)=\deg_{z}\left(x^{\prime}\right)$
and $\pi_{F,\mathbb{Z}}^{\mathrm{ab}}\left(x\right)=\pi_{F,\mathbb{Z}}^{\mathrm{ab}}\left(x^{\prime}\right)$.
For each possible value of $\deg_{z}\left(x\right)$ we consider the
corresponding set of rows:
\begin{enumerate}[label=\textbf{Case \arabic*:}, labelwidth=-8mm,  labelindent=2em,leftmargin =!]
	\item $\deg_z(x) = \left(0,0,0,1\right)$: the corresponding generators are $z_{4}$
	and those generators of the form $j\left(b_{i}\right)z_{4}$. Any
	two of these have distinct values under $\pi_{F,\mathbb{Z}}^{\mathrm{ab}}$
	(because $\pi_{F,\mathbb{Z}}^{\mathrm{ab}}\left(z_{4}\right)=0$ and
	$\pi_{F,\mathbb{Z}}^{\mathrm{ab}}\left(j\left(b_{i}\right)\right)$
	takes different values for different indices $i$, all of which are
	nonzero).
	\item $\deg_z(x) = \left(0,0,1,0\right)$: the corresponding generators are $z_{3}$
	and those of the form $j\left(b_{i}\right)z_{3}$, and the verification
	is the same as that for $\left(0,0,0,1\right)$.
	\item $\deg_z(x) = \left(0,1,0,0\right)$ is identical to the previous case, with $z_{2}$
	replacing $z_{3}$.
	\item $\deg_z(x) = \left(1,0,1,0\right)$: the possible generators are $z_{3}s^{\prime}z_{1}$
	and $z_{3}s^{\prime}z_{1}s^{\prime}$. These have different values
	under $\pi_{F,\mathbb{Z}}^{\mathrm{ab}}$.
	\item $\deg_z(x) = \left(1,i,0,0\right)$ for some $1\le i\le r$: the possible generators
	are $s^{\prime}z_{1}s^{\prime}z_{2}\cdot\ldots b_{k_{i-1}}^{\varepsilon_{i-1}}\cdot z_{2}$
	and $s^{\prime}z_{1}s^{\prime}z_{2}b_{k_{1}}^{\varepsilon_{1}}\ldots z_{2}b_{k_{i}}^{\varepsilon_{i}}$.
	These have different values under $\pi_{F,\mathbb{Z}}^{\mathrm{ab}}$
	(differing by $\pi_{F,\mathbb{Z}}^{\mathrm{ab}}\left(b_{k_{i}}^{\varepsilon_{i}}\right)\neq0$).
	\item $\deg_z(x) = \left(1,0,0,0\right)$: the corresponding generators are $z_{1}$,
	generators of the form $j\left(b_{i}\right)z_{1}$, $s^{\prime}z_{1}s^{\prime}$,
	and $sz_{1}s$. Except for the pair $\left\{ z_{1},t=sz_{1}s\right\} $,
	any two of these have distinct values under $\pi_{F,\mathbb{Z}}^{\mathrm{ab}}$
	(this follows directly from \ref{it:sc8}). 
	\item $\deg_z(x) = \left(1,r-i,0,1\right)$ for some $0\le i\le r$: there is only one generator
	with this degree.
	\item $\deg_z(x) = \left(0,0,0,0\right)$: these cases follow from property \ref{it:sc4} in the definition of group scrambling.
\end{enumerate}
\end{proof}

\begin{lemm}\label{lem:augmentation_words}
	In the notation of~\Cref{prop:trivial_or_generic_products} and denoting the commutator of $x$ and $y$ by $\left[x,y\right] = xyx^{-1}y^{-1}$, the following elements are sufficiently generic relative to $S$:
	\begin{enumerate}[(1)]
		\item $sz_{1}sz_{1}^{-1}$, $s^{-1}z_{1}^{-1}s^{-1}z_{1}$, and $z_{1}\left(s^{-1}z_{1}^{-1}s^{-1}\right)$,
		\item $\left[z_{3},s^{\prime-1}\right]$ and $\left[\left(z_{3}s^{\prime}z_{1}\right)^{-1},s^{\prime-1}\right]$, and
		\item $\left[z_{3},\left(sz_{1}s\right)^{-1}\right]$, $\left[\left(sz_{1}sz_{2}^{i}\right)^{-1},z_{2}\right]$, $\left[z_{2}^{-1},sz_{1}s\right]$, $\left[\left(z_{4}sz_{1}sz_{2}^{i}\right)^{-1},z_{2}\right]$, and $\left[z_{4},\left(sz_{1}s\right)^{-1}\right]$,
		\item $z_{4}\left(s^{-1}z_{1}^{-1}s^{-1}\right)z_{4}^{-1}z_{1}$.
	\end{enumerate}
\end{lemm}
\begin{proof}	
	For the following, let $\rho: F_S \ast \langle z_1,\ldots,z_4 \rangle \to \mathrm{GL}_n(\C)$ be any homomorphism mapping each $s\in S$ to the permutation matrix of a derangement or to $I_n$ and mapping $z_1,\ldots,z_4$ to matrices with algebraically independent entries over $\Q$. 
	
	Observe that the value of each of the elements in the lemma's statement under $\pi_{\mathbb{Z}}$
	is $0$. Therefore, all occurrences of $s^{\prime}$ can be replaced
	with $s$ without changing the words' value in the group $G^{\prime\prime}$. We then obtain elements in $G''$ which are words in $s$ and $z_1,\ldots,z_4$, and it suffices to show that $\rho(w)-I_n$ is invertible or $0$ for $w$ each of these words. 
	
	\begin{enumerate}[(1)]
		\item The inverse of $z_{1}\left(s^{-1}z_{1}^{-1}s^{-1}\right)$ is $sz_{1}sz_{1}^{-1}$ and the inverse of $s^{-1}z_{1}^{-1}s^{-1}z_{1}$
	 	is conjugate to $sz_{1}sz_{1}^{-1}$.
	 	So only one of these words needs to be checked by~\Cref{def:sufficiently_generic}.
		The element $sz_{1}sz_{1}^{-1}$ is sufficiently generic by~\Cref{lem:sufficiently_generic}~\ref{it:suff1}. 
		
		Note that if $\rho(s)$ is the permutation matrix of a derangement then, for each of these words $w$, by applying~\Cref{lem:sufficiently_generic} as above we find that $\rho(w) - I_n$ is invertible (and not zero).
		\item Following the above remark it is enough to consider the elements
		\[\left[z_{3},s^{-1}\right]\mbox{ and }\left[\left(z_{3}sz_{1}\right)^{-1},s^{-1}\right].\]
		The first is sufficiently generic by~\Cref{lem:sufficiently_generic}~\ref{it:suff2}.
		For $w = \left[\left(z_{3}sz_{1}\right)^{-1},s^{-1}\right]$, observe that
		if $\rho(s) = I_n$ then $\rho(w)-I_n = 0$. Otherwise, $\rho(s)$ is the permutation matrix of a derangement. By \cref{cor:transcendental_entries}, if there exist matrices $B_1,B_2$ such that $[(B_1 \rho(s) B_2)^{-1}, \rho(s)^{-1}] - I_n$ is invertible then so is $\rho(w) - I_n$. Taking $B_2 = \rho(s)^-1$ and $B_1 = \rho(z_3)^{-1}$, we obtain 
		\[[(B_1 \rho(s) B_2)^{-1}, \rho(s)^{-1}] - I_n = [\rho(z_3), \rho(s)^{-1}],\]
		which (since $\rho(s)^{-1}$ is the permutation matrix of a derangement) is again invertible by ~\Cref{lem:sufficiently_generic}~\ref{it:suff2}.

		\item 
		Let $w$ be any of these commutators and consider $\rho\left(w\right)$
		as a matrix with entries which are polynomials in the entries of the
		matrices $\left\{ \rho\left(z_{i}\right)\right\} _{i=1}^{4}$.
		It is clear that for any pair of invertible matrices $A,B$ we can
		arrange for $\rho\left(w\right)$ to equal $\left[A,B\right]$
		by choosing the entries of $\left\{ \rho\left(z_{i}\right)\right\} _{i=1}^{4}$
		appropriately. For example, for $\left[\left(z_{4}sz_{1}sz_{2}^{i}\right)^{-1},z_{2}\right]$
		we can set $\rho\left(z_{2}\right)=B$, $\rho\left(z_{1}\right)=I$,
		and take $\rho\left(z_{4}\right)$ to be the unique matrix
		such that $\rho\left(\left(z_{4}sz_{1}sz_{2}^{i}\right)^{-1}\right)=A$.
		Similarly, for $\left[z_{4},\left(sz_{1}s\right)^{-1}\right]$ we
		can set $\rho\left(z_{4}\right)=A$ and take $\rho\left(z_{1}\right)$
		to be the unique matrix such that $\rho\left(\left(sz_{1}s\right)^{-1}\right)=B$.
		If we take matrices $A,B$ that have algebraically independent entries then $[A,B]-I_n$ is invertible (for instance by~\Cref{lem:sufficiently_generic}~\ref{it:suff2}), and hence so is $\rho(w)-I_n$.
		\item Denote $w = z_{4}\left(s^{-1}z_{1}^{-1}s^{-1}\right)z_{4}^{-1}z_{1}$. If $\rho(s) = I_n$ then $\rho(w) - I_n = \rho([z_4,z_1^{-1}]) - I_n$ is invertible. Otherwise, $\rho(s)$ is the permutation matrix of a derangement. Considering $\rho(w) - I_n$ as a matrix with entries which are polynomials in the entries of $\rho(z_4)$, we can substitute $I_n$ for $\rho(z_4)$ to obtain $\rho(s^{-1}z_1^{1}s^{-1}z_1) - I_n$, which is invertible by case (1).
		\qedhere
	\end{enumerate}
\end{proof}

\subsection{The scrambling construction}\label{sec:scrambling_construction}
We describe a construction fulfilling the axioms for group scramblings
(see \cref{def:scrambling}).

\begin{construction}\label{con:scrambling}
	Let $G=\left\langle S\mid R\right\rangle $
	be a group given by a symmetric triangular presentation.
	We construct a finitely presented group $G^{\prime}=\left\langle S^{\prime}\mid R^{\prime}\right\rangle $
	together with an isomorphism $\varphi:G^{\prime}\rightarrow G\times\mathbb{Z}^{S\sqcup R}$
	in a sequence of steps. In each step (except the first preprocessing
	step) a group $G_{i}=\left\langle S_{i}\mid R_{i}\right\rangle $
	and a homomorphism $\varphi_{i}:G_{i}\rightarrow G\times\mathbb{Z}^{S\sqcup R}$
	is constructed. It is always the case that $S_{i}\subset S_{i+1}$,
	$R_{i}\subset R_{i+1}$, and $\varphi_{i+1}\restriction_{S_{i}}=\varphi_{i}\restriction_{S_{i}}$.
	We take $G^{\prime}$ and $\varphi$ to be the group presentation
	and homomorphism of the last step.
	
	In what follows we denote by $B=\left\{ b_{s}\right\} _{s\in S}\cup\left\{ b_{r}\right\} _{r\in R}$
	a basis for $\mathbb{Z}^{S\sqcup R}$.
	\begin{enumerate}[(CS1), labelwidth=-6mm,  labelindent=2em,leftmargin =!]
		\item\label{it:cs1} (A preprocessing step.) We modify $\left\langle S\mid R\right\rangle $
		to arrange that no relation $abc=e$ in $R$ contains the same generator
		twice (though it may contain a generator and its inverse) as follows.
		If some $s\in S$ appears twice or three times in some relation in
		$R$, add new elements $s'$ and $s^{\prime\prime}$ to $S$, and add
		the relations $ss^{\prime-1}e=e$ and $ss^{\prime\prime-1}e=e$ to
		$R$. Then, in any relation in which $s$ appears more than once,
		replace the second (and if present, the third) occurrence by $s^{\prime}$
		(or $s^{\prime\prime}$). Repeat this process until each relation
		is a product of three distinct generators (a generator and its inverse
		are considered distinct for this purpose). Then symmetrize the set
		of relations. It is clear how the resulting finitely presented group
		is isomorphic to the original one.
		\item\label{it:cs2} For each $s\in S\setminus\left\{ e\right\} $ define symbols $x_{s}$,
		$x_{s}^{-1}$. We call $x_{s}^{-1}$ the \emph{formal inverse} of
		$x_{s}$. We consider mutually inverse generators $s,s^{-1}$ in $S$
		as distinct for this purpose. In particular, for any such pair there
		are four symbols: $x_{s}$, $x_{s^{-1}}$, $x_{s}^{-1}$, and $x_{s^{-1}}^{-1}$.
		Furthermore, define symbols $w_r$, $w_r^{-1}$ for each $r\in R$.		
		Set 
		\begin{align*}
		S_{0}=&\left\{ x_{s}, x_{s}^{-1}\right\} _{s\in S}\cup \left\{w_{r}, w_{r}^{-1}\right\} _{r\in R}\cup\left\{ e\right\} ,\\
		R_{0}=&\left\{ x_{s}x_{s}^{-1}e=e\right\} _{s\in S}\cup\left\{ w_{r}w_{r}^{-1}e=e\right\}_{r\in R} ,
		\end{align*}
		and $G_{0}=\left\langle S_{0}\mid R_{0}\right\rangle $, so that $G_{0}$
		is a free group on $\left|S\right|+|R|$ generators (note: $x_{s}$ and
		$x_{s^{-1}}$ are \emph{not} inverses in $G_{0}$ for any pair $s,s^{-1}\in S$).
		Then define
		\[
		\varphi_{0}:G_{0}\rightarrow (G\ast F_R)\times\mathbb{Z}^{S\sqcup R}
		\]
		where $F_R$ is the free group with generators $f_r$ for $r\in R$ by setting
		\[
		\varphi_{0}\left(x_{s}\right)=\left(s,5b_{s}\right)\quad \varphi_{0}\left(w_{r}\right)=\left(f_r,0 \right)
		\]
		for each generator $x_{s}$ and $w_r$, and extending to $G_{0}$.
		\item\label{it:cs3} In this step we add generators for the $\Z^{S\sqcup R}$ part together with the appropriate commutators as relations to ensure they commute with all other generators.
		\begin{enumerate}
			\item For each $s\in S$ define symbols $t_{s}$ and $t_{s}^{-1}$, and
			for each $r\in R$ define symbols $t_{r}$ and $t_{r}^{-1}$ (again,
			mutually inverse generators $s,s^{-1}$ in $S$ are distinct for this
			purpose). Define $T^{+}=\left\{ t_{s}\right\} _{s\in S}\cup\left\{ t_{r}\right\} _{r\in R}$,
			let $T^{-}=\left\{ t_{s}^{-1}\right\} _{s\in S}\cup\left\{ t_{r}^{-1}\right\} _{r\in R}$
			be the formal inverses of those symbols in $T^{+}$, and define $T=T^{+}\cup T^{-}$.
			
			Choose a linear ordering $\prec$ on $T^{+}$.
			\item For each $s\in S$ and $t\in T^{+}$ define new symbols $u_{s,t}$
			and $u_{s,t}^{-1}$.
			Similarly for each $r\in R$ and $t\in T^{+}$ define new symbols $u_{r,t}$
			and $u_{r,t}^{-1}$.
			Lastly for distinct $t_{1},t_{2}\in T^{+}$
			with $t_{1}\prec t_{2}$ define the new symbols $u_{t_{1},t_{2}}$
			and $u_{t_{1},t_{2}}^{-1}$. Denote the set of all these symbols by
			$U$, and define
			\[
			S_{1}=S_{0}\cup T\cup U.
			\]
			If $t_{1}\succ t_{2}$ are elements of $T^{+}$, define the additional
			notation $u_{t_{1},t_{2}}$ for $u_{t_{2},t_{1}}$ (it is not a distinct
			symbol). Similarly let $u_{t_{1},t_{2}}^{-1}$ denote $u_{t_{2},t_{1}}^{-1}$.
			\item Write the following relations: for each pair of mutually inverse symbols
			$y,y^{-1}$ in $T\cup U$, write the relation $yy^{-1}e=e$. For each
			$s\in S$ and $t\in T^{+}$, write the relations $x_{s}tu_{s,t}^{-1}=e$
			and $u_{s,t}x_{s}^{-1}t^{-1}=e$ (here $t^{-1}\in T^{-}$ is the formal
			inverse of $t\in T^{+}$).
			Similarly for each $r\in R$ and $t\in T^{+}$, write the relations $w_{r}tu_{r,t}^{-1}=e$
			and $u_{r,t}w_{r}^{-1}t^{-1}=e$.
			Denote the set of all these relations by
			$R_{T}$. Define
			\[
			R_{1}=R_{0}\cup R_{T}.
			\]
		\end{enumerate}
		Define $G_{1}=\left\langle S_{1}\mid R_{1}\right\rangle $, and define
		\[
		\varphi_{1}:G_{1}\rightarrow (G\ast F_R)\times\mathbb{Z}^{S\sqcup R}
		\]
		by setting $\varphi_{1}\left(t_{s}\right)=b_{s}$ for each $s\in S$,
		$\varphi_{1}\left(t_{r}\right)=b_{r}$ for each $r\in R$, $\varphi_{1}\left(x_{s}\right)=\varphi_{0}\left(x_{s}\right)$,
		and extending to all other generators and elements as the relations
		dictate (for example, if $t\in T^{+}$ and $s\in S$ then $\varphi_{1}\left(u_{s,t}\right)=\varphi_{1}\left(x_{s}\right)\cdot\varphi_{1}\left(t\right)$).
		
		Note that $\varphi_{1}$ is surjective: $\left(e,b_{r}\right)$ and
		$\left(e,b_{s}\right)$ are in the image for each $r\in R$ and each
		$s\in S$. Similarly $\left(s,5b_{s}\right)$ is in the image for
		each $s\in S$.
		\item\label{it:cs4} Order $R$ arbitrarily, and denote the relations by $r_{1},\ldots,r_{n}$.
		For each relation $r=r_{j}$, in order, for $j=1,\ldots,n$:
		\begin{enumerate}
			\item Write $r$ as $abc=e$ for $a,b,c\in S$ (by step (1) these are distinct).
			\item Define generators and relations to ``break up'' the relation 
			\[
			w_r^{-1} (w_rx_{a}t_{r}^{5}x_{b}t_{r}^{5}x_{c}\left(t_{a}^{-1}t_{r}^{-1}t_{b}^{-1}t_{r}^{-1}t_{c}^{-1}\right)^{5})=e
			\]
			from left to right. Explicitly, write out the word on the left hand
			side of the relation without the $w_r^{-1}$:
			\begin{equation}\label{eq:scrambling_word}
			\tag{$\spadesuit$}w_rx_{a}\underbrace{t_{r}\ldots t_{r}}_{\times5}x_{b}\underbrace{t_{r}\ldots t_{r}}_{\times5}x_{c}\underbrace{t_{a}^{-1}t_{r}^{-1}t_{b}^{-1}t_{r}^{-1}t_{c}^{-1}\ldots t_{a}^{-1}t_{r}^{-1}t_{b}^{-1}t_{r}^{-1}t_{c}^{-1}}_{\times5}.
			\end{equation}
			Define symbols $y_{r,1},\ldots,y_{r,36}$ (together with formal
			inverses $y_{r,1}^{-1}, \dots, y_{r,36}^{-1}$), one for each prefix
			of this word, omitting the empty prefix, the first prefix $w_r$ and the final three prefixes (the entire word, and
			the entire word with the last or the two last letters omitted).
			Denote the set of all
			these symbols by $Y_{r}$. Write the following relations:
			\[
			w_rx_a y_{r,1}^{-1}=e,\quad y_{r,1}t_{r}y_{r,2}^{-1}=e,\quad\ldots\quad y_{r,35}t_{b}^{-1}y_{r,36}=e.
			\]
			(Multiplying by the symbols $y_{r,i}$ from the right and substituting
			the previous relation into each relation in turn, these read $y_{r,1}=w_rx_a$,
			$y_{r,2}=w_r x_{a} t_{r}$, and so on up to $y_{r,36}=w_rx_{a}t_{r}\ldots t_{b}^{-1}$,
			which equals the entire word without the final two letters $t_{r}^{-1}t_{c}^{-1}$).
			Finally, write the relation
			\[
			w_r^{-1} y_{r,36}u_{t_{c},t_{r}}^{-1}=e.
			\]
			Also, for each $y_{r,i}$ write the relation $y_{r,i}y_{r,i}^{-1}e=e$. 
			
			Denote the set of all these relations by $R_{r}$. Then define $S_{j+1}=S_{j}\cup Y_{r}$
			and $R_{j+1}=R_{j}\cup R_{r}$, $G_{j+1}=\left\langle S_{j+1}\mid R_{j+1}\right\rangle $,
			and extend $\varphi_{j}:G_{j}\rightarrow (G\ast F_R)\times\mathbb{Z}^{S\sqcup R}$
			to 
			\[
			\varphi_{j+1}:G_{j+1}\rightarrow (G\ast F_R)\times\mathbb{Z}^{S\sqcup R}
			\]
			in the manner dictated by the relations (this is possible because
			every generator $y\in S_{j+1}\setminus S_{j}$ satisfies a relation
			which defines it in terms of previous generators.) 
			
			Observe that $\varphi_{j+1}$ is a homomorphism: it maps every relator
			of $R_{j+1}$ to the identity. For ``trivial'' relators of the form
			$yy^{-1}e=e$ this is obvious, and similarly for the $36$ relators
			\[
			w_rx_a y_{r,1}^{-1},\quad y_{r,1}t_{r}y_{r,2}^{-1},\quad\ldots\quad, y_{r,35}t_{b}^{-1}y_{r,36},
			\]
			since they define $y_{r,1},\ldots,y_{r,36}$ in terms of the previous
			generators. For the relator $w_r^{-1} y_{r,36}u_{t_{c},t_{r}}^{-1}$,
			note that $u_{t_{c},t_{r}}=t_{c}t_{r}$, and when we substitute previous
			relations into it we obtain 
			\[
			w_r^{-1}w_rx_{a}t_{r}^{5}x_{b}t_{r}^{5}x_{c}\left(t_{a}^{-1}t_{r}^{-1}t_{b}^{-1}t_{r}^{-1}t_{c}^{-1}\right)^{5}=e.
			\]
			When the left hand side is evaluated under $\varphi_{j}$ we obtain
			precisely $abc$, but this product is the identity in $G$, as desired.
		\end{enumerate}
		\item (Postprocessing.) Let $G_{n+1}=\left\langle S_{n+1}\mid R_{n+1}\right\rangle $
		be the presentation of the last step and $\varphi_{n+1}:G_{n+1}\rightarrow (G\ast F_R)\times\mathbb{Z}^{S\sqcup R}$
		the corresponding homomorphism. Symmetrize the set of relations. For
		any relation $abc=e$ in $R_{n+1}$ in which $a=e$ or $a\in T$,
		add the relations $bac=e$ and $bca=e$. Then symmetrize the set of
		relations again. This does not change the group: each generator in
		$T$ commutes with all other generators.
	\end{enumerate}
\end{construction}
\begin{rmrk}
	It is obvious from the construction that it is computable. The presentation
	$\left\langle S^{\prime}\mid R^{\prime}\right\rangle $ can be computed
	from $\left\langle S\mid R\right\rangle $, and the homomorphism $\varphi$
	can be computed in the sense that we can explicitly write the image
	of each generator in $S^{\prime}$ (as a tuple consisting of a word
	in the generators $S$ and an explicitly-given element of $\mathbb{Z}^{S\sqcup R}$).
\end{rmrk}

\begin{theorem}
	\label[theorem]{thm:scrambling_construction}Let $G=\left\langle S\mid R\right\rangle $
	be a group given by symmetric triangular presentation. Let $G^{\prime}=\left\langle S^{\prime}\mid R^{\prime}\right\rangle $
	and $\varphi:G^{\prime}\rightarrow G\times\mathbb{Z}^{S\sqcup R}$
	be the output of \cref{con:scrambling} applied to $\left\langle S\mid R\right\rangle $.
	Then $\left\langle S^{\prime}\mid R^{\prime}\right\rangle $ is a
	group scrambling of $\left\langle S\mid R\right\rangle $ in the sense
	of \cref{def:scrambling}.
\end{theorem}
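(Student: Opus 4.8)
The plan is to verify properties \ref{it:sc1}--\ref{it:sc8} of \cref{def:scrambling} one at a time for the output $\left\langle S'\mid R'\right\rangle$, $\varphi$ of \cref{con:scrambling}, with $N=|S|+|R|$, $\mathbb{Z}^{N}=\mathbb{Z}^{S\sqcup R}$, $\mu=\varphi$, and the maps $i,j$ read off the construction by $i(s)=x_{s}$ and $j(b_{s})=t_{s}$, $j(b_{r})=t_{r}$. Properties \ref{it:sc1}, \ref{it:sc6}, and \ref{it:sc7} are then immediate: \ref{it:sc1} follows from the symmetrizations in the preprocessing step \ref{it:cs1} and the postprocessing step (which also ensures, via \ref{it:cs1}, that every relator $abc$ of $R'$ has three distinct letters --- a fact used below); and \ref{it:sc6}, \ref{it:sc7} follow from the definitions of $i$, $j$ together with the computed values $\varphi(x_{s})=(s,5b_{s})$, $\varphi(t_{s})=(e,b_{s})$, and $\varphi(t_{r})=(e,b_{r})$.

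The main structural step is \ref{it:sc2}, that $\varphi$ is an isomorphism onto $(G\ast F_{R})\times\mathbb{Z}^{N}$. I would prove this by Tietze transformations: every generator introduced after $S_{0}\cup T$ in steps \ref{it:cs3} and \ref{it:cs4} --- the symbols of $U$ and all the $y_{r,i}$ --- is defined by a relation expressing it as a word in previously introduced generators, so, traversing the lists in reverse, all of them can be eliminated. What remains is the presentation on $\{x_{s}\}_{s\in S}\cup\{w_{r}\}_{r\in R}\cup\{t_{s}\}_{s\in S}\cup\{t_{r}\}_{r\in R}$ whose relations are the commutators from \ref{it:cs3}, which make every $t$-generator central, together with, for each relation $r=abc$ of $R$, the relation obtained from \ref{it:cs4}, which after substitution reads
\[
x_{a}t_{r}^{5}x_{b}t_{r}^{5}x_{c}\bigl(t_{a}^{-1}t_{r}^{-1}t_{b}^{-1}t_{r}^{-1}t_{c}^{-1}\bigr)^{5}=e.
\]
Using centrality of the $t$-generators this is equivalent to $x_{a}x_{b}x_{c}=(t_{a}t_{b}t_{c})^{5}$, and the change of variables $x_{s}\mapsto x_{s}t_{s}^{-5}$ (a valid reparametrization) turns it into $x_{a}x_{b}x_{c}=e$. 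Killing the $x$'s and $w$'s now retracts onto the central subgroup generated by the $t$'s, so that subgroup is free abelian on $S\sqcup R$ and splits off, leaving a presentation of $\langle\{x_{s}\}\mid\{x_{a}x_{b}x_{c}=e\}_{abc\in R}\rangle\ast\langle\{w_{r}\}\rangle\times\mathbb{Z}^{S\sqcup R}\cong(G\ast F_{R})\times\mathbb{Z}^{N}$; since $\varphi$ is compatible with all these transformations and is onto (as noted in \cref{con:scrambling}), it is an isomorphism.

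For \ref{it:sc4} and \ref{it:sc8} I would tabulate $\varphi$ --- hence $\pi_{G}$, $\pi_{\mathbb{Z}}$, $\pi_{F,\mathbb{Z}}^{\mathrm{ab}}$ --- on each generator of $S'$: besides the values above, $\varphi(w_{r})=(f_{r},0)$, $\varphi(u_{s,t})=(s,5b_{s}+b_{t})$, $\varphi(u_{r,t})=(f_{r},b_{t})$, $\varphi(u_{t_{1},t_{2}})=(e,b_{t_{1}}+b_{t_{2}})$, and $\varphi(y_{r,i})=(f_{r}g_{r,i},v_{r,i})$ where $g_{r,i}\in G$ and $v_{r,i}\in\mathbb{Z}^{N}$ are the $G$- and $\mathbb{Z}^{N}$-parts of the corresponding prefix of the word~\eqref{eq:scrambling_word}. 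For \ref{it:sc4}: the $F_{R}^{\mathrm{ab}}$-component of $\pi_{F,\mathbb{Z}}^{\mathrm{ab}}$ (which is $0$ or a single basis vector $e_{r}$) separates the $w_{r}$- and $y_{r,i}$-type generators from all others and from distinct $r$, while within a fixed type the $\mathbb{Z}^{N}$-component distinguishes them --- here the coefficient $5$ in $5b_{s}$ is exactly what prevents a collision between $x_{s}$-/$u_{s,t}$-type generators and $t$-/$u_{t_{1},t_{2}}$-type generators. For \ref{it:sc8}: take $k$ the index of $b_{s}$; then $\pi_{\mathbb{Z}}(i(s))=5b_{s}$ so $c_{k}=5\ge5$, $\mu(i(s))=(s,5b_{s})$ has trivial $F_{R}$-component, and inspection of the table shows that the $b_{s}$-coefficient of $\pi_{\mathbb{Z}}$ of any generator of $S'$ is at most $6=c_{k}+1$ in absolute value.

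The main obstacle is \ref{it:sc5}. Given $s,s',s''\in S'$ with $\pi_{F,\mathbb{Z}}^{\mathrm{ab}}(s''s's)=0$, one must show $s''s's=e$ in $G'$ or $\pi_{G\ast F_{R}}(s''s's)$ is sufficiently generic. Since each generator of $S'$ has abelianized $f_{r}$-exponent $0$ or $\pm1$ for at most one $r$, vanishing of the $F_{R}^{\mathrm{ab}}$-component forces one of two cases: (a) none of $s,s',s''$ involves any $f_{r}$; or (b) exactly two of them involve $f_{r}$ for a single $r$ with opposite abelianized exponents and the third involves no $f$. In case (a), only $x_{s}$- and $u_{s,t}$-type generators contribute a coefficient-$5$ term to $\pi_{\mathbb{Z}}$, such a term can only be cancelled by a formal inverse, and three generators cannot pair up completely; using $x_{s}x_{s}^{-1}=e$ and centrality of the $t$-generators one checks that $\pi_{\mathbb{Z}}(s''s's)=0$ then forces the $G\ast F_{R}$-components of all three to vanish, so $s''s's=e$. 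In case (b) one enumerates, by the same bookkeeping as in the proof of \cref{prop:augmentation_reps}, which generators carry $f_{r}$; combining $\pi_{\mathbb{Z}}(s''s's)=0$ with \cref{cor:scrambling} (and using $abc=e$ in $G$ for the relevant $y_{r,i}$) one finds that after cancelling central $t$-generators and the factor $f_{r}f_{r}^{-1}$ the product $\pi_{G\ast F_{R}}(s''s's)$ is either $e$ or a conjugate or inverse of a commutator of the form $[g,f_{r}^{\pm1}]$ or $[g,f_{r}^{\pm1}g']$ with $g,g'\in G$, hence sufficiently generic by parts (ii) and (iii) of \cref{lem:sufficiently_generic} together with \cref{lem:sufficiently_generic_conjugation} (an inverse of a sufficiently generic element being sufficiently generic, since $\rho(x^{-1})-I=-\rho(x)^{-1}(\rho(x)-I)$). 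This case analysis, organized by the values of $\pi_{\mathbb{Z}}$ and $\pi_{F,\mathbb{Z}}^{\mathrm{ab}}$ on the triples, is long and delicate, and is where essentially all of the work lies.
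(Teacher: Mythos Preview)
Your proposal is correct and follows essentially the same approach as the paper: \ref{it:sc1}, \ref{it:sc6}, \ref{it:sc7} are immediate, \ref{it:sc2} is proved by Tietze transformations eliminating the auxiliary generators in reverse, \ref{it:sc4} and \ref{it:sc8} follow from a tabulation of $\pi_{F,\mathbb{Z}}^{\mathrm{ab}}$ on the generators, and \ref{it:sc5} is handled by the same dichotomy on the $F_R^{\mathrm{ab}}$-component followed by a case enumeration. The paper carries out that enumeration explicitly in two lemmas (one for the generators of steps \ref{it:cs1}--\ref{it:cs3}, one for the $y_{r,i}$ of step \ref{it:cs4}), arriving at exactly the commutator forms $[g,f_r^{\pm1}]$ and $[g,f_r^{\pm1}g']$ you predict; your sentence ``forces the $G\ast F_R$-components of all three to vanish'' in case~(a) should read ``forces the product to be $e$'' (the individual $G$-components need not be trivial, only their product), but this is a wording issue, not a gap.
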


\begin{proof}
	[Proof of \cref{thm:scrambling_construction}]
	Properties \ref{it:sc4} and \ref{it:sc5} require some case enumeration and are therefore split up into the Lemmas~\ref{lem:scrambling1} and~\ref{lem:scrambling2}.
	\begin{description}
		\item[\ref{it:sc1}] The generating set $S'$ is symmetric by construction, and similarly
		all relators in~$R'$ have length three. The relators are cyclically symmetric as we symmetrized the relations in the last step of the construction. 
		\item[\ref{it:sc2}] Denote $N=\left|S\sqcup R\right|$.
		We prove that $\mu=\varphi:G^{\prime}\rightarrow (G\ast F_R)\times\mathbb{Z}^{S\sqcup R}\simeq (G\ast F_R)\times\mathbb{Z}^{N}$
		is an isomorphism:
		\begin{enumerate}[(1),leftmargin=*]
			\item It is a homomorphism, as explained in the construction.
			\item It is surjective because $\varphi_{1}:G_{1}=\left\langle S_{1}\mid R_{1}\right\rangle \rightarrow (G\ast F_R)\times\mathbb{Z}^{S\sqcup R}$
			is surjective, where $S_{1}\subset S^{\prime}$ and $\varphi\left(s\right)=\varphi_{1}\left(s\right)$
			for each $s\in S_{1}$. 
			\item It is injective: just like in the proof of \cref{prop:augmentation},
			all generators except for $\left\{ x_{s}\right\} _{s\in S}\cup\left\{ t_{s}\right\} _{s\in S}\cup\left\{ t_{r}\right\} _{r\in R}$
			can be eliminated using Tietze transformations. The relations then
			simplify to:
			\begin{enumerate}[(a),leftmargin=*]
				\item The commutators $\left[t_{s},x\right]=e$ for each generator $x\neq t_{s}$,
				\item The commutators $\left[t_{r},x\right]=e$ for each generator $x\neq t_{r}$,
				\item For each mutually inverse pair $s,s^{-1}\in S'$, the relation $x_{s^{-1}}x_{s}=e$.
				\item For each relation $abc=e$ in $\left\langle S\mid R\right\rangle $,
				the relation
				\[
				w_r^{-1}w_rx_{a}t_{r}^{5}x_{b}t_{r}^{5}x_{c}\left(t_{a}^{-1}t_{r}^{-1}t_{b}^{-1}t_{r}^{-1}t_{c}^{-1}\right)^{5}=e.
				\]
			\end{enumerate}
			Since $\left\{ t_{s}\right\} _{s\in S}\cup\left\{ t_{r}\right\} _{r\in R}$
			commute with all generators, the relation of the form
			\[
			w_r^{-1}w_rx_{a}t_{r}^{5}x_{b}t_{r}^{5}x_{c}\left(t_{a}^{-1}t_{r}^{-1}t_{b}^{-1}t_{r}^{-1}t_{c}^{-1}\right)^{5}=e
			\]
			can be replaced by $x_{a}t_{a}^{-5}x_{b}t_{b}^{-5}x_{c}t_{c}^{-5}=e$.
			Using further Tietze transformations, introduce for each $s\in S$
			a new generator $\widetilde{x}_{s}$ and the relation $\widetilde{x}_{s}=x_{s}t_{s}^{-5}$.
			Since each generator $x_{s}$ can be expressed as $\widetilde{x}_{s}t_{s}^{5}$,
			the generators $\left\{ x_{s}\right\} _{s\in S}$ can be eliminated
			(again by Tietze transformations). This yields a presentation with
			generators $\left\{ \widetilde{x}_{s}\right\} _{s\in S}\cup\left\{ t_{s}\right\} _{s\in S}\cup\left\{ w_{r}\right\} _{r\in R}\cup\left\{ t_{r}\right\} _{r\in R}$,
			with relations similar to the above: the relations of type (a) and
			(b) are the same, relations of type (c) are replaced by $\widetilde{x}_{s}\widetilde{x}_{s^{-1}}=e$,
			and each relation of type (d) is replaced by $\widetilde{x}_{a}\widetilde{x}_{b}\widetilde{x}_{c}=e$
			for each relation $abc=e$ in $R$. It is clear that the resulting
			group is isomorphic to $(G\ast F_R)\times\mathbb{Z}^{S\sqcup R}$ with $\mu$
			mapping each $\widetilde{x}_{s}$ to the corresponding $s\in S$, each $w_r$ to the free generator $f_r$, and each element
			of $\left\{ t_{s}\right\} _{s\in S}\cup\left\{ t_{r}\right\} _{r\in R}$
			to the corresponding basis element of $\mathbb{Z}^{S\sqcup R}$.
		\end{enumerate}
		\item[\ref{it:sc6}] Define $i:S\rightarrow S^{\prime}$ by $i\left(s\right)=x_{s}$. Then
		$\pi_{G}\left(i\left(s\right)\right)=s$.
		\item[\ref{it:sc7}] Denote $B=\left\{ b_{s}\right\} _{s\in S}\cup\left\{ b_{r}\right\} _{r\in R}$.
		This is a basis of $\mathbb{Z}^{S\sqcup R}\simeq\mathbb{Z}^{N}$.
		Define $j:B\rightarrow S^{\prime}$ by $j\left(b_{z}\right)=t_{z}$
		(for all $z\in S\sqcup R$). Then $\mu\left(j\left(b_{z}\right)\right)=\left(e,b_{z}\right)$.
		\item[\ref{it:sc8}]Let $s\in S$. Then $\pi_{\mathbb{Z}}\left(i\left(s\right)\right)=\pi_{\mathbb{Z}}\left(x_{s}\right)=5b_{s}$,
		so (expressed in the basis $B=\left\{ b_{s}\right\} _{s\in S}\cup\left\{ b_{r}\right\} _{r\in R}$)
		the $b_{s}$-coefficient of $\pi_{\mathbb{Z}}\left(i\left(s\right)\right)$
		is at least $5$. We verify that the $b_{s}$-coefficient of $\pi_{\mathbb{Z}}\left(s^{\prime}\right)$
		(expressed in $B$) is at most $6$ for each $s^{\prime}\in S^{\prime}$:
		\begin{enumerate}[(1),leftmargin=*]
			\item If $x$ is one of the generators added in step~\ref{it:cs2} then $\pi_{\mathbb{Z}}\left(x\right)=\pm5b$
			for some $b\in B$, and its $b_{s}$-coefficient is clearly at most
			$6$ in absolute value.
			\item If $x$ is one of the generators $w_r$ and $w_r^{-1}$ it is zero under the projection $\pi_\Z$.
			\item Similarly, any of the generators $t_{s}$ or $t_{r}$ added in step~\ref{it:cs3} have $b_{s}$-coefficient at most $1$ in absolute value.
			\item The generators $u_{s,t}$ added in step~\ref{it:cs3} have $b_{s}$-coefficient
			at most $6$ in absolute value; the generators $u_{t_{1},t_{2}}$
			have $b_{s}$-coefficient at most $1$ in absolute value.
			\item If $x$ is one of the generators added in step~\ref{it:cs4}, there is a relation
			$abc=e$ in $R$ such that $x$ is a proper prefix of 
			\[
			w_rx_{a}\underbrace{t_{r}\ldots t_{r}}_{\times5}x_{b}\underbrace{t_{r}\ldots t_{r}}_{\times5}x_{c}\underbrace{t_{a}^{-1}t_{r}^{-1}t_{b}^{-1}t_{r}^{-1}t_{c}^{-1}\ldots t_{a}^{-1}t_{r}^{-1}t_{b}^{-1}t_{r}^{-1}t_{c}^{-1}}_{\times5}.
			\]
			If $s\notin\left\{ a,b,c\right\} $ then $\pi_{\mathbb{Z}}\left(x\right)$
			has $b_{s}$-coefficient $0$. If $s\in\left\{ a,b,c\right\} $, then
			since $a,b,c$ are distinct it is easy to see that $\pi_{\mathbb{Z}}\left(x\right)$
			has $b_{s}$-coefficient nonnegative and at most~$5$.\qedhere
		\end{enumerate}
	\end{description}
\end{proof}
\begin{lemm}\label{lem:scrambling1}
	Properties  \ref{it:sc4} and \ref{it:sc5} hold for the generators added in the steps \ref{it:cs1}-\ref{it:cs3} of Construction~\ref{con:scrambling}.
\end{lemm}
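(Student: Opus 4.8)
The plan is to compute the images of the generators of steps \ref{it:cs1}--\ref{it:cs3} under the homomorphism $\mu=\varphi\colon G'\to (G\ast F_R)\times\mathbb{Z}^{S\sqcup R}$ built alongside the construction, and then to verify both properties by a bounded case analysis. (Step \ref{it:cs1} only preprocesses $\langle S\mid R\rangle$; the relevant generators of $G'$ are the $x_s,w_r,t_z$ of steps \ref{it:cs2}--\ref{it:cs3} together with the $u$-generators, where $S$ and $R$ now denote the preprocessed presentation.) Write $B=\{b_z\}_{z\in S\sqcup R}$ for the chosen basis of $\mathbb{Z}^{S\sqcup R}\cong\mathbb{Z}^N$. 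From the construction one reads off $\varphi(x_s)=(s,5b_s)$, $\varphi(w_r)=(f_r,0)$, $\varphi(t_z)=(e,b_z)$, and hence $\varphi(u_{s,t_z})=(s,5b_s+b_z)$, $\varphi(u_{r,t_z})=(f_r,b_z)$, $\varphi(u_{t_{z_1},t_{z_2}})=(e,b_{z_1}+b_{z_2})$, with $\varphi(e)=e$ and the mirror formulas on formal inverses. Post-composing with $\pi_{F,\mathbb{Z}}^{\mathrm{ab}}$ replaces the $G\ast F_R$-coordinate by the abelianised class in $\mathbb{Z}^{|R|}$ of its $F_R$-part, so the generators acquire the images $(0,\pm5b_s)$, $(\pm e_r,0)$, $(0,\pm b_z)$, $(0,\pm(5b_s+b_z))$, $(\pm e_r,\pm b_z)$, $(0,\pm(b_{z_1}+b_{z_2}))$ and $(0,0)$.

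Property \ref{it:sc4} asks that these vectors be pairwise distinct, which is a short finite check. The $\mathbb{Z}^{|R|}$-coordinate separates the ``free'' generators $w_r^{\pm1},u_{r,t_z}^{\pm1}$ from the rest, and within each of the two resulting families the coordinate vector in $B$ --- all of whose entries lie in $\{0,\pm1,\pm5,\pm6\}$ and whose support has size at most two --- determines the generator together with its sign. The key point is that the weight $5$ carried by each $x_s$ cannot be matched by sums of the weight-$1$ contributions of the $t_z$'s, so, for instance, no $x$-type image can coincide with a $t$-type or $u_{t,t}$-type image.

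For property \ref{it:sc5}, let $s,s',s''$ be three such generators. Since $\pi_{F,\mathbb{Z}}^{\mathrm{ab}}$ is a homomorphism into an abelian group, $\pi_{F,\mathbb{Z}}^{\mathrm{ab}}(s''s's)$ is the sum of the three images; if this is nonzero we are in alternative~(i). Assume it vanishes. Vanishing of the $\mathbb{Z}^{|R|}$-component forces that among $s,s',s''$ either none is free, or exactly two are and they contribute $e_r$ and $-e_r$ for a common $r$ (a sum of one or three vectors each equal to $0$ or $\pm e_r$ cannot vanish unless all are $0$). In the second case the single non-free generator cannot be of $x$- or $u_{s,\cdot}$-type, since its weight-$5$ contribution to some $b_a$-coordinate could not be cancelled by the two weight-$\le1$ contributions of the free generators; hence it maps to the identity in the $G\ast F_R$-factor, and as the two free generators contribute $f_r$ and $f_r^{-1}$ there, the $G\ast F_R$-part of $s''s's$ is trivial, so vanishing of $\pi_{\mathbb{Z}}$ forces $s''s's=e$ in $G'$. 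In the first case all three generators lie among $x_s^{\pm1},t_z^{\pm1},u_{s,t_z}^{\pm1},u_{t_{z_1},t_{z_2}}^{\pm1},e$; the same weight-$5$ bookkeeping shows there are either zero or exactly two $x$- or $u_{s,\cdot}$-type generators, and in the latter case they have a common base index and opposite signs (so their weight-$5$ contributions cancel), the third generator then having weight $\le1$ in every coordinate. This leaves a short list of admissible triples, and going through it one checks in each case that the product is trivial in $G'$, using $u_{s,t}=x_st=tx_s$, $u_{t_1,t_2}=t_1t_2$, $w_rw_r^{-1}=e$, and above all that every $t_z=j(b_z)$ is central in $G'\simeq(G\ast F_R)\times\mathbb{Z}^N$, so that any commutator $[t_z,\cdot]$ is trivial. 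Hence alternative~(ii) of \ref{it:sc5} holds in every case; alternative~(iii), involving sufficiently generic elements, is not needed at this stage and enters only for the generators of step \ref{it:cs4}, treated in \Cref{lem:scrambling2}.

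The main obstacle is the bookkeeping in the \ref{it:sc5} argument: one must be certain that the enumeration of the generator families (the $x$'s, $w$'s, $t$'s, the three flavours of $u$, and all their formal inverses) and of the triples with vanishing $\pi_{F,\mathbb{Z}}^{\mathrm{ab}}$-image is exhaustive, and that the reduction of each such triple to the identity via the commutation relations and the identifications $u_{s,t}=x_st$, $u_{r,t}=w_rt$, $u_{t_1,t_2}=t_1t_2$ is carried out correctly. The weight-$5$ device is exactly what keeps this enumeration short; the rest is routine.
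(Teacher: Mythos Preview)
Your proposal is correct and follows essentially the same approach as the paper: both compute $\pi_{F,\mathbb{Z}}^{\mathrm{ab}}$ on the generators from steps \ref{it:cs1}--\ref{it:cs3}, verify \ref{it:sc4} by direct inspection of distinctness, and establish \ref{it:sc5} by enumerating all triples whose $\pi_{F,\mathbb{Z}}^{\mathrm{ab}}$-images sum to zero and checking that the resulting products are trivial in $G'$ (so alternative~(iii) is indeed never needed here). The only difference is organizational: you partition the case analysis first by the number of ``free'' generators (those carrying an $f_r$) and then by the weight-$5$ bookkeeping on the $\mathbb{Z}^N$-coordinates, whereas the paper tabulates the $\pi_{F,\mathbb{Z}}^{\mathrm{ab}}$-values (Table~\ref{tab:scrambling}) and runs through the eight possibilities for the ``largest'' generator $x''$. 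Your organization is arguably cleaner since the weight-$5$ argument immediately rules out most mixed cases, while the paper's enumeration is more explicit. One small point worth making explicit in your writeup: since \ref{it:sc5} quantifies over ordered triples, one must check both orderings $xx'x''^{-1}$ and $x'xx''^{-1}$ (cyclic shifts being conjugate); your argument covers this implicitly because in every surviving case the $G\ast F_R$-components are either all trivial or consist of $f_r$, $f_r^{-1}$, and $e$, whose product in any order is $e$.
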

\begin{proof}
	\Cref{tab:scrambling} contains the generators
	defined in steps \ref{it:cs1}-\ref{it:cs3} of the construction (one representative from
	each mutually inverse pair) together with their values under $\pi_{F,\mathbb{Z}}^\mathrm{ab}$.
	(We slightly abuse notation: if $t\in T^{+}=\left\{ t_{s}\right\} _{s\in S}\cup\left\{ t_{r}\right\} _{r\in R}$
	then $b_{t}$ refers to $b_{s}$ or $b_{r}$ according to the value
	of $t$. Further, we denote by $f_r$ both the generators of the free group $F_R$ and its abelianization $\Z^{|R|}$.)
	\begin{table}[hbt]
	\addtolength{\tabcolsep}{0.9pt}
	\def\arraystretch{1.3}{
	\[
	\begin{array}{ll}
	\text{Generator }x & \pi_{F,\mathbb{Z}}^\mathrm{ab}\left(x\right)\\
	\hline
	\rowcolor{LightGray}
	 e & 0\\
	x_{s}\text{ for each \ensuremath{s\in S}} & 5b_{s}\\\rowcolor{LightGray}
	w_{r}\text{ for each \ensuremath{r\in R}} & f_r\\
	t_{s}\text{ for each \ensuremath{s\in S}} & b_{s}\\\rowcolor{LightGray}
	t_{r}\text{ for each \ensuremath{r\in R}} & b_{r}\\
	u_{s,t}\text{ for \ensuremath{s\in S,t\in T^{+}}} & 5b_{s}+b_{t}\\\rowcolor{LightGray}
	u_{r,t}\text{ for \ensuremath{r\in R,t\in T^{+}}} & f_r+b_{t}\\
	u_{t_{1},t_{2}}\text{ for distinct \ensuremath{t_{1},t_{2}\in T^{+}}} & b_{t_{1}}+b_{t_{2}}
	\end{array}
	\]
	}
	\caption{The generators added in the steps \ref{it:cs1}-\ref{it:cs3} of the scrambling construction.}
	\label{tab:scrambling}
	\end{table}
	
	For Property \ref{it:sc4} it suffices to verify that the values of any two
	of the generators in the table under $\pi_{F,\mathbb{Z}}^{\mathrm{ab}}$ are distinct,
	and that the value of any generator under $\pi_{F,\mathbb{Z}}^{\mathrm{ab}}$ is different
	than the value of the inverse of another generator. This is clear
	by inspection of the rows of the table. 
	
	We now verify Property \ref{it:sc5} by checking that if generators $x,x',x''$ from~\Cref{tab:scrambling} satisfy $\pi_{F,\Z}^\mathrm{ab}(x''x'x)=0$ then the word $x''x'x$ is trivial in $G'$ or sufficiently generic relative to the map from $S$ to $G\ast F_R$.
	So assume these generators satisfy $\pi_{F,\Z}^\mathrm{ab}(x''x'x)=0$.
	This means we have $\pi_{F,\Z}^\mathrm{ab}\left(x\right)+\pi_{F,\Z}^\mathrm{ab}\left(x^{\prime}\right)+\pi_{F,\Z}^\mathrm{ab}\left(x^{\prime\prime}\right)=0$.
	But the $\pi_{F,\Z}^\mathrm{ab}$-values in Table~\ref{tab:scrambling} are
	positive linear combinations of the elements of the basis of $\Z^{|R|}\times \Z^N$, so at least
	one of the generators $x,x',x''$ is the inverse of a generator listed in this table.
	By negating the equation
	if necessary, we may assume without loss of generality that exactly one is the inverse of a generator listed in~\Cref{tab:scrambling}, and after replacing $x''$ by $x''^{-1}$ and renaming the generators
	if necessary we obtain the equation
	\begin{align}
	\tag{$\pi_{F,\Z}^\mathrm{ab}$}&\pi_{F,\Z}^\mathrm{ab}(x)+\pi_{F,\Z}^\mathrm{ab}(x')=\pi_{F,\Z}^\mathrm{ab}(x'').\label{eq:piZ_equation}
	\end{align}
	Subsequently we need to show that for generators $x,x',x''$ that satisfy this equation the elements $xx'x''^{-1}$, $x'x''^{-1}x$, $x''^{-1}xx'$, $x'xx''^{-1}$, $xx''^{-1}x'$, $x''^{-1}x'x$ are all trivial or sufficiently generic relative to the map from $S$ to $G\ast F_R$.
	Note that the cyclic shifts of these words arise by conjugating with  $x^{-1}$ or $x'^{-1}$.
	As conjugation preserves the set of sufficiently generic words by~\Cref{def:sufficiently_generic} it suffices to consider the elements $xx'x''^{-1}$ and $x'xx''^{-1}$ in the following.
	
	So we now look for all generators $x,x^{\prime},x^{\prime\prime}$ in~\Cref{tab:scrambling} satisfying Equation~\eqref{eq:piZ_equation} and check if the elements $xx'x''^{-1}$ and $x'xx''^{-1}$ are trivial in $G'$ or sufficiently generic relative to the map from $S$ to $G\ast F_R$.
	We split the argument into cases based on the value of $x^{\prime\prime}$.
	We can exclude the cases $x=e$ or $x'=e$ as this would imply $\pi_{F,\Z}^\mathrm{ab}(x'x''^{-1})=e_F$ or $\pi_{F,\Z}^\mathrm{ab}(xx''^{-1})=e_F$ respectively, and there exist no nontrivial solution to these equations by property \ref{it:sc4} which we already verified above.
	If $g_1,g_2,g_3$ are generators in Table~\ref{tab:scrambling} we don't distinguish between the solutions $x=g_1$, $x'=g_2$, $x''=g_3$ and $x=g_2$, $x'=g_1$, $x''=g_3$ as we analyze the words $xx'x''^{-1}$ and $x'xx''^{-1}$ for each such solution in both cases.
	\begin{enumerate}[label=\textbf{Case \arabic*:}, labelwidth=-8mm,  labelindent=3em,leftmargin =!]
		\item Suppose $x^{\prime\prime}=e$. Then $x=w_r$, $x'=w_r^{-1}$ for some $r\in R$. Both words $xx'x''^{-1}$ and $x'xx''^{-1}$ are trivial in $G'$ in this case.
		\item Suppose $x^{\prime\prime}=x_{s}$ for some $s\in S$. There is no solution in this case.
		\item Suppose $x^{\prime\prime}=w_{r}$ for some $r\in R$. There is no solution in this case.
		\item Suppose $x^{\prime\prime}=t_{s}$ for some $s\in S$. There is no solution in this case.
		\item Suppose $x^{\prime\prime}=t_{r}$ for some $r\in R$, Then $x=w_r^{-1}$, $x'=u_{r,t_r}$ and both associated words are trivial in $G'$.
		\item Suppose $x^{\prime\prime}=u_{s,t}$ for $s\in S$ and $t\in T^{+}$. The
		unique solution is $x=x_{s}$
		and $x^{\prime}=t$. In this case $x^{\prime\prime-1}x^{\prime}x=u_{s,t}^{-1}x_{s}t=e$
		is a relator in $R^{\prime}$, as is $u_{s,t}^{-1}tx_{s}=e$.
		\item Suppose $x^{\prime\prime}=u_{r,t}$ for $r\in R$ and $t\in T^{+}$. The
		unique solution is $x=w_{r}$
		and $x^{\prime}=t$. The resulting words are again trivial in $G'$.
		\item Suppose $x^{\prime\prime}=u_{t_{1},t_{2}}$ for $t_{1},t_{2}\in T^{+}$.
		The unique solution is $x=t_{1}$
		and $x^{\prime}=t_{2}$. In this case $x^{\prime\prime-1}x^{\prime}x=u_{t_{1},t_{2}}^{-1}t_{1}t_{2}=e$
		is a relation in $R^{\prime}$.\qedhere
	\end{enumerate}
\end{proof}

\begin{lemm}\label{lem:scrambling2}
	Properties  \ref{it:sc4} and \ref{it:sc5} hold for all generators added in the scrambling construction.
\end{lemm}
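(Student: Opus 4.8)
The generators introduced in steps \ref{it:cs1}--\ref{it:cs3} of \Cref{con:scrambling} were handled in \Cref{lem:scrambling1}, and the remaining steps add no generators beyond the prefix generators $y_{r,i}$ (one family for each $r=(abc=e)\in R$, arising from the word \eqref{eq:scrambling_word}) and no generators at all in the postprocessing step. So it remains to verify \ref{it:sc4} for pairs and \ref{it:sc5} for triples of generators at least one of which is some $y_{r,i}$.

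For \ref{it:sc4}, I would extend \Cref{tab:scrambling} by the values $\pi_{F,\Z}^{\mathrm{ab}}(y_{r,i})$. Since $\pi_{F,\Z}^{\mathrm{ab}}$ is a homomorphism into the abelian group $\Z^{|R|}\times\Z^N$ and $y_{r,i}$ is a proper prefix of \eqref{eq:scrambling_word}, this value is the corresponding partial sum of the letter values from \Cref{tab:scrambling}; hence it has $f_r$-coordinate $1$, all other $\Z^{|R|}$-coordinates $0$, and all $\Z^N$-coordinates nonnegative. Therefore $\pi_{F,\Z}^{\mathrm{ab}}(y_{r,i})$, and also its negative, can only agree with the value of a generator in $\{w_r\}\cup\{u_{r,t}\}_{t\in T^+}\cup\{y_{r,j}\}_j$: every other generator and every inverse has a zero or different $f_r$-coordinate. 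Now $w_r\mapsto(f_r,0)$, each $u_{r,t}\mapsto(f_r,b_t)$ is a single basis vector in $\Z^N$, while a direct inspection shows the $\Z^N$-part of $\pi_{F,\Z}^{\mathrm{ab}}(y_{r,i})$ is never $0$ and never a single basis vector; and $i\mapsto\pi_{F,\Z}^{\mathrm{ab}}(y_{r,i})$ is injective because, reading \eqref{eq:scrambling_word} left to right, the $\Z^N$-partial sums are coordinatewise strictly increasing up to the letter $x_c$ and coordinatewise strictly decreasing afterwards, so the only coincidence could be at the peak, which occurs once.

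For \ref{it:sc5}, suppose $x,x',x''\in S'$ satisfy $\pi_{F,\Z}^{\mathrm{ab}}(x''x'x)=0$ and at least one of them is a prefix generator. Write $\overline{(\cdot)}$ for the projection $G'\simeq(G\ast F_R)\times\Z^N\to G\ast F_R$ and let $\pi^{\mathrm{ab}}_F$ be the $\Z^{|R|}$-component of $\pi_{F,\Z}^{\mathrm{ab}}$. Each generator $x$ satisfies $\pi^{\mathrm{ab}}_F(x)=\varepsilon(x)f_{r(x)}$ with $\varepsilon(x)\in\{-1,0,1\}$: the $\varepsilon=0$ generators are the $x_s^{\pm1},u_{s,t}^{\pm1},t^{\pm1}$ for $t\in T$ and $u_{t_1,t_2}^{\pm1}$, and for these $\overline{x}\in G$; the $\varepsilon\ne0$ generators are the $w_r^{\pm1},u_{r,t}^{\pm1},y_{r,i}^{\pm1}$, with $\overline{w_r}=\overline{u_{r,t}}=f_r$ and $\overline{y_{r,i}}=f_rg$ for $g\in\{e,a,ab\}$. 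Since $\pi^{\mathrm{ab}}_F$ is a homomorphism, $\pi_{F,\Z}^{\mathrm{ab}}(x''x'x)=0$ forces the $\varepsilon$-contributions to cancel; as at least one generator is a prefix generator this means (after reordering and possibly an overall inversion) the triple is $\{A,B^{-1},C\}$ with $\overline{A}=f_rg_A$, $\overline{B}=f_rg_B$, $\overline{C}=g_C\in G$, where $A,B\in\{w_r,u_{r,t},y_{r,j}\}$ and at least one of $A,B$ is a prefix generator, and $C$ is an $\varepsilon=0$ generator — unless $A=B$ and $C=e$, in which case $x''x'x=e$ by the relator $yy^{-1}e=e$.

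The residual work is a finite case analysis, organized by which pair $(A,B)$ of $r$-type generators occurs: the $\Z^N$-component of the equation, together with \Cref{cor:scrambling} and the bound on $b_\bullet$-coefficients in property \ref{it:sc8}, pins down $C$, and one checks that the possible triples are exactly those underlying the break-up relations of step \ref{it:cs4} (namely $\{w_r,y_{r,1}^{-1},x_a\}$, the $\{y_{r,i},y_{r,i+1}^{-1},\ell\}$ for letters $\ell$ of \eqref{eq:scrambling_word}, and the closing $\{w_r^{-1},y_{r,36},u_{t_c,t_r}^{-1}\}$) together with a few variants obtained by absorbing a central factor $t_\bullet$ into a $y$ or into $w_r$ (e.g.\ $\{y_{r,i},y_{r,i+2}^{-1},u_{b,t_r}\}$), and that $g_B=g_Ag_C$ holds in every case. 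It then follows from the free-product structure that $\overline{ACB^{-1}}=f_r\bigl(g_Ag_Cg_B^{-1}\bigr)f_r^{-1}=e$, so this product (whose $\Z^N$-part is also $0$) equals $e$ in $G'$; its two cyclic shifts are conjugate to it, so \Cref{lem:sufficiently_generic_conjugation} covers that coset. The other coset is represented by $CAB^{-1}$, which by the same computation is conjugate in $G\ast F_R$ to $[f_rg_A,g_C^{-1}]$: this is trivial when $C$ is central (then $g_C=e$), and otherwise is sufficiently generic by \Cref{lem:sufficiently_generic}~\ref{it:suff2}--\ref{it:suff3} (with $g=g_C^{-1}\in G$, $t=f_r$, $g'=g_A\in\{e,a,ab\}\subseteq G$), \Cref{lem:sufficiently_generic_conjugation}, and the invariance of sufficient genericity under inversion. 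I expect the bookkeeping in this last step — confirming that the list of triples $\{A,B^{-1},C\}$ is complete and that $g_B=g_Ag_C$ in each — to be the main obstacle; the saving features are the $f_r$-coordinate forcing the $\varepsilon$-cancellation and the coefficient $\pi_\Z(x_s)=5b_s$ set against the small $b_\bullet$-coefficients of all other generators (property \ref{it:sc8}), which keep the list finite and prevent spurious coincidences of $\pi_{F,\Z}^{\mathrm{ab}}$-values.
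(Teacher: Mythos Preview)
Your plan is correct and takes a more structured route than the paper's proof. The paper proceeds by brute-force enumeration: it tabulates the values $\pi_{F,\Z}^{\mathrm{ab}}(y_{r_j,i})$ (Table~\ref{table:prefixes}), then walks through the possible values of $x''$ one at a time (Cases~1--10), for each listing all solutions $(x,x')$ of the equation $\pi_{F,\Z}^{\mathrm{ab}}(x)+\pi_{F,\Z}^{\mathrm{ab}}(x')=\pi_{F,\Z}^{\mathrm{ab}}(x'')$ and verifying directly that each resulting word is trivial in $G'$ or one of the commutators covered by \Cref{lem:sufficiently_generic}. Your approach instead extracts the mechanism: the $\Z^{|R|}$-coordinate forces the pattern $\{A,B^{-1},C\}$ with $\overline A=f_rg_A$, $\overline B=f_rg_B$, $\overline C=g_C\in G$, whence $\overline{ACB^{-1}}=f_r(g_Ag_Cg_B^{-1})f_r^{-1}$ and $\overline{CAB^{-1}}$ is (conjugate to) $[g_C,f_rg_A]$. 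This explains \emph{why} every case in the paper's enumeration produces one trivial product and one commutator of exactly the shape handled by \Cref{lem:sufficiently_generic}\ref{it:suff2}--\ref{it:suff3}. What your framework buys is a uniform reason; what the paper's enumeration buys is a complete verification --- your deferred ``bookkeeping'' that $g_B=g_Ag_C$ in each case is precisely the content of the paper's Cases~1--10.

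Two small points. First, your injectivity argument for \ref{it:sc4} is imprecise as written: the $\Z^N$-partial sums of \eqref{eq:scrambling_word} are not \emph{coordinatewise} strictly monotone (each step changes a single coordinate). A correct version: for $i\le 13$ the $\ell_1$-norm of the $\Z^N$-part strictly increases and the $b_a$-coefficient is~$5$, while for $i\ge 14$ the $\ell_1$-norm strictly decreases and the $b_a$-coefficient is at most~$4$; this separates the two halves, and within each half the $\ell_1$-norm gives distinctness. Second, the claim $g_B=g_Ag_C$ is not automatic from the abelianized equation; it holds because the $\pi_\Z$-constraint, together with the specific coefficient $5$ in $\pi_\Z(x_s)=5b_s$ versus the coefficient~$1$ carried by the $t$-type generators (property \ref{it:sc8}), forces any $C$ with $g_C\neq e$ to be exactly the $x$-letter (or its $u_{s,t}$-variant) separating the prefixes $A$ and $B$ in \eqref{eq:scrambling_word}. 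You correctly anticipate this as the crux, and the paper confirms it exhaustively.
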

\begin{proof}
	We inductively verify that these properties still hold when further
	generators are added for each relator in $R$ in step \ref{it:cs4} of Construction~\ref{con:scrambling}.

		Denote the relators in $R$ by $r_{1},\ldots,r_{n}$ according to the arbitrary order chosen in Construction~\ref{con:scrambling}.
	By induction on
	$1\le j\le n$ we show that:
	\begin{enumerate}[(i)]
		\item For each $j=1,\ldots,n$, the generators $\left\{ y_{r_{j},i}^{\pm1}\right\} _{i=1}^{36}$
		all satisfy that their value under $\pi_{F,\Z}^\mathrm{ab}$ involves $b_{r_{j}}$ or $f_{r_j}$ and no other element of $\left\{ b_{r},f_r\right\} _{r\in R}$. 
		\item The generators added in steps \ref{it:cs1}-\ref{it:cs3} of the construction, together
		with the generators $\bigcup_{k\le j}\left\{ y_{r_{k},i}^{\pm1}\right\} _{i=1}^{36}$
		satisfy conditions \ref{it:sc4} and \ref{it:sc5}.
	\end{enumerate}
	Let the $j$-th relation $r_{j}$ be $abc=e$.
	Table~\ref{table:prefixes} lists one generator from each mutually
	inverse pair $\left\{ y_{r_{j},i},y_{r_{j},i}^{-1}\right\} $ added in step \ref{it:cs4} for this relation.
	\begin{table}[hbt]
	\addtolength{\tabcolsep}{0.9pt}
	\def\arraystretch{1.4}{
	\[
	\begin{array}{ll}
	\text{Generator }x & \pi_{F,\Z}^\mathrm{ab}\left(x\right)\\
	\hline \rowcolor{LightGray}
	y_{r_{j},i}\text{ for \ensuremath{1\le i\le6}} & f_{r_j}+5b_{a}+\left(i-1\right)b_{r_{j}}\\
	y_{r_{j},i}\text{ for \ensuremath{7\le i\le12}} & f_{r_j}+5b_{a}+5b_{b}+(i-2)b_{r_{j}}\\\rowcolor{LightGray}
	y_{r_{j},13} & f_{r_j}+5b_{a}+5b_{b}+5b_{c}+10b_{r_{j}}\\
	y_{r_{j},13+5k}\text{ for \ensuremath{1\le k\le}4} & f_{r_j}+(5-k)b_{a}+(5-k)b_{b}+(5-k)b_{c}+(10-2k)b_{r_{j}}\\\rowcolor{LightGray}
	y_{r_{j},14+5k}\text{ for \ensuremath{0\le k\le4}} & f_{r_j}+(4-k)b_{a}+(5-k)b_{b}+(5-k)b_{c}+(10-2k)b_{r_{j}}\\
	y_{r_{j},15+5k}\text{ for \ensuremath{0\le k\le4}} & f_{r_j}+(4-k)b_{a}+(5-k)b_{b}+(5-k)b_{c}+(9-2k)b_{r_{j}}\\\rowcolor{LightGray}
	y_{r_{j},16+5k}\text{ for \ensuremath{0\le k\le4}} & f_{r_j}+(4-k)b_{a}+(4-k)b_{b}+(5-k)b_{c}+(9-2k)b_{r_{j}}\\
	y_{r_{j},17+5k}\text{ for \ensuremath{0\le k\le3}} & f_{r_j}+(4-k)b_{a}+(4-k)b_{b}+(5-k)b_{c}+(8-2k)b_{r_{j}}
	\end{array}
	\]
}
	\caption{The generators corresponding to the relator $r_j$ added in the step \ref{it:cs4} of the scrambling construction.}
	\label{table:prefixes}
	\end{table}

	Part (i) is clear: the new generators all satisfy that their value
	under $\pi_{F,\Z}^\mathrm{ab}$ involves $b_{r_{j}}$ or $f_{r_j}$
	and no other element of $\left\{ b_{r},f_r\right\} _{r\in R}$. We need
	to verify (ii), i.e. conditions \ref{it:sc4} and~\ref{it:sc5}.
	
	For condition \ref{it:sc4} it suffices to check that any two of the new generators
	have distinct values under $\pi_{F,\Z}^\mathrm{ab}$, and that their values
	under $\pi_{F,\mathbb{Z}}^{\mathrm{ab}}$ are distinct from those of the generators
	added in steps \ref{it:cs1}-\ref{it:cs3} of the construction.
	Note that the values of $\pi_{F,\mathbb{Z}}$ in~\Cref{table:prefixes} are all distinct, so it suffices to compare these generators with the ones added in steps \ref{it:cs1}-\ref{it:cs3}.
	These all have a $b_{r_j}$-coefficient at most one.
	We list all generators which after applying  $\pi_{F,\Z}^\mathrm{ab}$ have $b_{r_{j}}$-coefficient one or involve the generator $f_{r_j}$ in~\Cref{tab:property4}.
	\begin{table}[hbt]
	\addtolength{\tabcolsep}{0.9pt}
	\def\arraystretch{1.4}{
		\[
	\begin{array}{ll}
	\text{Generator \ensuremath{x}} & \pi_{F,\Z}^\mathrm{ab}\left(x\right)\\
	\hline \rowcolor{LightGray} y_{r_{j},1} & f_{r_j}+5b_{a}\\
	y_{r_{j},36} & f_{r_j}+b_{c}+b_{r_{j}}\\\rowcolor{LightGray}
	t_{r_{j}} & b_{r_{j}}\\
	u_{s,t_{r_{j}}}\text{ for \ensuremath{s\in S}} & 5b_{s}+b_{r_{j}}\\\rowcolor{LightGray}
	u_{r_j,t}\text{ for \ensuremath{t\in T^{+}}}& f_{r_j}+b_t\\
	u_{t_{r_{j}},t}\text{ for \ensuremath{t\in T^{+}\setminus\left\{ t_{r_{j}}\right\} }} & b_{r_{j}}+b_{t}\\\rowcolor{LightGray}
	w_{r_j}& f_{r_j}
	\end{array}
	\]
	}
	\caption{The generators of $S'$ which have $b_{r_j}$-coefficient one or involve the generator $f_{r_j}$ after applying $\pi_{F,\Z}^\mathrm{ab}$.}
	\label{tab:property4}
	\end{table}
	Apparently, any two of these have different values under
	$\pi_{F,\Z}^\mathrm{ab}$.
	and it is not possible that $\pi_{F,\mathbb{Z}}^{\mathrm{ab}}\left(x\right)=\pi_{F,\mathbb{Z}}^{\mathrm{ab}}\left(x^{\prime-1}\right)=-\pi_{F,\mathbb{Z}}^{\mathrm{ab}}\left(x^{\prime}\right)$.
	
	For the rest of this proof, ``sufficiently generic'' is short for ``sufficiently generic in $G \ast (F_R \ast \langle z_1,\ldots,z_4\rangle)$ relative to $S$''

		For condition \ref{it:sc5} we proceed similarly as in~\Cref{lem:scrambling1}, namely we consider generators $x,x',x''$ in the Tables~\ref{tab:scrambling} and~\ref{table:prefixes} that satisfy Equation~\eqref{eq:piZ_equation}.

	We verify that for these generators the words $xx'x''^{-1}$ and $x'xx''^{-1}$ are trivial in $G'$ or sufficiently generic.
	
	By the above discussion we may exclude the cases in which $x=e$ or $x'=e$.
		As in the proof of~\Cref{lem:scrambling1}, we don't distinguish between the solutions $x=g_1$, $x'=g_2$, $x''=g_3$ and $x=g_2$, $x'=g_1$, $x''=g_3$ for generators $g_1,g_2,g_3$ as we analyze the words $xx'x''^{-1}$ and $x'xx''^{-1}$ for each such solution in both cases.
	We may also restrict to cases in which at least one of the generators involves a generator $y_{r_{j},k}$ for $1\le k\le36$ as otherwise the condition was already checked in~\Cref{lem:scrambling1}.
	We begin by assuming that $x''=y_{r_{j},k}$ for some $1\le k\le36$. This yields the following cases.
	\begin{enumerate}[label=\textbf{Case \arabic*:}, labelwidth=-8mm,  labelindent=3em,leftmargin =!]
		\item Suppose $x''=y_{r_{j},1}$. In this case the only solution is $x=x_a$, $x'=w_{r_j}$ which yields the words $x_aw_rx_{a}^{-1}w_r^{-1}$ and $w_rx_ax_{a}^{-1}w_r^{-1}$. The former is sufficiently generic by \cref{lem:sufficiently_generic}~\ref{it:suff2} and the latter is clearly trivial in $G'$.
		\item Suppose $x''=y_{r_{j},k}$ with $2\le k\le 6$. In this case there are the following solutions:
		\begin{enumerate}[(1),leftmargin=*]
			\item If $k=2$ there is $x=u_{a,t_{r_j}}$, $x'=w_r$ which yields $x_at_{r_j}w_rt_{r_j}^{-1}x_{a}^{-1}w_r^{-1}$ and $w_rx_at_{r_j}t_{r_j}^{-1}x_{a}^{-1}w_r^{-1}$.
			Since $t_{r_j}$ commutes with all other involved letters this case reduces to the previous one.
			\item If $k=2$ there is $x=x_a$, $x'=u_{r_j,t_{r_j}}$ which yields $x_aw_{r_j}t_{r_j}t_{r_j}^{-1}x_a^{-1}w_{r_j}^{-1}$ and $w_{r_j}t_{r_j}x_at_{r_j}^{-1}x_a^{-1}w_{r_j}^{-1}$.
			These words again reduce to the $k=1$ case.
			\item If $k=3$ there is $x=u_{a,t_{r_j}}$, $x'=u_{r_j,t_{r_j}}$ which yields $x_at_{r_j}w_{r_j}t_{r_j}t_{r_j}^{-2}x_a^{-1}w_{r_j}^{-1}$ and $w_{r_j}t_{r_j}x_at_{r_j}t_{r_j}^{-2}x_a^{-1}w_{r_j}^{-1}$ which also reduces to the $k=1$ case.
			\item For all $k=2,\dots,6$ there is $x=y_{r_j,k-1}$, $x'=t_{r_j}$ which yields trivial words in $G'$ in both orders.
		\end{enumerate}
	\item Suppose $x''=y_{r_{j},k}$ with $7\le k\le 12$. In this case we have the solution $x=y_{r_j,k-1}$, $x'=t_{r_j}$ which yields either the trivial element in $G'$ or (after canceling $t_{r_j}$) the word \[x_bw_{r_j}x_ax_b^{-1}x_a^{-1}w_{r_j}^{-1} = [x_b, w_{r_j} x_a].\]
	This word is sufficiently generic by \cref{lem:sufficiently_generic}~\ref{it:suff3}.
	For $k=7$ there is the additional solution $x=y_{r_j,5}$, $x'=u_{b,t_{r_j}}$ which yields the same words as the previous solution.
	\item\label{it:case4} Suppose $x''=y_{r_{j},13}$. There is $x=y_{r_j,12}$, $x'=x_c$ and $x=y_{r_j,11}$, $x'=u_{c,t_{r_j}}$ which yields a trivial word in $G'$ and the word
	\[
		x_cw_{r_j}x_ax_bx_c^{-1}x_b^{-1}x_a^{-1}w_{r_j}^{-1}=\left[x_c,w_{r_j}x_ax_b\right].
	\]
	This word is sufficiently generic by \cref{lem:sufficiently_generic}~\ref{it:suff3}.
	Furthermore there are solutions $x=y_{r_j,14}$, $x'=t_a$ and $x=y_{r_j,15}$, $x'=u_{a,t_{r_j}}$. Both solutions yield in both orders trivial words in $G'$ as all $t$'s commute with all other letters.
	\item Suppose $x''=y_{r_{j},k}$ with $14\le k \le 34$. The prefixes $y_{r_j,k}$ all involve $f_{r_j}$ and at least three different bases elements of $B$ with $b_{r_j}$-coefficient at least two.
	Thus the only solutions involve the two previous or the two following prefixes and the appropriate letters that got added between these prefixes.
	Denoting by $l_{k}$ the $k$-th letter of the word~\eqref{eq:scrambling_word} we have the following solutions:
	\begin{enumerate}[(1),leftmargin=*]
		\item $x=y_{r_{j},k-2}$, $x'=u_{l_k^{-1},l_{k+1}^{-1}}$ where $u_{l_k^{-1},l_{k+1}^{-1}}$ is the commutator symbol of the letters $l_{k}^{-1}$ and $l_{k+1}^{-1}$,
		\item $x=y_{r_{j},k-1}$, $x'=l_{k+1}^{-1}$,
		\item $x=y_{r_{j},k+1}$, $x'=l_{k-1}^{-1}$,
		\item $x=y_{r_{j},k+2}$, $x'=u_{l_{k-1}^{-1},l_{k-2}^{-1}}$ where $u_{l_{k-1}^{-1},l_{k-2}^{-1}}$ is the commutator symbol of the letters $l_{k-2}^{-1}$ and $l_{k-1}^{-1}$.
	\end{enumerate}
	The case $k=14$ and $x=y_{r_{j},12}$, $x'=u_{c,t_{r_j}}$ reduces to the sufficiently generic word given in Case 4.
	 All other resulting words are trivial in $G'$ as they involve the ``noncommutative block'' $w_{r_j}x_ax_bx_c$ in the right order on both sides and the $t$'s cancel.
	\item Suppose $x''=y_{r_{j},35}$. There are also solutions of the same shape as in the previous case, namely $x=y_{r_{j},33}$, $x'=u_{t_b,t_{r_j}}$, $x=y_{r_{j},34}$, $x'=t_{b_b}$, and $x=y_{r_{j},36}$, $x'=b_{r_j}$ which all yield trivial words in $G'$.
	Furthermore, there is also the solution $x=u_{r_j,t_{r_j}}$, $x'=u_{t_c,t_{r_j}}$ which after reordering some $t$'s yields the word 
	\begin{equation}\label{eq:trivial_word}
		w_{r_j}t_{r_j}^2t_cy_{r_{j},35}^{-1}= w_{r_j}y_{r_{j},36}^{-1}u_{t_c,t_{r_j}}.
	\end{equation}
	This word is trivial in $G'$ as there is the relation $w_r^{-1} y_{r_j,36}u_{t_{c},t_{r}}^{-1}=e$ in this group.
	\item Suppose $x''=y_{r_{j},36}$. There are the solutions $x=y_{r_{j},34}$, $x'=u_{t_b,t_{r_j}}$ and $x=y_{r_{j},35}$, $x'=t_{r_j}$ which also yield trivial words as above.
	Furthermore, there are the solution $x=w_{r_j}$, $x'=u_{t_c,t_{r_j}}$, $x=u_{r_j,t_c}$, $x'=t_{r_j}$, and $x=u_{r_j,t_{r_j}}$, $x'=t_c$ which yield the same trivial word as in Equation~\eqref{eq:trivial_word} after reordering the $t$'s.
	\end{enumerate}
	The only remaining cases left to check are the ones where $x=y_{r_j,k}$ for some $1\le k\le 36$ and $x''$ is a generator of~\Cref{tab:scrambling}.
	As the $b_{r_j}$-coefficient of the generator $x$ can be at most one and the generators $x,x',x''$ satisfy Equation~\eqref{eq:piZ_equation} there are only the following cases to consider.
	\begin{enumerate}[label=\textbf{Case \arabic*:}, labelwidth=-8mm,  labelindent=3em,leftmargin =!,resume]
		\item Suppose $x=y_{r_j,1}$. The only solution is $x'=w_{r_j}^{-1}$ and $x''=x_a$. This yields a trivial word and the word $w_{r_j}x_aw_{r_j}^{-1}x_a^{-1}$. The latter is sufficiently generic by \cref{lem:sufficiently_generic}.
		\item Suppose $x=y_{r_j,2}$. The only solution is $x'=w_{r_j}^{-1}$ and $x''=u_{a,t_{r_j}}$. As the $t$'s commute and cancel we obtain the same words as in the previous case.
		\item Suppose $x=y_{r_j,36}$. The only solution is $x'=w_{r_j}^{-1}$ and $x''=u_{t_c,t_{r_j}}$. Both words are trivial due to the relation $w_r^{-1} y_{r_j,36}u_{t_{c},t_{r}}^{-1}=e$ in the group $G'$.
	\end{enumerate}
		Thus, condition \ref{it:sc5} holds for all generators $x,x',x''\in S'$  which completes the proof.
\end{proof}

\section{Entropic matroid representability is undecidable}\label{sec:entropic_undecidability}

We have now all necessary tools at our disposal to complete the proof that there is no algorithm that checks whether a matroid is entropic.
We prove this result by connecting the uniform word problem for finite groups with the entropic representations of the associated partial Dowling geometries.
The first part of this relation is described in the following theorem.

\begin{theorem}\label{thm:WP_entropic}
	Let $\langle S\mid R\rangle, s\in S$ be an instance of the uniform word problem for finite groups.
	Furthermore let $\langle S''\mid R''\rangle$ be the augmented presentation from~\Cref{con:augmentation}
	 and $\mathcal{M}$ the set of partial Dowling geometries subordinate to this presentation.
		If there exists a finite quotient of $G_{S,R}$ in which $s$ is nontrival then some matroid in $\mathcal{M}$ is entropic.
\end{theorem}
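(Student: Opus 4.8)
The strategy is to convert the finite quotient into a complex linear representation, feed it through~\Cref{prop:augmentation_reps}, and then invoke the ``moreover'' part of~\Cref{thm:gdg_vector_space_reps} together with the fact that multilinear matroids are entropic.

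First I would produce a linear representation of $G_{S,R}$ over $\mathbb{C}$ that separates $s$ from the identity. Suppose $\varphi:G_{S,R}\to G$ is a homomorphism onto a finite group with $\varphi(s)\neq e_{G}$. Composing $\varphi$ with any faithful representation of $G$ over $\mathbb{C}$ (for instance the left regular representation, or the permutation representation supplied by Cayley's theorem) yields an $n\in\mathbb{N}$ and a representation $\rho:G_{S,R}\to\mathrm{GL}_{n}\mathbb{C}$ with $\rho(s)\neq\rho(e)=I_{n}$. Thus condition~(i) of~\Cref{prop:augmentation_reps} holds with the field $\mathbb{F}=\mathbb{C}$.

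Applying~\Cref{prop:augmentation_reps}, condition~(ii) then also holds: there exist a field $\mathbb{F}\supseteq\mathbb{C}$, an $n'\in\mathbb{N}$, and a representation $\tilde{\rho}:\langle S''\mid R''\rangle\to\mathrm{GL}_{n'}\mathbb{F}$ such that $\tilde{\rho}(x)-\tilde{\rho}(x')$ is invertible whenever $x,x'\in S''$ are distinct, and $\tilde{\rho}(x'')^{-1}-\tilde{\rho}(x'x)$ is either invertible or zero for every (not necessarily distinct) triple $x,x',x''\in S''$. These are precisely hypotheses~(a) and~(b) of~\Cref{thm:gdg_vector_space_reps} for the presentation $\langle S''\mid R''\rangle$, which is symmetric triangular because the augmentation construction breaks every new relation into length-three pieces and symmetrizes the relation set in its final step. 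Hence the ``moreover'' assertion of~\Cref{thm:gdg_vector_space_reps} applies and produces a multilinear matroid in $\mathcal{M}_{S'',R''}=\mathcal{M}$. Since multilinear matroids are entropic~\cite{Mat99}, this matroid is entropic, completing the proof.

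I expect the write-up to be short, since nearly all of the content is already packaged in~\Cref{prop:augmentation_reps} and~\Cref{thm:gdg_vector_space_reps}. The only points needing a word of justification are that a finite quotient separating $s$ from $e$ really does give a linear representation separating the corresponding matrices (immediate from faithfulness) and that the conclusion of~\Cref{prop:augmentation_reps} matches the hypotheses of~\Cref{thm:gdg_vector_space_reps} verbatim; neither is a genuine obstacle. All of the real difficulty has been absorbed into the scrambling and augmentation constructions and into the earlier correspondence between Dowling groupoids and generalized Dowling geometries.
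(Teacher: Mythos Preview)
Your proposal is correct and follows essentially the same route as the paper: produce a linear representation over $\mathbb{C}$ separating $s$ from $e$ via a finite quotient (the paper uses the left regular permutation representation, which is one of the options you list), invoke~\Cref{prop:augmentation_reps} to upgrade it to a representation of $\langle S''\mid R''\rangle$ satisfying conditions~(a) and~(b), and then apply the ``moreover'' clause of~\Cref{thm:gdg_vector_space_reps} together with~\cite{Mat99}. There is no substantive difference between your argument and the paper's.
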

\begin{proof}
Assume there is a group homomorphism $\varphi:G_{S,R}\to G$ for some finite group G with $\varphi(s)\neq e$.
Set $n\coloneqq |G|$ and identify the elements of $G$ with $\{1,\ldots,n\}$.
Let $\rho:G_{S,R} \to \mathrm{GL}_n(\C)$ be the representation where each $\rho(g)$ is the permutation matrix corresponding to the action of $\varphi(g)$ by left-multiplication on $G$.

By assumption we have $\rho(s)\neq \rho(e)$.
	Therefore we can apply~\Cref{prop:augmentation_reps} and obtain a representation $\widetilde{\rho}:G_{S'',R''}\to \mathrm{GL}_{\widetilde{n}}(\C)$ for some $\widetilde{n}\in \N$ such that
	\begin{enumerate}
	\item $\widetilde{\rho}(s)-\widetilde{\rho}(s^{\prime})$ is invertible for any distinct $s,s^{\prime}\in S''$ and
	\item whenever $s,s^{\prime},s^{\prime\prime}\in S''$ (not necessarily
	distinct) satisfy 
	$\widetilde{\rho}(s^{\prime\prime} s^{\prime}s) \neq I_n$
	then the matrix
	$\widetilde{\rho}(s^{\prime\prime}s^{\prime}s) - I_n$ 
\end{enumerate}
Hence by~\Cref{thm:gdg_vector_space_reps} some of the partial Dowling geometries $\mathcal{M}$ subordinate to $\langle S''\mid R''\rangle$ is multilinear over $\F$.
	Thus by~\cite{Mat99} this matroid in $\mathcal{M}$ is also entropic.
\end{proof}

The next theorem describes the converse implication
\begin{theorem}\label{thm:entropic_WP}
	Let $\langle S\mid R\rangle, s\in S$ be an instance of the uniform word problem for finite groups and $M$ be the partial Dowling geometry of the presentation $\langle S''\mid R''\rangle$ obtained from \Cref{con:augmentation}.
	Assume that some matroid of the partial Dowling geometries $\mathcal{M}$ subordinate to $\langle S''\mid R''\rangle$ is entropic.
	Then there exists a group homomorphism $\varphi:G_{S,R}\to G$ to a finite group $G$ with $\varphi(s)\neq e$.
\end{theorem}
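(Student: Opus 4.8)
The plan is to read the required homomorphism off the structure of the augmented presentation, using \Cref{thm:entropic_Dowling_reps} as the bridge between entropic representability and homomorphisms into finite symmetric groups.

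Suppose $M\in\mathcal{M}=\mathcal{M}_{S'',R''}$ is entropic. By construction $M=M_X$ for some $X$, so $M$ is the generalized Dowling geometry of a symmetric triangular presentation $H$ whose generating set is still $S''$ and which is obtained from $\langle S''\mid R''\rangle$ by adjoining (a symmetrization of) some set of length-three relators; adjoining length-three relators and symmetrizing adds only cyclic shifts and formal inverses, so no new generators appear. Since the generalized Dowling geometry of $H$ is entropic, \Cref{thm:entropic_Dowling_reps} furnishes an $n\in\N$ and a homomorphism $\rho\colon H\to S_n$ with $\rho(x)\neq\rho(x')$ for all distinct $x,x'\in S''$. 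Composing with the quotient map $q\colon G''=\langle S''\mid R''\rangle\to H$, which is the identity on $S''$, gives a homomorphism $\bar\rho\coloneqq\rho\circ q\colon G''\to S_n$ that still separates generators: $\bar\rho(x)\neq\bar\rho(x')$ whenever $x,x'\in S''$ are distinct.

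Next I invoke \Cref{prop:augmentation}, which identifies $G''$ with $(G_{S,R}\ast F_R\ast\langle z_1,\ldots,z_4\rangle)\times\mathbb{Z}^N$ so that the generator $t\in S''$ corresponds to $s\,z_1\,s$, where $s$ is the image in $G''$ of the original generator of $G_{S,R}$ under the canonical inclusion and $z_1$ is the generator of $S''$ of the same name. Let $\varphi\colon G_{S,R}\to S_n$ be the composite of this inclusion with $\bar\rho$. Since $z_1$ and $t$ are distinct elements of $S''$ we have $\bar\rho(z_1)\neq\bar\rho(t)$, while
\[
\bar\rho(t)=\bar\rho(s)\,\bar\rho(z_1)\,\bar\rho(s)=\varphi(s)\,\bar\rho(z_1)\,\varphi(s).
\]
Were $\varphi(s)$ the identity of $S_n$, these two facts would contradict each other. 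Hence $\varphi\colon G_{S,R}\to S_n$ is a homomorphism into the finite group $S_n$ with $\varphi(s)\neq e$, which is the assertion.

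Given the machinery already in place --- \Cref{thm:entropic_Dowling_reps} (and behind it \Cref{thm:entropic_iff_prob_rep} and \Cref{thm:Dowling_prob_rep}) together with the scrambling and augmentation constructions --- this theorem is an assembly rather than a new argument, and I expect no serious obstacle. The one point that genuinely needs care is bookkeeping: one must keep straight that the ``$s$'' appearing in the factorization $t=sz_1s$ of \Cref{prop:augmentation} is the image of the original generator in the free factor $G_{S,R}$, not the scrambled generator $i(s)\in S''$ (the two differ by a nontrivial element of the $\mathbb{Z}^N$-factor). This is precisely what makes the final contradiction go through.
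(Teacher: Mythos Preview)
Your argument is correct and follows essentially the same route as the paper: apply \Cref{thm:entropic_Dowling_reps} to the entropic subordinate geometry, compose with the quotient map $G''\to H$ to get a homomorphism $G''\to S_n$ separating the generators $S''$, and then use the identification of \Cref{prop:augmentation} together with the fact that $z_1$ and $t=sz_1s$ are distinct generators of $S''$ to conclude $\varphi(s)\neq e$ upon restriction to $G_{S,R}$. Your closing remark about the distinction between $s$ and $i(s)$ is a helpful clarification that the paper leaves implicit.
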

\begin{proof}
Suppose the matroid $M\in \mathcal{M}$ is entropic.
This is the partial Dowling geometry of a quotient of $G_{S'',R''}$.
Composing the quotient map with the group homomorphism stemming from \Cref{thm:entropic_Dowling_reps} applied to the entropic matroid $M$ we obtain an $n\in \N$ and a group homomorphism $\rho:G_{S'',R''}\rightarrow S_{n}$ with $\rho\left(x\right)\neq\rho\left(x^{\prime}\right)$
for distinct $x,x^{\prime}\in S''$.
Recall from \Cref{con:augmentation} that there is an isomorphism 
\[
	\nu: \left(G_{S,R}\ast F_R \ast \left\langle z_{1},\ldots,z_{4}\right\rangle \right)\times\mathbb{Z}^{N}\to G_{S'',R''}
\]
such that  $\nu(z_{1})=s_z$ for some generator $s_z\in S^{\prime\prime}$
and $\nu(sz_1s)=t$ with $t\in S''$.

As $\nu$ is an isomorphism the generators $s_z$ and $t$ must be distinct.
Hence we obtain $\rho(t)\neq \rho(s_z)$.
Composing these maps therefore yields $\rho \circ \nu(sz_1s)\neq \rho \circ \nu(z_1)$.
Thus $\rho \circ \nu(s)\neq \rho \circ \nu(e)$.
Restricting $\rho\circ \nu$ to $G_{S,R}\le\left(G_{S,R}\ast F_R\ast \left\langle z_{1},\ldots,z_{4}\right\rangle \right)\times\mathbb{Z}^{N}$ therefore yields the desired map from $G_{S,R}$ to the finite group $S_n$ with $\rho\circ \nu(s)\neq \rho\circ \nu(e)$.
\end{proof}

Combining the last two theorems with Slobodskoi's undecidability of the uniform word problem for finite groups immediately yields a proof of \cref{thm:entropic}, which we restate here:

\begin{theorem}\label{thm:undecidable_entropic}
	The entropic matroid representation problem is algorithmically undecidable.
	In other words, there is no algorithm that takes a matroid as input, always halts, and returns ``true'' if and only if the matroid is entropic.
\end{theorem}
\begin{proof}
	 \Cref{thm:WP_entropic,thm:entropic_WP} imply that solving an instance of the uniform word for finite groups is equivalent to checking whether at least one member in a finite set of matroids is entropic. The conclusion therefore follows from Slobodskoi's theorem that the uniform word problem for finite groups is undecidable (\Cref{thm:undecidable_grp_theory}).
\end{proof}

\section{The conditional independence implication problem}\label{sec:cii}

We fix some finite ground set $E$ for the entire section.

\begin{lemm}\label{lem:determination}
	A family of discrete random variables $\{X_e\}_{e\in E}$ realizes the CI statement $(i\perp i \mid J)$ with $i\in E$ and $J\subseteq E\setminus \{i\}$ if and only if $X_i$ is determined by $\{X_j\}_{j\in J}$.
\end{lemm}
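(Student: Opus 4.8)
The plan is to reduce the statement to an elementary computation with conditional probabilities, exploiting the fact that a conditional independence statement of the form ``$X_i$ is independent from itself'' forces the conditional law of $X_i$ to be deterministic. Since all variables are discrete, I will first pass to the support of $P$, so that $\Omega$ is (at most) countable and every singleton has positive probability; then the pointwise and almost-sure versions of ``determined by'' in \Cref{def:independence_and_determination} agree. Write $\Omega_i$ and $\Omega_J$ for the countable ranges of $X_i$ and $X_J$, and $P_J$ for the law of $X_J$.

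First I would spell out what it means for $\{X_e\}_{e\in E}$ to realize $(i\perp i\mid J)$: for all $a,b\in\Omega_i$ and all $x\in\Omega_J$ with $P_J(x)>0$,
\[
P(X_i=a,\ X_i=b\mid X_J=x)=P(X_i=a\mid X_J=x)\cdot P(X_i=b\mid X_J=x).
\]
The left-hand side is $0$ when $a\neq b$ and equals $P(X_i=a\mid X_J=x)$ when $a=b$; specializing to $a=b$ turns the identity into $t=t^2$ for $t=P(X_i=a\mid X_J=x)$, so $t\in\{0,1\}$, and conversely if every such conditional probability lies in $\{0,1\}$ then the displayed identity holds for all $a,b$. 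Hence realizing $(i\perp i\mid J)$ is equivalent to: for each $x$ in the support of $X_J$ there is a (necessarily unique) value $f(x)\in\Omega_i$ with $P(X_i=f(x)\mid X_J=x)=1$.

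To conclude, I would extend $f$ arbitrarily to a function $\Omega_J\to\Omega_i$; it is measurable because $\Omega_J$ is discrete, and $X_i=f\circ X_J$ everywhere on $\Omega$ (the condition $P(X_i=f(X_J(\omega))\mid X_J=X_J(\omega))=1$ forces $X_i(\omega)=f(X_J(\omega))$ for each $\omega$, using that every point of $\Omega$ has positive probability), which is exactly the assertion that $X_i$ is determined by $\{X_j\}_{j\in J}$. The converse is immediate: if $X_i=f\circ X_J$ then the conditional law of $X_i$ given $X_J=x$ is the point mass at $f(x)$, so every conditional probability $P(X_i=a\mid X_J=x)$ is $0$ or $1$ and the equivalence of the previous paragraph applies. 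There is no genuine obstacle here; the only thing to be careful about is the reduction to a discrete probability space of full support, which is what reconciles the pointwise definition of determination with the almost-sure conclusion of the probabilistic computation.
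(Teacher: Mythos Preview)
Your proof is correct and more careful than the paper's own argument on one point. The paper proceeds differently: it invokes the entropy characterization of conditional independence, namely that $(A\perp B\mid C)$ holds iff $H(X_A\mid X_C)+H(X_B\mid X_C)-H(X_{A\cup B}\mid X_C)=0$, specializes to $A=B=\{i\}$ and $C=J$ to get $H(X_i\mid X_J)=0$, and then cites the standard fact that vanishing conditional entropy is equivalent to functional dependence. Your route bypasses entropy entirely and argues directly with conditional probabilities via the idempotent equation $t=t^2$.

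What each approach buys: the paper's argument is a one-line appeal to well-known identities, but it leaves implicit exactly the issue you flag---that $H(X_i\mid X_J)=0$ only gives $X_i=f(X_J)$ almost surely, whereas \Cref{def:independence_and_determination} demands pointwise equality. Your reduction to the support of $P$ is precisely what is needed to reconcile these, and you state this explicitly. Your argument is also self-contained and does not rely on the entropy machinery from \Cref{sec:preliminaries}. Either approach is fine; yours is the more elementary and the more scrupulous about the pointwise-versus-a.s.\ distinction.
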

\begin{proof}
	The random variables $\{X_e\}_{e\in E}$ realize a CI statement $(A\perp B\mid C)$ for $A,B,C\subseteq E$ if and only if
	\[
		H(X_A \mid X_C) + H(X_B \mid X_C) - H(X_{A\cup B} \mid X_C) =0, 
	\]
	where $H(X_S\mid X_T)$ is the entropy of $X_S$ conditioned on $X_T$ for subsets $S,T\subseteq E$.
	Applying this to the CI statement $(i\perp i \mid J)$ implies that $H(X_i\mid X_J)=0$ which is the case if and only if $X_i$ is determined by $\{X_j\}_{j\in J}$.
\end{proof}

We relate probability space representations of matroids to the following variant of the conditional independence implication (CII) problem.
\begin{prob}\label{prob:cir}
	The \emph{conditional independence realization (CIR) problem} asks:
	\begin{description}
		\item[Instance] A set $\mathcal C$ of CI statements on a finite ground set $E$.
		\item[Question] Does there exist a nontrivial family of discrete random variables $\{X_e\}_{e\in E}$ realizing all CI statements in $\mathcal{C}$?
		By nontrivial we mean that there is at least one random variable that is not constant (i.e., at least one random variable does not take a single value with probability $1$).
	\end{description}
\end{prob}

\begin{theorem}\label{thm:CI_Entropic}
	Let $M$ be a connected matroid on the ground set $E$.
	There exists a set of CI statements~$\mathcal{C}_M$ on the ground set $E$ such that $M$ has a discrete probability space representation if and only if $\mathcal{C}_M$ can be realized by a nontrivial family of discrete random variables.
\end{theorem}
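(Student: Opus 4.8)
The plan is to take $\mathcal{C}_M$ to be the union of two families of CI statements. For the \emph{determination} axiom of a probability space representation, for every circuit $C$ of $M$ and every $c\in C$ include the statement $(c\perp c\mid C\setminus\{c\})$: by \Cref{lem:determination} a family $\{X_e\}_{e\in E}$ realizes it precisely when $X_c$ is determined by $\{X_e\}_{e\in C\setminus\{c\}}$. For the \emph{independence} axiom, for each basis $B$ of $M$ fix an enumeration $B=\{e_1,\dots,e_k\}$ and include the statements $(e_i\perp\{e_1,\dots,e_{i-1}\}\mid\emptyset)$ for $2\le i\le k$; their conjunction is equivalent to joint independence of $\{X_e\}_{e\in B}$, and since every independent set is contained in a basis and subfamilies of jointly independent families are jointly independent, any family realizing all the basis statements satisfies the independence axiom. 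We may assume $M$ is loopless, since a connected matroid on at least two elements has no loops and the remaining cases are trivial.

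First I would handle the easy direction: if $\{X_e\}_{e\in E}$ is a discrete probability space representation of $M$, then the determination and independence axioms show it realizes every statement of $\mathcal{C}_M$, and, $M$ being loopless, the non-triviality axiom forces every $X_e$ to be non-constant, so the family is nontrivial.

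For the converse, suppose $\{X_e\}_{e\in E}$ is a nontrivial family of discrete random variables realizing $\mathcal{C}_M$. By the remarks above it satisfies the independence and determination axioms, so the remaining task is to establish the non-triviality axiom, i.e.\ that no $X_e$ is a.s.\ constant. Put $Z=\{e\in E:X_e\text{ is a.s.\ constant}\}$; nontriviality gives $Z\ne E$. The crucial claim is that any circuit of $M$ meeting $Z$ is contained in $Z$. To see this, let $C$ be a circuit with some $x\in C\cap Z$, and let $c'\in C\setminus\{x\}$ (nonempty since $M$ is loopless). The determination axiom makes $X_{c'}$ a function of $(X_d)_{d\in C\setminus\{c'\}}$; as $X_x$ is a.s.\ constant, $X_{c'}$ is a.s.\ a function of $(X_d)_{d\in C\setminus\{x,c'\}}$. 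On the other hand $C\setminus\{x\}$ is an independent set, so $(X_d)_{d\in C\setminus\{x\}}$ are jointly independent, whence $X_{c'}$ is independent of $(X_d)_{d\in C\setminus\{x,c'\}}$. A random variable that is both determined by and independent of a fixed collection is a.s.\ constant, so $c'\in Z$; thus $C\subseteq Z$. Now if $Z\ne\emptyset$, choose $e_0\in E\setminus Z$ and $f\in Z$; since $M$ is connected there is a circuit containing $e_0$ and $f$, which meets $Z$ and hence lies in $Z$, forcing $e_0\in Z$ — a contradiction. Therefore $Z=\emptyset$, every $X_e$ is non-constant (so, being discrete, it realizes the non-triviality axiom), and $\{X_e\}$ is a probability space representation of $M$.

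The only delicate point is the propagation argument in the converse: the right way to combine the determination and independence hypotheses is to observe that within a circuit containing a constant variable, every other variable is simultaneously determined by and independent of the same subcollection, hence constant. Once this is phrased correctly one gets that $Z$ is closed under circuits — equivalently, a union of connected components of $M$ — and connectedness of $M$ together with $Z\ne E$ forces $Z=\emptyset$.
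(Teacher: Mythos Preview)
Your proof is correct and follows essentially the same approach as the paper: the same determination CI statements $(c\perp c\mid C\setminus\{c\})$ for circuits, a chain-rule encoding of independence, and the same propagation argument via connectedness to upgrade ``some $X_e$ nontrivial'' to ``every $X_e$ nontrivial.'' The only cosmetic differences are that the paper includes independence statements for \emph{all} independent sets (not just bases with one fixed enumeration), and phrases the non-triviality step directly (``given nontrivial $X_e$ and any non-loop $f$, find a circuit through both'') rather than via your set $Z$; these are the same argument in slightly different packaging.
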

\begin{proof}
	Given a connected matroid $M$ we construct a set of CI statements $\mathcal{C}_M$:
	\begin{enumerate}
		\item For every independent set $A\subseteq E$ in $M$ we add the CI statements $(i\perp A\setminus \{i\}\mid \emptyset)$ for all $i\in A$ to $\mathcal{C}_M$
		\item For every circuit $C\subseteq E$ in $M$ we add the CI statements $(i\perp i \mid C\setminus \{i\})$ for all $i\in C$ to $\mathcal{C}_M$.
	\end{enumerate}
	Let $\{X_e\}_{e\in E}$ be a set of discrete random variables.
	Suppose $A=\{a_1,\dots,a_k\}\subseteq E$ is an independent subset of $M$.
	The random variables $\{X_e\}_{e\in A}$ are independent if and only if they realize the CI statements  $(a_{i+1}\perp \{a_1,\dots,a_{i}\}\mid \emptyset)$ for all $1\le i \le k-1$.
	By construction, $\mathcal{C}_M$ contains all these CI statements, because a subset of an independent set of $M$ is independent. Therefore if $\{X_e\}_{e\in E}$ satisfy $\mathcal{C}_M$ then $\{X_a\}_{a\in A}$ are independent for every independent set $A$ of $M$.
	
	Conversely, it is clear that if the variables $\{X_e\}_{e\in E}$ give a probability space representation of $M$, they satisfy every CI statement in $\mathcal{C}_M$ constructed in (a).
	
	Let $C\subseteq E$ be a circuit of $M$.
	\Cref{lem:determination} yields that the random variables $\{X_e\}_{e\in C\setminus\{i\}}$ determine $X_i$ for all $i\in C$ if and only if the random variables realize the CI statements corresponding to this circuit.
	Thus, for the family $\{X_e\}_{e\in E}$ it is equivalent to realize all CI statements corresponding to circuits of the matroid and to fulfill all determination properties dictated by the matroid in~\Cref{def:prob_space_rep}.
	
	Finally, the probability space representation being nontrivial implies that the random variable corresponding to an element that is not a loop is not constant with probability $1$.
	Hence, if $\{X_e\}_{e\in E}$ are random variables corresponding to a probability space representation of $M$ then they are a nontrivial realization of $\mathcal{C}_M$.
	
	Conversely, assume that $\{X_e\}_{e\in E}$ is a nontrivial family of random variables realizing $\mathcal{C}_M$, so that there exists $e\in E$ such that $X_e$ is a nontrivial random variable.
	We show that this implies the nontriviality condition of a probability space representation in~\Cref{def:prob_space_rep}:
	Let $f\in E$ be any element that is not a loop in the matroid $M$.
	Since the matroid is connected, there exists a circuit $C$ of $M$ with $\{e,f\}\subseteq C$.
	By the above arguments we know that the family $\{X_e\}_{e\in E}$ satisfies the independence and determination assumptions.
	In particular, the subfamily $\{X_g\}_{g\in C\setminus\{e\}}$ is independent and determines $X_e$.
	Thus, the subfamily $\{X_g\}_{g\in C\setminus\{e,f\}}$ does not determine $X_e$ which implies that $X_f$ must be nontrivial too.
\end{proof}

\begin{coro}\label{cor:CIR}
	The conditional independence realization (CIR) problem is algorithmically undecidable.
\end{coro}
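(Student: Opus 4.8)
The plan is to reduce the undecidability of entropic representability to the CIR problem, with \Cref{thm:CI_Entropic} serving as the bridge. Recall that the proof of \Cref{thm:undecidable_entropic} establishes more than its statement: via \Cref{con:augmentation}, \Cref{thm:WP_entropic,thm:entropic_WP} and Slobodskoi's theorem, it provides a computable map sending an instance $\langle S\mid R\rangle, s$ of the uniform word problem for finite groups to the finite, explicitly presented set $\mathcal{M}$ of generalized Dowling geometries subordinate to the augmented presentation $\langle S''\mid R''\rangle$, with the property that $\langle S\mid R\rangle$ has a finite quotient in which $s$ is nontrivial if and only if at least one $M\in\mathcal{M}$ is entropic. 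Each such $M$ is a generalized Dowling geometry, hence a simple connected matroid of rank three (each copy $s_i$ is collinear with two of the $b_j$, from which one checks that any two elements lie in a common circuit).

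First I would associate to every $M\in\mathcal{M}$ the set $\mathcal{C}_M$ of CI statements built in the proof of \Cref{thm:CI_Entropic}; that construction is completely explicit, so $M\mapsto\mathcal{C}_M$ is computable. Since $M$ is connected of rank at least two, \Cref{thm:entropic_iff_prob_rep} gives that $M$ is entropic if and only if it has a discrete probability space representation (one may take all singletons to have positive probability, which is harmless by the remark following \Cref{def:prob_space_rep}). Chaining this with \Cref{thm:CI_Entropic}, $M$ is entropic if and only if $\mathcal{C}_M$ admits a nontrivial realization by discrete random variables, i.e. if and only if $\mathcal{C}_M$ is a positive instance of the CIR problem.

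Now suppose towards a contradiction that some algorithm decides CIR. Given $\langle S\mid R\rangle, s$, compute $\mathcal{M}$ and then $\mathcal{C}_M$ for each of the finitely many $M\in\mathcal{M}$, and run the hypothetical CIR algorithm on each $\mathcal{C}_M$. By the previous paragraph, some run returns ``yes'' precisely when some $M\in\mathcal{M}$ is entropic, which by the reduction above happens precisely when $\langle S\mid R\rangle$ has a finite quotient in which $s$ is nontrivial; this would solve the uniform word problem for finite groups, contradicting \Cref{thm:undecidable_grp_theory}. The only point that genuinely needs attention is verifying that the matroids coming out of the reduction meet the connectedness (and rank $\ge 2$) hypotheses of \Cref{thm:entropic_iff_prob_rep,thm:CI_Entropic}; granting that, the remainder is bookkeeping — composing the explicit computable reductions and noting that a single CIR query per matroid suffices because $\mathcal{M}$ is finite.
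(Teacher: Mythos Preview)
Your proposal is correct and follows essentially the same route as the paper: both reduce from the undecidability of entropic representability (\Cref{thm:undecidable_entropic}) via the bridge \Cref{thm:CI_Entropic}, using that the generalized Dowling geometries produced in the reduction are connected. The paper's proof is a two-line citation of these ingredients; your version simply unpacks the reduction explicitly and adds the observation (asserted without argument in the paper) that connectedness holds.
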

\begin{proof}
	This follows directly form the~\Cref{thm:undecidable_entropic,thm:CI_Entropic} since the partial Dowling geometries are connected matroids.
\end{proof}

Now we are finally ready to prove that the conditional independence implication problem is undecidable,

\begin{theorem}
	The conditional independence implication (CII) problem is algorithmically undecidable.
\end{theorem}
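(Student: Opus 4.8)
The plan is to reduce the conditional independence realization (CIR) problem, shown to be undecidable in \Cref{cor:CIR}, to the CII problem. The key observation I would use is that a family $\{X_e\}_{e\in E}$ of discrete random variables is \emph{trivial} --- meaning every $X_e$ is almost surely constant --- precisely when it realizes the CI statement $(e\perp e\mid\emptyset)$ for every $e\in E$. Indeed, applying \Cref{lem:determination} with $J=\emptyset$, the family realizes $(e\perp e\mid\emptyset)$ if and only if $X_e$ is determined by the empty subfamily, and that holds if and only if $X_e$ is constant. Consequently $\{X_e\}_{e\in E}$ is nontrivial if and only if there is some $e\in E$ for which $\{X_e\}_{e\in E}$ does \emph{not} realize $(e\perp e\mid\emptyset)$.

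Given this, I would argue as follows. Let $\mathcal{C}$ be an instance of the CIR problem on a finite ground set $E$. By the observation above together with the definition of the CIR problem, there exists a nontrivial family of discrete random variables realizing every statement in $\mathcal{C}$ if and only if there exists $e\in E$ for which the implication
\[
\bigwedge_{A\in\mathcal{C}} A \;\Longrightarrow\; (e\perp e\mid\emptyset)
\]
fails --- that is, for which the CII instance with hypotheses $\mathcal{C}$ and conclusion $(e\perp e\mid\emptyset)$ has answer ``no''. Since $E$ is finite, an algorithm deciding CII could be run on each of the finitely many instances $\bigl(\mathcal{C},(e\perp e\mid\emptyset)\bigr)$, $e\in E$, and the CIR instance $\mathcal{C}$ would then have answer ``yes'' exactly when at least one of these runs returns ``no''. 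Hence a decision procedure for CII would yield one for CIR, contradicting \Cref{cor:CIR}, so the CII problem is algorithmically undecidable.

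I do not expect a genuine obstacle in this final step: the substance has already been carried out in the undecidability of CIR (\Cref{cor:CIR}), which rests on \Cref{thm:undecidable_entropic} and \Cref{thm:CI_Entropic}, and hence on the entropic representability theory of generalized Dowling geometries together with Slobodskoi's theorem. The one point that needs care is the treatment of the trivial realization: it is exactly this that makes the CIR question --- whether \emph{some nontrivial} realization exists --- unwind into a finite disjunction of negated CII questions rather than a single CII question, which is what makes the reduction run in this direction.
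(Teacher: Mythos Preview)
Your proof is correct and follows essentially the same approach as the paper: reduce CIR to finitely many CII instances by using that nontriviality of a realization is witnessed by the failure of some CI statement, then invoke \Cref{cor:CIR}. The only difference is cosmetic: the paper tests the implication $\bigwedge_{A\in\mathcal{C}}A\Rightarrow c$ for every $c\in\mathcal{A}_E\setminus\mathcal{C}$, whereas you observe (via \Cref{lem:determination} with $J=\emptyset$) that it suffices to test only the $|E|$ conclusions $(e\perp e\mid\emptyset)$, since a family is trivial precisely when it realizes all of these --- a small but pleasant sharpening.
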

\begin{proof}
	Assume there is an oracle to decide the CII problem.
	We will show that using this oracle one can also decide the CIR problem.
	By~\Cref{cor:CIR} this then shows that the CII problem is algorithmically undecidable.
	
	Let $\mathcal C$ be an CIR problem instance and denote by $\A_E$ the set of all CI statements on the ground set $E$.
	We claim that $\mathcal C$ has a nontrivial realization, that is the associated CIR problem has a positive solution, if and only if at least one of the following finite set of CII problem instances has a negative answer:
	\begin{equation}\label{eq:cii}
		\left\{\bigwedge_{A\in \mathcal C} A \Rightarrow c \mid c\in \A_E\setminus \mathcal C\right\}.
	\end{equation}
	Suppose the family $\{X_e\}_{e\in E}$ is nontrivial and realizes $\mathcal C$.
	Since the family is nontrivial there is some $c_0\in \A_E$ that is not realized by this family: $(e \perp e | \emptyset)$ is not realized whenever $X_e$ is not constant.
	Hence, the CII problem instance $\bigwedge_{A\in \mathcal C} A \Rightarrow c_0$ that appears in~\eqref{eq:cii} has a negative answer.
	
	Conversely, assume that $\bigwedge_{A\in \mathcal C} A \Rightarrow c_0$ for some $c_0\in \A_E\setminus \mathcal C$ has a negative answer.
	Hence, there exists a family $\{X_e\}_{e\in E}$ of discrete random variables that realizes $\mathcal C$ but not $c_0$.
	Thus, they also realize $\mathcal C$.
	Since $\{X_e\}_{e\in E}$ does not realize $c$ the family is nontrivial and therefore the CIR problem has a positive answer.
\end{proof}

\section{Almost multilinear matroids}
\label[section]{sec:almost_mutilinear}
This section presents our results in the almost multilinear setting. \cref{sec:approx_vec_reps} and \cref{sec:approx_dowling} generalize \cref{sec:vec_reps} and \cref{sec:entropic_reps}. \cref{sec:almost_undecidable} puts everything together to prove undecidability results parallel to \cref{sec:entropic_undecidability}, but for almost multilinear rather than entropic matroids.

\subsection{Approximate vector space representations}
\label{sec:approx_vec_reps}
Here we adapt~\Cref{def:vec_rep,def:vect_space_rep} to the approximate setting.
We use the notation for collection of linear maps introduced in~\Cref{sec:map_notation}
\begin{defn}
	Let $V$ be a vector space, $c\in \N$ and $E$ be a finite set.
	Further, let $\{W_e\}_{e\in E}$ be a collection of vector spaces with $\dim W_e=c$ and let $\{T_e: V\to W_e\}_{e\in E}$ be a collection of surjective linear maps. 
	Fix some $\varepsilon > 0$.
	\begin{enumerate}
		\item The maps $\{T_e\}_{e\in E}$ are \emph{independent with error $\varepsilon$} if $\rk(T_E)\ge c(|E|-\varepsilon)$.
		\item Fix $x\in E$. The map $T_x$ is \emph{determined with error $\varepsilon$} by $\{T_e\}_{e\in E\setminus\{x\}}$ if there exists a linear map $S:W_{E\setminus\{x\}}\to W_x$ such that
		\[\rk (T_x - S\circ T_{E\setminus\{x\}})\le c\varepsilon.\]
		That is, the normalized rank distance of $T_x$ and $S\circ T_{E\setminus\{x\}}$ is at most $\varepsilon$.
		In this case, $S$ is called an \emph{$\varepsilon$-determination map}.
	\end{enumerate}
For the sake of brevity we sometimes write that a set of maps is $\varepsilon$-independent, or that some map is $\varepsilon$-determined by a given collection of maps.
\end{defn}

\begin{lemm}\label{lem:approximate_rk_inverse}
	Let $A\in M_c(\F)$ be a matrix over a field $\F$ and let $\delta\ge 0$ be a real number.
	Then $\rk(A)\ge c(1-\delta)$ if and only if there exists an invertible matrix $D\in M_c(\F)$ such that $\rk(I_c-DA)\le c \delta$.
\end{lemm}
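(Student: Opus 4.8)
The statement is an approximate analogue of the elementary fact that a matrix $A\in M_c(\F)$ has full rank if and only if it is invertible, i.e. if and only if $I_c - DA = 0$ for $D = A^{-1}$. The plan is to quantify both directions of this equivalence using the normalized rank distance and basic linear algebra over an arbitrary field.

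\textbf{The "only if" direction.} Suppose $\rk(A)\ge c(1-\delta)$, and set $\rho = \rk(A)$, so $c - \rho \le c\delta$. I would choose bases adapted to $A$: by the theory of Smith normal form / rank factorization over a field, there exist invertible matrices $P,Q\in M_c(\F)$ such that $PAQ = \begin{bmatrix} I_\rho & 0 \\ 0 & 0\end{bmatrix}$. Then I want an invertible $D$ with $I_c - DA$ of small rank. Take $D = Q R P$ where $R\in M_c(\F)$ is a suitable invertible matrix to be chosen; then $I_c - DA = I_c - QRPAQQ^{-1}$, and it is cleaner to instead compute $Q^{-1}(I_c - DA)Q = Q^{-1}Q - Q^{-1}DAQ = I_c - RPAQ = I_c - R\begin{bmatrix} I_\rho & 0 \\ 0 & 0\end{bmatrix}$. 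Choosing $R = I_c$ gives $I_c - \begin{bmatrix} I_\rho & 0 \\ 0 & 0\end{bmatrix} = \begin{bmatrix} 0 & 0 \\ 0 & I_{c-\rho}\end{bmatrix}$, which has rank $c - \rho \le c\delta$; but $R = I_c$ makes $D = QP$ invertible, as desired. Since $\rk$ is conjugation-invariant, $\rk(I_c - DA) = \rk\big(Q^{-1}(I_c-DA)Q\big) = c - \rho \le c\delta$, completing this direction.

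\textbf{The "if" direction.} Conversely, suppose there is an invertible $D$ with $\rk(I_c - DA) \le c\delta$. Write $I_c - DA = N$, so $DA = I_c - N$. Then $\rk(DA) = \rk(I_c - N) \ge \rk(I_c) - \rk(N) \ge c - c\delta = c(1-\delta)$, using the subadditivity of rank (equivalently, the triangle inequality for $d_{\rk}$ already recorded just before the statement, applied to $I_c$, $I_c - N$, and $0$). Finally, since $D$ is invertible, $\rk(DA) = \rk(A)$, so $\rk(A) \ge c(1-\delta)$.

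\textbf{Main obstacle.} There is no serious obstacle here; the only mild subtlety is making sure all the linear algebra (rank factorization, conjugation-invariance of rank, subadditivity of rank) is valid over an arbitrary field $\F$, which it is. One should also double-check the edge case $\delta = 0$: the "only if" direction then forces $\rho = c$, so $A$ is invertible and $D = A^{-1}$ works with $I_c - DA = 0$; the "if" direction forces $N = 0$, so $DA = I_c$ and $A$ is invertible. Everything is consistent, so the proof is a short two-paragraph argument once the rank factorization is invoked.
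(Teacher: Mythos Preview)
Your proof is correct and follows essentially the same approach as the paper. The ``if'' direction is identical (triangle inequality for rank plus invariance under invertible multiplication); for the ``only if'' direction the paper constructs $D$ slightly differently---it replaces at most $c-\rk(A)$ rows of $A$ to obtain an invertible matrix $A'$ and sets $D=A'^{-1}$---but your rank-factorization choice $D=QP$ achieves the same bound with the same level of effort.
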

\begin{proof}
	Suppose $\rk(A)\ge c(1-\delta)$.
	Then there exists an invertible matrix $A'$ such that $A'-A$ has at most $c\delta$ nonzero rows: To construct such an $A'$ from the given matrix $A$, iteratively find a row of the matrix which is in the span of the others, and replace it by a row which is not in the row span. After $c-\rk(A)$ row replacements we obtain an invertible matrix and the process ends.
	
	For $D=A'^{-1}$, we have
	\[
		\rk(I_c - DA)=\rk(DA'- DA)=\rk(A'-A)\le c\delta.
	\]
	Conversely, suppose there exists a matrix $D$ with $\rk(I_c-DA)\le c \delta$.
	By the triangle inequality $\rk(I_c) \le \rk(I_c-DA)+ \rk(DA)$, and hence
	$\rk(DA)\ge c(1-\delta)$, which implies the claim.
\end{proof}
The following corollary is obvious from the lemma.
\begin{coro}\label[corollary]{cor:approximate_rk_inverse}
	Let $T:V\to W$ be a linear transformation between vector spaces of the same (finite) dimension $c$ and let $\delta \ge 0$. Then $\rk(T)\ge c(1-\delta)$ if and only if there exists an invertible transformation $S:W\to V$ such that $\rk(\mathrm{id}_V - S\circ T) \le c\delta$.
\end{coro}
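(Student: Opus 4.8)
The plan is to reduce to the matrix statement \Cref{lem:approximate_rk_inverse} by fixing coordinates. First I would choose a basis of $V$ and a basis of $W$; since $\dim V = \dim W = c$, the linear transformation $T$ is then represented by a matrix $A \in M_c(\F)$, and the rank of $T$ equals $\rk(A)$. Under the same choice of bases, an arbitrary linear transformation $S : W \to V$ is represented by a matrix $D \in M_c(\F)$, this correspondence is a bijection between such transformations and matrices in $M_c(\F)$, and $S$ is invertible if and only if $D$ is invertible. Moreover $S \circ T$ is represented by $DA$ and $\mathrm{id}_V$ by $I_c$, so $\mathrm{id}_V - S \circ T$ is represented by $I_c - DA$ and $\rk(\mathrm{id}_V - S\circ T) = \rk(I_c - DA)$.

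With these identifications, the statement to prove becomes exactly the assertion that $\rk(A) \ge c(1-\delta)$ if and only if there exists an invertible $D \in M_c(\F)$ with $\rk(I_c - DA) \le c\delta$, which is precisely \Cref{lem:approximate_rk_inverse} (taking that lemma's $\delta$ to be our $\delta$). Applying the lemma finishes the argument in both directions.

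There is essentially no obstacle here: the only thing to be careful about is that passing between abstract maps and their matrices is done consistently (same ordered bases used throughout) so that invertibility and rank are genuinely preserved and the quantifier "there exists an invertible $S$" on the map side matches "there exists an invertible $D$" on the matrix side. Once that bookkeeping is in place the corollary is immediate.
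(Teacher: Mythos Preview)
Your proposal is correct and matches the paper's approach: the paper simply states that the corollary is obvious from \Cref{lem:approximate_rk_inverse}, and your choice-of-bases reduction is exactly the routine verification that makes this obvious.
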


\begin{defn}
	Let $M$ be a matroid on $E$.
	An \emph{$\varepsilon$-approximate vector space representation} of $M$ consists of $c\in \N$, a vector space $V$ and a collection of surjective linear maps $\{T_e : V \to W_e\}_{e\in E}$ with $\dim W_e=c$ such that
	\begin{enumerate}
		\item If $A\subseteq E$ is independent, the maps $\{T_e\}_{e\in A}$ are independent with error $\varepsilon$.
	\item If $C\subseteq E$ is a circuit and $e\in C$, then $T_e$ is determined with error $\varepsilon$ by $\{T_f\}_{f\in C\setminus\{e\}}$.
	\end{enumerate}
\end{defn}

\begin{theorem}\label{thm:multilinear_and_vector_space_reps}
	A simple matroid $M$ is multilinear if and only if it has a vector space representation.
	It is almost-multilinear if and only if it has an $\varepsilon$-approximate
	vector space representation for every $\varepsilon>0$.
\end{theorem}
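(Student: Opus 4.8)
The plan is to translate everything through linear duality, passing between a subspace arrangement and a tuple of surjective linear maps, and then to bootstrap the defining conditions --- which only mention independent sets and circuits --- up to a statement about the entire rank function by a basis-plus-circuits argument.

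First I would record the dictionary. Given subspaces $\{W_e\}_{e\in E}$ of a finite-dimensional $V$ over $\F$, put $U=\sum_{e\in E}W_e$ and let $T_e\colon U^{*}\to W_e^{*}$ be dual to the inclusion $W_e\hookrightarrow U$; these are surjective, $\dim W_e^{*}=\dim W_e$, and since $\ker T_S=(\sum_{e\in S}W_e)^{\perp}$ one has $\rk(T_S)=\dim(\sum_{e\in S}W_e)$ for every $S\subseteq E$. Conversely, from surjective maps $\{T_e\colon V\to W_e\}$ set $\widetilde W_e=\operatorname{im}(T_e^{*})\subseteq V^{*}$, so that $\dim\widetilde W_e=\rk(T_e)=\dim W_e$ and $\dim(\sum_{e\in S}\widetilde W_e)=\rk(T_S)$; the two constructions are mutually inverse up to canonical isomorphism. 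Under this dictionary, ``$\{T_e\}_{e\in A}$ is independent (with error $\varepsilon$)'' means precisely ``$\rk(T_A)=\sum_{e\in A}\dim W_e$'' (resp.\ ``$\rk(T_A)\ge\sum_{e\in A}\dim W_e-c\varepsilon$''), and I would prove the elementary lemma that, for surjective maps with $c$-dimensional codomains, the minimum of $\rk(T_x-S\circ T_D)$ over linear maps $S$ equals $\rk(T_{D\cup\{x\}})-\rk(T_D)$: the inequality ``$\le$'' follows from writing $T_{D\cup\{x\}}=(S,\operatorname{id})\circ T_D+(T_x-S\circ T_D,0)$ and using rank subadditivity, and ``$\ge$'' by choosing $S$ from a splitting $V=\ker T_D\oplus V_0$ so that $T_x-S\circ T_D$ vanishes on $V_0$ and thus has rank at most $\dim(\ker T_D/\ker T_{D\cup\{x\}})$. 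In particular $T_x$ is determined with error $\varepsilon$ by $\{T_f\}_{f\in D}$ exactly when $\rk(T_{D\cup\{x\}})-\rk(T_D)\le c\varepsilon$; this is the tuple analogue of \cref{lem:approximate_rk_inverse} and \cref{cor:approximate_rk_inverse}.

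For the exact equivalence: from a multilinear representation $\{W_e\}$ with $r(S)=\tfrac1c\dim(\sum_{e\in S}W_e)$, simplicity gives $r(\{e\})=1$ and hence $\dim W_e=c$, so the dual maps satisfy $\rk(T_S)=c\,r(S)$; then $\{T_e\}_{e\in A}$ is independent for independent $A$, and for a circuit $C\ni e$ we get $\rk(T_C)=c\,r(C)=c\,r(C\setminus\{e\})=\rk(T_{C\setminus\{e\}})$, so $\ker T_{C\setminus\{e\}}=\ker T_C\subseteq\ker T_e$ and $T_e$ factors through $T_{C\setminus\{e\}}$ --- a vector space representation. Conversely, from a vector space representation $\{T_e\}$ set $\widetilde W_e=\operatorname{im}T_e^{*}$; for $S\subseteq E$ choose a basis $B$ of $M|S$, so condition (a) gives $\rk(T_B)=c\lvert B\rvert=c\,r(S)$, and each $e\in S\setminus B$ lies in a circuit contained in $B\cup\{e\}$, so by condition (b) $T_e$ is determined by $\{T_f\}_{f\in B}$; adding the elements of $S\setminus B$ one at a time --- each stays in $\operatorname{cl}(B)$ --- leaves the rank fixed, whence $\dim(\sum_{e\in S}\widetilde W_e)=\rk(T_S)=\rk(T_B)=c\,r(S)$ and $\{\widetilde W_e\}$ is a $c$-arrangement representing $M$.

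For the approximate equivalence I would rerun this skeleton with error bookkeeping, first reducing to the case where the linear polymatroid witnessing ``almost multilinear'' has ground set exactly $E$ (restrict the arrangement to the elements of $E$). If an $\varepsilon$-approximate vector space representation $\{T_e\}$ exists for every $\varepsilon$, then $\widetilde W_e=\operatorname{im}T_e^{*}\subseteq V^{*}$ defines an $\F$-linear polymatroid $\tilde r(S)=\rk(T_S)$; fixing a basis $B$ of $M|S$, condition (a) gives $\rk(T_S)\ge\rk(T_B)\ge c(r(S)-\varepsilon)$, and assembling, for each $e\in S\setminus B$, an $\varepsilon$-determination map for the circuit through $e$ in $B\cup\{e\}$ into a single map $\Phi\colon W_B\to\bigoplus_{e\in S\setminus B}W_e$ yields $\rk\bigl(T_S-(\operatorname{id}_{W_B},\Phi)\circ T_B\bigr)\le c\varepsilon\,\lvert S\setminus B\rvert$ and hence $\rk(T_S)\le c\,r(S)+c\varepsilon\,\lvert S\setminus B\rvert$; so $\lVert r-\tfrac1c\tilde r\rVert_{\infty}\le\varepsilon(\lvert E\rvert-1)$, and as $\varepsilon$ was arbitrary $M$ is almost multilinear. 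For the reverse implication, fix $\varepsilon>0$ and take subspaces $\{W_e\}\subseteq V$ over $\F$ with $\lvert r(S)-\tfrac1c\dim(\sum_{e\in S}W_e)\rvert<\varepsilon'$ for some $\varepsilon'$ to be chosen. The main obstacle is that $\dim W_e$ is only within $c\varepsilon'$ of $c$, not equal to $c$, while $c$ itself depends on $\varepsilon'$, so one cannot simply demand $\varepsilon'<1/c$; I would instead normalize the arrangement: replace each $W_e$ with $\dim W_e>c$ by a subspace of dimension $c$ (this lowers each $\dim(\sum_{e\in S}W_e)$ by at most $\dim W_e-c$), and each $W_e$ with $\dim W_e<c$ by $W_e\oplus N_e$ with $N_e$ a $(c-\dim W_e)$-dimensional subspace in a fresh coordinate block (this raises each $\dim(\sum_{e\in S}W_e)$ by exactly $c-\dim W_e$). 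The normalized arrangement $\{W_e'\}$ has all parts of dimension $c$ and its polymatroid $\tilde r'$ satisfies $\lvert\tilde r'(S)-\tilde r(S)\rvert\le\sum_e\lvert\dim W_e-c\rvert<\lvert E\rvert c\varepsilon'$, hence $\lvert r(S)-\tfrac1c\tilde r'(S)\rvert<(1+\lvert E\rvert)\varepsilon'$. Dualizing $\{W_e'\}$ gives surjective $\{T_e\}$ with $\rk(T_S)=\tilde r'(S)$: condition (a) then holds with error $(1+\lvert E\rvert)\varepsilon'$, and by the determination lemma, for a circuit $C\ni e$ we get $\rk(T_C)-\rk(T_{C\setminus\{e\}})=\tilde r'(C)-\tilde r'(C\setminus\{e\})<2(1+\lvert E\rvert)c\varepsilon'$, so condition (b) holds with error $2(1+\lvert E\rvert)\varepsilon'$; taking $\varepsilon'=\varepsilon/(2(1+\lvert E\rvert))$ produces an $\varepsilon$-approximate vector space representation of $M$. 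Apart from the dimension normalization, the rest is routine bookkeeping of these error terms.
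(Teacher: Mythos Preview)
Your proposal is correct and follows essentially the same architecture as the paper's proof in \Cref{sec:pf_of_multilin_thm}: the same duality between subspace arrangements and tuples of surjections (the paper uses $T_e:V^*\to V^*/W_e^0$, canonically isomorphic to your $T_e:U^*\to W_e^*$), the same basis-plus-circuits bootstrap to pass from the local independence/determination conditions to $\rk(T_S)\approx c\,r(S)$ for all $S$, and the same dimension-normalization step (\cref{lem:almost_multilinear_rounding}) to force $\dim W_e=c$ before dualizing.

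The one packaging difference worth noting is your determination lemma $\min_S\rk(T_x-S\circ T_D)=\rk(T_{D\cup\{x\}})-\rk(T_D)$: this is a cleaner and more direct formulation than the paper's route through \cref{lem:left_inverse}, which instead builds an approximate left inverse to the surjection $\operatorname{im}T_C\to\operatorname{im}T_{C\setminus\{e_1\}}$ and then projects. Your lemma makes the equivalence between $\varepsilon$-determination and the rank increment transparent in one line, whereas the paper's argument is slightly more ad hoc; conversely the paper's \cref{lem:left_inverse} is a standalone statement that gets reused elsewhere. Either way the content is the same.
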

The proof consists of simple but slightly lengthy calculations. 

\begin{notation}
	For $a,b\in\mathbb{R}$, we write $a\approx_{\varepsilon}b$
	as shorthand for $\left|a-b\right|\le\varepsilon$.	
\end{notation}

\begin{lemm}
	Let $M=\left(E,r\right)$ be a simple matroid. A vector space $V$
	and a collection of linear maps $\left\{ T_{e}:V\rightarrow W_{e}\right\} _{e\in E}$
	define a vector space representation of $M$ if and only if there
	exists $c\in\mathbb{N}$ such that for all $S\subseteq E$
	\[
	r(S) = \frac{1}{c}\rk (T_{S}).
	\]
\end{lemm}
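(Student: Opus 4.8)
The plan is to show that the two requirements in \Cref{def:vect_space_rep} — the independence condition for independent sets and the determination condition for circuits — are together equivalent to the single identity $r(S)=\frac{1}{c}\rk(T_S)$ for all $S\subseteq E$. Throughout write $\tilde r(S)=\frac{1}{c}\rk(T_S)$. Two elementary facts are used repeatedly: $\tilde r$ is monotone, since for $S\subseteq S'$ the coordinate projection $W_{S'}\to W_S$ carries $T_{S'}$ to $T_S$, so $\rk(T_S)\le\rk(T_{S'})$; and $\rk(T_S)\le\sum_{e\in S}\dim W_e$ because $\img(T_S)\subseteq W_S$. On the matroid side I use the standard facts that every dependent set contains a circuit and that if $x$ lies in a circuit $C\subseteq S$ then $x\in\cl(S\setminus\{x\})$, hence $r(S)=r(S\setminus\{x\})$; this last follows from $r(C)=|C|-1=r(C\setminus\{x\})$ and monotonicity of the closure operator (see \cite{Oxl11}).

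For the ``if'' direction, assume $r(S)=\tilde r(S)$ for every $S$. Since $M$ is simple, $r(\{e\})=1$, so $\rk(T_e)=c$; corestricting each $T_e$ to its image changes none of the numbers $\rk(T_S)$, so we may assume $\dim W_e=c$ for all $e$, i.e.\ that the data is of the form required in \Cref{def:vect_space_rep}. If $A$ is independent then $\rk(T_A)=c\,r(A)=c\,|A|=\sum_{e\in A}\dim W_e$, which is the independence condition. If $C$ is a circuit and $x\in C$ then $r(C)=r(C\setminus\{x\})$, so $\rk(T_C)=\rk(T_{C\setminus\{x\}})$; the coordinate projection $W_C\to W_{C\setminus\{x\}}$ restricts to a surjection $\img(T_C)\to\img(T_{C\setminus\{x\}})$ of spaces of equal dimension, hence an isomorphism, and composing its inverse with the projection $W_C\to W_x$ gives a linear map on $\img(T_{C\setminus\{x\}})$ which, extended arbitrarily to $W_{C\setminus\{x\}}$, is a determination map for $T_x$ given $\{T_e\}_{e\in C\setminus\{x\}}$. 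This is the determination condition.

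For the ``only if'' direction, assume both conditions hold. The key step is the claim that if $x$ lies in a circuit $C\subseteq S$ then $\rk(T_S)=\rk(T_{S\setminus\{x\}})$: by the determination condition there is a linear $\sigma_0\colon W_{C\setminus\{x\}}\to W_x$ with $T_x=\sigma_0\circ T_{C\setminus\{x\}}$, and precomposing with the projection $W_{S\setminus\{x\}}\to W_{C\setminus\{x\}}$ gives $\sigma\colon W_{S\setminus\{x\}}\to W_x$ with $T_x=\sigma\circ T_{S\setminus\{x\}}$; identifying $W_S$ with $W_x\oplus W_{S\setminus\{x\}}$, one gets $T_S=\gamma\circ T_{S\setminus\{x\}}$ where $\gamma(w)=(\sigma(w),w)$ is injective, so the ranks agree. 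I then induct on $|S|$. The case $S=\emptyset$ is trivial. If $S$ is independent, the independence condition gives $\rk(T_S)=\sum_{e\in S}\dim W_e=c\,|S|$, so $\tilde r(S)=|S|=r(S)$. If $S$ is dependent, choose a circuit $C\subseteq S$ and $x\in C$; then $\tilde r(S)=\tilde r(S\setminus\{x\})$ by the key step, $r(S)=r(S\setminus\{x\})$ by the matroid fact, and $\tilde r(S\setminus\{x\})=r(S\setminus\{x\})$ by the induction hypothesis, so $\tilde r(S)=r(S)$.

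Most of this is bookkeeping; the point that needs care is the ``graph'' identity $T_S=\gamma\circ T_{S\setminus\{x\}}$ with $\gamma$ injective, which is precisely what makes a determined coordinate contribute nothing to the rank of the joint map $T_S$, paired with its exact matroid counterpart $r(S)=r(S\setminus\{x\})$. The only other subtlety is the reduction to $\dim W_e=c$ in the ``if'' direction, handled by the corestriction step, since the rank identity by itself only forces $\rk(T_e)=c$.
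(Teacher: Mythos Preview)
Your proof is correct and follows essentially the same approach as the paper's. The only differences are organizational: in the ``only if'' direction the paper picks a maximal independent subset $S'\subseteq S$ and removes all elements of $S\setminus S'$ at once (arguing each is in $\cl(S')$ and hence determined by $\{T_f\}_{f\in S'}$), whereas you induct on $|S|$ and peel off one circuit element at a time; and in the ``if'' direction you explicitly handle the corestriction to arrange $\dim W_e=c$, which the paper leaves implicit.
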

\begin{proof}
	Suppose $V$ and the maps $\left\{ T_{e}\right\} _{e\in E}$ define
	a vector space representation of $M$. Then $c\coloneqq\dim W_{e}=\rk (T_{e})$
	is independent of $e\in E$. Each $S\subseteq E$ contains a maximal
	independent subset $S^{\prime}\subseteq S$ with $r\left(S\right)=r\left(S^{\prime}\right)=\left|S^{\prime}\right|$,
	which then satisfies
	\[
	\rk(T_{S^{\prime}})=\sum_{e\in S^{\prime}}\rk(T_{e})=c\left|S^{\prime}\right|.
	\]
	If $e\in S\setminus S^{\prime}$ then $e$ is in the closure of $S^{\prime}$,
	so $T_{e}$ is determined by $\left\{ T_{f}\right\} _{f\in S^{\prime}}$.
	It follows that $\rk (T_{S})=\rk(T_{S^{\prime}})=c\left|S^{\prime}\right|=c\, r\left(S\right)$.
	
	Conversely, suppose a vector space $V$ and linear maps $\left\{ T_{e}:V\rightarrow W_{e}\right\} _{e\in E}$
	are given such that $\rk(T_{S})=c \, r\left(S\right)$
	for all $S\subseteq E$. If $S\subseteq E$ is independent then $r\left(S\right)=\left|S\right|$,
	so that $\rk(T_{S})=c\left|S\right|=\sum_{e\in S}\rk(T_{e})$,
	and the maps $\left\{ T_{e}\right\} _{e\in S}$ are independent. If
	$C=\left\{ e_{1},\ldots,e_{n}\right\} $ is a circuit of $M$ then
	$r\left(C\right)=\left|C\right|-1$ and $C\setminus\left\{ e_{1}\right\} $
	is independent, so
	\[
	\rk(T_{C\setminus\left\{ e_{1}\right\} })=c\left(\left|C\right|-1\right)=\rk(T_{C}).
	\]
	The map $\pi:\bigoplus_{e\in C}W_{e}\rightarrow\bigoplus_{e\in C\setminus\left\{ e_{1}\right\} }W_{e}$
	which drops the $W_{e_{1}}$ coordinate satisfies $T_{C\setminus\left\{ e_{1}\right\} }=\pi\circ T_{C}$,
	so it induces an isomorphism $\img(T_{C})\rightarrow\img(T_{C\setminus\left\{ e_{1}\right\} })$
	($\pi$ must be a surjection onto $\img(T_{C\setminus\left\{ e_{1}\right\} })$
	because $T_{C\setminus\left\{ e_{1}\right\} }$ is a surjection; the
	dimensions of the two spaces are equal, so it is injective as well).
	Let $\psi:\img(T_{C\setminus\left\{ e_{1}\right\} })\rightarrow\img(T_{C})$
	be its inverse and let $\pi_{e_{1}}:\bigoplus_{e\in C}W_{e}\rightarrow W_{e_{1}}$
	be the projection to the $W_{e_{1}}$ summand.
	Then 
	\[
	\left(\pi_{e_{1}}\circ\psi\right)\circ T_{C\setminus\left\{ e_{1}\right\} }=\pi_{e_{1}}\circ T_{C}=T_{e_{1}},
	\]
	and $T_{e_{1}}$ is determined by $\left\{ T_{e}\right\} _{e\in C\setminus\left\{ e_{1}\right\} }$
	as required. 
\end{proof}
The proof of the analogous statement for almost-multilinear matroids
is very similar. The following simple claim is useful:
\begin{lemm}
	\label[lemma]{lem:left_inverse}Let $T:W_{1}\rightarrow W_{2}$ be
	a surjection. Then there exists a map $S:W_{2}\rightarrow W_{1}$
	such that $\rk\left(S\circ T-\mathrm{id}_{W_{1}}\right)\le\dim W_{1}-\dim W_{2}$.
\end{lemm}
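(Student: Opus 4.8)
The statement to prove is \cref{lem:left_inverse}: given a surjection $T:W_1\rightarrow W_2$, there is a map $S:W_2\rightarrow W_1$ with $\rk(S\circ T - \mathrm{id}_{W_1})\le \dim W_1 - \dim W_2$. The plan is to choose $S$ to be essentially a ``right-inverse-turned-left-candidate'': since $T$ is surjective, $\ker T$ is a subspace of $W_1$ of dimension $\dim W_1 - \dim W_2$, and $T$ restricts to an isomorphism from any complement $U$ of $\ker T$ in $W_1$ onto $W_2$. First I would fix such a complement $W_1 = U \oplus \ker T$, let $\iota:U\hookrightarrow W_1$ be the inclusion, and let $(T|_U)^{-1}:W_2\rightarrow U$ be the inverse of the isomorphism $T|_U$. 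Then set $S = \iota\circ (T|_U)^{-1}:W_2\rightarrow W_1$.

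Next I would compute $S\circ T$. For $u\in U$ we have $T(u)\in W_2$ and $(T|_U)^{-1}(T(u)) = u$, so $S\circ T$ acts as the identity on $U$. For $w\in\ker T$ we have $T(w)=0$, hence $S\circ T(w)=0$. Therefore $S\circ T$ is the projection of $W_1$ onto $U$ along $\ker T$. Consequently $S\circ T - \mathrm{id}_{W_1}$ vanishes on $U$ and sends $\ker T$ into $\ker T$ (indeed it is $-\mathrm{id}$ on $\ker T$), so its image is contained in $\ker T$. This gives
\[
\rk(S\circ T - \mathrm{id}_{W_1}) \le \dim\ker T = \dim W_1 - \dim W_2,
\]
which is exactly the claim. (In fact equality holds, but only the inequality is needed.)

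There is essentially no obstacle here: the only mild point of care is making sure the complement $U$ exists, which is immediate in the vector-space setting (every subspace of a finite-dimensional vector space has a complement), and checking that $T|_U$ is genuinely an isomorphism onto $W_2$ — it is injective because $U\cap\ker T = 0$, and surjective because $T$ itself is surjective and $T(W_1) = T(U)$. So the proof is a two-line construction followed by a one-line rank estimate; I would write it out directly without invoking anything beyond elementary linear algebra.
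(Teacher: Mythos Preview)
Your proof is correct and is essentially the same as the paper's: the paper picks a basis $v_1,\ldots,v_n$ of $W_2$, chooses preimages $w_i\in T^{-1}(v_i)$, and defines $S(v_i)=w_i$, which amounts precisely to choosing your complement $U=\mathrm{span}(w_1,\ldots,w_n)$ of $\ker T$ and setting $S=\iota\circ(T|_U)^{-1}$. The only difference is that you phrase the construction coordinate-free via a complement while the paper writes it out in a basis.
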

\begin{proof}
	Pick a basis $v_{1},\ldots,v_{n}$ of $W_{2}$ and choose $w_{1}\in T^{-1}\left(v_{1}\right),\ldots,w_{n}\in T^{-1}\left(v_{n}\right)$.
	Then $w_{1},\ldots,w_{n}$ are independent since they have an independent
	image, and they can be completed to a basis $w_{1},\ldots,w_{n},w_{n+1},\ldots,w_{n+r}$
	of $W_{1}$. Define $S:W_{2}\rightarrow W_{1}$ on $v_{1},\ldots,v_{n}$
	by $S\left(v_{i}\right)=w_{i}$ and extend linearly. Then the map
	$S\circ T-\mathrm{id}_{W_{1}}$ vanishes on $\mathrm{span}\left(w_{1},\ldots,w_{n}\right)$,
	so its image is equal to the image of its restriction to $\mathrm{span}\left(w_{n+1},\ldots,w_{n+r}\right)$,
	and therefore has dimension at most $r=\dim W_{1}-\dim W_{2}$.
\end{proof}
\begin{lemm}
	\label[lemma]{lem:almost_multilinear_spaces_and_maps}Let $M=\left(E,r\right)$
	be a simple matroid, let $V$ be a vector space and let $\left\{ T_{e}:V\rightarrow W_{e}\right\} _{e\in E}$
	be a collection of linear maps. If $\left\{ T_{e}\right\} _{e\in E}$
	defines an $\varepsilon$-approximate vector space representation of
	$M$ then there exists $c\in\mathbb{N}$ such that
	\[
	\rk(T_{S})\approx_{c\left|E\right|\varepsilon} c\cdot r\left(S\right)\quad\text{for all \ensuremath{S\subseteq E}.}
	\]
	Conversely, if there exists $c\in\mathbb{N}$ such that $\rk(T_{e})=c$
	for all $e\in E$ and 
	\[
	\rk(T_{S})\approx_{c\varepsilon} c \cdot r\left(S\right)\quad\text{for all \ensuremath{S\subseteq E}}
	\]
	then the maps $\left\{ T_{e}\right\} _{e\in E}$ define a $2\varepsilon$-approximate
	vector space representation of $M$.
\end{lemm}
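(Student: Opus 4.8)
The plan is to mirror the proof of the exact statement (the lemma just above) while tracking how errors accumulate, treating the two directions separately. For the forward direction, assume $\{T_e\}_{e\in E}$ is an $\varepsilon$-approximate vector space representation with codomains of common dimension $c$. Fix $S\subseteq E$ and pick a maximal independent subset $S'\subseteq S$, so $r(S)=r(S')=|S'|$. I would first record the two easy bounds $\rk(T_{S'})\ge c(|S'|-\varepsilon)$ (from the $\varepsilon$-independence of the independent set $S'$) and $\rk(T_{S'})\le c|S'|$ (since $T_{S'}$ lands in $W_{S'}$ of dimension $c|S'|$); together with $\rk(T_S)\ge\rk(T_{S'})$, coming from the coordinate projection $W_S\to W_{S'}$ carrying $T_S$ to $T_{S'}$, this already gives $\rk(T_S)\ge c\,r(S)-c\varepsilon$. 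For the upper bound, each $e\in S\setminus S'$ lies in the closure of $S'$, so there is a circuit $C_e$ with $e\in C_e\subseteq S'\cup\{e\}$; the hypothesis furnishes an $\varepsilon$-determination map $U_e$ with $\rk(T_e-U_e\circ T_{C_e\setminus\{e\}})\le c\varepsilon$, and since $C_e\setminus\{e\}\subseteq S'$ one can absorb a projection to rewrite this as $\rk(T_e-V_e\circ T_{S'})\le c\varepsilon$ for a suitable $V_e\colon W_{S'}\to W_e$. Assembling the $V_e$ (and coordinate inclusions for $e\in S'$) into a single map $\Psi\colon W_{S'}\to W_S$, the difference $T_S-\Psi\circ T_{S'}$ is block-structured with at most $|S\setminus S'|\le|E|$ nonzero blocks, each of rank $\le c\varepsilon$, so $\rk(T_S-\Psi\circ T_{S'})\le c|E|\varepsilon$, whence $\rk(T_S)\le\rk(T_{S'})+c|E|\varepsilon\le c\,r(S)+c|E|\varepsilon$. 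Combining, $\rk(T_S)\approx_{c|E|\varepsilon}c\,r(S)$.

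For the converse, replace each $W_e$ by $\img T_e$, so the maps become surjective with $\dim W_e=\rk(T_e)=c$. The independence condition is immediate: if $A\subseteq E$ is independent then $\rk(T_A)\ge c\,r(A)-c\varepsilon=c(|A|-\varepsilon)\ge c(|A|-2\varepsilon)$, so $\{T_e\}_{e\in A}$ is independent with error $2\varepsilon$ (indeed with error $\varepsilon$). For the determination condition, fix a circuit $C$ and $e\in C$. Since $C$ is a circuit we have $r(C\setminus\{e\})=r(C)$, so the hypothesis yields $\dim\img T_C-\dim\img T_{C\setminus\{e\}}\le\bigl(c\,r(C)+c\varepsilon\bigr)-\bigl(c\,r(C)-c\varepsilon\bigr)=2c\varepsilon$ (the gap being nonnegative since $T_{C\setminus\{e\}}$ factors through the projection of $T_C$). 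That projection $\pi\colon W_C\to W_{C\setminus\{e\}}$ restricts to a surjection $\img T_C\to\img T_{C\setminus\{e\}}$, so by \Cref{lem:left_inverse} there is $\psi\colon\img T_{C\setminus\{e\}}\to\img T_C$ with $\rk(\psi\circ\pi|_{\img T_C}-\id_{\img T_C})\le 2c\varepsilon$. Extending $\psi$ by zero to $\tilde\psi\colon W_{C\setminus\{e\}}\to W_C$ and setting $\sigma\coloneqq\pi_e\circ\tilde\psi$ with $\pi_e\colon W_C\to W_e$ the coordinate projection, the identities $T_e=\pi_e\circ T_C$ and $T_{C\setminus\{e\}}=\pi\circ T_C$ give $T_e-\sigma\circ T_{C\setminus\{e\}}=\pi_e\circ(\id_{\img T_C}-\psi\circ\pi|_{\img T_C})\circ T_C$, whose rank is at most $2c\varepsilon$. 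Hence $T_e$ is $2\varepsilon$-determined by $\{T_f\}_{f\in C\setminus\{e\}}$, and $\{T_e\}_{e\in E}$ is a $2\varepsilon$-approximate vector space representation of $M$.

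There is no serious obstacle; the work is entirely error bookkeeping. The one place that needs care is the forward direction, where each $T_e$ with $e\in S\setminus S'$ is only known to be approximately determined along some circuit inside $S'\cup\{e\}$, and one must (i) rewrite that approximation so it factors through $T_{S'}$ and (ii) check that combining all of these through the block structure of $T_S-\Psi\circ T_{S'}$ makes the errors add rather than compound, producing the clean constant $c|E|\varepsilon$. In the converse direction the decisive input is the approximate one-sided inverse from \Cref{lem:left_inverse}, which turns the small rank gap $\dim\img T_C-\dim\img T_{C\setminus\{e\}}\le 2c\varepsilon$ into an explicit $2\varepsilon$-determination map.
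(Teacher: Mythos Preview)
Your proposal is correct and follows essentially the same argument as the paper: in the forward direction both pick a maximal independent $S'\subseteq S$, use $\varepsilon$-independence for the lower bound, and use $\varepsilon$-determination of each $e\in S\setminus S'$ (through a circuit in $S'\cup\{e\}$) to control $\rk(T_S)-\rk(T_{S'})$; in the converse both use the rank gap $\rk(T_C)-\rk(T_{C\setminus\{e\}})\le 2c\varepsilon$ together with \Cref{lem:left_inverse} to produce the $2\varepsilon$-determination map. You are in fact slightly more explicit than the paper about factoring the circuit-based determination through $T_{S'}$ and about replacing $W_e$ by $\img T_e$ to ensure surjectivity, but the structure is the same.
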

\begin{proof}
	Suppose $V$ and the maps $\left\{ T_{e}\right\} _{e\in E}$ define
	an $\varepsilon$-approximate vector space representation of $M$.
	Then $c\coloneqq\dim W_{e}=\rk(T_{e})$ is independent of
	$e\in E$. Each nonempty $S\subseteq E$ contains a maximal independent
	subset $S^{\prime}\subseteq S$ with $r\left(S\right)=r\left(S^{\prime}\right)=\left|S^{\prime}\right|$,
	which then satisfies
	\[
	\rk(T_{S^{\prime}})\approx_{c\varepsilon}\sum_{e\in S^{\prime}}\rk(T_{e})=c\left|S^{\prime}\right|.
	\]
	If $e\in S\setminus S^{\prime}$ then $e$ is in the closure of $S^{\prime}$,
	so $T_{e}$ is determined by $\left\{ T_{f}\right\} _{f\in S^{\prime}}$
	with error $\varepsilon$. It follows that 
	\[
	\rk(T_{S})\approx_{c\left(\left|S\right|-\left|S^{\prime}\right|\right)\varepsilon}\rk(T_{S^{\prime}})\approx_{c\varepsilon}\left|S^{\prime}\right|c=c\cdot r\left(S\right),
	\]
	so 
	\[
	\rk(T_{S})\approx_{c\left(\left|S\right|-\left|S^{\prime}\right|+1\right)} c\cdot r\left(S\right),
	\]
	where $\left|S\right|-\left|S^{\prime}\right|+1\le\left|E\right|$
	because $S^{\prime}\neq\emptyset$ (or $S$ consists of loops, and
	$M$ is not simple). 
	
	Conversely, suppose a vector space $V$ and linear maps $\left\{ T_{e}:V\rightarrow W_{e}\right\} _{e\in E}$
	are given such that $\rk(T_{S})\approx_{c\varepsilon}c \cdot r\left(S\right)$
	for all $S\subseteq E$. If $S\subseteq E$ is independent then $r\left(S\right)=\left|S\right|$,
	so that $\rk(T_{S})\approx_{c\varepsilon}c\left|S\right|=\sum_{e\in S}\rk(T_{e})$,
	and the maps $\left\{ T_{e}\right\} _{e\in S}$ are independent with
	error $\varepsilon$. If $C=\left\{ e_{1},\ldots,e_{n}\right\} $
	is a circuit of $M$ then $r\left(C\right)=\left|C\right|-1$ and
	$C\setminus\left\{ e_{1}\right\} $ is independent, so
	\[
	\rk(T_{C\setminus\left\{ e_{1}\right\} })\approx_{c\varepsilon}c\left(\left|C\right|-1\right)\approx_{c\varepsilon}\rk(T_{C}),
	\]
	and $\rk(T_{C})-\rk(T_{C\setminus\left\{ e_{1}\right\} })\le2c\varepsilon$.
	The map $\pi:\bigoplus_{e\in C}W_{e}\rightarrow\bigoplus_{e\in C\setminus\left\{ e_{1}\right\} }W_{e}$
	which drops the $W_{e_{1}}$--coordinate satisfies $T_{C\setminus\left\{ e_{1}\right\} }=\pi\circ T_{C}$,
	so it induces a surjection $\img(T_{C})\rightarrow\img(T_{C\setminus\left\{ e_{1}\right\} })$
	($\pi$ is a surjection onto $\img(T_{C\setminus\left\{ e_{1}\right\} })$
	because $T_{C\setminus\left\{ e_{1}\right\} }$ is a surjection).
	\cref{lem:left_inverse} implies that there exists $\psi:\img(T_{C\setminus\left\{ e_{1}\right\} })\rightarrow\img(T_{C})$
	such that
	\[
	\rk\left(\psi\circ\pi-\mathrm{id}_{\img(T_{C})}\right)\le2c\varepsilon.
	\]
	Denote the projection to the $e_{1}$-summand $\bigoplus_{e\in C}W_{e}\rightarrow W_{e_{1}}$
	by $\pi_{e_{1}}$. Then $\pi_{e_{1}}\circ T_{C}=T_{e_{1}}$ by definition,
	and 
	\begin{align*}
		\pi_{e_{1}}\circ\left(\psi\circ\pi-\mathrm{id}_{\img({T_{C}})}\right)\circ T_{C}&=\pi_{e_{1}}\circ\psi\circ\left(\pi\circ T_{C}\right)-\pi_{e_{1}}\circ T_{C}\\
		&=\left(\pi_{e_{1}}\circ\psi\right)\circ T_{C\setminus\left\{ e_{1}\right\} }-T_{e_{1}}
	\end{align*}
	has rank at most $2c\varepsilon$ (since $\left(\psi\circ\pi-\mathrm{id}_{\img({T_{C}})}\right)$
	has rank at most $2c\varepsilon$). This shows that $T_{e_{1}}$ is
	determined by $\left\{ T_{e}\right\} _{e\in C\setminus\left\{ e_{1}\right\} }$
	with error at most $2\varepsilon$.
\end{proof}
\begin{lemm}
	\label[lemma]{lem:almost_multilinear_rounding}A matroid $M=\left(E,r\right)$
	is almost multilinear if and only if for every $\varepsilon>0$ there
	exists a linear polymatroid $\left(\widetilde{E},\widetilde{r}\right)$ and
	a $c\in\mathbb{N}$
	\[
	\left\Vert r-\frac{1}{c}\widetilde{r}\right\Vert _{\infty}<\varepsilon
	\]
	and in addition $\widetilde{r}(e)=c$ for all $e\in E$.
\end{lemm}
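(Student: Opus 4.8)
The plan is as follows. The ``if'' direction is immediate: the condition stated here -- that moreover $\tilde r(e)=c$ for all $e\in E$ -- only strengthens the definition of almost multilinearity, so any family of witnesses for the strengthened condition witnesses the original one. For the ``only if'' direction I would begin with two harmless reductions. First, the ground set of the approximating linear polymatroid may be taken to be $E$ itself: restriction of a linear polymatroid to a subset of its ground set is again a linear polymatroid (keep the subspaces indexed by $E$), and only the values on $S\subseteq E$ enter the norm $\|\,\cdot\,\|_\infty$. Second, we may assume $M$ has no loops; indeed, if $e\in E$ were a loop then $r(e)=0$, while $\tilde r(e)=c$ forces $\tfrac1c\tilde r(e)=1$, so no approximation with $\varepsilon<1$ could exist -- and in the applications of this lemma $M$ is simple.

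Given $\varepsilon>0$, I would then set $\delta=\varepsilon/(|E|+1)$ and invoke almost multilinearity with error $\delta$ to obtain a linear polymatroid on $E$, realized by subspaces $\{W_e\}_{e\in E}$ of a vector space $V$ over $\F$ with $\dim(\sum_{e\in S}W_e)=\tilde r(S)$, together with a multiplier $c\in\N$, satisfying $\bigl|\tfrac1c\tilde r(S)-r(S)\bigr|<\delta$ for all $S\subseteq E$. Applying this to singletons and using $r(e)=1$ records that $|\dim W_e-c|<c\delta$ for every $e\in E$. The core construction is to perturb each $W_e$ into a subspace $W_e^{*}$ of dimension exactly $c$: for $e$ with $\dim W_e>c$ replace $W_e$ by an arbitrary $c$-dimensional subspace $W_e^{*}\subseteq W_e$; for $e$ with $\dim W_e<c$ set $W_e^{*}=W_e\oplus U_e$, where the $U_e$ are pairwise disjoint ``fresh'' spaces of dimension $c-\dim W_e$ appended to an enlarged ambient space $V'=V\oplus\bigoplus_e U_e$; and leave $W_e$ unchanged otherwise. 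This yields a linear polymatroid $\tilde r^{*}(S)=\dim(\sum_{e\in S}W_e^{*})$ on $E$ with $\tilde r^{*}(e)=c$ for every $e$.

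Finally I would bound $|\tilde r^{*}(S)-\tilde r(S)|$ by performing the substitutions, one element $e$ at a time, and tracking how $\dim(\sum_{f\in S}(\cdot))$ changes. An ``enlarge'' step changes every affected joint dimension by exactly $\dim U_e=c-\dim W_e<c\delta$, since $U_e$ is disjoint from the span $X$ of the remaining spaces. A ``shrink'' step decreases every affected joint dimension by at most $\dim W_e-c<c\delta$: if $X$ denotes the span of the remaining spaces, then $X+W_e=(X+W_e^{*})+W_e$ and $W_e^{*}\subseteq W_e\cap(X+W_e^{*})$, so $\dim(X+W_e)-\dim(X+W_e^{*})\le\dim W_e-\dim W_e^{*}=\dim W_e-c$, and of course $X+W_e^{*}\subseteq X+W_e$. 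Summing over the at most $|E|$ modifications gives $|\tilde r^{*}(S)-\tilde r(S)|<|E|c\delta$ for all $S\subseteq E$, hence $\bigl|\tfrac1c\tilde r^{*}(S)-r(S)\bigr|<|E|\delta+\delta=\varepsilon$, so $(E,\tilde r^{*})$ together with $c$ witnesses the strengthened condition.

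The main obstacle is the per-element dimension estimate in the last paragraph -- in particular verifying that a ``shrink'' step cannot collapse the joint spans by more than the number of coordinates removed -- but this is a routine submodularity computation, and the rest is bookkeeping; the observation that $M$ must be loopless is the only genuinely subtle point in setting up the statement.
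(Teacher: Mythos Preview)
Your proof is correct and follows essentially the same route as the paper: set $\varepsilon'=\varepsilon/(|E|+1)$, adjust each $W_e$ to have dimension exactly $c$, bound the change in each joint span by $\sum_{e\in S}|c-\dim W_e|\le |E|c\varepsilon'$, and finish with the triangle inequality. The only differences are cosmetic --- you enlarge small $W_e$'s by attaching fresh direct summands whereas the paper extends within an enlarged ambient $V$ --- and you explicitly flag the looplessness assumption, which the paper uses silently (via $|c-d_e|=c|r(e)-\tfrac1c\tilde r(e)|$) and which is harmless since the lemma is only applied to simple matroids.
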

\begin{proof}
	One direction is trivial: if for every $\varepsilon>0$ there exists
	a polymatroid as in the statement then $M$ is almost multilinear.
	
	Conversely, suppose $M$ is almost multilinear and let $\varepsilon>0$.
	Denote $\varepsilon^{\prime}=\frac{1}{\left|E\right|+1}\varepsilon$.
	Take a linear polymatroid $\left(\widetilde{E},\widetilde{r}\right)$ and
	a $c\in\mathbb{N}$
	\[
	\lim_{n\rightarrow\infty}\left\Vert r-\frac{1}{c}\widetilde{r}\right\Vert _{\infty}<\varepsilon^{\prime}.
	\]
	Let $V$ be a vector space and let $\left\{ W_{e}\right\} _{e\in E}$
	be subspaces representing $\left(\widetilde{E},\widetilde{r}\right)$. Assume
	$\dim V\ge c$ (by enlarging $V$ if necessary). For each $e\in E$
	denote $d_{e}=\dim W_{e}$, and take a basis $b_{1}^{e},\ldots,b_{d_{e}}^{e}$
	for $W_{e}$. If $c>d_{e}$ add vectors to the basis such that $b_{1}^{e},\ldots,b_{d_{e}}^{e},\ldots,b_{c}^{e}$
	are linearly independent; if $c<d_{e}$ remove the last vectors from
	the list. Then define 
	\[
	W_{e}^{\prime}=\mathrm{span}\left\{ b_{1}^{e},\ldots,b_{c}^{e}\right\}.
	\]
	Consider the subspaces $\left\{ W_{e}^{\prime}\right\} _{e\in E}$.
	For any $S\subseteq E$ we have 
	\[
	\left|\dim\left(\sum_{e\in S}W_{e}^{\prime}\right)-\dim\left(\sum_{e\in S}W_{e}\right)\right|\le\sum_{e\in S}\left|c-d_{e}\right|\le\sum_{e\in E}\left|c-d_{e}\right|
	\]
	where $\left|c-d_{e}\right|=c\left|1-\frac{1}{c}\widetilde{r}(e)\right|\le c\left\Vert r-\frac{1}{c}\widetilde{r}\right\Vert _{\infty}$.
	
	In particular, if $r^{\prime}$ is the rank function of the polymatroid
	represented by $\left\{ W_{e}^{\prime}\right\} _{e\in E}$ then 
	\[
	\left\Vert r^{\prime}-\widetilde{r}\right\Vert _{\infty}\le\left|E\right|c\left\Vert r-\frac{1}{c}\widetilde{r}\right\Vert _{\infty},
	\]
	and therefore
	\begin{align*}
		\left\Vert r-\frac{1}{c}r^{\prime}\right\Vert _{\infty}&\le\left\Vert r-\frac{1}{c}\widetilde{r}\right\Vert _{\infty}+\left\Vert \frac{1}{c}r^{\prime}-\frac{1}{c}\widetilde{r}\right\Vert _{\infty}=\left\Vert r-\frac{1}{c}\widetilde{r}\right\Vert _{\infty}+\frac{1}{c}\left\Vert r^{\prime}-\widetilde{r}\right\Vert _{\infty}\\
		&\le\left\Vert r-\frac{1}{c}\widetilde{r}\right\Vert _{\infty}\left(\left|E\right|+1\right)<\varepsilon.\qedhere
	\end{align*}
\end{proof}
\begin{proof}
	[Proof of \Cref{thm:multilinear_and_vector_space_reps}]Let $V$ be
	a finite dimensional vector space and let $\left\{ W_{e}\right\} _{e\in E}$
	be a finite indexed collection of subspaces. For each $W\le V$ denote
	by $W^{0}\le V^{*}$ the annihilator of $W$ in the dual space, and
	recall $\dim W^{0}=\dim V-\dim W$. Define $T_{e}:V^{*}\rightarrow V^{*}/W_{e}^{0}$
	to be the quotient map. The indexed collection of maps $\left\{ T_{e}:V^{*}\rightarrow V^{*}/W_{e}^{0}\right\} _{e\in E}$
	satisfies
	\[
	\ker T_{S}=\bigcap_{e\in S}W_{e}^{0}=\left(\sum_{e\in S}W_{e}\right)^{0}
	\]
	for any $S\subseteq E$, where $T_{S}$ is the map
	\[
	T_{S}:V^{*}\rightarrow\bigoplus_{e\in S}V^{*}/W_{e}^{0}.
	\]
	Thus 
	\[
	\rk(T_{S})=\dim V^{*}-\dim\ker T_{S}=\dim V^{*}-\left(\dim V-\dim\left(\sum_{e\in S}W_{e}\right)\right)
	\]
	\[
	=\dim\left(\sum_{e\in S}W_{e}\right),
	\]
	and $\dim V^{*}/W_{e}^{0}=\dim W_{e}$ for all $e\in E$. 
	
	It follows that the subspaces $\left\{ W_{e}\right\} _{e\in E}$ define
	a multilinear representation of a matroid $\left(E,r\right)$ if and
	only if the maps $\left\{ T_{e}\right\} _{e\in E}$ define a vector
	space representation. Similarly, by \cref{lem:almost_multilinear_rounding}
	and \cref{lem:almost_multilinear_spaces_and_maps} the matroid $M=\left(E,r\right)$
	is almost multilinear if and only if for every $\varepsilon>0$ it
	has an $\varepsilon$-approximate vector space representation.
\end{proof}
\subsection{Almost multilinear Dowling geometries}
\label{sec:approx_dowling}

The next two theorems provide sufficient conditions for a partial Dowling geometry to be almost multilinear.
Moreover, we discuss  a group-theoretic consequence of a Dowling geometry being almost multilinear.

\begin{theorem}\label{thm:epsilon_reps}
	Let $G=\left\langle S\mid R\right\rangle $
	be a group with a given symmetric triangular presentation and fix $\varepsilon \ge 0$.
	Let $\rho:S\rightarrow\mathrm{GL}(W)$
	be an $\varepsilon/18$-approximate representation of $\langle S \mid R \rangle$, where $W$ is a finite dimensional vector space over a field $\F$.
	Suppose that
	\begin{enumerate}
		\item $d_{\rk}(\rho(s),\rho(s'))\ge 1-\varepsilon/18$ for all distinct $s,s'\in S$,
		\item For all triples $s,s',s'' \in S$ (not necessarily distinct) either
		\begin{align*}
		&d_{\rk}(\rho(s'')\rho(s')\rho(s),\mathrm{id}_W)\le \varepsilon/18 \text{ or}\\
		&d_{\rk}(\rho(s'')\rho(s')\rho(s),\mathrm{id}_W)\ge 1 - \varepsilon/18.\end{align*}
		\item If $s,s',s'' \in S$ (not necessarily distinct) satisfy $d_{\rk}(\rho(s'')\rho(s')\rho(s),\mathrm{id}_W)\le \varepsilon/18$ then $s'' s' s = e$ is a relation in $R$.
	\end{enumerate}
	Then the partial Dowling geometry of
	 the presentation $\left\langle S\mid R\right\rangle $ has an $\varepsilon$-approximate vector space representation.
	
	Moreover, if the approximate representation $\rho$ just satisfies the assumptions (a) and (b) then some matroid among the partial Dowling geometries $\mathcal{M}_{S,R}$ subordinate to $\langle S \mid R \rangle$ has an $\varepsilon$-approximate vector space representation.
\end{theorem}
\begin{proof}
	The second statement (``Moreover, ...'') follows from the first after adding the relations $s''s's=e$ to $R$ whenever $\rho(s'' s' s)=e$ holds.
	Note that by the definition of the subordinate partial Dowling geometries, the matroid of this new presentation is a member of $\mathcal{M}_{S,R}$.
	
	As in \Cref{def:GDG}, we denote the partial Dowling geometry
	of $\left\langle S\mid R\right\rangle $ by $M$, the ground set by
	$E$, and the special basis by $B=\left\{ b_{1},b_{2},b_{3}\right\} $.
	We construct an $\varepsilon$-approximate vector space representation of $M$.
	
	Set $c=\dim W$ and for each $e\in E$ set $W_{e}=W$. Let $V=W_{b_{1}}\oplus W_{b_{2}}\oplus W_{b_{3}}$,
	and let $T_{b_{i}}:V\rightarrow W_{b_{i}}$ be given by the projection.
	Let $i,j\in\{1,2,3\}$ be two distinct indices, and suppose $j$ is
	the element following $i$ in the cyclic ordering. Let $s\in S$ be
	any element. Define
	\begin{align*}
	T_{s_{i}}:V=W_{b_{1}}\oplus W_{b_{2}}\oplus W_{b_{3}}\rightarrow W_{s_{i}} \\
	T_{s_{i}}\left(v_{1},v_{2},v_{3}\right)=v_{j}-\rho(s)\left(v_{i}\right),
	\end{align*}
	or in other words $T_{s_{i}}=T_{b_{j}}-\rho(s)T_{b_{i}}$. 
	
	(One can come up with this guess for the maps by starting with the
	following determination map for $v_{j}$ given $v_{s_{i}}=T_{s_{i}}(v_{1},v_{2},v_{3})$
	and $v_{i}$: $S(v_{i},v_{s_{i}})=\rho(s_{i})v_{i}+v_{s_{i}}$. Such
	determination maps ``compose correctly'' in the sense of \Cref{thm:Dowling_prob_rep}, condition (b).
	Another way is to inspect the matrix representations of Dowling geometries.)
	In order to prove the required $\varepsilon$-independence and $\varepsilon$-determination conditions we first establish the following claims.
	\begin{description}
		\item[Claim 1] $d_{\rk}(\rho(s)^{-1},\rho(s^{-1}))\le \varepsilon/9$.
		\item[Claim 2] Fix $\varepsilon'\ge 0$. Let $S\subseteq E$ with $|S|=3$.
			If $T_{b_i}$ is determined by $\{T_e\}_{e\in S}$ with error $\varepsilon'/3$ for all $1\le i\le 3$ then  $\{T_e\}_{e\in S}$ is independent with error $\varepsilon'$.
		\item[Claim 3] Let $S\subseteq E$ with $|S|=3$. If $\{T_e\}_{e\in S}$ is independent with error $\varepsilon'/3$ then $T_{b_i}$ is determined by $\{T_e\}_{e\in S}$ with error $\varepsilon'$ for all $1\le i\le 3$.
	\end{description}
	\begin{proof}[Proof of Claim 1]
		Applying assumption (b) to the relation $s^{-1}se=e$, we obtain 
		\[d_{\rk}(\rho(s^{-1})\rho(s)\rho(e),\mathrm{id}_W)\le \varepsilon/18.\]
		Since $d_{\rk}(\rho(e),\mathrm{id}_W)\le\varepsilon/18$, we have
		\begin{align*}
		& d_{\rk}(\rho(s^{-1})\rho(s),\mathrm{id}_W) \\ 
		\le\,& d_{\rk}(\rho(s^{-1})\rho(s)\circ\mathrm{id}_W, \rho(s^{-1})\rho(s)\rho(e)) + d_{\rk}(\rho(s^{-1})\rho(s)\rho(e),\mathrm{id}_W) \le\varepsilon/9. 
		\end{align*}
	by \cref{rem:normalized_rank_metric} and the triangle inequality.
	\end{proof}
	\begin{proof}[Proof of Claims 2 and 3]
		Given a basis $S\subseteq E$ of $M$ (so that in particular $\left|S\right|=3)$
		consider the map 
		\[T_S:V = W_{b_1} \oplus W_{b_2} \oplus W_{b_3} \to W_S=\bigoplus_{e\in S} W_e.\]
		
		Suppose each $T_{b_i}$ is $(\varepsilon'/3)$-determined by $\{T_e\}_{e\in S}$ . Then there exist maps $\widetilde{T}_1$, $\widetilde{T}_2$, and $\widetilde{T}_3$ such that
		\[\rk(T_{b_i} - \widetilde{T}_i \circ T_S) \le c\varepsilon'/3\]
		for each $1\le i\le 3$. Define
		\begin{align*}
			\widetilde{T}:W_S &\to V=W_{b_1}\oplus W_{b_2} \oplus W_{b_3} \\
			w=(w_e)_{e\in S} & \mapsto (\widetilde{T}_1(w),\widetilde{T}_2(w),\widetilde{T}_3(w))
		\end{align*}
		and observe that $T_B$ differs from $\widetilde{T}\circ T_S$ on a subspace of dimension at most $3\cdot c\varepsilon'/3=c\varepsilon'$. In particular $T_S$ has rank at least $\rk(T_B)-c\varepsilon'$, and thus $\{T_e\}_{e\in S}$ are $\varepsilon'$-independent.
		
		Suppose $\{T_e\}_{e\in S}$ are independent with error $\varepsilon'/3$. Then by definition $T_S$ has rank at least $c(1-\varepsilon'/3)$. Thus by \cref{cor:approximate_rk_inverse} there exists a map $\widetilde{T}:W_S \to V$ such that 
		\[\rk(\mathrm{id}_V - \widetilde{T}\circ T_S) \le 3c(\varepsilon'/3)=c\varepsilon'.\]
		Composing with $T_{b_i}$ for $1\le i\le 3$, we find
		\[\rk(T_{b_i} - (T_{b_i}\circ\widetilde{T})\circ T_S) \le c\varepsilon',\]
		so that each $T_{b_i}$ is determined by $\{T_e\}_{e\in S}$ with error $\varepsilon'$. \qedhere
	\end{proof}
	
	We now verify that the correct independence and determination conditions
	hold with error at most $\varepsilon$ for the maps $\left\{ T_{e}:V\rightarrow W_{e}\right\} _{e\in E}$.
	
	For the independence conditions there are several cases. It suffices
	to check the condition for bases of $M$ (recall these are all of size $3=\rk(M)$).
	In each case we will show that $\{T_e\}_{e\in S}$ $\varepsilon/3$-determines $T_{b_i}$ for all $1\le i\le3$, which suffices by Claim~2.
	\begin{enumerate}
		\item For $\left\{ b_{1},b_{2},b_{3}\right\} $ the statement is clear:
		$T_{b_{i}}$ ($i=1,2,3$) are distinct projections onto summands of
		$V=W_{b_{1}}\oplus W_{b_{2}}\oplus W_{b_{3}}$.
		\item\label{it:ind_b} For subsets of the form $\left\{ b_{1},b_{2},s_{2}\right\} $, we
		have
		\[
		T_{s_{2}}+\rho(s)T_{b_{2}}=T_{b_{3}},
		\]
		so that $T_{b_{3}}$ is determined (with error $0$) by $\left\{ T_{b_{1}},T_{b_{2}},T_{s_{2}}\right\} $,
		and we reduce to the previous case. The same holds for subsets of
		the form $\left\{ b_{1},b_{2},s_{3}\right\} $, or similar subsets
		with cyclic shifts of the indices.
		\item Subsets of the form $\left\{ s_{1},s_{2}',b_{1}\right\} $
		(up to shifts of the indices, with $s=s'$ allowed) are similar:
		we first observe that $T_{b_{2}}$ is determined (with error $0$) by $\left\{ T_{b_{1}},T_{s_{1}}\right\} $
		and then reduce to~\eqref{it:ind_b}.
		The same idea works for subsets of the form
		$\left\{ s_{1},s_{2}',b_{2}\right\} $.
		\item\label{it:ind_d} For subsets of the form $\left\{ s_{1},s_{1}'\right\} $ we
		note that
		\[
		\rho(s)T_{b_{1}}+T_{s_{1}}=\rho(s')T_{b_{1}}+T_{s_{1}'}=T_{b_{2}}
		\]
		and therefore $\left(\rho(s)-\rho(s')\right)T_{b_{1}}=T_{s_{1}'}-T_{s_{1}}$.
		By assumption we know that $\rk(\rho(s)-\rho(s'))\ge c(1-\frac{\varepsilon}{18})$.
		Thus by~\Cref{cor:approximate_rk_inverse}, there is a $\widetilde{T}\in \mathrm{GL}(W)$ such that 
		\[\rk(\mathrm{id}_W-\widetilde{T}\circ(\rho(s)-\rho(s')))\le c\varepsilon/18.\]
		Precomposing with $T_{b_1}$ and using the identity $\left(\rho(s)-\rho(s')\right)T_{b_{1}}=T_{s_{1}'}-T_{s_{1}}$, we find
		\[\rk(T_{b_1} - \widetilde{T}\circ(T_{s'_1} - T_{s_1})) \le c\varepsilon/18.\]
		Thus $T_{b_{1}}$ is determined by $\{T_{s'_1},T_{s_1}\}$
		with error~$\varepsilon/18$.
		
		Using $\rho(s)T_{b_{1}}+T_{s_{1}}=T_{b_{2}}$ and composing the maps in the previous rank inequality with $\rho(s)$, we find
		\begin{align*}
		 &\rk(T_{b_2} - [\rho(s)\widetilde{T}\circ(T_{s'_1} - T_{s_1}) - T_{s_1}]) \\
		 =&\rk([\rho(s)T_{b_1} + T_{s_1}] - [\rho(s)\widetilde{T}\circ(T_{s'_1} - T_{s_1}) - T_{s_1}]) \\
		 =&\rk(\rho(s)\circ T_{b_1} - \rho(s)\widetilde{T}\circ(T_{s'_1} - T_{s_1})) \le c\varepsilon/18.
		\end{align*}
		Observe that $\rho(s)\widetilde{T}\circ(T_{s'_1} - T_{s_1}) - T_{s_1}$ is the composition of a map 
		\[W_{s'_1}\oplus W_{s_1} \to W=W_{b_2}\]
		on $T_{\{s'_1,s_1\}}$. Therefore $T_{b_{2}}$ is determined with error $\varepsilon/18$ by $T_{\{s'_1,s_{1}\}}$.
		
		By Claims 2 and 3, this computation yields the independence condition for subsets of the form $S=\left\{ s_{1},s_{1}',b_{3}\right\} $: by our computation, the maps $\{T_e\}_{e\in S}$ determine $T_{b_1}$ and $T_{b_2}$ with error $\varepsilon/18$ each, so that $T_B$ is determined with error at most $\varepsilon/9$ and the maps are $\varepsilon/3$-independent.
		
		It also yields the independence condition for subsets of the form $S=\left\{ s_{1},s_{1}',s_{2}''\right\} $ which are independent in $M$
		(up to shifts of the indices, with $s,s',s''$
		not necessarily distinct): the maps $\{T_e\}_{e\in S}$ determine each $T_{b_i}$ with error $\varepsilon/18$.
		\item Finally, for subsets of the form $\left\{ s_{1},s_{2}',s_{3}''\right\} $
		with $s''s's\neq e$, we have
		\begin{align*}
		&T_{s_{3}''}+\rho(s'')T_{s_{2}'}+\rho(s'')\rho(s')T_{s_{1}}\\
		=&\left[T_{b_{1}}-\rho(s'')T_{b_{3}}\right]+\rho(s'')\left[T_{b_{3}}-\rho(s')T_{b_{2}}\right]+\rho(s'')\rho(s')\left[T_{b_{2}}-\rho(s)T_{b_{1}}\right]\\
		=&T_{b_{1}}-\rho(s'')\rho(s')\rho(s)T_{b_{1}}=\left[\mathrm{id}_{W}-\rho(s'')\rho(s')\rho(s)\right]T_{b_{1}}.
		\end{align*}
		By assumption $\rk(\mathrm{id}_W - \rho(s''^{-1})\rho(s')\rho(s))\ge c(1-\varepsilon/18)$.
		By \cref{cor:approximate_rk_inverse} there exists a $\widetilde{T}\in \mathrm{GL}(W)$ such that
		\[\rk(\mathrm{id}(W) - \widetilde{T}\circ \rho(s'')\rho(s')\rho(s))\le c\varepsilon/18.\]
		As in case \eqref{it:ind_d}, this implies that $T_{b_1}$ is determined by $\{T_e\}_{e\in S}$ with error $\varepsilon/18$ for each $1\le i\le 3$.
		By permuting the indices $\left(1,2,3\right)$ and generators $\left(s,s',s''\right)$
		cyclically, we find similar expressions for $T_{b_{2}}$ and $T_{b_{3}}$.
		This shows each $T_{b_{i}}$is determined by $\left(T_{s_{1}},T_{s_{2}'},T_{s_{3}''}\right)$ with error $\varepsilon/18$, which by Claim 2 
		implies the claimed independence.
	\end{enumerate}
	We now consider the circuits and show that the determination conditions
	are satisfied.
	\begin{enumerate}
		\item If $C$ is a circuit of size $4$, let $x\in C$. The subset $C\setminus\left\{ x\right\} $
		is a basis of $M$ (since this subset is independent and $M$ has
		rank $3$), so $\left\{ T_{e}\right\} _{e\in C\setminus\{ x\} }$
		determine $T_{b_{1}}$, $T_{b_{2}}$ and $T_{b_{3}}$ with error $\varepsilon/3$ by the above arguments.
		
		By construction we can express $T_x$ by $T_x=\sum_{i=1}^3A_iT_{b_i}$ for some maps $A_i\in \mathrm{GL}(W)$.
		The above argument also implies that $\left\{ T_{e}\right\} _{e\in C\setminus\{ x\} }$ determines $A_iT_{b_i}$ with error $\varepsilon/3$ for all $1\le i\le 3$.
		Therefore $\left\{ T_{e}\right\} _{e\in C\setminus\{ x\} }$ determines $T_x$ with error $\varepsilon$.
		\item If $C$ consists of $3$ elements of the flat spanned by $\left\{ b_{1},b_{2}\right\} $
		then any subset consisting of two elements is of the form $S=\{b_1,b_2\}$, $S=\left\{ b_{i},s_{1}\right\} $
		($i\in\{1,2\}$), or $S=\left\{ s_{1},s_{1}'\right\} $
		(where $s\neq s'$ in $S$). In the first case it is clear that $\{T_e\}_{e\in S}$ determines $T_x$ (with error $0$) for $x$ the unique element of $C\setminus S$.
		
		For the latter two cases, note that in the cases~\eqref{it:ind_b} and~\eqref{it:ind_d} of the independence conditions
		it is shown that $\left\{ T_{e}\right\} _{e\in S}$
		determines $T_{b_{1}}$ and $T_{b_{2}}$ in either case with error $\varepsilon/18$.
		Therefore, $\left\{ T_{e}\right\} _{e\in S}$ determines $T_{e'}$ with error $\varepsilon/18$ for
		any $e'$ in the flat spanned by $\left\{ b_{1},b_{2}\right\} $ by an analogous argument as in the previous case, and
		in particular for $x$ the unique element of $C\setminus S$.
		\item Suppose $C=\left\{ s_{1},s_{2}',s_{3}''\right\} $
		where $s''s's=e$, or equivalently $s''=\left(s's\right)^{-1}=s^{-1}s^{'-1}$.
		We show that $T_{s_3''}$ is determined by $\left\{ T_{s_{1}},T_{s_{2}'}\right\}$ with error $\varepsilon$.
		To this end, we compute
		\begin{align*}
			&-\rho(s)^{-1}T_{s_1}-\rho(s)^{-1}\rho(s')^{-1}T_{s_2'}\\
			=&-\rho(s)^{-1}\left[T_{b_{2}}-\rho\left(s\right)T_{b_{1}}\right]-\rho\left(s\right)^{-1}\rho\left(s'\right)^{-1}\left[T_{b_{3}}-\rho\left(s'\right)T_{b_{2}}\right]\\
			=& T_{b_{1}} - \rho\left(s\right)^{-1}\rho\left(s'\right)^{-1}T_{b_{3}}
		\end{align*}
	By assumption we have $\rk(\rho(s'')\rho(s')\rho(s)-\mathrm{id}_W)\le \varepsilon/18$. Composing the transformation with $\rho(s)^{-1}\rho(s')^{-1}$ from the right, we obtain
	\[\rk(\rho(s'')-\rho(s)^{-1}\rho(s)^{-1})\le \varepsilon/18.\]
	Therefore
	\[\rk(T_{s_3''} - [T_{b_{1}} - \rho\left(s\right)^{-1}\rho\left(s'\right)^{-1}T_{b_{3}}]) \le \varepsilon/18,\]
	which implies
	\[\rk([-\rho(s)^{-1}T_{s_1}-\rho(s)^{-1}\rho(s')^{-1}T_{s_2'}] - T_{s_3''})\le \varepsilon/18,\]
	so the map $T_{s_3''}$ is determined by $\{T_{s_1},T_{s_2'}\}$ as required. \qedhere
	\end{enumerate}
\end{proof}

\begin{theorem}\label[theorem]{thm:almost_multilinear_to_group}
	Suppose the partial Dowling geometry $M=\left(E,r\right)$ associated
	to a finitely presented group $G=\left\langle S\mid R\right\rangle $
	is almost multilinear. Then $s\neq s^{\prime}$ in $G$ for all distinct
	$s,s^{\prime}\in S$.
\end{theorem}

The next lemma is helpful in part of the computation.
\begin{lemm}\label[lemma]{lem:invertible_determination}
	Let $M=\left(E,r\right)$ be a matroid and let $E^{\prime}=\left\{ e_{1},e_{2},e_{3}\right\} \subseteq E$
	be a subset such that $r\left(\left\{ e_{1},e_{2},e_{3}\right\} \right)=2$
	and $r\left(\left\{ e_{i},e_{j}\right\} \right)=2$ for all distinct
	$1\le i,j\le3$. Let an $\varepsilon$-approximate multilinear representation
	of $M$ be given by the vector space $V$ and the maps $\left\{ T_{e}:V\rightarrow W_{e}\right\} _{e\in E}$,
	and denote by $c$ the dimension of each of the vector spaces $\left\{ W_{e}\right\} _{e\in E}$
	(recall this dimension is constant by assumption). Then there are
	$6\varepsilon$-determination functions $f:W_{e_{1}}\oplus W_{e_{2}}\rightarrow W_{e_{3}}$
	and $g:W_{e_{1}}\oplus W_{e_{3}}\rightarrow W_{e_{2}}$ such that
	the following holds. Pick bases for $W_{e_{i}}$ ($1\le i\le3$) and
	identify the spaces with $\mathbb{F}^{c}$. Then there are matrices
	$A_{1},A_{2}\in M_{c}\left(\mathbb{F}\right)$ such that $A_{2}$
	is invertible, and $f,g$ satisfy 
	\[
	f\left(v_{1},v_{2}\right)=A_{1}v_{1}+A_{2}v_{2},\quad g\left(v_{1},v_{3}\right)=-A_{2}^{-1}A_{1}v_{1}+A_{2}^{-1}v_{3}.
	\]
\end{lemm}
\begin{proof}
	Take an $\varepsilon$-determination map $\widetilde{f}:W_{e_{1}}\oplus W_{e_{2}}\rightarrow W_{e_{3}}$.
	Since $\widetilde{f}$ is linear, it is of the form $\widetilde{f}\left(v_{1},v_{2}\right)=A_{1}v_{1}+\widetilde{A}_{2}v_{2}$
	for some $A_{1},\widetilde{A}_{2}\in M_{c}\left(\mathbb{F}\right)$. Define
	\[
	\widetilde{T}_{e_{3}}:V\rightarrow W_{e_{3}}
	\]
	\[
	\widetilde{T}_{e_{3}}\left(v\right)=\widetilde{f}\left(T_{e_{1}}(v),T_{e_{2}}(v)\right).
	\]
	Further define 
	\[
	\widetilde{T}_{(e_{1},e_{3})}:V\rightarrow W_{e_{1}}\oplus W_{e_{3}}
	\]
	
	\[
	\widetilde{T}_{(e_{1},e_{3})}(v)=\left(T_{e_{1}}(v),\widetilde{T}_{e_{3}}(v)\right)
	\]
	by analogy with $T_{(e_{1},e_{3})}$. 
	
	By definition $d_{\mathrm{rk}}\left(T_{e_{3}},\widetilde{T}_{e_{3}}\right)\le\varepsilon$,
	and hence also $d_{\mathrm{rk}}\left(T_{(e_{1},e_{3})},\widetilde{T}_{(e_{1},e_{3})}\right)\le\varepsilon$.
	In particular, $\mathrm{rk}\left(\widetilde{T}_{(e_{1},e_{3})}\right)\ge\mathrm{rk}\left(T_{(e_{1},e_{3})}\right)-c\varepsilon\ge2c-2c\varepsilon$.
	Observe that 
	\[
	\widetilde{T}_{(e_{1},e_{3})}(v)=\left(T_{e_{1}}(v),\widetilde{T}_{e_{3}}(v)\right)=\left(T_{e_{1}}(v),\widetilde{f}\left(T_{e_{1}}(v),T_{e_{2}}(v)\right)\right).
	\]
	Hence, for 
	\[
	F:W_{e_{1}}\oplus W_{e_{2}}\rightarrow W_{e_{1}}\oplus W_{e_{3}}
	\]
	\[
	F\left(v_{1},v_{2}\right)=\left(v_{1},f\left(v_{1},v_{2}\right)\right)=\left(v_{1},A_{1}v_{1}+\widetilde{A}_{2}v_{2}\right),
	\]
	we have $\widetilde{T}_{(e_{1},e_{3})}=F\circ T_{(e_{1},e_{2})}$. In
	particular, $F$ has rank at least $2c-2c\varepsilon$. Identifying
	$W_{e_{1}}\oplus W_{e_{2}}$ and $W_{e_{1}}\oplus W_{e_{3}}$ with
	$\mathbb{F}^{2c}$ via the chosen bases, we represent $F$ by the
	block matrix
	\[
	\left[\begin{matrix}I & 0\\
		A_{1} & \widetilde{A}_{2}
	\end{matrix}\right]
	\]
	(note that indeed $\left[\begin{smallmatrix}I & 0\\
		A_{1} & \widetilde{A}_{2}
	\end{smallmatrix}\right]\left[\begin{smallmatrix}v_{1}\\
		v_{2}
	\end{smallmatrix}\right]=\left[\begin{smallmatrix}v_{1}\\
		A_{1}v_{1}+\widetilde{A}_{2}v_{2}
	\end{smallmatrix}\right]$). Since such a block matrix has rank $c+\mathrm{rk}\left(\widetilde{A}_{2}\right)$,
	we obtain $\mathrm{rk}\left(\widetilde{A}_{2}\right)\ge c-2c\varepsilon$.
	By \cref{lem:approximate_rk_inverse} there is an invertible matrix $A_{2}$ such that
	$d_{\mathrm{rk}}\left(A_{2},\widetilde{A}_{2}\right)\le 2\varepsilon$.
	Define
	\[
	f:W_{e_{1}}\oplus W_{e_{2}}\rightarrow W_{e_{3}}
	\]
	\[
	f\left(v_{1},v_{2}\right)=A_{1}v_{1}+A_{2}v_{2}
	\]
	as well as
	\[
	g:W_{e_{1}}\oplus W_{e_{3}}\rightarrow W_{e_{2}}
	\]
	\[
	g\left(v_{1},v_{3}\right)=-A_{2}^{-1}A_{1}v_{1}+A_{2}^{-1}v_{3}.
	\]
	It remains to show that these are $6\varepsilon$-determination functions.
	First observe that $\left(\widetilde{f}-f\right)\left(v_{1},v_{2}\right)=\left(A_{2}-\widetilde{A}_{2}\right)v_{2}$,
	and therefore 
	\[
	d_{\mathrm{rk}}\left(\widetilde{f},f\right)=d_{\mathrm{rk}}\left(\widetilde{A}_{2},A_{2}\right)=\frac{1}{c}\mathrm{rk}\left(\widetilde{A}_{2}-A_{2}\right)\le2\varepsilon.
	\]
	Hence also 
	\[
	d_{\mathrm{rk}}\left(f\circ T_{(e_{1},e_{2})},T_{e_{3}}\right)\le d_{\mathrm{rk}}\left(f\circ T_{(e_{1},e_{2})},\widetilde{f}\circ T_{(e_{1},e_{2})}\right)+d_{\mathrm{rk}}\left(\widetilde{f}\circ T_{(e_{1},e_{2})},T_{e_{3}}\right) \le3\varepsilon,
	\]
	so $f$ is a $3\varepsilon$-determination function. For $g$, observe
	that for all $v\in V$, denoting $v_{1}=T_{e_{1}}(v)$ and $v_{2}=T_{e_{2}}(v)$
	we have $g\left(v_{1},f\left(v_{1},v_{2}\right)\right)=v_{2}$ by
	construction. So it suffices to bound
	\[
	d_{\mathrm{rk}}\left(g\circ T_{(e_{1},e_{3})},\quad\left[v\mapsto g\left(T_{e_{1}}(v),f\left(T_{e_{1}}(v),T_{e_{2}}(v)\right)\right)\right]\right),
	\]
	since the map on the right equals $T_{e_{2}}$. By \cref{rem:normalized_rank_metric}
	(using the fact that both maps are constructed by composing a map
	with $g$) this is at most
	\[
	2d_{\mathrm{rk}}\left(T_{(e_{1},e_{3})},\quad\left[v\mapsto\left(T_{e_{1}}(v),f\circ T_{(e_{1},e_{2})}(v)\right)\right]\right)
	\]
	\[
	\le2d_{\mathrm{rk}}\left(T_{e_{3}},f\circ T_{(e_{1},e_{2})}\right)\le6\varepsilon.\qedhere
	\]
\end{proof}

\begin{proof}[Proof of \cref{thm:almost_multilinear_to_group}]
	We start by constructing approximate representations
	of the Dow\-ling groupoid $\mathcal{G}$ associated to $G$. Recall
	that this is a finitely presented category with objects $\left\{ b_{1},b_{2},b_{3}\right\} $
	and with generating morphisms
	\[
	\left\{ g_{s,i,j}:b_{i}\rightarrow b_{j}\mid s\in S,\text{ and }i,j\in\{1,2,3\}\text{ with }i\neq j\right\} .
	\]
	These objects and generating morphisms define a directed graph $H$. Using the notation of \cref{sec:fp_cats,sec:approx_groupoid_reps}, we denote by $\mathcal{C}(H)$ the free category on $H$, so that $\mathcal{G} = \mathcal{C}(H)/\mathord{\sim}$, with $\sim$ the congruence generated by the relations described in \cref{def:Dowling_groupoid}.
	
	Let an $\varepsilon$-approximate vector
	space representation of $M$ be given by the vector space $V$
	and the linear maps $\left\{ T_e:V\rightarrow W_e\right\} _{e\in E}$.
	Suppose the spaces $\left\{ W_e\right\} _{e\in E}$ all have dimension
	$c$, and define $\mathcal{D}$ to be the category
	of vector spaces over the underlying field of
	$V$.
	
	We define a graph homomorphism $f:H\rightarrow\mathrm{Graph}\left(\mathcal{D}\right)$
	as follows. On objects define $f$ by $f\left(b_{i}\right)=W_{b_{i}}$
	for all $1\le i\le3$. To define $f$ on the morphisms, suppose
	$1\le i,j\le3$ and $i$ precedes $j$ in the cyclic ordering. Given
	$s\in S$, choose $6\varepsilon$-approximate determination maps
	\begin{align*}
	\varphi_{s,i,j}:&W_{b_{i}}\oplus W_{s_{i}}\rightarrow W_{b_{j}}\mbox{ and}\\
	\varphi_{s,j,i}:&W_{b_{j}}\oplus W_{s_{i}}\rightarrow W_{b_{i}}
	\end{align*}
	for $T_{b_{j}}$ given $\left\{ T_{b_{i}},T_{s_{i}}\right\} $
	and for $T_{b_{i}}$ given $\left\{ T_{b_{j}},T_{s_{i}}\right\} $,
	respectively. (We take $6\varepsilon$ instead of $\varepsilon$ because we later apply this construction with determination maps produced by \cref{lem:invertible_determination} and obtain additional consequences. For the first part of the proof this choice doesn't matter.) Then define $f\left(g_{s,i,j}\right):W_{b_{i}}\rightarrow W_{b_{j}}$
	by
	\[
	\left(f\left(g_{s,i,j}\right)\right)\left(w\right)=\varphi_{s,i,j}\left(w,0\right)
	\]
	and similarly define
	\[
	\left(f\left(g_{s,j,i}\right)\right)\left(w\right)=\varphi_{s,j,i}\left(w,0\right).
	\]
	
	We now show that for any relation $\varphi_{1}=\varphi_{2}$
	in the presentation of $\mathcal{G}$ we have \[d_\mathrm{rk} \left(f\left(\varphi_{1}\right),f\left(\varphi_{2}\right)\right)\le 20\varepsilon.\]
	
\begin{enumerate}[label=\textbf{Case \arabic*:}, labelwidth=-8mm,  labelindent=3em,leftmargin =!]
		\item For $i$, $j$, and $s$ as above consider the relation $g_{s,j,i}\circ g_{s,i,j}=\mathrm{id}_{b_{i}}$.
		Since $T_{(b_{i},s_{i})}$ is $\varepsilon$-independent, its image intersects
		the subspace $W_{b_{i}}\oplus\left\{ 0\right\} \subset W_{b_{i}}\oplus W_{s_{i}}$
		in a subspace of dimension at least $c\left(1-\varepsilon\right)$. In the
		same way, the image of $T_{(b_{j},s_{i})}$ intersects $W_{b_{j}}\oplus\left\{ 0\right\} $
		in a subspace of dimension at least $c\left(1-\varepsilon \right)$. Define
		\[
		V^{\prime}=T_{s_{i}}^{-1}\left(0\right).
		\]
		The previous considerations imply precisely that $T_{(b_{i},s_{i})}\left(V^{\prime}\right)\simeq T_{b_{i}}\left(V^{\prime}\right)$
		and $T_{(b_{j},s_{i})}\left(V^{\prime}\right)\simeq T_{b_{j}}\left(V^{\prime}\right)$
		have dimension at least $c\left(1-\varepsilon\right)$. It is clear that
		\[
		\mathrm{rk}\left(T_{b_{j}}\restriction_{V^{\prime}}-\varphi_{s,i,j}\circ T_{(b_{i},s_{i})}\restriction_{V^{\prime}}\right)\le\mathrm{rk}\left(T_{b_{j}}-\varphi_{s,i,j}\circ T_{(b_{i},s_{i})}\right)\le 6c\varepsilon
		\]
		(the inequality on the right is from the definition of $\varphi_{s,i,j}$
		as a determination map). In the same way,
		\[
		\mathrm{rk}\left(T_{b_{i}}\restriction_{V^{\prime}}-\varphi_{s,j,i}\circ T_{(b_{j},s_{i})}\restriction_{V^{\prime}}\right)\le 6c\varepsilon.
		\]
		Therefore 
		\[
		V^{\prime\prime}=\left[\ker\left(T_{b_{i}}\restriction_{V^{\prime}}-\varphi_{s,j,i}\circ T_{(b_{j},s_{i})}\restriction_{V^{\prime}}\right)\cap\ker\left(T_{b_{j}}\restriction_{V^{\prime}}-\varphi_{s,i,j}\circ T_{(b_{i},s_{i})}\restriction_{V^{\prime}}\right)\right]
		\]
		is a subspace of $V^{\prime}$ of dimension at least $\dim V^{\prime}-12c\varepsilon$.
		Its image under $T_{b_{i}}$ therefore has codimension at most $12c\varepsilon$
		within the image of $V^{\prime}$, and similarly for $T_{b_{j}}$.
		That is,
		\[
		\dim T_{b_{i}}\left(V^{\prime\prime}\right)\ge c\left(1-13\varepsilon\right)\quad\text{and}\quad\dim T_{b_{j}}\left(V^{\prime\prime}\right)\ge c\left(1-13\varepsilon\right).
		\]
		By definition, if $w\in T_{b_{i}}\left(V^{\prime\prime}\right)$ then
		$w=T_{b_{i}}\left(v\right)$ for some $v\in V^{\prime\prime}$ and
		\[
		\left(f_{n}\left(g_{s,i,j}\right)\right)\left(w\right)=\varphi_{s,i,j}\left(w,0\right)=\varphi_{s,i,j}\left(T_{(b_{i},s_{i})}\left(v\right)\right)=T_{b_{j}}\left(v\right)
		\]
		where the rightmost equality is because $v\in V^{\prime\prime}$ is
		contained in the kernel of $T_{b_{j}}-\varphi_{s,i,j}\circ T_{(b_{i},s_{i})}$.
		In the same way, if $w\in T_{b_{j}}\left(V^{\prime\prime}\right)$
		then $w=T_{b_{j}}\left(v\right)$ for some $v\in V^{\prime\prime}$
		and
		\[
		\left(f_{n}\left(g_{s,j,i}\right)\right)\left(w\right)=T_{b_{i}}\left(v\right).
		\]
		It follows that 
		\[
		f_{n}\left(g_{s,j,i}\right)\circ f_{n}\left(g_{s,i,j}\right)\restriction_{T_{b_{i}}\left(V^{\prime\prime}\right)}=\mathrm{id}_{T_{b_{i}}\left(V^{\prime\prime}\right)},
		\]
		and the normalized rank distance between $f_{n}\left(g_{s,j,i}\right)\circ f_{n}\left(g_{s,i,j}\right)$
		and $\mathrm{id}_{W_{b_{i}}}$ is at most 
		\[
		\frac{1}{c}\left(\dim W_{b_{i}}-\dim T_{b_{i}}\left(V^{\prime\prime}\right)\right)\le 13\varepsilon.
		\]
		\item Suppose $\left(i,j,k\right)$ is an even permutation of $\left(1,2,3\right)$
		(so that $i<j<k<i$ in the cyclic ordering) and let $s,s^{\prime},s^{\prime\prime}\in S$
		such that $s^{\prime\prime}s^{\prime}s=e$ is a relation in $R$.
		We verify that
		\[
		f_{n}\left(g_{s^{\prime\prime},k,i}\right)\circ f_{n}\left(g_{s^{\prime},j,k}\right)\circ f_{n}\left(g_{s,i,j}\right)
		\]
		has small normalized rank distance from $\mathrm{id}_{W_{b_{i}}}$. 
		Define 
		\[
		V^{\prime}=T_{s_{i}}^{-1}\left(0\right)\cap T_{s_{j}^{\prime}}^{-1}\left(0\right)\cap T_{s_{k}^{\prime\prime}}^{-1}\left(0\right)=T_{(s_{i},s_{j}^{\prime},s_{k}^{\prime\prime})}^{-1}\left(0\right).
		\]
		By $\varepsilon$-determination of $T_{s_{k}^{\prime\prime}}$ by $\left\{ T_{s_{i}},T_{s_{j}^{\prime}}\right\} $,
		the map $T_{(s_{i},s_{j}^{\prime},s_{k}^{\prime\prime})}$ has rank
		at most $c\left(2+\varepsilon\right)$. Therefore $V^{\prime}$ is a subspace
		of $V$ with dimension at least 
		\[
		\dim V-\mathrm{rk}\left(T_{(s_{i},s_{j}^{\prime},s_{k}^{\prime\prime})}\right)\ge\dim V-c\left(2+\varepsilon\right).
		\]
		Since $\varphi_{s,i,j}$ is a $6\varepsilon$ determination map we have
		\[
		\mathrm{rk}\left(T_{b_{j}}\restriction_{V^{\prime}}-\varphi_{s,i,j}\circ T_{(b_{i},s_{i})}\restriction_{V^{\prime}}\right)\le\mathrm{rk}\left(T_{b_{j}}-\varphi_{s,i,j}\circ T_{(b_{i},s_{i})}\right)\le 6c\varepsilon,
		\]
		and in the same way also
		\begin{align*}
		\mathrm{rk}\left(T_{b_{k}}\restriction_{V^{\prime}}-\varphi_{s^{\prime},j,k}\circ T_{(b_{j},s_{j}^{\prime})}\restriction_{V^{\prime}}\right)\le &6c\varepsilon\mbox{ and}\\ \mathrm{rk}\left(T_{b_{i}}\restriction_{V^{\prime}}-\varphi_{s^{\prime\prime},k,i}\circ T_{(b_{k},s_{k}^{\prime\prime})}\restriction_{V^{\prime}}\right)\le &6c\varepsilon.
		\end{align*}
		Define
		\[
		\begin{aligned}V^{\prime\prime}= & \ker\left(T_{b_{j}}\restriction_{V^{\prime}}-\varphi_{s,i,j}\circ T_{(b_{i},s_{i})}\restriction_{V^{\prime}}\right)\cap\\
			& \ker\left(T_{b_{k}}\restriction_{V^{\prime}}-\varphi_{s^{\prime},j,k}\circ T_{(b_{j},s_{j}^{\prime})}\restriction_{V^{\prime}}\right)\cap\\
			& \ker\left(T_{b_{i}}\restriction_{V^{\prime}}-\varphi_{s^{\prime\prime},k,i}\circ T_{(b_{k},s_{k}^{\prime\prime})}\restriction_{V^{\prime}}\right).
		\end{aligned}
		\]
		Then $V^{\prime\prime}$ has codimension at most $18c\varepsilon$ within $V^{\prime}$.
		
		We now compute $\dim T_{b_{i}}\left(V^{\prime\prime}\right)$: observe
		that $V^{\prime\prime}\subseteq V^{\prime}\subseteq T_{s_{i}}^{-1}\left(0\right)\cap T_{s_{j}^{\prime}}^{-1}\left(0\right)$.
		Since $\left\{ T_{s_{i}},T_{s_{j}^{\prime}}\right\} $ determine $T_{s_{k}^{\prime\prime}}$
		with error at most $\varepsilon$, the codimension of $V^{\prime}$ within
		$T_{s_{i}}^{-1}\left(0\right)\cap T_{s_{j}^{\prime}}^{-1}\left(0\right)$
		is at most $c\varepsilon$. Thus the codimension of $V^{\prime\prime}$ within
		$T_{s_{i}}^{-1}\left(0\right)\cap T_{s_{j}^{\prime}}^{-1}\left(0\right)$
		is at most $c\left(1+18\right)\varepsilon=19c\varepsilon$.
		Since $\left\{ T_{b_{i}},T_{s_{i}},T_{s_{j}^{\prime}}\right\} $ are
		$\varepsilon$-independent, the image of $T_{(b_{i},s_{i},s_{j}^{\prime})}$
		intersects
		\[
		W_{b_{i}}\oplus\left\{ 0\right\} \oplus\left\{ 0\right\} \subseteq W_{b_{i}}\oplus W_{s_{i}}\oplus W_{s_{j}^{\prime}}
		\]
		in a subspace of codimension at most $c\varepsilon$. This intersection is
		isomorphic to \[T_{b_{i}}\left(T_{s_{i}}^{-1}\left(0\right)\cap T_{s_{j}^{\prime}}^{-1}\left(0\right)\right),\]
		which thus has codimension at most $c\varepsilon$ in $W_{b_{i}}$. It follows
		that $T_{b_{i}}\left(V^{\prime\prime}\right)$ has codimension at
		most $19c\varepsilon+c\varepsilon=20c\varepsilon$ within $W_{b_{i}}$.
		
		Fix $v\in V^{\prime\prime}$. Then $v\in\ker\left(T_{b_{j}}-\varphi_{s,i,j}\circ T_{(b_{i},s_{i})}\right)$,
		and
		\[
		\left(f_{n}\left(g_{s,i,j}\right)\right)\left(T_{b_{i}}\left(v\right)\right)=\varphi_{s,i,j}\left(T_{b_{i}}\left(v\right),0\right)=\varphi_{s,i,j}\left(T_{(b_{i},s_{i})}\left(v\right)\right)=T_{b_{j}}\left(v\right).
		\]
		In the same way we have 
		\[
		\left(f_{n}\left(g_{s^{\prime},j,k}\right)\right)\left(T_{b_{j}}\left(v\right)\right)=T_{b_{k}}\left(v\right)\quad\text{and}\quad\left(f_{n}\left(g_{s^{\prime\prime},k,i}\right)\right)\left(T_{b_{k}}\left(v\right)\right)=T_{b_{i}}\left(v\right).
		\]
		It follows that 
		\[
		f_{n}\left(g_{s^{\prime\prime},k,i}\right)\circ f_{n}\left(g_{s^{\prime},j,k}\right)\circ f_{n}\left(g_{s,i,j}\right)\restriction_{T_{b_{i}}\left(V^{\prime\prime}\right)}=\mathrm{id}_{T_{b_{i}}\left(V^{\prime\prime}\right)},
		\]
		so the normalized rank distance between $f_{n}\left(g_{s^{\prime\prime},k,i}\right)\circ f_{n}\left(g_{s^{\prime},j,k}\right)\circ f_{n}\left(g_{s,i,j}\right)$
		and $\mathrm{id}_{W_{b_{i}}}$ is at most
		\[
		\frac{1}{c}\left[c-\dim T_{b_{i}}\left(V^{\prime\prime}\right)\right]\le 20\varepsilon.
		\]
	\end{enumerate}
	
	This shows that $f$ is a $20\varepsilon$-approximate representation of $\mathcal{G}$.
	
	Let $s,s^{\prime}\in S$
	be distinct generators. By \cref{cor:approx_different_groupoid_elements}, it suffices to find a positive constant lower bound on $d_\mathrm{rk}(f(\varphi_{s,1,2}), f(\varphi_{s',1,2}))$ that holds for all small enough $\varepsilon$: this implies that $\varphi_{s,1,2} \neq \varphi_{s',1,2}$ in $\mathcal{G}$ and hence by the results of \cref{sec:dowling_representations} that $s,s'$ map to distinct elements of $G$ as required.
	
	We apply \cref{lem:invertible_determination} to $W_{b_1}, W_{b_2}, W_{s_1}$ and obtain $6\varepsilon$-determination maps $\psi:W_{b_1} \oplus W_{b_2} \to W_{s_1}$ and $\varphi_{s,1,2}:W_{b_1} \oplus W_{s_1} \to W_{b_2}$ such that with respect to bases for the three vector spaces, $\psi(v_1,v_2) = A_1 v_1 + A_2 v_2$, the matrix $A_2$ is invertible, and $\varphi_{s,1,2}(v_1,v_3) = -A_2^{-1} A_1 v_1 + A_2^{-1}v_3$. Applying the lemma to $W_{b_1}, W_{b_2}, W_{s'_1}$ we obtain similar maps $\psi':W_{b_1} \oplus W_{b_2} \to W_{s'_1}$ and $\varphi_{s',1,2}:W_{b_1} \oplus W_{s'_1} \to W_{b_2}$ with matrices $A'_1, A'_2$. We may assume that the chosen bases for $W_{b_1}$ and $W_{b_2}$ are the same in both applications of the lemma. Note that with respect to our chosen bases,
	\[f(\varphi_{s,1,2}) = -{A_2}^{-1}A_1 \quad \text{and} \quad f(\varphi_{s',1,2}) = -{A'_2}^{-1}A'_1,\]
	and it suffices to find a constant positive lower bound for the normalized rank distance between these two matrices that holds for all small enough $\varepsilon$.
	
	Observe that $\left\{ T_{s_{1}},T_{s_{1}^{\prime}}\right\}$
	are $\varepsilon$-independent, so $\mathrm{rk}(T_{(s_1,s'_1)}) \ge c(2 - \varepsilon)$. Define
	\[F:V \to W_{s_1} \oplus W_{s'_1}\]
	\[F(v) = (\psi\circ T_{(b_1,b_2)}(v), \psi'\circ T_{(b_1,b_2)}(v))\]
	and observe that 
	\[\mathrm{rk}(F - T_{(s_1,s'_1)}) \le \mathrm{rk}(\psi\circ T_{(b_1,b_2)} - T_{s_1}) + \mathrm{rk}(\psi'\circ T_{(b_1,b_2)} - T_{s'_1}) \le 12c\varepsilon.\]
	Therefore $\mathrm{rk}(F) \ge \mathrm{rk}(T_{(s_1,s'_1)}) - 12c\varepsilon \ge c(2 - 13\varepsilon)$. Representing $F$ with respect to the bases chosen in our applications of \cref{lem:invertible_determination}, we obtain the block matrix
	\[\left[\begin{matrix}
		A_1 & A_2 \\
		A'_1 & A'_2
	\end{matrix}\right].\]
	By applying block row operations (multiplying the first row by ${A_2}^{-1}$, the second by ${A'_2}^{-1}$, and then subtracting the second row from the first) we find it has rank equal to the rank of
	\[\left[\begin{matrix}
		{A_2}^{-1}A_1 - {A'_2}^{-1}A'_1 & 0 \\
		{A'_2}^{-1}A'_1 & I
	\end{matrix}\right],\]
	which has rank $c + \mathrm{rk}({A_2}^{-1}A_1 - {A'_2}^{-1}A'_1)$. It follows that $c+\mathrm{rk}({A_2}^{-1}A_1 - {A'_2}^{-1}A'_1) \ge c(2-13\varepsilon)$, or in other words that
	\[d_\mathrm{rk}(-{A_2}^{-1}A_1, -{A'_2}^{-1}A'_1) \ge 1-13\varepsilon,\]
	and if $\varepsilon < \frac{1}{26}$ this is at least $\frac{1}{2}$ as required.
\end{proof}

\subsection{Almost-multilinear matroid representability is undecidable}\label{sec:almost_undecidable}
We put together our tools and show that it is undecidable whether a matroid is almost multilinear: this section is roughly parallel to \cref{sec:entropic_undecidability}. 

Unlike in that section, using the collection of all matroids subordinate to a given partial Dowling geometry is not sufficient here. The (mild) issue is that, unlike for presentations resulting from the scrambling construction, it is possible for some pairs of generators of a group presentation $\langle S \mid R \rangle$ to map to the same element of the group. To handle this possibility we use the following simple lemma.
\begin{lemm}
	Let $\langle S \mid R \rangle$ be a finite presentation of a group and let $\sim$ be an equivalence relation on $S$. Denote by $\langle S/\mathord{\sim} \mid R/\mathord{\sim} \rangle$ the group presentation which is obtained from $\langle S \mid R \rangle$ by replacing $S$ with $S/\mathord{\sim}$, and replacing each letter in each relation in $R$ by its equivalence class in $S/\mathord{\sim}$.
	
	If $\langle S \mid R \rangle$ is symmetric triangular then so is $\langle S/\mathord{\sim} \mid R/\mathord{\sim} \rangle$. There is a group homomorphism
	\[\langle S \mid R \rangle \to \langle S/\sim \mid R/\mathord{\sim} \rangle\]
	that maps each $s\in S$ to its equivalence class $[s]_\sim$.
\end{lemm}

\begin{defn}\label[definition]{def:extended_subordinate}
	Let $\langle S \mid R \rangle$ be a finite group presentation and let $s \in S$. Let $\{\mathord{\sim}_i\}_{i=1}^N$ be the set of all equivalence relations on $S$ satisfying that $s$ is not identified with $e$. The \emph{extended subordinate set} to the pair $(\langle S \mid R \rangle,s)$ is the set of all partial Dowling geometries subordinate to the presentations in~$\langle S/\mathord{\sim}_i \mid R/\mathord{\sim}_i \rangle$.
\end{defn}

\begin{theorem}\label{thm:WP_almost_multilinear}
	Let $G = \langle S\mid R\rangle, s\in S$ be an instance of the word problem. Assume $\langle S \mid R \rangle$ is symmetric triangular, and that $G$ is sofic and torsion-free.
	Let $\mathcal{M}$ be the extended subordinate set of $(\langle S \mid R \rangle,s)$.
	If $s$ is nontrival in $\langle S \mid R \rangle$ then some matroid in $\mathcal{M}$ is almost multilinear.
\end{theorem}
\begin{proof}
	Assume $s$ is nontrivial in $G$. Let $\sim$ be the equivalence relation on $S$ that identifies elements whenever they map to the same element of $G$. This relation is one of the relations $\mathord{\sim}_i$ considered in \cref{def:extended_subordinate}, because it does not identify $s$ with $e$.

	Let $\langle T \mid R_T \rangle$ be a presentation of $G$ such that $S/\mathord{\sim}\subseteq T$, $R/\mathord{\sim}\subseteq R_T$, and $T$ contains an element mapping onto $x\cdot x'$ in $G$ for any $x,x' \in S/\mathord{\sim}$. By \cref{lem:linear_sofic_amplification} for any $\varepsilon > 0$ there is an $n\in\mathbb{N}$ and an $\varepsilon$-approximate representation $\rho: T \to \mathrm{GL}_n(\C)$ of $\langle T \mid R_T \rangle$ such that $d_\mathrm{rk}(x,x') \ge 1-\varepsilon$ whenever $x,x' \in T$ map to distinct elements of $G$.
	
	This implies that for any $\varepsilon > 0$ there exists an $n \in \mathbb{N}$ and an $\varepsilon$-approximate representation $\rho: S/\mathord{\sim} \to \mathrm{GL}_n(\C)$ of $\langle S/\mathord{\sim} \mid R/\mathord{\sim} \rangle$ such that $d_\mathrm{rk}(\rho(x), \rho(x')) \ge 1 - \varepsilon$ for all distinct generators $x,x' \in S/\mathord{\sim}$ and such that if $x,x',x'' \in S/\mathord{\sim}$ is any triple of generators (not necessarily distinct) then $d_\mathrm{rk}(\rho(x'')^{-1}, \rho(x' x))$ is either at most $\varepsilon$ or at least $1 - \varepsilon$. (The statement on pairs follows from the same statement for $\langle T \mid R_T \rangle$. The statement on triples follows by taking the pair $y = x''$, $y' = x' x$ in $T$ for each triple $x,x',x'' \in S$.)
	
	Hence by \cref{thm:epsilon_reps} at least one of the partial Dowling geometries $M$ subordinate to $\langle S/\mathord{\sim}\mid R/\mathord{\sim}\rangle$ is almost multilinear over $\C$. This geometry is in the extended subordinate set of $\langle S \mid R \rangle$.
\end{proof}

\begin{theorem}\label{thm:almost_multilinear_WP}
	Let $G = \langle S\mid R\rangle, s\in S$ be an instance of the word problem and $M$ be its partial Dowling geometry.
	Assume that some matroid of the partial Dowling geometries $\mathcal{M}$ in the extended subordinate set to $(\langle S\mid R\rangle, s)$ is almost multilinear.
	Then $s \neq e$ in $G$.
\end{theorem}
\begin{proof}
	Say the matroid $M\in \mathcal{M}$ is almost multilinear.
	This is a partial Dowling geometry subordinate to a presentation $\langle S/\mathord{\sim} \mid R/\mathord{\sim}\rangle$, where $\sim$ does not identify $s$ and $e$.
	By \cref{thm:almost_multilinear_to_group}, we have $s \neq e$ in $G$ as desired.
\end{proof}

\begin{coro}
	Almost-multilinearity of matroids is undecidable.
\end{coro}
\begin{proof} The preceding two theorems reduce the word problem in a finitely presented sofic group to a finite (computable) sequence of almost-multilinearity problems for matroids. But the former problem is undecidable by \cref{thm:undecidable_sofic}.
\end{proof}

\begin{rmrk}
	One can formulate a problem parallel to conditional independence implication (as in \cref{sec:cii}) in the almost-multilinear setting. We leave the details to the interested reader. Undecidability of conditional rank inequalities in a linear setting already follows from \cite{KY19}; the approximate version would correspond to considering ``stable'' implications, which continue to hold in an approximate sense even when the assumptions hold only in an approximate sense.
\end{rmrk}

\bibliographystyle{myalpha}
\bibliography{matroid.bib}

\providecommand{\bysame}{\leavevmode\hbox to3em{\hrulefill}\thinspace}
\providecommand{\MR}{\relax\ifhmode\unskip\space\fi MR }
\providecommand{\MRhref}[2]{%
  \href{http://www.ams.org/mathscinet-getitem?mr=#1}{#2}
}
\providecommand{\href}[2]{#2}
\begin{thebibliography}{NGSVG13}

\bibitem[AP17]{AP12}
Goulnara Arzhantseva and Liviu P{\u{a}}unescu, \emph{Linear sofic groups and
  algebras}, Trans. Amer. Math. Soc. \textbf{369} (2017), no.~4, 2285--2310.

\bibitem[Awo10]{Awo10}
Steve Awodey, \emph{Category theory}, second ed., Oxford Logic Guides, vol.~52,
  Oxford University Press, Oxford, 2010.

\bibitem[BBEPT14]{BBP14}
Amos Beimel, Aner Ben-Efraim, Carles Padr\'{o}, and Ilya Tyomkin,
  \emph{Multi-linear secret-sharing schemes}, Theory of cryptography, Lecture
  Notes in Comput. Sci., vol. 8349, Springer, Heidelberg, 2014, pp.~394--418.

\bibitem[BD91]{BD91}
Ernest~F. Brickell and Daniel~M. Davenport, \emph{On the classification of
  ideal secret sharing schemes}, Journal of Cryptology \textbf{4} (1991),
  no.~2, 123--134.

\bibitem[Bei25]{secret_sharing_beimel}
Amos Beimel, \emph{Secret-sharing schemes for general access structures: An
  introduction}, Cryptology {ePrint} Archive, Paper 2025/518, 2025.

\bibitem[BGS86]{undecidable_torsionfree_solvable}
Gilbert Baumslag, Dion Gildenhuys, and Ralph Strebel, \emph{Algorithmically
  insoluble problems about finitely presented solvable groups, lie and
  associative algebras}, J. Pure Appl. Algebra \textbf{39} (1986), no.~1-2,
  53--94.

\bibitem[Dow73]{Dow73}
Thomas~A. Dowling, \emph{A class of geometric lattices based on finite groups},
  Journal of Combinatorial Theory, Series B \textbf{14} (1973), no.~1, 61 --
  86.

\bibitem[ES06]{ES06}
G\'{a}bor Elek and Endre Szab\'{o}, \emph{On sofic groups}, J. Group Theory
  \textbf{9} (2006), no.~2, 161--171.

\bibitem[ESG10]{ElR10}
Salim {El Rouayheb}, Alex {Sprintson}, and Costas {Georghiades}, \emph{On the
  index coding problem and its relation to network coding and matroid theory},
  IEEE Transactions on Information Theory \textbf{56} (2010), no.~7,
  3187--3195.

\bibitem[Fuj78]{Fuj78}
Satoru Fujishige, \emph{Polymatroidal dependence structure of a set of random
  variables}, Information and Control \textbf{39} (1978), 55--72.

\bibitem[GKR23]{rankmetriccodes}
Anina Gruica, Altan~B. Kilic, and Alberto Ravagnani, \emph{Rank-metric codes
  and their parameters}, invited book chapter, Springer Lecture Notes in
  Mathematics, 2023.

\bibitem[GM88]{GM88}
Mark Goresky and Robert MacPherson, \emph{Stratified {M}orse theory},
  Ergebnisse der Mathematik und ihrer Grenzgebiete (3), vol.~14,
  Springer-Verlag, Berlin, 1988.

\bibitem[GP93]{GP93}
Dan Geiger and Judea Pearl, \emph{Logical and algorithmic properties of
  conditional independence and graphical models}, Ann. Statist. \textbf{21}
  (1993), no.~4, 2001--2021.

\bibitem[Kha81]{Kha81}
Olga Kharlampovich, \emph{A finitely presented solvable group with unsolvable
  word problem}, Izv. Akad. Nauk SSSR Ser. Mat. \textbf{45} (1981), no.~4,
  852--873, 928.

\bibitem[Kin11]{Kin09}
Ryan Kinser, \emph{New inequalities for subspace arrangements}, Journal of
  Combinatorial Theory, Series A \textbf{118} (2011), no.~1, 152 -- 161.

\bibitem[KKNS20]{KKNS20}
Mahmoud~Abo Khamis, Phokion~G. Kolaitis, Hung~Q. Ngo, and Dan Suciu,
  \emph{{Decision Problems in Information Theory}}, 47th International
  Colloquium on Automata, Languages, and Programming (ICALP 2020) (Dagstuhl,
  Germany), vol. 168, 2020, pp.~106:1--106:20.

\bibitem[KPY23]{KPY20}
Lukas K\"uhne, Rudi Pendavingh, and Geva Yashfe, \emph{Von {S}taudt
  constructions for skew-linear and multilinear matroids}, Comb. Theory
  \textbf{3} (2023), no.~1, Paper No. 16, 27.

\bibitem[KY22]{KY19}
Lukas K\"uhne and Geva Yashfe, \emph{Representability of matroids by
  {$c$}-arrangements is undecidable}, Israel J. Math. \textbf{252} (2022),
  no.~1, 95--147.

\bibitem[Li21]{Li21}
Cheuk~Ting Li, \emph{The undecidability of conditional affine information
  inequalities and conditional independence implication with a binary
  constraint}, 2021 IEEE Information Theory Workshop (ITW), 2021, pp.~1--6.

\bibitem[Li23]{Li22}
\bysame, \emph{Undecidability of network coding, conditional information
  inequalities, and conditional independence implication}, IEEE Trans. Inf.
  Theor. \textbf{69} (2023), no.~6, 3493–3510.

\bibitem[LS77]{LS77}
Roger~C. Lyndon and Paul~E. Schupp, \emph{Combinatorial group theory},
  Springer-Verlag, Berlin-New York, 1977, Ergebnisse der Mathematik und ihrer
  Grenzgebiete, Band 89.

\bibitem[Mar91]{Mar91}
Keith~Murray Martin, \emph{Discrete structures in the theory of secret
  sharing}, 1991.

\bibitem[{Mat}93]{Matus_background}
Franti\v{s}ek {Mat\'{u}\v{s}}, \emph{Probabilistic conditional independence
  structures and matroid theory: Background}, International Journal of General
  Systems \textbf{22} (1993), no.~2, 185--196.

\bibitem[{Mat}99]{Mat99}
\bysame, \emph{Matroid representations by partitions}, Discrete Math.
  \textbf{203} (1999), no.~1-3, 169--194.

\bibitem[{Mat}07]{Mat07}
\bysame, \emph{Two constructions on limits of entropy functions}, IEEE
  Transactions on Information Theory \textbf{53} (2007), no.~1, 320--330.

\bibitem[{Mat}24]{Mat19}
\bysame, \emph{Algebraic matroids are almost entropic}, Proc. Amer. Math. Soc.
  \textbf{152} (2024), no.~1, 1--6.

\bibitem[MBE20]{MB20}
Franti\v{s}ek Mat\'{u}\v{s} and Aner Ben-Efraim, \emph{A note on representing
  {D}owling geometries by partitions}, Kybernetika (Prague) \textbf{56} (2020),
  no.~5, 934--947.

\bibitem[MKY25]{optimal_linear_sofic_approx}
Keivan Mallahi-Karai and Maryam~Mohammadi Yekta, \emph{Optimal linear sofic
  approximations of countable groups}, Glasgow Mathematical Journal \textbf{67}
  (2025), no.~2, 245--268.

\bibitem[NGSVG13]{NGSG13}
Mathias Niepert, Marc Gyssens, Bassem Sayrafi, and Dirk Van~Gucht, \emph{On the
  conditional independence implication problem: a lattice-theoretic approach},
  Artificial Intelligence \textbf{202} (2013), 29--51.

\bibitem[Oxl11]{Oxl11}
James Oxley, \emph{Matroid theory}, second ed., Oxford Graduate Texts in
  Mathematics, vol.~21, Oxford University Press, Oxford, 2011.

\bibitem[Pad12]{secret_sharing_padro}
Carles Padro, \emph{Lecture notes in secret sharing}, Cryptology {ePrint}
  Archive, Paper 2012/674, 2012.

\bibitem[Pes08]{Pes08}
Vladimir~G. Pestov, \emph{Hyperlinear and sofic groups: a brief guide}, Bull.
  Symbolic Logic \textbf{14} (2008), no.~4, 449--480.

\bibitem[PP86]{PP86}
Judea Pearl and Azaria Paz, \emph{Graphoids: Graph-based logic for reasoning
  about relevance relations or when would x tell you more about y if you
  already know z?}, Proceedings of the 7th European Conference on Artificial
  Intelligence - Volume 2, ECAI'86, North-Holland, 1986, p.~357–363.

\bibitem[SA98]{SA98}
Juriaan Simonis and Alexei Ashikhmin, \emph{Almost affine codes}, Designs,
  Codes and Cryptography \textbf{14} (1998), no.~2, 179--197.

\bibitem[Sey92]{Sey92}
Paul~D. Seymour, \emph{On secret-sharing matroids}, J. Combin. Theory Ser. B
  \textbf{56} (1992), no.~1, 69--73.

\bibitem[Slo81]{Slo81}
A.~M. Slobodskoi, \emph{Unsolvability of the universal theory of finite
  groups}, Algebra and Logic \textbf{20} (1981), no.~2, 139--156.

\bibitem[Sti92]{Stin92}
Doug Stinson, \emph{An explication of secret sharing schemes}, Des. Codes
  Cryptogr. \textbf{2} (1992), no.~4, 357--390.

\bibitem[Stu87]{Stu87}
Bernd Sturmfels, \emph{On the decidability of {D}iophantine problems in
  combinatorial geometry}, Bull. Amer. Math. Soc. (N.S.) \textbf{17} (1987),
  no.~1, 121--124.

\bibitem[Stu90]{Stu90}
Milan Studen\'{y}, \emph{Conditional independence relations have no finite
  complete characterization}, 1990.

\bibitem[Yas25]{almost_entropic}
Geva Yashfe, \emph{The recognition problem for integer points in the
  almost-entropic cone}, in preparation., 2025+.

\bibitem[Zas89]{Zas89}
Thomas Zaslavsky, \emph{Biased graphs. {I}. {B}ias, balance, and gains}, J.
  Combin. Theory Ser. B \textbf{47} (1989), no.~1, 32--52.

\bibitem[Zas91]{Zas91}
\bysame, \emph{Biased graphs. {II}. {T}he three matroids}, J. Combin. Theory
  Ser. B \textbf{51} (1991), no.~1, 46--72.

\bibitem[Zas03]{Zas03}
\bysame, \emph{Biased graphs. {IV}. {G}eometrical realizations}, J. Combin.
  Theory Ser. B \textbf{89} (2003), no.~2, 231--297.

\end{thebibliography}

\end{document}